\documentclass[leqno,11pt,a4paper]{amsart}

\usepackage[left=2.5cm,right=2.5cm,top=2.5cm,bottom=2.5cm]{geometry}
\usepackage{amsmath}
\usepackage{amsthm}
\usepackage{amssymb}
\usepackage{mathtools}
\usepackage{IEEEtrantools}
\usepackage[shortlabels]{enumitem}
\usepackage{hyperref}
\usepackage{cite}

\usepackage{xcolor}

\theoremstyle{plain}
\newtheorem{theorem}{Theorem}[section]
\newtheorem{prop}[theorem]{Proposition}
\newtheorem{cor}[theorem]{Corollary}
\newtheorem{lem}[theorem]{Lemma}

\theoremstyle{definition}

\newtheorem{remark}[theorem]{Remark}
\newtheorem{definition}[theorem]{Definition}
\newtheorem{question}[theorem]{Question}
\newtheorem{conjecture}[theorem]{Conjecture}
\newtheorem{construction}[theorem]{Construction}

\numberwithin{equation}{section}

\newcommand{\BC}{{\mathbb{C}}}
\newcommand{\BD}{{\mathbb{D}}}

\newcommand{\BP}{{\mathbb{P}}}

\newcommand{\BR}{{\mathbb{R}}}

\newcommand{\BT}{{\mathbb{T}}}

\newcommand{\BZ}{{\mathbb{Z}}}

\newcommand{\MA}{\mathcal{A}}

\newcommand{\ML}{\mathcal{L}}

\newcommand{\MT}{\mathcal{T}}
\newcommand{\MU}{\mathcal{U}}

\newcommand{\on}[1]{\operatorname{#1}}

\newcommand{\vol}{\operatorname{vol}}
\newcommand{\Amin}{\operatorname{A_{min}}}
\newcommand{\AHopf}{\operatorname{A_{Hopf}}}
\newcommand{\cZ}{\operatorname{c_Z}}

\newcommand{\cG}{\operatorname{c_G}}
\newcommand{\cHZ}{\operatorname{c_{HZ}}}
\newcommand{\cSH}{\operatorname{c_{SH}}}
\newcommand{\cEH}[1]{\operatorname{c^{EH}_{#1}}}
\newcommand{\cCH}[1]{\operatorname{c^{CH}_{#1}}}
\newcommand{\cECH}[1]{\operatorname{c^{ECH}_{#1}}}

\newcommand{\Symp}{\operatorname{Symp}}
\newcommand{\Ham}{\operatorname{Ham}}
\newcommand{\Diff}{\operatorname{Diff}}
\newcommand{\Cal}{\operatorname{Cal}}

\newcommand{\identity}{\operatorname{id}}

\newcommand{\interior}{\operatorname{int}}

\begin{document}

\author[O.~Edtmair]{O.~Edtmair}
\address{Department of Mathematics\\University of California at Berkeley\\Berkeley, CA\\94720\\USA}
\email{oliver\_edtmair@berkeley.edu}
\title[Disk-like surfaces of section and symplectic capacities]{Disk-like surfaces of section and symplectic capacities}

\begin{abstract}
We prove that the cylindrical capacity of a dynamically convex domain in $\BR^4$ agrees with the least symplectic area of a disk-like global surface of section of the Reeb flow on the boundary of the domain. Moreover, we prove the strong Viterbo conjecture for all convex domains in $\BR^4$ which are sufficiently $C^3$ close to the round ball. This generalizes a result of Abbondandolo-Bramham-Hryniewicz-Salom\~{a}o establishing a systolic inequality for such domains. 
\end{abstract}

\maketitle

\tableofcontents

\section{Introduction}

\subsection{Symplectic capacities}

A {\it symplectic capacity} is a function $c$ that assigns numbers $c(X,\omega)\in [0,\infty]$ to symplectic manifolds $(X,\omega)$ of a certain dimension $2n$. Symplectic capacities are required to be monotonic under symplectic embeddings and behave linearly with respect to scalings of the symplectic form. More precisely, one requires:
\begin{itemize}
\item (Monotonicity) If $(X,\omega)$ symplectically embeds into $(X',\omega')$, then $c(X,\omega)\leq c(X',\omega')$.
\item (Conformality) For every $r>0$, we have $c(X,r\omega)=rc(X,\omega)$.
\end{itemize}
We will be mainly concerned with symplectic capacities of domains in Euclidean space $\BR^{2n}=\BC^n$ equipped with the standard symplectic form
\begin{equation*}
\omega_0 \coloneqq \sum\limits_{j=1}^n dx_j\wedge dy_j.
\end{equation*}
We define the ball $B(a)$ and cylinder $Z(a)$ of symplectic width $a>0$ to be the sets
\begin{equation*}
B(a) \coloneqq \left\{z\in\BC^n\mid \pi |z|^2\leq a\right\}\qquad \text{and} \qquad Z(a) \coloneqq \left\{z\in\BC^n \mid \pi |z_1|^2\leq a\right\}.
\end{equation*}
A symplectic capacity is called {\it normalized} if it satisfies
\begin{itemize}
\item (Normalization) $c(B(\pi))= c(Z(\pi)) = \pi$.
\end{itemize}
Two examples of normalized symplectic capacities which are easy to define are the {\it Gromov width} $\cG$ and the {\it cylindrical capacity} $\cZ$. We use the notation $A \overset{s}{\hookrightarrow} B$ to indicate that there exists a {\it symplectic embedding} of $A$ into $B$, i.e. a smooth embedding preserving the symplectic structure. Gromov width and cylindrical capacity are given by
\begin{equation*}
\cG(X)\coloneqq \sup \{a \mid B(a) \overset{s}{\hookrightarrow} X\} \qquad \text{and}\qquad
\cZ(X)\coloneqq \inf \{a \mid X \overset{s}{\hookrightarrow} Z(a) \}.
\end{equation*}
Note, however, that it is highly non-trivial to show that $\cG$ and $\cZ$ are indeed normalized capacities. In fact, this is equivalent to the celebrated Gromov non-squeezing theorem \cite{Gr85}. There is a whole collection of symplectic capacities whose definition involves Hamiltonian dynamics. Examples of normalized capacities arising this way are the Hofer-Zehnder capacity $\cHZ$ introduced in \cite{HZ90} and the Viterbo capacity $\cSH$ defined in \cite{Vit99} using symplectic homology. Other capacities come in families parametrized by positive integers. Examples are the Ekeland-Hofer capacities $\cEH{k}$ defined in \cite{EH89} and \cite{EH90} and the equivariant capacities $\cCH{k}$ constructed by Gutt and Hutchings in \cite{GH18} from $S^1$-equivariant symplectic homology. The first capacities $\cEH{1}$ and $\cCH{1}$ in these families are normalized. In dimension $4$, there exists a sequence of capacities $\cECH{k}$ defined by Hutchings in \cite{Hu11} using embedded contact homology. Again, the first capacity $\cECH{1}$ is normalized. For more information on symplectic capacities, we refer the reader to Cieliebak-Hofer-Latschev-Schlenk \cite{CHLS07} and the references therein.\\

Recall that a contact form on an odd dimensional manifold is a nowhere vanishing $1$-form $\alpha$ such that the restriction of $d\alpha$ to the hyperplane field $\xi\coloneqq\ker\alpha$ is non-degenerate. A contact form $\alpha$ induces a Reeb vector field $R=R_\alpha$ characterized by the equations
\begin{equation*}
\iota_{R}\alpha = 1\qquad \text{and}\qquad \iota_R d\alpha = 0.
\end{equation*}
Studying the dynamical properties of Reeb flows, such as the existence of periodic orbits, is a topic of great interest in symplectic geometry. Contact forms naturally arise on the boundaries of convex or, more generally, star-shaped domains $X\subset\BR^{2n}$. We equip $\BR^{2n}$ with the radial Liouville vector field $Z_0$ and the associated Liouville $1$-form $\lambda_0$
\begin{equation}
\label{eq:liouville_vector_field_and_form}
Z_0 = \sum\limits_{j=1}^n (x_j\partial_{x_j} + y_j\partial_{y_j}) = \frac{1}{2}r\partial_r\qquad\text{and}\qquad \lambda_0 = \frac{1}{2}\sum\limits_{j=1}^n (x_jdy_j-y_jdx_j).
\end{equation}
They are related to the symplectic form $\omega_0$ via $d\lambda_0 = \omega_0$ and $\iota_{Z_0}\omega_0=\lambda_0$. Consider a closed, connected hypersurface $Y\subset\BR^{2n}$. The restriction of $\lambda_0$ to $Y$ is a contact form if and only if the Liouville vector field $Z_0$ is transverse to $Y$. If $Y$ has this property, we call it a star-shaped hypersurface and the domain bounded by $Y$ a star-shaped domain. Note that all star-shaped hypersurfaces are contactomorphic to the sphere $S^{2n-1}$ equipped with its standard contact structure. Moreover, any contact form on $S^{2n-1}$ defining the standard contact structure is strictly contactomorphic to the restriction of $\lambda_0$ to some star-shaped hypersurface. Thus studying star-shaped hypersurfaces is equivalent to studying contact forms on $S^{2n-1}$ defining the standard contact structure.\\

It was proved by Rabinowitz in \cite{Rab78} that there exists a periodic Reeb orbit on the boundary of any star-shaped domain $X\subset\BR^{2n}$. If $\gamma$ is a periodic orbit on $\partial X$, we define its action $\MA(\gamma)$ to be
\begin{equation*}
\MA(\gamma)\coloneqq\int_\gamma\lambda_0.
\end{equation*}
The capacities $\cHZ$, $\cSH$, $\cEH{k}$ and $\cCH{k}$ have the following important property: Their value on a star-shaped domain $X\subset\BR^{2n}$ is equal to the action $\MA(\gamma)$ of some (possibly multiply covered) periodic orbit $\gamma$ on $\partial X$. The capacities $\cECH{k}$ have a similar property. Their values can be represented as the sum of the actions of finitely many periodic orbits.

\subsection{Viterbo's conjecture}

In \cite{Vit00}, Viterbo stated the following fascinating conjecture concerning normalized symplectic capacities.

\begin{conjecture}[Viterbo conjecture]
\label{conjecture:weak_viterbo_conjecture}
Let $X\subset\BR^{2n}$ be a convex domain. Then any normalized symplectic capacity $c$ satisfies the inequality
\begin{equation}
\label{eq:weak_viterbo_conjecture}
c(X) \leq (n!\vol(X))^{\frac{1}{n}}.
\end{equation}
\end{conjecture}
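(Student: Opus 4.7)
The plan is to prove the inequality in the strongest possible form, namely for the cylindrical capacity $\cZ$, since any symplectic embedding $X\overset{s}{\hookrightarrow} Z(a)$ yields $c(X)\leq c(Z(a))=a$ by monotonicity and normalization, hence $c(X)\leq \cZ(X)$ for every normalized capacity $c$. It therefore suffices to establish
\begin{equation*}
\cZ(X) \leq (n!\vol(X))^{1/n}
\end{equation*}
for every convex $X\subset\BR^{2n}$, i.e.\ the \emph{strong} form of the conjecture.

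In the four-dimensional case the first step is to identify $\cZ(X)$ with a concrete dynamical invariant of the Reeb flow on $\partial X$, namely the minimal $d\lambda_0$-area $\Amin(X)$ of a disk-like global surface of section. The bound $\cZ\leq\Amin$ would proceed by extracting, from any disk-like surface of section of area $a$, a symplectic model for a neighborhood of $\partial X$ as a mapping torus and embedding this model into $Z(a)$. The reverse bound $\Amin\leq\cZ$ proceeds by producing a disk-like surface of section from a symplectic embedding of $X$ into a cylinder, using pseudoholomorphic foliations \`a la Hofer-Wysocki-Zehnder; here convexity of $X$ guarantees dynamical convexity and hence the existence of such a section by the classical result of Hofer-Wysocki-Zehnder.

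With this identification in hand, the Viterbo inequality becomes a sharp systolic-type estimate bounding $\Amin(X)$ above by $(2\vol(X))^{1/2}$, which is exactly the theorem of Abbondandolo-Bramham-Hryniewicz-Salom\~{a}o for convex domains sufficiently $C^3$-close to the round ball. To extend to all smooth convex domains in $\BR^4$, the plan is a continuation argument along paths of convex bodies: $C^0$-continuity of both $\cZ$ and $\vol$ on the space of convex bodies should give closedness of the locus where the inequality holds, while a quantitative version of the identification $\cZ=\Amin$ combined with openness of the dynamically convex locus should give openness.

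The hardest step by far is dimension $2n\geq 6$, where disk-like global surfaces of section and the underlying ECH-based machinery are simply unavailable. There the only plausible route is a direct systolic inequality for convex Hamiltonian dynamics controlling the action of the shortest closed Reeb orbit, which is essentially equivalent to the weak form of the conjecture itself and remains wide open. A secondary but still serious obstacle in the four-dimensional argument is behavior near the boundary of the dynamically convex locus, where bindings can become degenerate and the pseudoholomorphic foliation can lose compactness, potentially breaking the continuation argument sketched above.
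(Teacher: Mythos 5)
The statement you are asked about is a \emph{conjecture}, not a theorem of the paper: the paper explicitly records that inequality \eqref{eq:weak_viterbo_conjecture} is known only for the Gromov width $\cG$ (where it follows from volume preservation) and is open for every other normalized capacity. Your proposal does not close this gap; it reduces the conjecture to statements that are themselves open, and in two places it misstates what is actually known. First, the identification $\cZ(X)=\Amin(X)$ is not available. What the paper proves (Theorem \ref{theorem:a_hopf_equals_cylindrical_capacity_for_convex_domains}) is $\cZ(X)=\AHopf(X)$ for dynamically convex $X\subset\BR^4$, where $\AHopf$ is the minimal action of a \emph{Hopf} orbit — an unknotted orbit of self-linking number $-1$. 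A Reeb orbit of minimal action need not be a Hopf orbit, and whether $\Amin(X)=\AHopf(X)$ for convex $X$ is precisely Question \ref{question:cz_3_implies_hopf}, which is open except under the curvature pinching hypotheses of Theorem \ref{theorem:unknottedness_of_index_tree_orbits}. So even in dimension four your "concrete dynamical invariant" is not the one controlled by the known systolic estimate.

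Second, the continuation argument is circular. The systolic inequality $\Amin(X)\le(2\vol(X))^{1/2}$ is known only for $\partial X$ in a $C^3$-neighbourhood of the round sphere (\cite{ABHS18}); the "openness" step of your open-closed argument on the space of convex bodies would require a quantitative version of this estimate at an arbitrary convex body, which is exactly the content of the conjecture — there is no independent mechanism supplying it. (Closedness via $C^0$-continuity of capacities and volume is fine, but vacuous without a nontrivial open set to propagate from beyond the neighbourhood of the ball.) Finally, you concede that the case $2n\ge 6$ "remains wide open," which by itself means the proposal is not a proof of the stated conjecture. What the paper actually establishes are the special cases: the strong Viterbo conjecture for convex domains $C^3$-close to the ball (Theorem \ref{theorem:strong_viterbo_near_round_ball}) and the identity $\cZ=\AHopf=\cECH{1}$ for dynamically convex domains in $\BR^4$; the general conjecture is left open.
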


Note that inequality \eqref{eq:weak_viterbo_conjecture} holds for the Gromov width $\cG$. This is an easy consequence of the fact that symplectomorphisms are volume preserving. For all other normalized capacities introduced above, Conjecture \ref{conjecture:weak_viterbo_conjecture} is open. It was proved by Artstein-Avidan-Karasev-Ostrover \cite{AKO14} that Conjecture \ref{conjecture:weak_viterbo_conjecture} implies Mahler's conjecture, an old conjecture in convex geometry. This is one of the reasons for the recent increase in interest in Viterbo's conjecture. There is a stronger version of Viterbo's conjecture.

\begin{conjecture}[Strong Viterbo conjecture]
\label{conjecture:strong_viterbo_conjecture}
Let $X\subset \BR^{2n}$ be a convex domain. Then all normalized symplectic capacities agree on $X$.
\end{conjecture}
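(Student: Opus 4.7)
The plan is to reduce the strong Viterbo conjecture to a question about ball embeddings. Any normalized capacity $c$ satisfies $\cG(X)\leq c(X)\leq \cZ(X)$ by monotonicity applied to the defining embeddings $B(a)\overset{s}{\hookrightarrow} X \overset{s}{\hookrightarrow} Z(b)$, so it suffices to prove $\cG(X)=\cZ(X)$ for every convex domain $X\subset\BR^{2n}$.

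I would first focus on the case $2n=4$ and invoke the main theorem announced in the abstract: for every dynamically convex star-shaped domain $X\subset\BR^4$ the cylindrical capacity $\cZ(X)$ coincides with the minimal symplectic area $\Amin(X)$ of a disk-like global surface of section for the Reeb flow on $\partial X$. Combined with the well-known fact that convex star-shaped hypersurfaces in $\BR^4$ are dynamically convex, this reduces the problem to proving the single inequality $\cG(X)\geq \Amin(X)$ for every convex $X\subset\BR^4$.

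The heart of the argument would then be an explicit ball-embedding construction: given a disk-like global surface of section $\Sigma\subset X$ realizing the minimum $a=\Amin(X)$, produce a symplectic embedding $B(a)^{\circ}\overset{s}{\hookrightarrow} X$. The strategy is to view $\Sigma$ together with its first-return map $\phi\colon\Sigma\to\Sigma$ as defining symplectic ``Birkhoff coordinates'' on a neighborhood of $\partial X$: the open mapping torus of $\phi$, equipped with its natural symplectic structure, is symplectomorphic to the complement of a singular ``core'' in $X$, and the choice of $\Sigma$ of minimal area is precisely what controls the symplectic width of the ball that fits inside this mapping torus. To close up the construction one would fill in the core using the fact that, by dynamical convexity, the Reeb dynamics near the binding orbit of $\Sigma$ is asymptotically a rotation, so a Moser-type rectification should identify a neighborhood of the binding with a model piece of the ball.

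The main obstacle --- and the reason Conjecture \ref{conjecture:strong_viterbo_conjecture} remains open in general --- is precisely this ball-embedding step. For a general convex domain the return map $\phi$ can be an essentially arbitrary area-preserving disk diffeomorphism, and converting the mapping-torus picture into a genuine symplectic ball is a hard dynamical problem. In the perturbative regime addressed in the abstract, where $X$ is $C^3$-close to the round ball, $\phi$ is a small perturbation of the identity, the Hopf foliation by disk-like sections persists, and Moser-type arguments should be enough to push the construction through and obtain $\cG(X)=\Amin(X)=\cZ(X)$. In dimensions $2n>4$ there is not yet any workable theory of disk-like global surfaces of section, so even the first reduction step breaks down and substantially new tools appear to be needed.
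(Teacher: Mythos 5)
The statement you are asked about is a conjecture, and neither the paper nor your proposal proves it; the paper only establishes the special case of convex domains in $\BR^4$ that are $C^3$-close to the round ball (Theorem \ref{theorem:strong_viterbo_near_round_ball}), together with the unconditional identity $\cZ(X)=\AHopf(X)$ for dynamically convex $X\subset\BR^4$ (Theorem \ref{theorem:a_hopf_equals_cylindrical_capacity_for_convex_domains}). Your reduction to $\cG(X)=\cZ(X)$ is correct and standard, and your overall strategy (surface of section, first return map, mapping-torus model, ball embedding) is indeed the one the paper implements in the perturbative regime. But you yourself concede that the ball-embedding step is out of reach for a general convex domain, so what you have written is a program, not a proof; the gap is exactly where you say it is.

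There is, however, a second gap in your reduction that you do not flag and that the paper is careful to isolate. You write that the paper identifies $\cZ(X)$ with ``the minimal symplectic area $\Amin(X)$ of a disk-like global surface of section.'' This conflates two a priori different quantities: $\Amin(X)$ is the minimal action over \emph{all} closed Reeb orbits on $\partial X$ (and equals $\cHZ$, $\cSH$, $\cEH{1}$, $\cCH{1}$ on convex domains), whereas the paper proves $\cZ(X)=\AHopf(X)$, the minimal action over \emph{Hopf} orbits, i.e. over boundaries of disk-like global surfaces of section. One always has $\Amin(X)\leq\AHopf(X)$, but equality is open even for convex domains: it is known only under the curvature pinching hypothesis of Hainz (Theorem \ref{theorem:unknottedness_of_index_tree_orbits}), and Question \ref{question:cz_3_implies_hopf} records precisely this as an open problem. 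So even if you could carry out the ball embedding $B(a)\overset{s}{\hookrightarrow}X$ for a minimal-area surface of section, you would obtain $\cG(X)\geq\AHopf(X)=\cZ(X)$ and hence the conjecture for that $X$ --- but your intermediate claim that this amounts to proving $\cG(X)\geq\Amin(X)$ silently assumes that the shortest closed orbit bounds a disk-like global surface of section, which is a separate unresolved difficulty. Your remark about dimensions $2n>4$ is accurate: the surface-of-section framework has no higher-dimensional analogue here, so the entire approach is confined to $\BR^4$.
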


The strong Viterbo conjecture together with the above observation that Conjecture \ref{conjecture:weak_viterbo_conjecture} holds for $\cG$ immediately implies that Conjecture \ref{conjecture:weak_viterbo_conjecture} is true for all normalized symplectic capacities. It is an easy consequence of the definitions that any normalized symplectic capacity $c$ satisfies $\cG\leq c\leq \cZ$. Thus Conjecture \ref{conjecture:strong_viterbo_conjecture} is equivalent to saying that Gromov width $\cG$ and cylindrical capacity $\cZ$ agree on convex domains. The convexity assumption in Viterbo's conjectures is essential. Even within the class of star-shaped domains there exist domains $X$ with arbitrarily small volume such that the cylindrical capacity satisfies $\cZ(X)\geq 1$ (see Hermann's paper \cite{Her98}). We refer to Gutt-Hutchings-Ramos \cite{GHR20} for a recent account on Viterbo's conjectures.

\subsection{Embeddings into cylinders}

Except for the Gromov width $\cG$ and the cylindrical capacity $\cZ$, the construction of most, if not all, known normalized capacities is based on Hamiltonian dynamics. There has been a significant amount of work showing that many of these dynamical capacities agree. For example, it follows from work of Ekeland, Hofer, Zehnder, Abbondandolo, Kang and Irie that the values of the capacities $\cHZ$, $\cSH$, $\cEH{1}$ and $\cCH{1}$ on a convex domain $X\subset \BR^{2n}$ all agree with $\Amin(X)$, the minimal action $\MA(\gamma)$ of a periodic orbit $\gamma$ on $\partial X$. We refer to Theorem 1.12 in \cite{GHR20} for a summary. On the other hand, except for the obvious inequalities $\cG\leq c\leq \cZ$, almost nothing is known about the relationship between the dynamical capacities and the embedding capacities $\cG$ and $\cZ$. The purpose of our work is to bridge the gap between dynamics and symplectic embeddings.\\

While it is a well established strategy to use dynamics to obstruct symplectic embeddings, in this paper we go in the opposite direction and use dynamical information to construct symplectic embeddings. The dynamical information is given in terms of {\it global surfaces of section}, an important concept in dynamics going back to Poincar\'{e}. Let $\alpha$ be a contact form on a closed $3$-manifold $Y$. We call an embedded surface (with boundary) $\Sigma\subset Y$ a global surface of section for the Reeb flow if the boundary $\partial \Sigma$ is embedded and consists of closed, simple Reeb orbits, the Reeb vector field $R$ is transverse to the interior $\interior (\Sigma)$, and every trajectory not contained in $\partial\Sigma$ meets $\interior (\Sigma)$ infinitely often forward and backward in time. Surfaces of section are an extremely useful tool in three dimensional Reeb dynamics. For example, they have been used to show that every (non-degenerate) Reeb flow on a closed contact $3$-manifold must have either two or infinitely many periodic orbits (see \cite{HWZ98}, \cite{CHP19} and \cite{CDR20}). In this paper, we will be concerned with disk-like global surfaces of section, i.e. the case that $\Sigma$ is diffeomorphic to the $2$-dimensional closed disk. This implies that the underlying contact manifold must be the $3$-sphere $S^3$ with its unique tight contact structure. For more details on surfaces of section we refer to section \ref{subsection:disk_like_global_surfaces_of_section}.\\

Let us begin by stating the following general dynamical criterion guaranteeing the existence of symplectic embeddings into a cylinder.

\begin{theorem}
\label{theorem:area_of_surface_of_section_bounds_cylindrical_capacity}
Let $X\subset\BR^4$ be a star-shaped domain. Let $\Sigma\subset\partial X$ be a $\partial$-strong (see Definition \ref{def:non_degenerate_surface_of_section}) disk-like global surface of section of the natural Reeb flow on $\partial X$ of symplectic area
\begin{equation*}
a\coloneqq \int_\Sigma \omega_0 = \MA(\partial\Sigma).
\end{equation*}
Then there exists a symplectic embedding $X\overset{s}{\hookrightarrow} Z(a)$. In particular, we have $\cZ(X)\leq a$.
\end{theorem}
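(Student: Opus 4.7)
I would construct an explicit symplectic embedding $\Phi : X \hookrightarrow Z(a) = B^2(a) \times \BC$, using the disk-like surface of section $\Sigma$ to coordinatize both source and target. The two factors of $Z(a)$ encode complementary pieces of data from $\Sigma$: the bounded factor $B^2(a)$ records the Reeb-flow direction, to which corresponds the binding $\gamma = \partial\Sigma$ of action $a$, while the unbounded factor $\BC$ houses an area-preserving embedding of the page, which has area exactly $a$ by Stokes, since $\int_\Sigma \omega_0 = \int_\gamma \lambda_0 = \MA(\gamma) = a$.

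First I would set up page-and-flow coordinates on $X$. Let $\phi : \interior(\Sigma) \to \interior(\Sigma)$ be the first-return map of the Reeb flow on $\partial X$, with return time $\tau : \interior(\Sigma) \to (0,\infty)$. Since $\omega_0|_\Sigma$ is preserved by the Reeb flow, $\phi$ is area-preserving, and by the $\partial$-strong assumption it extends smoothly across $\partial\Sigma$ with a controlled normal form at the binding. This realizes $\partial X \setminus \gamma$ as the mapping torus of $\phi$. Extending inward via the radial Liouville flow $Z_0$ produces a $4$-dimensional parametrization of $X$ away from the Liouville cone through $\gamma$, with coordinates $(\sigma, t, r)$ where $\sigma \in \interior(\Sigma)$ is the page point, $t \in [0, \tau(\sigma))$ the Reeb time, and $r \in (0,1]$ the Liouville parameter.

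Next I would build $\Phi$ in these coordinates. Since $\phi$ is area-preserving on the disk $\Sigma$ and, by the $\partial$-strong condition, equal to a controlled rotation near $\partial\Sigma$, it is generated by a Hamiltonian isotopy $\phi_s$ ($s \in [0,1]$) with a Hamiltonian $H_s : \Sigma \to \BR$ supported in $\interior(\Sigma)$. Fix an area-preserving embedding $\psi : (\Sigma, \omega_0|_\Sigma) \hookrightarrow (\BC, \omega_0)$. The embedding $\Phi$ then takes the schematic form
\begin{equation*}
\Phi(\sigma, t, r) = \Bigl( \sqrt{\tfrac{I(\sigma,t,r)}{\pi}}\, e^{2\pi i t / a},\; \psi\bigl(\phi_{t/a}^{-1}(\sigma)\bigr) \Bigr),
\end{equation*}
where the radial function $I(\sigma, t, r) \in [0, a]$ is determined by the symplectic condition $\Phi^*\omega_0 = \omega_0$. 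The Hamiltonian isotopy $\phi_s$ is inserted precisely to turn the nontrivial monodromy of the mapping torus into a trivial product in the target $B^2(a) \times \BC$.

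\textbf{Main obstacle.} The heart of the argument is satisfying two constraints on $\Phi$ simultaneously: the symplectic condition $\Phi^*\omega_0 = \omega_0$, and the target bound $\pi|\Phi_1|^2 \leq a$. The area identity $\int_\Sigma \omega_0 = a$ is precisely what forces $I(\sigma, t, r) \leq a$; this is where the hypothesis enters in an essential way. Additional care is needed at two singular loci: smoothness of $\Phi$ across the Liouville cone on $\gamma$ relies on the $\partial$-strong normal form at the binding, while smoothness at the Liouville origin $0 \in X$ requires a suitable tail choice for the radial factor. Both loci have codimension at least two, so $\Phi$ can first be built on the complement and then extended by hand. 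The final inequality $\cZ(X) \leq a$ is immediate from the existence of $\Phi$ and the definition of the cylindrical capacity.
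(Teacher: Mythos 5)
Your plan is essentially the paper's ``spinning'' construction, but two of its load-bearing steps are wrong or missing. First, the factors are assigned backwards, and this is fatal as written. In your formula the bounded factor $B^2(a)$ of $Z(a)$ receives the time--energy data $\sqrt{I/\pi}\,e^{2\pi i t/a}$ while the page goes to the unbounded factor $\BC$; the constraint $\pi|z_1|^2\leq a$ then bounds the wrong quantity. If $\Phi$ were symplectic with second factor $\psi\circ\phi_{t/a}^{-1}\circ\pi_\Sigma$, the preimage of a point of $\BC$ would be a flow--Liouville cylinder over a page point, mapped area-preservingly into $B^2(a)$, hence of area at most $a$; but these fibers have average area $2\vol(X)/a$, which already exceeds $a$ for the ellipsoid $\{\pi|z_1|^2/a+\pi|z_2|^2/b\leq 1\}$ with $b>2a$ (return time $\equiv b$). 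The page, whose area is exactly $a$, is what must go into $B^2(a)$; the time--energy annuli, whose areas are the (unbounded) return times, go into $\BC$. Relatedly, $\partial$-strong does not make $\phi$ a rotation near $\partial\Sigma$ -- the paper has to modify the domain to arrange that -- and you never say which lift of $\phi$ (i.e.\ which degree of trivialization) your isotopy $\phi_s$ represents, which is exactly the datum controlling whether the construction closes up.

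Second, even with the factors corrected, ``$I$ is determined by the symplectic condition'' conceals the actual difficulty. Requiring $\Phi^*\omega_0=\omega_0$ with the other three coordinate functions prescribed is an overdetermined condition: it forces $\omega_0|_X$ minus the page pullback to be of the form $dI\wedge d(t/a)$ in your $(\sigma,t,r)$ coordinates, and nothing about the Liouville/Reeb coordinates guarantees this, nor that a solution $I$ yields an injective map. The paper does not write down the interior symplectomorphism at all: it builds a model domain $A(a,H)$ from the graph of a Hamiltonian $H$ generating the degree-$0$ lift of the return map, matches $\partial X$ with $\partial A(a,H)$ only along the boundary (using the characteristic foliations), and then invokes the Gromov--McDuff recognition theorem to extend to a symplectomorphism of the interiors -- a genuinely non-explicit, holomorphic-curve step. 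Moreover, for $\partial X$ to be realized as such a graph one needs the degree-$0$ lift to be generated by a Hamiltonian positive in $\interior(\Sigma)$; this can fail, and the paper first enlarges $X$ (Proposition \ref{prop:modify_hypersurface_such_that_return_map_is_generated_by_positive_hamiltonian}) to a domain of the same page area where it holds, then embeds the enlargement into $Z(a)$. Your proposal addresses neither the positivity obstruction nor the need for a non-constructive filling argument, and the ``extend by hand over codimension-two loci'' step is not a substitute for either.
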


The boundary of a general star-shaped domain need not possess a disk-like global surface of section (see van Koert's paper \cite{VK20}). In this case, Theorem \ref{theorem:area_of_surface_of_section_bounds_cylindrical_capacity} is vacuous. However, Theorem \ref{theorem:area_of_surface_of_section_bounds_cylindrical_capacity} is particularly useful when applied to the important class of {\it dynamically convex} domains because for such domains there are general existence theorems for disk-like global surfaces of section due to Hofer-Wysocki-Zehnder \cite{HWZ98}, Hryniewicy-Salom\~{a}o \cite{HS11} and Hryniewicz \cite{Hry14}. Ever since the notion of dynamical convexity was first introduced in \cite{HWZ98}, it has played a significant role in numerous papers on Reeb dynamics (see e.g. \cite{Hry14}, \cite{AM17}, \cite{AM17b}, \cite{GG20}, \cite{Zho20}, \cite{HN16}, just to name a few). We recall the definition of dynamical convexity from \cite{HWZ98}.

\begin{definition}[{\cite[Definition 1.2]{HWZ98}}]
\label{definition:dynamical_convexity}
A contact form $\alpha$ on $S^3$ defining the unique tight contact structure is called {\it dynamically convex} if every periodic Reeb orbit $\gamma$ of $\alpha$ has Conley-Zehnder index $\on{CZ}(\gamma)$ at least $3$. A star-shaped domain $X\subset\BR^4$ is called dynamically convex if the restriction of the standard Liouville $1$-form $\lambda_0$ (see equation \eqref{eq:liouville_vector_field_and_form}) to the boundary $\partial X$ is dynamically convex.
\end{definition}

\begin{remark}
The Conley-Zehnder index $\on{CZ}(\gamma)$ of a periodic Reeb orbit depends on the choice (up to homotopy) of a symplectic trivialization of the contact structure along the orbit. On $S^3$ every contact structure admits a unique global trivialization up to homotopy. This is the trivialization used in Definition \ref{definition:dynamical_convexity}. Let us also point out that usually the Conley-Zehnder index is only defined for non-degenerate orbits. If $\gamma$ is degenerate, then $\on{CZ}(\gamma)$ in Definition \ref{definition:dynamical_convexity} refers to the lower semicontinuous extension of the Conley-Zehnder index.
\end{remark}

It is proved in \cite{HWZ98} that every convex domain $X\subset\BR^4$ whose boundary has positive definite second fundamental form is dynamically convex.\\

Let us introduce the following terminology.
\begin{definition}
A simple closed orbit of a tight Reeb flow on $S^3$ is called a {\it Hopf orbit} if it is unknotted and has self-linking number equal to $-1$ when viewed as a transverse knot.
\end{definition}
The reason for this terminology is that the fibers of the Hopf fibration on $S^3$ are unknotted and have self-linking number $-1$. It is shown by Hofer-Wysocki-Zehnder \cite{HWZ96} that the boundary of every star-shaped domain carries at least one Hopf orbit. Given a star-shaped $X\subset\BR^4$, let us therefore define
\begin{equation}
\label{eq:def_of_ahopf}
\AHopf(X) \coloneqq \inf \left\{ \MA(\gamma) \mid \gamma \enspace \text{is Hopf orbit on} \enspace \partial X \right\} \in (0,\infty).
\end{equation}
The significance of Hopf orbits to our discussion is the following. By work of Hryniewicz-Salom\~{a}o \cite{HS11} and Hryniewicz \cite{Hry14}, a simple periodic orbit of a dynamically convex Reeb flow on $S^3$ bounds a disk-like global surface of section if and only if it is a Hopf orbit.\\

Our main result provides a dynamical characterization of the cylindrical capacity of $4$-dimensional dynamically convex domains. It can be thought of as a generalization of the Gromov non-squeezing theorem from the ball to arbitrary dynamically convex domains.

\begin{theorem}
\label{theorem:a_hopf_equals_cylindrical_capacity_for_convex_domains}
Let $X\subset\BR^4$ be a dynamically convex domain and let $a>0$. Then there exists a symplectic embedding $X\overset{s}{\hookrightarrow} Z(a)$ if and only if $a\geq \AHopf(X)$. In particular, this implies that
\begin{equation*}
\cZ(X) = \AHopf(X).
\end{equation*}
Moreover, the infimum in the definition of $\cZ(X)$ is attained, i.e. there exists an optimal embedding of $X$ into a smallest cylinder.
\end{theorem}

Let us point out that sharp symplectic embedding results such as Theorem \ref{theorem:a_hopf_equals_cylindrical_capacity_for_convex_domains} are rather rare. Moreover, most known results concern highly symmetric toric domains with integrable flows on their boundaries. In contrast to this, the Reeb dynamics of dynamically convex domains can be extremely rich. It is also worth mentioning that while the cylindrical capacity $\cZ$ is apriori rather elusive, the quantity $\operatorname{A_{Hopf}}$ can in principle be computed numerically given an explicit domain.\\

Hryniewicz-Hutchings-Ramos show in \cite{HHR21} that $\AHopf(X)$ agrees with the first embedded contact homology capacity $\cECH{1}(X)$ for all dynamically convex domains $X\subset\BR^4$. We obtain the following corollary.

\begin{cor}
For all dynamically convex domains $X\subset\BR^4$ we have
\begin{equation*}
\cZ(X) = \cECH{1}(X).
\end{equation*}
\end{cor}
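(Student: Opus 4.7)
The plan is to simply chain two equalities. The corollary follows formally from combining Theorem \ref{theorem:a_hopf_equals_cylindrical_capacity_for_convex_domains}, which has just been established and gives $\cZ(X) = \AHopf(X)$ for every dynamically convex domain $X \subset \BR^4$, with the result of Hryniewicz-Hutchings-Ramos from \cite{HHR21} cited in the paragraph immediately preceding the corollary, which asserts that $\AHopf(X) = \cECH{1}(X)$ for every such $X$. Transitivity of equality then yields $\cZ(X) = \cECH{1}(X)$.

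Concretely, I would write: by Theorem \ref{theorem:a_hopf_equals_cylindrical_capacity_for_convex_domains}, $\cZ(X) = \AHopf(X)$; by \cite{HHR21}, $\AHopf(X) = \cECH{1}(X)$; combining these gives the result. No additional argument is required.

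Since both inputs are taken as given, there is essentially no obstacle at this stage. The genuine technical content lies entirely upstream: the nontrivial half of the first equality comes from Theorem \ref{theorem:area_of_surface_of_section_bounds_cylindrical_capacity}, which constructs an explicit symplectic embedding $X \overset{s}{\hookrightarrow} Z(\AHopf(X))$ out of a $\partial$-strong disk-like global surface of section (provided by Theorem \ref{theorem:hopf_iff_boundary_of_disk_gss} together with Remark \ref{remark:hopf_iff_boundary_of_disk_gss} and Proposition \ref{prop:ahopf_attained_dyn_cvx}), complemented by the lower bound $\AHopf(X) \leq \cZ(X)$ of Hofer-Wysocki-Zehnder (Theorem \ref{theorem:ahopf_bounded_by_cylindrical_capacity}); the second equality encodes the identification in \cite{HHR21} of $\cECH{1}$ with the minimal action of a Hopf orbit in the dynamically convex setting. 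All of this is already in place, so the corollary itself is a one-line consequence.
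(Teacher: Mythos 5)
Your proposal is correct and coincides with the paper's (implicit) argument: the corollary is stated immediately after the observation that \cite{HHR21} gives $\AHopf(X)=\cECH{1}(X)$ for dynamically convex domains, and it follows by chaining this with Theorem \ref{theorem:a_hopf_equals_cylindrical_capacity_for_convex_domains} exactly as you describe. Nothing further is needed.
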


\subsection{The local strong Viterbo conjecture}

Abbondandolo-Bramham-Hryniewicz-Salom\~{a}o \cite{ABHS18} proved that for all domains $X\subset\BR^{4}$ whose boundary $\partial X$ is smooth and sufficiently close to the unit sphere with respect to the $C^3$-topology, the minimal action $\Amin(X)$ satisfies inequality \eqref{eq:weak_viterbo_conjecture}. This result was generalized to small perturbations of more general $3$-dimensional Zoll Reeb flows by Benedetti-Kang \cite{BK21} and to arbitrary dimension by Abbondandolo-Benedetti in \cite{AB20}. A consequence of these works is that the capacities $\cHZ$, $\cSH$, $\cEH{1}$ and $\cCH{1}$ satisfy Conjecture \ref{conjecture:weak_viterbo_conjecture} in a $C^3$-neighbourhood of the round ball. Our second main result significantly strengthens this in the $4$-dimensional case. We prove the full strong Viterbo conjecture (Conjecture \ref{conjecture:strong_viterbo_conjecture}) near the ball.

\begin{theorem}
\label{theorem:strong_viterbo_near_round_ball}
Let $X\subset\BR^4$ be a convex domain. If $\partial X$ is sufficiently close to the unit sphere $S^3\subset\BR^4$ with respect to the $C^3$-topology, then all normalized symplectic capacities agree on $X$.
\end{theorem}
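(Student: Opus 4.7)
The plan is to sandwich every normalized capacity between $\Amin(X)$ on both sides, using the paper's two major inputs together with the identifications (1.2). Every normalized capacity $c$ satisfies $\cG(X) \le c(X) \le \cZ(X)$, and (1.2) gives $\cHZ(X) = \cSH(X) = \cEH{1}(X) = \cCH{1}(X) = \Amin(X)$ on the smooth convex domain $X$, so it suffices to prove the two equalities
\begin{equation*}
\cZ(X) = \Amin(X) \qquad \text{and} \qquad \cG(X) = \Amin(X).
\end{equation*}
A $C^3$-small perturbation of the unit ball is convex and hence dynamically convex (Hofer--Wysocki--Zehnder), so the paper's main technical theorem is available for such $X$.

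For the upper bound $\cZ(X) = \Amin(X)$, I invoke the paper's structural result: for a dynamically convex star-shaped $X \subset \BR^4$, $\cZ(X)$ equals the infimum of symplectic areas $\omega(\Sigma)$ over disk-like global surfaces of section $\Sigma$ of the Reeb flow on $\partial X$. Any such $\Sigma$ satisfies $\omega(\Sigma) = \MA(\partial\Sigma) \ge \Amin(X)$ because $\partial\Sigma$ is a closed Reeb orbit. In the reverse direction, the theorem of Abbondandolo--Bramham--Hryniewicz--Salom\~{a}o produces, for $\partial X$ sufficiently $C^3$-close to $S^3$, a disk-like global surface of section whose boundary is the action-minimizing Reeb orbit, hence of symplectic area exactly $\Amin(X)$. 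This realizes the infimum and yields $\cZ(X) = \Amin(X)$.

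The main remaining step is the lower bound $\cG(X) = \Amin(X)$, which demands a symplectic embedding $B(\Amin(X)) \hookrightarrow X$. I would begin with the disk-like surface of section $\Sigma$ supplied by ABHS, of area $\Amin(X)$ and boundary the action-minimizing orbit $\gamma$, and exploit its area-preserving first-return map $\psi \colon \mathring\Sigma \to \mathring\Sigma$, which for $\partial X$ $C^3$-close to $S^3$ is $C^1$-close to the Hopf return map. Filling $\Sigma$ by a symplectic disk $D \subset X$ bounding $\gamma$ and applying a Moser/Weinstein-type normal form along $\gamma \cup D$, one should identify a neighborhood in $X$ with a ball of capacity precisely $\Amin(X)$, producing the desired embedding; the matching inequality $\cG(X) \le \cZ(X) = \Amin(X)$ is automatic. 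The main obstacle is precisely this construction: a $C^3$-perturbative comparison with $B(\pi)$ yields only an approximate inequality $\cG(X) \approx \Amin(X)$, whereas the theorem requires an \emph{exact} symplectic ball of capacity $\Amin(X)$. Extracting such a ball requires genuinely four-dimensional embedding information tied to the surface of section itself rather than merely to the closeness of $\partial X$ to $S^3$, and is where the heart of the proof must lie.
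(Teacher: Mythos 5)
Your reduction to the two endpoint capacities is the right first move, and your treatment of the upper bound $\cZ(X)\leq\Amin(X)$ is broadly consistent with the paper (via Theorem \ref{theorem:area_of_surface_of_section_bounds_cylindrical_capacity} applied to a surface of section bounded by the shortest orbit, which near the round ball is indeed a Hopf orbit). But the proposal has a genuine gap at exactly the point you flag yourself: the construction of a symplectic ball $B(a)\overset{s}{\hookrightarrow}X$ with $a$ equal to the area of the surface of section. The route you sketch --- a Moser/Weinstein normal form along $\gamma\cup D$ --- cannot work as stated: a normal-form neighbourhood of a disk with boundary on $\partial X$ only yields a thin neighbourhood, hence a ball of small capacity, and no perturbative comparison with $B(\pi)$ can be upgraded to the exact inclusion the theorem needs. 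Acknowledging that ``this is where the heart of the proof must lie'' is accurate, but it means the proposal does not contain a proof.

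What the paper actually does there is qualitatively different and genuinely four-dimensional. First, $X$ is identified up to symplectomorphism with a model domain $A(a,H)\subset Z(a)$ built from the graph of a Hamiltonian $H$ generating the degree-$0$ lift $\widetilde{\phi}_0$ of the first return map; this identification uses Proposition \ref{prop:spun_hypersurface_isomorphic_starshaped_hypersurface} together with the Gromov--McDuff recognition theorem (Theorem \ref{theorem:gromov_mcduff_theorem}), not a normal form. Second, the ball inclusion $B(a)\subset A(a,H)$ is reduced to the pointwise inequality $H(t,z)\geq a-\pi|z|^2$, which holds once the degree-$1$ lift $\widetilde{\phi}_1=\widetilde{\rho}^{-1}\circ\widetilde{\phi}_0$ is generated by a nonnegative Hamiltonian $G$ (then $H=K\#G$ with $K=a-\pi|z|^2$). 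Producing such a $G$ is the content of Theorem \ref{theorem:positivity_criterion_for_radially_monotone_diffeomorphisms} and Corollary \ref{cor:positivity_criterion_for_diffeomorphisms_close_to_the_identity}: for $\widetilde{\phi}_1$ that is $C^1$-close to the identity with all fixed-point actions positive (positivity coming from $\MA(\partial\Sigma)=\Amin$ and Lemma \ref{lem:action_equals_first_return_time}), a generalized generating function $W$ is nonnegative at its critical points, the linear interpolation $t\cdot W$ produces an isotopy, and the generating Hamiltonian is shown to be nonnegative because its critical values coincide with those of $W$. None of this machinery --- the generating-function positivity criterion, the model domain $A(a,H)$, or the Gromov--McDuff step --- appears in your outline, so the lower bound $\cG(X)=\cZ(X)$ remains unproved. (There is also a technical layer you omit: the paper must first perturb the contact form, via Proposition \ref{prop:contact_forms_in_C_3_neighbourhood_may_be_approx_by_forms_satisfying_criterion}, so that the return map is conjugate to a rotation near the boundary, and then conclude for $X$ itself by $C^0$-continuity of capacities.)
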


\subsection{Systoles and Hopf orbits}

The following question was first raised by Hofer-Wysocki-Zehnder in \cite{HWZ98}.

\begin{question}
Let $X\subset \BR^4$ be a (dynamically) convex domain. Must a systole of $X$, i.e. a Reeb orbit on $\partial X$ of least action, be a Hopf orbit and therefore bound a disk-like global surface of section?
\end{question}

This question is particularly interesting in view of Theorem \ref{theorem:a_hopf_equals_cylindrical_capacity_for_convex_domains}. An affirmative answer would imply that $\operatorname{A_{min}}(X) = \cZ(X)$ for all (dynamically) convex domains. This would force all normalized capacities which are bounded from below by $\operatorname{A_{min}}(X)$ to be equal to the cylindrical capacity. The number of known normalized capacities would be cut down to just two: the Gromov width and the cylindrical capacity.\\

Equality of $\Amin$ and $\AHopf$ was proved by Hainz \cite{Hai07} (see also \cite{HH11}) under certain curvature assumptions.

\begin{theorem}[Hainz]
\label{theorem:unknottedness_of_index_tree_orbits}
Let $X\subset\BR^4$ be a strictly convex domain. Assume that the principal curvatures $a\geq b\geq c$ of the boundary $\partial X$ satisfy the pointwise pinching condition $a\leq b+c$. Then any periodic Reeb orbit $\gamma$ on $\partial X$ of Conley-Zehnder index $3$ is a Hopf orbit.
\end{theorem}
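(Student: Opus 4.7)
My plan is to exploit the curvature pinching $a \leq b + c$ to derive tight control on the winding behavior of the linearized Reeb flow along $\gamma$, and then convert this information into a statement about the self-linking number of $\gamma$ as a transverse knot in $(S^3,\xi_{\operatorname{std}})$. The Bennequin inequality $\operatorname{sl}(K) \leq -1 - 2 g(K)$ for transverse knots then forces unknottedness as soon as $\operatorname{sl}(\gamma) = -1$, giving the Hopf orbit conclusion.

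First, I would rewrite the pinching as a pointwise symplectic comparison estimate for the path of linearized Poincar\'e maps along $\gamma$. The shape operator of $\partial X$ decomposes with respect to the splitting $T\partial X = \operatorname{span}(R) \oplus \xi$, where $\xi = \ker(\lambda_0|_{\partial X})$, and the principal curvatures $a \geq b \geq c$ are its eigenvalues. Coupling the pinching with the Jacobi-type equation that governs the linearized Reeb flow on a strictly convex hypersurface, and working in the global trivialization of $\xi$ induced by $J_0|_\xi$ (the trivialization used throughout \cite{HWZ98} and in Definition \ref{definition:dynamical_convexity}), the condition $a \leq b + c$ yields a Sturm-type bound on the winding of the linearized flow. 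The content of the bound is that the curvature in the Reeb direction is dominated by the transverse curvatures, and hence cannot generate more rotation than the transverse Jacobi equation allows.

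Second, I would combine the winding bound from Step 1 with the Hofer-Wysocki-Zehnder description of the Conley-Zehnder index in terms of winding intervals of the linearized flow. The assumption $\operatorname{CZ}(\gamma) = 3$---the minimal index permitted by dynamical convexity---pins the winding interval tightly enough that a non-vanishing section of $\xi$ along $\gamma$ defines a transverse pushoff whose linking number with $\gamma$ relative to any Seifert surface equals $-1$. Since the standard trivialization of $\xi$ on $S^3$ extends across an embedded disk, this reads as $\operatorname{sl}(\gamma) = -1$. The Bennequin inequality in $(S^3,\xi_{\operatorname{std}})$ then forces the Seifert genus of $\gamma$ to vanish, so $\gamma$ is unknotted and, combined with $\operatorname{sl}(\gamma) = -1$, is a Hopf orbit.

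The hard part will be Step 1. The pinching inequality only constrains the quadratic form appearing in the Jacobi equation; it does not directly control the branching between elliptic and hyperbolic behavior of the symplectic path, where extra winding could in principle accumulate and disrupt the matching of index and self-linking. Ruling this out requires a Sturm comparison sensitive to the full structure of the shape operator, and this is where the strict convexity assumption together with the pointwise nature of the pinching enter essentially. The round sphere---where $a = b = c$, every orbit has $\operatorname{CZ} = 3$, and every orbit is a Hopf orbit---provides the sharp equality case against which the comparison must be calibrated.
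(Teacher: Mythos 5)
This theorem is quoted in the paper from Hainz \cite{Hai07} (see also \cite{HH11}) without proof, so there is no in-paper argument to compare against; I can only assess your proposal on its own terms, and it has a fatal gap in the final step.

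The Bennequin inequality is misstated, and the mistake is exactly where your argument needs to do work. For a transverse knot $K$ in $(S^3,\xi_{\operatorname{std}})$ with Seifert surface $\Sigma$ the inequality reads $\operatorname{sl}(K)\leq -\chi(\Sigma)=2g(\Sigma)-1$, not $\operatorname{sl}(K)\leq -1-2g(K)$. With the correct inequality, $\operatorname{sl}(\gamma)=-1$ only gives $g\geq 0$, which is vacuous; it does not force unknottedness. Indeed, self-linking drops by $2$ under transverse stabilization, so essentially every knot type admits transverse representatives with $\operatorname{sl}=-1$ (e.g.\ a stabilized transverse trefoil). So even granting Steps 1 and 2 in full, Step 3 cannot close the argument: unknottedness is not a consequence of the self-linking number. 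A related problem already infects Step 2: the identification of the winding of the linearized flow (computed in the global trivialization of $\xi$) with the self-linking number requires comparing the global trivialization with the Seifert framing, and for a knotted orbit this comparison involves precisely the genus/writhe data you are trying to control. You cannot first compute $\operatorname{sl}(\gamma)=-1$ in the global trivialization and then hope to deduce topological information about $\gamma$.

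The missing idea is that unknottedness of low-index orbits in this setting is obtained from pseudoholomorphic curves, not from knot-theoretic inequalities. In the line of work of Hofer--Wysocki--Zehnder \cite{HWZ96}, Hryniewicz \cite{Hry14} and Hainz \cite{Hai07}, one shows that the orbit $\gamma$ is the asymptotic limit of an embedded fast finite-energy plane (equivalently, that it bounds a disk-like global surface of section, cf.\ Theorem \ref{theorem:hopf_iff_boundary_of_disk_gss}); the embedded filling disk simultaneously exhibits $\gamma$ as unknotted and computes $\operatorname{sl}(\gamma)=-1$. The role of the pinching $a\leq b+c$ is the one you correctly identify in Step 1 --- it gives Sturm-type control on the rotation numbers of $\gamma$ \emph{and of all its iterates} --- but this control is then fed into the Fredholm and intersection theory needed to produce and control the filling curve, not into a Bennequin-type estimate. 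If you want to salvage the proposal, Step 1 should be kept and Steps 2--3 replaced by the construction of such a filling.
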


It follows from Ekeland's book \cite{Ek90} (see in particular Theorem 3 and Proposition 9 in chapter V) that for convex domains $X$ with strictly positively curved boundary a Reeb orbit of minimal action has Conley-Zehnder index $3$. Thus we have $\Amin(X)=\AHopf(X)$ if $X$ satisfies the curvature assumptions in Theorem \ref{theorem:unknottedness_of_index_tree_orbits}.

\subsection{Overview of the proofs}

Let us explain the main ideas. Consider the unit disk $\BD\subset\BC$ equipped with the standard symplectic form $\omega_0 = dx\wedge dy$. Let
\begin{equation*}
H:\BR/\BZ \times \BD\rightarrow\BR
\end{equation*}
be a $1$-periodic Hamiltonian vanishing on the boundary $\partial\BD$. Consider {\it time-energy extended phase space}
\begin{equation*}
\widetilde{\BD} \coloneqq \BR_s \times (\BR/\BZ)_t \times \BD
\end{equation*}
equipped with the symplectic form
\begin{equation*}
 \widetilde{\omega_0} = ds \wedge dt + \omega_0.
\end{equation*}
Let
\begin{equation*}
\Gamma(H)\coloneqq \{(H(t,z),t,z)\in\widetilde{\BD}\mid (t,z)\in\BR/\BZ\times\BD\}
\end{equation*}
be the graph of $H$. This is a hypersurface in $\widetilde{\BD}$ of codimension $1$. Hence the symplectic form $\widetilde{\omega_0}$ induces a characteristic foliation on $\Gamma(H)$. It is an easy computation (see Lemma \ref{lem:characteristic_foliation_on_graph}) that the vector field
\begin{equation*}
R\coloneqq X_{H_t}(z) + \partial_t + \partial_tH(z,t)\cdot \partial_s
\end{equation*}
is tangent to the characteristic foliation on $\Gamma(H)$. Observe that the projection of the flow of $R$ to the disk $\BD$ agrees with the Hamiltonian flow $\phi_H^t$ on $\BD$ induced by $H$. In particular, we see that the image of the map
\begin{equation*}
f:\BD\rightarrow\Gamma(H)\qquad z\mapsto (H(0,z),0,z)
\end{equation*}
is a disk-like surface of section of the flow on $\Gamma(H)$ and that the first return map is given by $\phi_H^1$.\\

\noindent{\bf Main construction.} Assume that the Hamiltonian $H$ is strictly positive in the interior $\interior(\BD)$ of the disk and vanishes on the boundary $\partial\BD$. We abbreviate
\begin{equation*}
\widetilde{\BD}_+\coloneqq \BR_{\geq 0}\times\BR/\BZ\times\BD\qquad\text{and}\qquad\widetilde{\BD}_0\coloneqq \{0\}\times\BR/\BZ\times\BD.
\end{equation*}
Consider the map
\begin{equation*}
\Phi: \widetilde{\BD}_+ \rightarrow \BC^2
\qquad \Phi(s,t,z) \coloneqq \left(z\enspace,\enspace\sqrt{\frac{s}{\pi}}\cdot e^{2\pi i t}\right).
\end{equation*}
Note that the image of $\Phi$ is precisely the cylinder $Z(\pi)$. We observe that $\Phi$ restricts to a symplectomorphism
\begin{equation*}
\Phi : (\widetilde{\BD}_+\setminus \widetilde{\BD}_0,\widetilde{\omega_0}) \rightarrow (Z(\pi)\setminus (\BD\times\{0\}),\omega_0).
\end{equation*}
The image $\Phi(\Gamma(H))\subset\BC^2$ is a smooth hypersurface away from the circle $\partial\BD\times\{0\}$. Under suitable assumptions on the boundary behaviour of $H$, it is smooth everywhere. In order to keep the introduction simple, let us ignore this issue for now. Let $A(H)$ denote the domain bounded by $\Phi(\Gamma(H))$. Since $\Phi$ restricts to a symplectomorphism on $\widetilde{\BD}_+\setminus\widetilde{\BD}_0$, it maps the characteristic foliation on $\Gamma(H)$ to the characteristic foliation on $\partial A(H)$. Thus $\Phi\circ f$ parametrizes a disk-like surface of section of the characteristic foliation on $\partial A(H)$. The first return map is given by $\phi_H^1$. Note that $\partial A(H)$ need not be star-shaped or even of contact type.\\

\noindent{\bf Embeddings into the cylinder.} Now suppose that $X\subset\BC^2$ is a star-shaped domain and that the boundary $\partial X$ admits a disk-like surface of section $\Sigma\subset\partial X$. After scaling, we can always assume that the symplectic area of $\Sigma$ is equal to $\pi$. Suppose that $g:\BD\rightarrow\Sigma$ is a parametrization of $\Sigma$ such that $g^*\omega_0=\omega_0$. Here $\omega_0$ denotes the standard symplectic form on both $\BC^2$ and $\BC$. Let $\phi\in\Ham(\BD,\omega_0)$ be the first return map (see equation \eqref{eq:first_return_map}). In section \ref{subsection:disk_like_global_surfaces_of_section}, we explain how to lift $\phi$ to an element $\widetilde{\phi}\in\widetilde{\Ham}(\BD,\omega_0)$ of the universal cover. Such a lift depends on a choice of trivialization which, roughly speaking, is an identification of $\partial X\setminus\partial\Sigma$ with the solid torus. In section \ref{subsection:disk_like_global_surfaces_of_section} we classify isotopy classes of such trivializations via an integer-valued function called degree. Let $\widetilde{\phi}$ denote the lift of $\phi$ with respect to a trivialization of degree $0$. Suppose that $\widetilde{\phi}$ can be generated by a $1$-periodic Hamiltonian $H$ which vanishes on the boundary and is strictly positive in the interior. The above discussion shows that $\partial X$ and $\partial A(H)$ admit disk-like surfaces of section whose first return maps agree. In fact, one can show more: The lifts of the first return maps to $\widetilde{\Ham}(\BD,\omega_0)$ (with respect to trivializations of degree $0$) agree as well. We use a well-known result of Gromov and McDuff (Theorem \ref{theorem:gromov_mcduff_theorem}) to show that this in fact implies that the domains $X$ and $A(H)$ must be symplectomorphic. By definition, $A(H)$ is contained in the cylinder $Z(\pi)$. Therefore, we obtain an embedding of $X$ into the cylinder $Z(\pi)$. We make these arguments precise in Theorem \ref{theorem:embedding_result}. Unfortunately, we do not know whether the lift of the first return map of a disk-like surface of section with respect to a trivialization of degree $0$ can always be generated by a Hamiltonian which vanishes on the boundary and is strictly positive in the interior. In order to resolve this issue, let us observe that if $H$ and $G$ are Hamiltonians vanishing on the boundary $\partial\BD$ and strictly positive in the interior $\interior(\BD)$, then the inequality $H\leq G$ implies the inclusion $A(H)\subset A(G)$. Roughly speaking, this says that we can increase the Hamiltonian generating the first return map by making the domain bigger. More precisely, in Proposition \ref{prop:modify_hypersurface_such_that_return_map_is_generated_by_positive_hamiltonian} we prove that any star-shaped domain with a disk-like surface of section in its boundary can be symplectically embedded into a bigger star-shaped domain whose boundary admits a disk-like surface of section of the same area and with the property that the degree $0$ lift of the first return map can be generated by a positive Hamiltonian. Theorem \ref{theorem:area_of_surface_of_section_bounds_cylindrical_capacity} is an easy consequence of Theorem \ref{theorem:embedding_result} and Proposition \ref{prop:modify_hypersurface_such_that_return_map_is_generated_by_positive_hamiltonian}.\\

\noindent{\bf Ball embeddings.} Suppose that the degree $0$ lift of the first return map of the surface of section $\Sigma\subset\partial X$ can be generated by a Hamiltonian $H$ which satisfies the inequality
\begin{equation}
\label{eq:lower_bound_hamiltonian_in_introduction}
H(t,z)\geq \pi(1-|z|^2).
\end{equation}
Then the domain $A(H)$ is squeezed between the ball $B(\pi)$ and the cylinder $Z(\pi)$, i.e.
\begin{equation*}
B(\pi)\subset A(H)\subset Z(\pi).
\end{equation*}
Since $X$ is symplectomorphic to $A(H)$, this implies that $\cG(X)=\cZ(X)$. The strategy of the proof of Theorem \ref{theorem:strong_viterbo_near_round_ball} is to show that if $\partial X$ is sufficiently close to the round sphere, then the shortest Reeb orbit on $\partial X$ must bound a disk-like surface of section with the property that the degree $0$ lift of the first return map can be generated by a Hamiltonian satisfying \eqref{eq:lower_bound_hamiltonian_in_introduction}. This is the subject of section \ref{section:a_positivity_criterion_for_hamiltonian_diffeomorphisms} and makes use of generalized generating functions as introduced in \cite{ABHS18}. Let us sketch the main ideas in a special case. We assume that $g:\BD\rightarrow\Sigma\subset \partial X$ parametrizes a surface of section whose boundary orbit $\partial\Sigma$ has minimal action among all closed Reeb orbits on $\partial X$. Moreover, assume that $g^*\omega_0=\omega_0$. Let $\widetilde{\phi}\in\widetilde{\Ham}(\BD,\omega_0)$ be the degree $0$ lift of the first return map $\phi$ to the universal cover. The periodic orbits on $\partial X$ different from the boundary orbit $\partial\Sigma$ correspond to the periodic points of $\phi$. As explained in section \ref{subsection:area_preserving_maps_of_the_disk}, any fixed point $p$ of $\phi$ has a well-defined action $\sigma_{\widetilde{\phi}}(p)$ depending on the lift $\widetilde{\phi}$ to the universal cover. Lifts with respect to a trivialization of degree $0$ have the property that $\sigma_{\widetilde{\phi}}(p)$ is equal to the action of the corresponding closed Reeb orbit on $\partial X$ (see Lemma \ref{lem:action_equals_first_return_time}). Since $\partial\Sigma$ is assumed to have minimal action, this implies that
\begin{equation}
\label{eq:action_inequality_in_overview_of_proofs}
\sigma_{\widetilde{\phi}}(p)\geq \MA(\partial\Sigma)=\pi
\end{equation}
for all fixed points $p$ of $\phi$. If $\partial X$ is the unit sphere $S^3$, then the degree $0$ lift of the first return map $\widetilde{\phi}$ is equal to the counter-clockwise rotation by angle $2\pi$. Let us denote this rotation by $\widetilde{\rho}\in\widetilde{\Ham}(\BD,\omega_0)$. If $\partial X$ is sufficiently close to $S^3$ with respect to the $C^3$-topology, then $\widetilde{\phi}$ is $C^1$-close to $\widetilde{\rho}$. This is proved in \cite{ABHS18} and explained in section \ref{section:from_reeb_flows_to_disk_like_surfaces_of_section_and_approximation_results}. In order to simplify the discussion, let us assume that $\widetilde{\phi}$ is actually equal to $\widetilde{\rho}$ in a small neighbourhood of the boundary $\partial\BD$. Therefore, we can regard
\begin{equation*}
\psi\coloneqq \widetilde{\rho}^{-1}\circ\widetilde{\phi}
\end{equation*}
as an element of $\Ham_c(\BD,\omega_0)$, the group of Hamiltonian diffeomorphisms compactly supported in the interior $\interior(\BD)$. It is $C^1$-close to the identity and it follows from \eqref{eq:action_inequality_in_overview_of_proofs} that the action $\sigma_\psi(p)$ is non-negative for all fixed points $p$. The following result is a special case of Corollary \ref{cor:positivity_criterion_for_diffeomorphisms_close_to_the_identity} in section \ref{section:a_positivity_criterion_for_hamiltonian_diffeomorphisms}.

\begin{prop}[Special case of Corollary \ref{cor:positivity_criterion_for_diffeomorphisms_close_to_the_identity}]
\label{prop:special_case_of_corollary_positivity_criterion}
Let $\psi\in\Ham_c(\BD,\omega_0)$ be a Hamiltonian diffeomorphism compactly supported in the interior $\interior(\BD)$. Suppose that all fixed points of $\psi$ have non-negative action and that $\psi$ is close to the identity with respect to the $C^1$-topology. Then $\psi$ can be generated by a non-negative Hamiltonian $H$ with support contained in $\interior(\BD)$.
\end{prop}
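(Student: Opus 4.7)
The plan is to use a generating function representation of $\psi$. Since $\psi \in \Ham_c(\BD, \omega_0)$ is compactly supported in $\interior(\BD)$ and $C^1$-close to the identity, one can attach to it a (generalized) generating function $F : \BD \to \BR$, modeled for instance on the classical midpoint generating function in coordinates $(u,v) = ((x+X)/2, (y+Y)/2)$ with $(X,Y) = \psi(x,y)$, or on the variant introduced in \cite{ABHS18}. By construction $F$ will have three crucial properties: (i) the critical points of $F$ are in natural bijection with the fixed points of $\psi$; (ii) the critical value $F(p)$ equals the action $\sigma_\psi(p)$ of the corresponding fixed point; and (iii) $F$ vanishes outside the support of $\psi - \identity$, hence is compactly supported in $\interior(\BD)$.

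From these properties I obtain the key structural observation: $F \geq 0$ pointwise. Indeed, the hypothesis on actions together with (i) and (ii) implies that every critical value of $F$ is non-negative, while (iii) ensures that the global minimum of $F$ is attained either at an interior critical point, where $F \geq 0$ by hypothesis, or at a point outside the support of $F$, where $F = 0$. Either way, $F \geq 0$.

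It remains to manufacture a non-negative Hamiltonian generating $\psi$. The natural construction is the Hamiltonian path $\psi_s$, $s \in [0,1]$, whose generating function at time $s$ is $sF$. For $\psi$ sufficiently $C^1$-close to the identity, $sF$ is small enough that this path is well-defined, connects the identity to $\psi$, and stays compactly supported in $\interior(\BD)$ throughout. A direct computation, based on implicit differentiation of the formulas defining the generating function, should express the corresponding time-dependent Hamiltonian $H_s(u,v)$ pointwise as a value of $F$ at a nearby point, modulo a smooth $s$-dependent reparametrization; consequently $H_s \geq 0$ follows from $F \geq 0$, and $H_s$ inherits the compact support of $F$. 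The principal technical obstacle is making this pointwise representation of $H_s$ precise and uniform in $s$, and in particular verifying that the required smallness hypothesis can be controlled purely by the $C^1$-distance from $\psi$ to the identity; this is precisely where the generalized generating functions of \cite{ABHS18} play their essential role.
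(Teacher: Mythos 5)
Your proposal is correct and follows the same strategy as the paper: represent $\psi$ by a compactly supported generating function $W$, observe that the hypotheses force $W\geq 0$ (its minimum is attained either off its support or at a critical point, i.e.\ a fixed point, where $W$ equals the action), and then generate $\psi$ by the isotopy $\psi_s$ associated to the interpolated generating functions $sW$. The only place you diverge is the final step, where you deduce $H_s\geq 0$ from a pointwise Hamilton--Jacobi identity rather than from the critical values of $H_s$. Your route does work, and the ``principal technical obstacle'' you flag is surmountable: for the generating function of type $W(X,y)$ one checks by implicit differentiation of the defining relations that the compactly supported Hamiltonian generating $(\psi_s)_s$ satisfies $H_s(X_s,Y_s)=\partial_s\bigl(sW\bigr)(X_s,y)=W(X_s,y)$, so $H_s$ is $W$ precomposed with a diffeomorphism and is therefore non-negative with support contained in $\interior(\BD)$; the required smallness is indeed controlled by the $C^1$-distance of $\psi$ to the identity, which is what guarantees that $(X,y)$ are global coordinates on the graph of each $\psi_s$. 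The paper instead avoids this identity: it shows that for each $s$ the critical points of $H_s$ coincide with the critical points of $W$ (namely the fixed points, which are stationary under the entire isotopy), that $H_s=W$ on this set, and then reruns the minimum argument for $H_s$ itself. Both arguments are valid in this special case; the paper's version is the one that survives the passage to the generalized generating functions on the strip needed for the full Corollary \ref{cor:positivity_criterion_for_diffeomorphisms_close_to_the_identity}, where the primitive $\Xi$ is nonstandard and a clean pointwise Hamilton--Jacobi formula is less readily available.
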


We apply Proposition \ref{prop:special_case_of_corollary_positivity_criterion} to the Hamiltonian diffeomorphism $\psi=\widetilde{\rho}^{-1}\circ\widetilde{\phi}$. Let $G$ denote the resulting Hamiltonian. We may assume that $G_t$ vanishes for time $t$ close to $0$ or $1$. Let us define the Hamiltonian $K$ by the formula
\begin{equation*}
K(t,z)\coloneqq \pi(1-|z|^2).
\end{equation*}
This Hamiltonian generates the rotation $\widetilde{\rho}$. Now set
\begin{equation*}
H_t\coloneqq (K\#G)_t\coloneqq K_t + G_t\circ(\phi_K^t)^{-1}.
\end{equation*}
This defines a $1$-periodic Hamiltonian. Its time-$1$-flow represents $\widetilde{\phi}$. Since $G$ is non-negative, $H$ satisfies inequality \eqref{eq:lower_bound_hamiltonian_in_introduction}. As explained above, this implies that $B(\pi)\subset A(H)\subset Z(\pi)$ and hence $\cG(X)=\cZ(X)$.\\

\noindent{\bf Existence of non-negative Hamiltonians.} Let us sketch the proof of Proposition \ref{prop:special_case_of_corollary_positivity_criterion}. It follows the same basic idea as the proof of Corollary \ref{cor:positivity_criterion_for_diffeomorphisms_close_to_the_identity}. The advantage of our simplified setting here is that we can work with standard generating functions (see e.g. chapter 9 in \cite{MS17}) and do not have to appeal to the generalized ones from \cite{ABHS18}. Let $\psi=(X,Y)$ denote the components of $\psi$. There exists a unique generating function $W:\BD\rightarrow\BR$, compactly supported in $\interior(\BD)$, such that
\begin{equation*}
\begin{cases}
X-x = \enspace\partial_2W(X,y)\\
Y-y = -\partial_1W(X,y)
\end{cases}
\end{equation*}
The fixed points of $\psi$ are precisely the critical points of $W$. Moreover, the action of a fixed point is equal to the value of $W$ at the fixed point. Since all fixed points are assumed to have non-negative action, this implies that $W$ takes non-negative values at all its critical points. In particular, this implies that $W$ is non-negative. For $t\in [0,1]$, let us define the generating function $W_t\coloneqq t\cdot W$. Let $\psi_t$ denote the compactly supported symplectomorphism generated by $W_t$. This defines an arc in $\Ham_c(\BD,\omega_0)$ from the identity to $\psi$. Let $H$ be the unique compactly supported Hamiltonian generating the arc $\psi_t$. Our goal is to show that $H$ is non-negative. A direct computation shows that $H_0$, the Hamiltonian $H$ at time $0$, is equal to $W$ and in particular non-negative. The Hamiltonian $H$ need not be autonomous. However, the following is true. For every fixed $t\in[0,1]$, the set of critical points of $H_t$ is equal to the set of critical points of $W$. Moreover, $W$ and $H_t$ agree on this set. Hence $H_t$ takes non-negative values on its critical points. Therefore, the Hamiltonian $H$ must be non-negative.\\

\noindent{\bf Approximation results.} In general, the first return map of a disk-like surface of section need not be equal to the identity in any neighbourhood of $\partial\BD$. Nevertheless, it will be convenient to assume that the Reeb flow in a small neighbourhood of the boundary orbit $\partial\Sigma$ has a specific simple form. More precisely, we want to assume that the local first return map of a small disk transverse to the orbit $\partial\Sigma$ is smoothly conjugated to a rotation. The main purpose of section \ref{section:from_reeb_flows_to_disk_like_surfaces_of_section_and_approximation_results} is to prove that we may approximate a given contact form with contact forms having this property. This is slightly subtle because we need to keep track of a certain number of higher order derivatives of the Reeb vector field in order to be able to apply the results from section \ref{section:a_positivity_criterion_for_hamiltonian_diffeomorphisms} to the first return map.\\

\noindent{\bf Organization.} The rest of the paper is structured as follows:\\
\indent In \S\ref{section:preliminaries} we review some preliminary material on area preserving disk maps (\S\ref{subsection:area_preserving_maps_of_the_disk}) and global surfaces of section (\S\ref{subsection:disk_like_global_surfaces_of_section}).\\
\indent The main results of section \ref{section:from_disk_like_surfaces_of_section_to_symplectic_embeddings}, namely the embedding result Theorem \ref{theorem:embedding_result} and Proposition \ref{prop:modify_hypersurface_such_that_return_map_is_generated_by_positive_hamiltonian} on modifications of star-shaped domains, are stated in \S\ref{subsection:embedding_results}. The construction of the domain $A(H)$ is explained in \S\ref{subsection:main_construction}. Proofs are given in \S\ref{subsection:proof_of_embedding_result} and \S\ref{subsection:proof_of_modification_result}. Note that the reader only interested in Theorems \ref{theorem:area_of_surface_of_section_bounds_cylindrical_capacity} and \ref{theorem:a_hopf_equals_cylindrical_capacity_for_convex_domains} on the cylindrical embedding capacity and not in the local version of the strong Viterbo conjecture (Theorem \ref{theorem:strong_viterbo_near_round_ball}) may skip \S\ref{section:a_positivity_criterion_for_hamiltonian_diffeomorphisms} and \S\ref{section:from_reeb_flows_to_disk_like_surfaces_of_section_and_approximation_results} and directly move on to \S\ref{section:proofs_of_the_main_results}, where we prove our main results.\\
\indent The main results of \S\ref{section:a_positivity_criterion_for_hamiltonian_diffeomorphisms} are Theorem \ref{theorem:positivity_criterion_for_radially_monotone_diffeomorphisms} and Corollary \ref{cor:positivity_criterion_for_diffeomorphisms_close_to_the_identity} guaranteeing the existence of non-negative Hamiltonians generating certain Hamiltonian diffeomorphisms. They are stated in \S\ref{subsection:statement_of_the_positivity_criterion}. In \S\ref{subsection:lifts_to_the_strip} and \S\ref{subsection:generalized_generating_functions} we review material from \cite{ABHS18} on generalized generating functions. The only result that is not also explicitly explained in \cite{ABHS18} is Proposition \ref{prop:correspondence_symplectomorphisms_generating_functions}. The proofs of the main results of \S\ref{section:a_positivity_criterion_for_hamiltonian_diffeomorphisms} are given in \S\ref{subsection:proof_of_the_positivity_criterion_for_radially_monotone_diffeomorphisms} and \S\ref{subsection:proof_of_the_positivity_criterion_for_diffeomorphisms_close_to_the_identity}.\\
\indent \S\ref{section:from_reeb_flows_to_disk_like_surfaces_of_section_and_approximation_results} is slightly technical in nature. The main result that is needed outside of this section is Proposition \ref{prop:contact_forms_in_C_3_neighbourhood_may_be_approx_by_forms_satisfying_criterion} on certain approximations of contact forms.\\
\indent In \S\ref{section:proofs_of_the_main_results} we give proofs of the main results of our paper.\\

\noindent{\bf Acknowledgments.} We are deeply indepted Umberto Hryniewicz, whose talk on \cite{HHR21} inspired this paper. We also thank Michael Hutchings for his suggestion to prove the strong Viterbo conjecture near the round ball and Julian Chaidez for countless stimulating discussions.

\section{Preliminaries}
\label{section:preliminaries}

\subsection{Area preserving maps of the disk}
\label{subsection:area_preserving_maps_of_the_disk}

In this section, we recall some basic concepts and results concerning area preserving diffeomorphisms of the disk. Most of the material is taken from Abbondandolo-Bramham-Hryniewicz-Salom\~{a}o \cite[sections 2.1 and 2.2]{ABHS18}. Let $\omega$ be a smooth $2$-form on the closed unit disk $\BD\subset \BC$. We assume that $\omega$ is positive in the interior $\interior(\BD)$. On the boundary, $\omega$ is allowed to vanish. We let $\Diff^+(\BD)$ denote the group of orientation preserving diffeomorphisms of $\BD$. Let
\begin{equation*}
\pi:\widetilde{\Diff}(\BD)\rightarrow \Diff^+(\BD)\qquad \widetilde{\phi}\mapsto\phi
\end{equation*}
be the universal cover. We define $\Diff(\BD,\omega)\subset\Diff^+(\BD)$ to be the subgroup of all diffeomorphisms preserving $\omega$. Let $\widetilde{\Diff}(\BD,\omega)$ denote the preimage of $\Diff(\BD,\omega)$ under the universal covering map $\pi$. If $\omega$ is nowhere vanishing on the boundary $\partial\BD$, then this agrees with the actual universal cover of $\Diff(\BD,\omega)$. However, in general it need not agree with the universal cover (see Remark 2.1 in \cite{ABHS18}). Elements $\widetilde{\phi}\in\widetilde{\Diff}(\BD,\omega)$ can be represented by arcs $(\phi_t)_{t\in [0,1]}$ in $\Diff^+(\BD)$ which start at the identity and end at $\phi_1=\pi(\widetilde{\phi})\in\Diff(\BD,\omega)$. Two such arcs are equivalent in $\widetilde{\Diff}(\BD,\omega)$ if they are isotopic in $\Diff^+(\BD)$ with fixed end points.\\

Consider a primitive $\lambda$ of $\omega$ and an element $\widetilde{\phi} = [(\phi_t)_{t\in [0,1]}]\in\widetilde{\Diff}(\BD,\omega)$. Then there exists a unique smooth function $\sigma_{\widetilde{\phi},\lambda}\in C^\infty(\BD,\BR)$ such that
\begin{equation}
\label{eq:action_on_disk_first_defining_identity}
\phi^*\lambda - \lambda = d\sigma_{\widetilde{\phi},\lambda}
\end{equation}
and
\begin{equation}
\label{eq:action_on_disk_second_defining_identity}
\sigma_{\widetilde{\phi},\lambda}(z) = \int_{\{t\mapsto\phi_t(z)\}}\lambda
\end{equation}
for all $z\in\partial\BD$ (see \cite[section 2.1]{ABHS18}). We call $\sigma_{\widetilde{\phi},\lambda}$ the {\it action} of $\widetilde{\phi}$ with respect to $\lambda$. We recall the following basic result \cite[Lemma 2.2]{ABHS18}.

\begin{lem}
\label{lem:basic_properties_of_action}
Let $\widetilde{\phi},\widetilde{\psi}\in \widetilde{\Diff}(\BD,\omega)$. Let $\lambda$ be a primitive of $\omega$ and let $u$ be a smooth real-valued function on $\BD$. Then:
\begin{enumerate}
\item $\sigma_{\widetilde{\phi},\lambda+du} = \sigma_{\widetilde{\phi},\lambda} + u\circ\phi - u$
\item $\sigma_{\widetilde{\psi}\circ\widetilde{\phi},\lambda} = \sigma_{\widetilde{\psi},\lambda}\circ\phi + \sigma_{\widetilde{\phi},\lambda}$
\item $\sigma_{\widetilde{\phi}^{-1},\lambda} = -\sigma_{\widetilde{\phi},\lambda}\circ\phi^{-1}$
\end{enumerate}
\end{lem}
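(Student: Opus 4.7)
The plan is to invoke the uniqueness part of the definition: according to \eqref{eq:action_on_disk_first_defining_identity}--\eqref{eq:action_on_disk_second_defining_identity}, the action $\sigma_{\widetilde{\chi},\mu}$ of an element $\widetilde{\chi}\in\widetilde{\Diff}(\BD,\omega)$ with respect to a primitive $\mu$ of $\omega$ is the unique smooth function whose differential equals $\chi^{*}\mu-\mu$ and whose restriction to $\partial\BD$ records the $\mu$-integral along a lifted arc $t\mapsto\chi_t(z)$. For each of the three identities I would simply check that the right-hand side satisfies both defining properties, and then conclude by uniqueness.

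For item (1), the differential identity follows from
\begin{equation*}
\phi^{*}(\lambda+du)-(\lambda+du) = (\phi^{*}\lambda-\lambda) + d(u\circ\phi-u) = d\bigl(\sigma_{\widetilde{\phi},\lambda}+u\circ\phi-u\bigr),
\end{equation*}
and the boundary identity from
\begin{equation*}
\int_{\{t\mapsto\phi_t(z)\}}(\lambda+du) = \int_{\{t\mapsto\phi_t(z)\}}\lambda + u(\phi(z))-u(z)
\end{equation*}
for $z\in\partial\BD$. For item (2), writing $\chi=\psi\circ\phi$ I would split
\begin{equation*}
\chi^{*}\lambda-\lambda = \phi^{*}(\psi^{*}\lambda-\lambda) + (\phi^{*}\lambda-\lambda) = \phi^{*}d\sigma_{\widetilde{\psi},\lambda} + d\sigma_{\widetilde{\phi},\lambda} = d\bigl(\sigma_{\widetilde{\psi},\lambda}\circ\phi + \sigma_{\widetilde{\phi},\lambda}\bigr).
\end{equation*}
For the boundary values at $z\in\partial\BD$, I would pick representing arcs $(\phi_t)$ and $(\psi_t)$ of $\widetilde{\phi}$ and $\widetilde{\psi}$, observe that the concatenation of $t\mapsto\phi_t(z)$ with $t\mapsto\psi_t(\phi(z))$ is the arc traced by $z$ under a representative of $\widetilde{\psi}\circ\widetilde{\phi}$, and conclude that its $\lambda$-integral is $\sigma_{\widetilde{\phi},\lambda}(z)+\sigma_{\widetilde{\psi},\lambda}(\phi(z))$.

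Item (3) will then be formal: applying (2) to $\widetilde{\psi}=\widetilde{\phi}^{-1}$ yields $\sigma_{\widetilde{\identity},\lambda} = \sigma_{\widetilde{\phi}^{-1},\lambda}\circ\phi + \sigma_{\widetilde{\phi},\lambda}$, and since the identity element admits a constant representing arc its action vanishes; precomposing with $\phi^{-1}$ then gives the stated formula. I do not anticipate any serious obstacle here: once the uniqueness characterization of $\sigma_{\widetilde{\phi},\lambda}$ is in hand, each claim reduces to a one-line pullback computation combined with the cocycle behaviour of path integrals under concatenation. The only minor care required is in verifying that the concatenated arc in item (2) truly represents the composed element of $\widetilde{\Diff}(\BD,\omega)$, which is immediate from the description of the group operation on $\widetilde{\Diff}(\BD)$ as isotopy classes of arcs in $\Diff^{+}(\BD)$ with fixed endpoints.
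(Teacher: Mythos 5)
Your argument is correct. The paper itself gives no proof of this lemma (it is quoted directly from \cite[Lemma 2.2]{ABHS18}), and your strategy — verify that each right-hand side satisfies the two defining identities \eqref{eq:action_on_disk_first_defining_identity} and \eqref{eq:action_on_disk_second_defining_identity} and conclude by uniqueness, using that $\lambda|_{\partial\BD}$ is closed so the boundary integrals only depend on homotopy classes of paths in $\partial\BD$ — is exactly the standard argument given in that reference.
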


In particular, item (1) in Lemma \ref{lem:basic_properties_of_action} implies that the value $\sigma_{\widetilde{\phi},\lambda}(p)$ at a fixed point $p$ of $\phi$ is independent of the choice of primitive $\lambda$ and we will occasionally denote this value by $\sigma_{\widetilde{\phi}}(p)$.\\

The {\it Calabi invariant} $\Cal(\widetilde{\phi})$ is defined to be the integral
\begin{equation*}
\Cal(\widetilde{\phi}) = \int_\BD \sigma_{\widetilde{\phi},\lambda}\cdot \omega.
\end{equation*}
It follows from item (1) in Lemma \ref{lem:basic_properties_of_action} that this is independent of the choice of primitive $\lambda$ and from item (2) that
\begin{equation*}
\Cal:\widetilde{\Diff}(\BD,\omega)\rightarrow\BR
\end{equation*}
is a group homomorphism.\\

Let $(\phi_t)_{t\in [0,1]}$ be an arc in $\Diff(\BD,\omega)$. Let $X_t$ be the vector field generating $\phi_t$. Since $\phi_t$ preserves $\omega$, the interior product $\iota_{X_t}\omega$ is a closed $1$-form. Since $\BD$ is simply connected, there exists a smooth function $H_t$ on $\BD$, unique up to addition of a constant, such that $dH_t=\iota_{X_t}\omega$. The vector field $X_t$ is tangent to the boundary $\partial\BD$. This implies that $dH_t$ vanishes on tangent vectors of $\partial\BD$. Thus $H_t$ is constant on the boundary. We will always use the normalization $H_t|_{\partial\BD}=0$. This uniquely specifies $H_t$. Conversely, if we are given a family of smooth functions $H_t$ vanishing on the boundary, there exists a unique vector field $X_{H_t}$ in the interior $\interior(\BD)$ satisfying $\iota_{X_{H_t}}\omega = dH_t$. If $\omega$ does not vanish on $\partial\BD$, then $X_{H_t}$ smoothly extends to a vector field on the closed disk which is tangent to the boundary. Note that this is not necessarily true if $\omega$ vanishes on the boundary. So while every arc in $\Diff(\BD,\omega)$ is generated by a family of Hamiltonians vanishing on the boundary $\partial \BD$, not every family of such Hamiltonians generates an arc in $\Diff(\BD,\omega)$. The following result \cite[Proposition 2.6]{ABHS18} expresses the action of $\widetilde{\phi}= [(\phi_t)_{t\in [0,1]}]$ in terms the Hamiltonian $H_t$.
\begin{lem}
\label{lem:action_in_terms_of_hamiltonian}
Suppose that $(\phi_t)_{t\in [0,1]}$ is an arc in $\Diff(\BD,\omega)$ generated by a family of Hamiltonians $H_t$ vanishing on the boundary $\partial \BD$. Let $\widetilde{\phi}\in \widetilde{\Diff}(\BD,\omega)$ be the element represented by the arc $(\phi_t)_{t\in [0,1]}$. Then
\begin{equation*}
\sigma_{\widetilde{\phi},\lambda}(z) = \int_{\{t\mapsto\phi_t(z)\}}\lambda + \int_0^1 H_t(\phi_t(z))dt
\end{equation*}
for all $z\in \BD$.
\end{lem}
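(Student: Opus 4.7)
The strategy is to define the candidate function
\[
f(z) \coloneqq \int_{\{t\mapsto \phi_t(z)\}} \lambda + \int_0^1 H_t(\phi_t(z))\,dt
\]
and verify that it satisfies the two characterizing properties \eqref{eq:action_on_disk_first_defining_identity} and \eqref{eq:action_on_disk_second_defining_identity} of $\sigma_{\widetilde{\phi},\lambda}$. Since $\sigma_{\widetilde{\phi},\lambda}$ is the unique function satisfying both, this will give $f = \sigma_{\widetilde{\phi},\lambda}$.

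The main computation is to check $\phi^*\lambda - \lambda = df$. I would do this by differentiating $\phi_t^*\lambda$ in $t$ using Cartan's formula: because $X_{H_t}$ generates $\phi_t$ and $\iota_{X_{H_t}}\omega = dH_t$, one has
\[
\frac{d}{dt}\phi_t^*\lambda \;=\; \phi_t^*\bigl(\iota_{X_{H_t}}d\lambda + d\,\iota_{X_{H_t}}\lambda\bigr) \;=\; \phi_t^*\,d\bigl(H_t + \lambda(X_{H_t})\bigr) \;=\; d\Bigl[\phi_t^*H_t + (\phi_t^*\lambda)(X_{H_t}\circ \phi_t)\Bigr].
\]
Integrating from $0$ to $1$ and recognizing that $\lambda(X_{H_t})\circ \phi_t$ integrated in $t$ is precisely the line integral $\int_{\{t \mapsto \phi_t(z)\}} \lambda$, one obtains $\phi^*\lambda - \lambda = df$.

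To check the boundary condition \eqref{eq:action_on_disk_second_defining_identity}, note that every $\phi_t$ is a diffeomorphism of $\BD$ and hence preserves $\partial \BD$. For $z\in\partial\BD$ the entire curve $t\mapsto \phi_t(z)$ stays on $\partial\BD$, where $H_t$ vanishes by hypothesis; consequently the second integral in the definition of $f$ vanishes and $f(z) = \int_{\{t \mapsto \phi_t(z)\}}\lambda$, as required.

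The only subtle point, and likely the place to be most careful, is the interaction with the boundary behaviour of $\omega$: if $\omega$ vanishes on $\partial\BD$ then $X_{H_t}$ need not extend to a smooth vector field up to the boundary. However, we are given a priori that $(\phi_t)$ is an arc inside $\Diff(\BD,\omega)$, so each $\phi_t$ is smooth up to the boundary and the pullback $\phi_t^*\lambda$ is smooth on all of $\BD$. Thus the Cartan-formula computation above, which a priori holds in the interior, extends by continuity to the whole disk, and the argument goes through. Uniqueness of $\sigma_{\widetilde{\phi},\lambda}$ (property \eqref{eq:action_on_disk_first_defining_identity} determines it up to a locally constant function, and \eqref{eq:action_on_disk_second_defining_identity} pins down the value on the connected boundary $\partial\BD$) then finishes the proof.
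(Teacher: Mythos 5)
Your proof is correct: the Cartan-formula computation $\tfrac{d}{dt}\phi_t^*\lambda = d\bigl(H_t\circ\phi_t + \lambda(X_{H_t})\circ\phi_t\bigr)$, integrated in $t$, together with the vanishing of $H_t$ on $\partial\BD$ for the boundary normalization, is exactly the standard argument. The paper itself gives no proof here — it cites \cite[Proposition 2.6]{ABHS18} — so there is nothing to compare beyond noting that your verification of the two characterizing identities \eqref{eq:action_on_disk_first_defining_identity} and \eqref{eq:action_on_disk_second_defining_identity} is the expected route, and your remark that smoothness up to $\partial\BD$ is guaranteed because the arc is assumed to lie in $\Diff(\BD,\omega)$ correctly disposes of the only delicate point.
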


\subsection{Global surfaces of section}
\label{subsection:disk_like_global_surfaces_of_section}

Let $Y^3$ be a closed oriented $3$-manifold equipped with a nowhere vanishing vector field $R$. Let $\phi^t$ denote the flow generated by $R$. Let $\Sigma\subset Y$ be an embedded compact surface, possibly with boundary, which we also assume to be embedded. We call $\Sigma$ a {\it global surface of section} for the flow $\phi^t$ if the boundary $\partial\Sigma$ consists of simple periodic orbits of $\phi^t$, the vector field $R$ is transverse to $\interior(\Sigma)$ and every trajectory of $\phi^t$ which is not contained in $\partial\Sigma$ meets $\interior(\Sigma)$ infinitely often forward and backward in time. We will always orient surfaces of section such that $R$ is positively transverse to $\Sigma$, i.e. the orientation of $R$ followed by the orientation of $\Sigma$ agrees with the orientation of $Y$. Consider a boundary orbit $\gamma$ of $\Sigma$. We call $\gamma$ {\it positive} if the orientation of $\gamma$ given by $R$ agrees with the boundary orientation of $\Sigma$ and {\it negative} otherwise. We define the {\it first return time} and {\it first return map} by
\begin{equation}
\label{eq:first_return_time}
\sigma:\interior(\Sigma)\rightarrow\BR_{>0}\qquad \sigma(p)\coloneqq \inf\{t>0\mid \phi^t(p)\in \Sigma\}
\end{equation}
and
\begin{equation}
\label{eq:first_return_map}
\phi:\interior(\Sigma)\rightarrow \interior(\Sigma)\qquad \phi(p)\coloneqq \phi^{\sigma(p)}(p).
\end{equation}
Studying the dynamics of the flow $\phi^t$ is equivalent to studying the discrete dynamics of the diffeomorphism $\phi$. Let $\Sigma'$ be a second global surface of section with the same boundary orbits as $\Sigma$, i.e. $\partial\Sigma'=\partial\Sigma$. Then the respective first return maps $\phi$ and $\phi'$ are smoothly conjugated. To see this, we define a transfer map $\psi:\interior(\Sigma)\rightarrow\interior(\Sigma')$ as follows. Let $z_0\in \interior(\Sigma)$ and let $\tau(z_0)$ denote a real number such that $\phi^{\tau(z_0)}(z_0)\in\interior(\Sigma')$. Then there exists a unique smooth extension of $\tau$ to a real-valued function on $\interior(\Sigma)$ such that $\phi^{\tau(z)}(z)\in\interior(\Sigma')$ for all $z\in\interior(\Sigma)$. We define $\psi(z)\coloneqq \phi^{\tau(z)}(z)$. This is a diffeomorphism. The first return maps of $\Sigma$ and $\Sigma'$ are related via $\phi = \psi^{-1}\circ\phi'\circ\psi$.\\
In general, the first return time $\sigma$ and map $\phi$ need not smoothly extend to the boundary $\partial\Sigma$. In order to describe the boundary behaviour, we recall a blow-up construction due to Fried \cite{Fr82}. Our exposition follows Florio-Hryniewicz \cite{FH21}. We define the vector bundle $\xi\coloneqq TY/\langle R\rangle$ on $Y$, where $\langle R\rangle$ is the subbundle of $TY$ spanned by $R$. Moreover, we define the circle bundle $\BP_+\xi\coloneqq (\xi\setminus 0)/\BR_+$. The linearization of $\phi^t$ induces a lift $d\phi^t$ of the flow $\phi^t$ to $\xi$. This lift $d\phi^t$ descends to the bundle $\BP_+\xi$. Now consider a simple closed orbit $\gamma$ of $\phi^t$. Then the torus $\BT_\gamma\coloneqq \BP_+\xi|_\gamma$ is invariant under the projective linearized flow $d\phi^t$. As a set, the blow-up of $Y$ at $\gamma$ is equal to the disjoint union $\overline{Y}\coloneqq (Y\setminus\gamma) \sqcup \BT_\gamma$. It carries the structure of a compact smooth manifold with boundary $\BT_\gamma$. The natural projection $\pi:\overline{Y}\rightarrow Y$ is smooth. The pullback of the restriction of the vector field $R$ to $Y\setminus\gamma$ is a smooth vector field $\overline{R}$ on the interior of $\overline{Y}$. It smoothly extends to all of $\overline{Y}$ (see e.g. \cite[Lemma A.1]{FH21}). The resulting flow $\overline{\phi}^t$ on $\overline{Y}$ lifts the flow $\phi^t$ and its restriction to the boundary $\BT_\gamma$ agrees with the projective linearized flow $d\phi^t$. Consider a surface of section $\Sigma\subset Y$. Let $\overline{Y}$ be the simultaneous blow-up of $Y$ at all the boundary orbits of $\Sigma$. The surface $\Sigma$ lifts to a properly embedded surface $\overline{\Sigma}\subset\overline{Y}$ with boundary $\partial\overline{\Sigma}$ contained in $\partial\overline{Y}$. We recall the following definition from \cite{FH21}.

\begin{definition}
\label{def:non_degenerate_surface_of_section}
The global surface of section $\Sigma$ is called {\it $\partial$-strong} if the lifted surface $\overline{\Sigma}\subset \overline{Y}$ is a global surface of section for the lifted flow $\overline{\phi}^t$, i.e. if $\overline{R}$ is transverse to $\overline{\Sigma}$ and all trajectories of $\overline{\phi}^t$ meet $\overline{\Sigma}$ infinitely often forward and backward in time.
\end{definition}
Since $\Sigma$ is a surface of section, $\overline{R}$ is clearly transverse to $\overline{\Sigma}$ in the interior of $\overline{Y}$. Moreover, all trajectories in the interior meet $\overline{\Sigma}$ forward and backward in time. Thus the condition for being $\partial$-strong is equivalent to requiring that $\overline{\Sigma}\cap\BT_\gamma$ is a surface of section for the projective linearized flow $d\phi^t$ on $\BT_\gamma$ for all boundary orbits $\gamma$ of $\Sigma$.

\begin{lem}
Suppose that $\Sigma\subset Y$ is a $\partial$-strong global surface of section. Then the first return time extends to a smooth function $\sigma:\Sigma\rightarrow \BR_{>0}$ and the first return map extends to a diffeomorphism $\phi:\Sigma\rightarrow\Sigma$. If $\Sigma'$ is a second $\partial$-strong global surface of section with the same boundary orbits, then any transfer map extends to a diffeomorphism $\psi:\Sigma\rightarrow\Sigma'$.
\end{lem}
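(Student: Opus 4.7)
The plan is to transfer the problem to the blow-up $\overline{Y}$, where the flow $\overline{\phi}^t$ is smooth up to the boundary and $\overline{\Sigma}$ is a compact $\partial$-strong global surface of section. The key preliminary observation is that the restriction $\pi|_{\overline{\Sigma}}:\overline{\Sigma}\to\Sigma$ of the blow-down map is a diffeomorphism. In local coordinates $(t,x,y)$ near a point of a boundary orbit $\gamma$, chosen so that $\gamma$ is the $t$-axis and $\Sigma$ is the half-plane $\{y=0,\,x\geq 0\}$, the blow-up $\overline{Y}$ is locally modelled by $\{(t,r,\theta):r\geq 0\}$ with blow-down $(t,r,\theta)\mapsto(t,r\cos\theta,r\sin\theta)$; the lift $\overline{\Sigma}$ is the slice $\{\theta=0\}$, and $\pi$ restricted to it reads $(t,r)\mapsto(t,r)$, which is manifestly a diffeomorphism onto $\Sigma$.

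Granted this identification, smoothness of $\sigma$ and $\phi$ up to $\partial\Sigma$ reduces to smoothness of their lifts $\overline{\sigma}$ and $\overline{\phi}$ on $\overline{\Sigma}$. Since $\overline{R}$ is transverse to $\overline{\Sigma}$ everywhere by $\partial$-strongness and $\overline{\Sigma}$ is compact, the implicit function theorem applied to the smooth map $(z,t)\mapsto\overline{\phi}^t(z)$ produces smooth local first return times. A standard compactness argument, using that every orbit of $\overline{\phi}^t$ meets $\overline{\Sigma}$ in forward time, promotes these to a globally defined smooth function $\overline{\sigma}:\overline{\Sigma}\to\BR_{>0}$ bounded away from zero and infinity. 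Then $\overline{\phi}(z)=\overline{\phi}^{\overline{\sigma}(z)}(z)$ is smooth, and applying the same argument to the time-reversed flow provides a smooth inverse, so $\overline{\phi}$ is a diffeomorphism. Pushing forward along $\pi|_{\overline{\Sigma}}$ yields the claimed smooth extensions of $\sigma$ and $\phi$ to all of $\Sigma$.

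For the transfer map, carry out the simultaneous blow-up of $Y$ along all orbits in $\partial\Sigma=\partial\Sigma'$, so that both $\overline{\Sigma}$ and $\overline{\Sigma'}$ are $\partial$-strong global surfaces of section for $\overline{\phi}^t$ in $\overline{Y}$. Their boundary circles inside each $\BT_\gamma$ may differ, since they record the possibly distinct tangent directions of $\Sigma$ and $\Sigma'$ along $\gamma$, but this is immaterial. The implicit function theorem together with the compactness argument yields a smooth transfer-time function $\overline{\tau}$ on $\overline{\Sigma}$, and the induced map $\overline{\psi}(z)=\overline{\phi}^{\overline{\tau}(z)}(z):\overline{\Sigma}\to\overline{\Sigma'}$ is a smooth diffeomorphism (with inverse constructed symmetrically). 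Conjugating by the blow-down diffeomorphisms $\pi|_{\overline{\Sigma}}$ and $\pi|_{\overline{\Sigma'}}$ yields the desired smooth extension $\psi:\Sigma\to\Sigma'$.

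The only genuine subtlety is in the first step: verifying that $\pi|_{\overline{\Sigma}}$ is a diffeomorphism, equivalently that the smooth structure induced on $\overline{\Sigma}$ from the blow-up $\overline{Y}$ agrees with that of $\Sigma\subset Y$ near the boundary orbits. Once this is established, the rest is the standard implicit-function-theorem-plus-compactness package applied to the smoothly extended flow on the compact manifold with boundary $\overline{Y}$.
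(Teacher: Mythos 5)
Your proof is correct and follows the same route as the paper: pass to the blow-up, where $\partial$-strongness makes the return time and return map (and transfer time) smooth up to the boundary, and push back down via the blow-down. The paper records this as a one-line observation; you have simply supplied the details it leaves implicit (the local polar model showing $\pi|_{\overline{\Sigma}}$ is a diffeomorphism, plus the implicit function theorem and compactness).
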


\begin{proof}
We simply observe that $\Sigma$ being $\partial$-strong implies that the first return time and map of the lifted surface $\overline{\Sigma}$ are also defined on the boundary $\partial\overline{\Sigma}$ and smooth. The same argument applies to a transfer map $\psi$.
\end{proof}

In this paper we will be mainly concerned with {\it disk-like global surfaces of section}, i.e. the case that $\Sigma$ is diffeomorphic to the closed unit disk $\BD$. The manifold $Y$ is then necessarily diffeomorphic to $S^3$. Suppose that $\Sigma$ is a $\partial$-strong disk-like global surface of section. It will be useful to lift the first return map $\phi:\Sigma\rightarrow\Sigma$ to an element $\widetilde{\phi}\in\widetilde{\Diff}(\Sigma)$ of the universal cover of the space $\Diff^+(\Sigma)$ of orientation preserving diffeomorphisms of $\Sigma$. Such a lift depends on a choice of trivialization. Let $\pi:\overline{Y}\rightarrow Y$ be the blow-up of $Y$ at the boundary orbit of $\Sigma$. A {\it trivialization} of $\overline{Y}$ is a diffeomorphism $\tau:\BR/\BZ\times \Sigma\rightarrow \overline{Y}$ such that the composition
\begin{equation*}
\Sigma\cong 0\times\Sigma\subset\BR/\BZ\times \Sigma \overset{\tau}{\rightarrow} \overline{Y} \overset{\pi}{\rightarrow} Y
\end{equation*}
is simply the inclusion of $\Sigma$. Moreover, we require that $\iota_{\tau^*\overline{R}}dt>0$, where $t$ denotes the coordinate on $\BR/\BZ$. Since $\Diff^+(\Sigma)$ is connected, the space of trivializations is non-empty. Let $\MT$ denote the set of isotopy classes of trivializations of $\overline{Y}$. It is an affine space over $\pi_1(\Diff^+(\Sigma))\cong\BZ$. We exhibit an explicit bijection $\on{deg}:\MT\rightarrow\BZ$ as follows. Let $\tau$ be a trivialization and $p\in\partial\Sigma$ a point in the boundary. Then the degree $d$ of the map
\begin{equation*}
S^1\cong \BR/\BZ \rightarrow \partial\Sigma\cong S^1\qquad t\mapsto \pi(\tau(t,p))
\end{equation*}
is independent of the choice of $p$ and only depends on the isotopy class of $\tau$. Here $\partial\Sigma$ is oriented as the boundary of $\Sigma$. We define the {\it degree} of $\tau$ to be $\on{deg}(\tau)\coloneqq d$. Given a trivialization $\tau$, there is a natural lift $\widetilde{\phi}$ of $\phi$ to $\widetilde{\Diff}(\Sigma)$ constructed as follows. Let $X$ denote the unique (positive) rescaling of the pullback vector field $\tau^*\overline{R}$ on $\BR/\BZ\times \Sigma$ such that $\iota_Xdt=1$. The flow of $X$ yields an arc in $\Diff^+(\Sigma)$ from the identity to $\phi$. Clearly, the element $\widetilde{\phi}\in\widetilde{\Diff}(\Sigma)$ represented by this arc only depends on the isotopy class of $\tau$. Let us explain the dependence of the lift on the choice of trivialization. Consider integers $d$ and $e$ and let $\widetilde{\phi}_d$ and $\widetilde{\phi}_e$ denote the lifts of $\phi$ with respect to trivializations of degrees $d$ and $e$, respectively. Let $\widetilde{\rho}\in\widetilde{\Diff}(\Sigma)$ be one full positive rotation of $\Sigma$. Then the lifts $\widetilde{\phi}_d$ and $\widetilde{\phi}_e$ are related by the identity
\begin{equation}
\label{eq:relationship_lift_trivialization}
\widetilde{\rho}^{e-d}\circ \widetilde{\phi}_e = \widetilde{\phi}_d.
\end{equation}

Let us now specialize our discussion of global surfaces of section to Reeb flows. Let $\alpha$ be a contact form on $Y$ and let $R$ be the induced Reeb vector field. We abbreviate $\omega\coloneqq d\alpha|_\Sigma$. This is a closed $2$-form on $\Sigma$. It vanishes on the boundary $\partial\Sigma$ and is a positive area form in the interior $\interior(\Sigma)$. Note that by Stokes' theorem $\Sigma$ must possess at least one positive boundary orbit. In particular, if $\Sigma$ is a disk, then its boundary orbit must be positive. Let $\lambda$ denote the restriction of $\alpha$ to $\Sigma$. This defines a primitive of $\omega$. The first return time $\sigma$ and map $\phi$ satisfy the identity
\begin{equation}
\label{eq:first_return_time_identity}
\phi^*\lambda - \lambda = d\sigma.
\end{equation}
This implies that $\phi$ preserves the area form $\omega$. Similarly, one can show that a transfer map $\psi$ between two global surfaces of section $\Sigma$ and $\Sigma'$ with the same boundary orbits is area preserving.

\begin{lem}
\label{lem:action_equals_first_return_time}
Suppose that $\Sigma$ is a $\partial$-strong disk-like global surface of section and let $\widetilde{\phi}\in\widetilde{\Diff}(\Sigma,\omega)$ denote the lift of the first return map $\phi$ with respect to a trivialization of degree $0$. Then the action $\sigma_{\widetilde{\phi},\lambda}$ agrees with the first return time $\sigma$.
\end{lem}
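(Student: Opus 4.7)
The plan is to exploit the fact that both $\sigma$ and $\sigma_{\widetilde{\phi},\lambda}$ satisfy $\phi^*\lambda-\lambda=d(\cdot)$: equation \eqref{eq:first_return_time_identity} for $\sigma$, and \eqref{eq:action_on_disk_first_defining_identity} for $\sigma_{\widetilde{\phi},\lambda}$. Since $\Sigma$ is connected, the two smooth functions differ by a constant $c\in\BR$, and it suffices to verify $c=0$ by comparing them at a single boundary point $z\in\partial\Sigma$.

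At such a point one has $\sigma(z)=\MA(\partial\Sigma)$ by continuity from the interior, because a Reeb trajectory starting very close to $\partial\Sigma$ must wind essentially once around the simple boundary orbit before returning to $\Sigma$. To compute $\sigma_{\widetilde{\phi},\lambda}(z)$ I would use \eqref{eq:action_on_disk_second_defining_identity}: writing $(\phi_t)_{t\in[0,1]}$ for the arc coming from the trivialization---i.e., the $\Sigma$-projection of the flow of the vector field $X$ on $\BR/\BZ\times \Sigma$ which is the unique rescaling of $\tau^*\overline{R}$ with $\iota_X\,dt=1$---and identifying $\Sigma\subset Y$ via $\iota\coloneqq\pi\circ\tau(0,\cdot)$, one gets
\[
  \sigma_{\widetilde{\phi},\lambda}(z) \;=\; \int_{\{t\mapsto \phi_t(z)\}}\lambda \;=\; \int \bar{\gamma}_1^*\alpha,\qquad \bar{\gamma}_1(t)\coloneqq \iota(\phi_t(z)).
\]
Since $X$ is tangent to $\BR/\BZ\times\partial\Sigma$, the curve $\bar{\gamma}_1$ lies entirely in the orbit $\gamma=\iota(\partial\Sigma)\subset Y$.

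The main step is to compare $\bar{\gamma}_1$ with the genuine Reeb reparametrization $\gamma_2(t)\coloneqq \pi(\tau(t,\phi_t(z)))$, which also takes values in $\gamma$. A direct calculation using $d(\pi\circ\tau)(X)=R/c$ (where $c=\iota_{\tau^*\overline{R}}\,dt$) identifies $\gamma_2$ as an orientation-preserving reparametrization of the Reeb orbit from $\iota(z)$ that elapses total Reeb time $\sigma(z)$, so $\int\gamma_2^*\alpha=\sigma(z)$. I would then introduce the homotopy
\[
  F\colon [0,1]^2 \to \gamma,\qquad F(s,t)\coloneqq \pi\bigl(\tau(st,\phi_t(z))\bigr),
\]
whose restriction to $\partial[0,1]^2$ traces, counterclockwise, the curves $\bar{\gamma}_1$ (bottom), the trivialization loop $\eta(s)\coloneqq \pi(\tau(s,\phi(z)))$ (right), $\gamma_2^{-1}$ (top), and a constant curve at $\iota(z)$ (left). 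Because $F$ lands in the $1$-manifold $\gamma$, one has $F^*d\alpha=0$, and Stokes' theorem yields
\[
  \int\bar{\gamma}_1^*\alpha + \int\eta^*\alpha - \int\gamma_2^*\alpha \;=\; 0.
\]
The degree $0$ hypothesis on the trivialization is precisely the statement that $\eta$ is a null-homotopic loop on $\gamma\cong S^1$, so $\int\eta^*\alpha=0$; hence $\int\bar{\gamma}_1^*\alpha=\int\gamma_2^*\alpha=\sigma(z)$, forcing $c=0$.

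The main obstacle is really just bookkeeping: carefully juggling the two ambient identifications ($\Sigma\subset Y$ via $\iota$ and $\overline{\Sigma}\subset\overline{Y}$ via $\tau(0,\cdot)$) and checking that the endpoints and orientations of $\bar\gamma_1$, $\gamma_2$, and $\eta$ line up in the Stokes computation, including the possibility that $\phi(z)\neq z$ as a point of $\partial\Sigma$. The geometric content is clean: the degree $0$ trivialization is engineered exactly so that the extra loop $\eta$ carries no $\alpha$-content, and the action measured inside $\Sigma$ equals the actual Reeb time.
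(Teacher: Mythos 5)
Your argument is correct and is essentially the paper's proof in different packaging: the paper verifies that $\sigma$ satisfies the two defining identities \eqref{eq:action_on_disk_first_defining_identity} and \eqref{eq:action_on_disk_second_defining_identity} of $\sigma_{\widetilde{\phi},\lambda}$, and for the boundary normalization it forms the loop $(\tau^{-1}\circ\delta)\#\overline{\gamma}$ in $\BR/\BZ\times\partial\Sigma$ and uses homotopy invariance of the integral of a closed $1$-form together with the degree-$0$ hypothesis --- which is exactly the content of your Stokes computation with the homotopy $F$ pushed down into the orbit $\gamma$. One incidental claim is false, although your derivation never uses it: at a boundary point $z$ one does \emph{not} have $\sigma(z)=\MA(\partial\Sigma)$ in general. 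For the irrational ellipsoid $E(a,b)$ with surface of section bounded by the orbit of action $a$, the first return time is constantly $b$, including on the boundary; a trajectory near $\partial\Sigma$ returns to $\Sigma$ once the \emph{transverse} (projectivized linearized) direction completes a turn, not once the trajectory winds once around the boundary orbit. The correct identity, which is the one you actually use, is $\int\gamma_2^*\alpha=\sigma(z)$, the elapsed Reeb time.
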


\begin{proof}
We need to check that the first return time $\sigma$ satisfies \eqref{eq:action_on_disk_first_defining_identity} and \eqref{eq:action_on_disk_second_defining_identity}. The first identity is true by \eqref{eq:first_return_time_identity}. Let $(\phi_t)_{t\in [0,1]}$ be any arc in $\Diff^+(\Sigma)$ representing $\widetilde{\phi}$. Let $z\in\partial\Sigma$ be a point in the boundary and let $\gamma:[0,1]\rightarrow\partial\Sigma$ be the path defined by $\gamma(t)\coloneqq \phi_t(z)$. Let $\delta:[0,\sigma(z)]\rightarrow\overline{Y}$ be the trajectory of $\overline{\phi}^t$ starting at $z\in\partial \overline{\Sigma}\cong\partial\Sigma$. We can express $\sigma_{\widetilde{\phi},\lambda}(z)$ and $\sigma(z)$ as
\begin{equation*}
\sigma_{\widetilde{\phi},\lambda}(z) = \int_\gamma\lambda\qquad\text{and}\qquad \sigma(z) = \int_\delta \pi^*\alpha = \int_{\tau^{-1}\circ\delta} \tau^*\pi^*\alpha.
\end{equation*}
In order to see that these two numbers agree, we regard $\gamma$ as a path in $\BR/\BZ\times\Sigma$ via the inclusion $\Sigma\cong 0\times\Sigma\subset \BR/\BZ\times\Sigma$ and form the concatenation $\epsilon\coloneqq (\tau^{-1}\circ\delta) \# \overline{\gamma}$. This defines a loop in $\BR/\BZ\times\partial\Sigma$ which is homotopic to the loop $\BR/\BZ\times z$. The restriction $\beta\coloneqq (\tau^*\pi^*\alpha)|_{\BR/\BZ\times\partial\Sigma}$ is the pullback of the restriction of $\alpha$ to $\partial\Sigma$. Hence $\beta$ is a closed $1$-form. Since $\tau$ has degree $0$, the loop $\BR/\BZ\times z$ is mapped to a contractible loop in $\partial\Sigma$ by $\pi\circ\tau$. Thus the integral of $\beta$ over the loop $\BR/\BZ\times z$ vanishes. Since $\epsilon$ is homotopic to $\BR/\BZ\times z$, we obtain
\begin{equation*}
0 = \int_\epsilon \beta = \int_{\tau^{-1}\circ\delta} \tau^*\pi^*\alpha - \int_{\gamma} \lambda
\end{equation*}
where we have used that the restriction of $\beta$ to $0\times\partial\Sigma$ is equal to $\lambda$. This concludes the proof.
\end{proof}

\section{From disk-like surfaces of section to symplectic embeddings}
\label{section:from_disk_like_surfaces_of_section_to_symplectic_embeddings}

\subsection{Embedding results}
\label{subsection:embedding_results}

The following theorem says, roughly speaking, that if the boundary of a star-shaped domain $X\subset\BR^4$ admits a disk-like global surface of section of symplectic area $a$ such that the lift of the first return map with respect to a trivialization of degree $0$ can be generated by a positive Hamiltonian, then the domain $X$ can be symplectically embedded into the cylinder $Z(a)$. The second part of the theorem states that if the lift of the first return map with respect to a trivialization of degree $1$ can still be generated by a positive Hamiltonian, then the ball $B(a)$ embeds into $X$.

\begin{theorem}
\label{theorem:embedding_result}
Let $X\subset\BR^4$ be a star-shaped domain. Let $\Sigma\subset\partial X$ be a $\partial$-strong disk-like global surface of section of the natural Reeb flow on $\partial X$. Assume that the local first return map of a small disk transverse to the boundary orbit $\partial\Sigma$ is smoothly conjugated to a rotation. Set $\omega\coloneqq \omega_0|_{\Sigma}$ and let
\begin{equation*}
a\coloneqq \int_\Sigma \omega
\end{equation*}
be the symplectic area of the surface of section.
\begin{enumerate}
\item Let $\widetilde{\phi}_0\in \widetilde{\Diff}(\Sigma,\omega)$ be the lift of the first return map with respect to a trivialization of degree $0$. Suppose that there exists a Hamiltonian $H:\BR/\BZ\times\Sigma\rightarrow\BR$ with the following properties:
\begin{enumerate}
\item $H$ is strictly positive in the interior $\operatorname{int}(\Sigma)$ and vanishes on the boundary $\partial\Sigma$.
\item $H$ is autonomous in some neighbourhood of $\partial\Sigma$.
\item The Hamiltonian vector field $X_{H_t}$ defined by $\iota_{X_{H_t}}\omega = dH_t$ in the interior $\operatorname{int}(\Sigma)$ smoothly extends to the closed disk $\Sigma$ and is tangent to $\partial \Sigma$.
\item The arc $(\phi_H^t)_{t\in [0,1]}$ represents $\widetilde{\phi}_0$.
\end{enumerate}
\noindent Then $X\overset{s}{\hookrightarrow} Z(a)$.
\item Let $\widetilde{\phi}_1\in \widetilde{\Diff}(\Sigma,\omega)$ be the lift of the first return map with respect to a trivialization of degree $1$. Assume that there exists a Hamiltonian $G:\BR/\BZ\times\Sigma\rightarrow\BR$ satisfying properties (a)-(c) above such that the arc $(\phi_G^t)_{t\in [0,1]}$ represents $\widetilde{\phi}_1$. Then $B(a)\overset{s}{\hookrightarrow}X\overset{s}{\hookrightarrow} Z(a)$.
\end{enumerate}
\end{theorem}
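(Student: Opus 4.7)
The plan is to use the construction from the overview: given $H$ satisfying hypotheses (a)--(d) of Part (1), form the graph $\Gamma(H) \subset \widetilde{\BD}_+$ in time--energy extended phase space and push it forward via $\Phi$ to a hypersurface $\Phi(\Gamma(H)) \subset Z(a)$ bounding a domain $A(H) \subset Z(a)$. Conditions (b) and (c) are exactly what is required for $\partial A(H)$ to be smooth across the circle $\partial\BD \times \{0\}$, where $\Phi$ degenerates. Since $\Phi$ restricts to a symplectomorphism on $\widetilde{\BD}_+ \setminus \widetilde{\BD}_0$, it carries the characteristic foliation on $\Gamma(H)$ to that on $\partial A(H)$, and the image of the section $z \mapsto (H(0,z),0,z)$ becomes a $\partial$-strong disk-like global surface of section of $\partial A(H)$ of symplectic area $a$ whose first return map is $\phi_H^1$. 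The trivialization of this surface of section naturally induced by $\Phi$ has degree $0$, so the arc $(\phi_H^t)_{t \in [0,1]}$ represents its degree-$0$ lift. By hypothesis (d), this lift agrees with $\widetilde{\phi}_0$ under an area-preserving identification of the two model disks.

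\textbf{Matching disk maps and uniqueness.} The core step is to convert this matching of surface-of-section data into a symplectomorphism $X \cong A(H)$. I would invoke a Gromov--McDuff-type uniqueness theorem (as alluded to in the overview): two bounded domains in $\BR^4$ whose boundaries carry $\partial$-strong disk-like global surfaces of section of equal symplectic area, and whose degree-$0$ lifted first return maps agree under an area-preserving identification of the respective surfaces, must themselves be symplectomorphic. Feeding in $\Sigma \subset \partial X$, the canonical surface of section in $\partial A(H)$ constructed above, and the coincidence of degree-$0$ lifts from hypothesis (d), this produces a symplectomorphism $X \to A(H) \subset Z(a)$, finishing Part (1).

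\textbf{Part (2) via composition with a rotation-generating Hamiltonian.} For Part (2), let $K(z) := a - \pi|z|^2$ on the model disk of area $a$; a direct computation shows that $K$ generates one full positive rotation $\widetilde{\rho}$ and that $\Phi(\Gamma(K))$ is exactly the sphere $\partial B(a)$, so $A(K) = B(a)$. Since identity \eqref{eq:relationship_lift_trivialization} with $d=0,\,e=1$ reads $\widetilde{\rho} \circ \widetilde{\phi}_1 = \widetilde{\phi}_0$, the concatenation $H := K \# G$, defined by $H_t := K_t + G_t \circ (\phi_K^t)^{-1}$, generates $\widetilde{\phi}_0$ and inherits properties (a)--(c) from $K$ and $G$. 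Because $G \geq 0$, one has $H \geq K$ pointwise, and the monotonicity principle $H \geq K \Longrightarrow A(H) \supset A(K)$---immediate from the description of $A(\cdot)$ as the sub-graph region $\{\pi|z_2|^2 \leq H\}$ inside $Z(a)$---yields $A(H) \supset B(a)$. Applying Part (1) to $H$ now gives $X \cong A(H) \supset B(a)$, which is the required ball embedding.

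\textbf{Main obstacle.} The most delicate step is the uniqueness result used in Part (1). Since $\partial A(H)$ need not be star-shaped or even of contact type, no Moser-type argument is available directly. One anticipates building the identification via pseudoholomorphic techniques (in the spirit of Hofer--Wysocki--Zehnder and the Gromov--McDuff symplectic filling results), in which the given disk-like surfaces of section provide distinguished holomorphic leaves of a foliation filling the interior and the matched degree-$0$ return dynamics rigidify the filling on both sides. Verifying that this data is precisely what is needed to pin down a symplectomorphism---rather than merely a smooth identification preserving characteristic foliations---is the critical technical point on which the whole argument rests.
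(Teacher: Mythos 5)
Your overall architecture matches the paper's: build $A(a,H)$ from the graph of $H$ in time--energy extended phase space, show $X$ is symplectomorphic to $A(a,H)\subset Z(a)$, and obtain Part (2) by composing with the rotation Hamiltonian $K(z)=a-\pi|z|^2$ and using $H\geq K\Rightarrow A(a,H)\supset B(a)$; your treatment of Part (2) is essentially identical to the paper's. However, the step you yourself flag as the ``main obstacle'' is a genuine gap: the ``Gromov--McDuff-type uniqueness theorem'' you invoke --- that two domains whose boundaries carry $\partial$-strong disk-like surfaces of section of equal area with matching degree-$0$ lifted return maps are symplectomorphic --- is not an off-the-shelf result, and your proposed route to it (a holomorphic foliation with the surfaces of section as distinguished leaves, ``rigidified'' by the return dynamics) is not how one gets there. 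The paper proves it in two separate steps. First (Proposition \ref{prop:spun_hypersurface_isomorphic_starshaped_hypersurface}) one constructs \emph{by hand} a diffeomorphism $\psi:\partial A(a,H)\rightarrow\partial X$ with $\psi^*(\omega_0|_{\partial X})=\omega_0|_{\partial A(a,H)}$: this requires first normalizing a tubular neighbourhood of the binding orbit (Lemma \ref{lem:neighbourhood_theorem_boundary_orbit_conj_to_rot}) and forcing $H$ into the exact quadratic form \eqref{eq:boundary_behaviour_of_ham} near $\partial\Sigma$ (your claim that hypotheses (b) and (c) alone give smoothness of $\partial A(H)$ is too quick --- it is this quadratic normal form, deduced from the rotation hypothesis, that makes the boundary an ellipsoid near the degenerate circle), then matching the two surfaces of section via area-preserving parametrizations, rescaling the characteristic vector field so the return \emph{times} agree, and transporting the $2$-forms along the characteristic foliations via Cartan's formula. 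Second (Corollary \ref{cor:corollary_of_gromov_mcduff}), such a boundary diffeomorphism extends to a symplectomorphism of the domains; this is where Gromov--McDuff enters, but only after Eliashberg's theorem identifies the boundary contact structure with the standard tight one so that both domains can be capped off by $\BR^4\setminus V$ for a star-shaped $V$. In other words, what feeds into the pseudoholomorphic machinery is agreement of $\omega_0$ restricted to the boundaries, not the return dynamics directly; the matched degree-$0$ lifts are used earlier, to build $\psi$. Without carrying out these two steps your proof of Part (1) is incomplete, and Part (2), which depends on Part (1), inherits the gap.
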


Given a star-shaped domain $X$ and a disk-like surface of section $\Sigma\subset\partial X$, we do not know whether it is always possible to generate the lift $\widetilde{\phi}_0$ by a Hamiltonian which vanishes on the boundary $\partial\Sigma$ and is positive in the interior. The following Proposition says that we may always symplectically embed $X$ into a bigger domain satisfying the hypotheses of Theorem \ref{theorem:embedding_result}.

\begin{prop}
\label{prop:modify_hypersurface_such_that_return_map_is_generated_by_positive_hamiltonian}
Let $X\subset\BR^4$ be a star-shaped domain. Let $\Sigma\subset\partial X$ be a $\partial$-strong disk-like surface of section of the natural Reeb flow on $\partial X$. Then there exist a star-shaped domain $X'$ and a $\partial$-strong disk-like surface of section $\Sigma'$ of the natural Reeb flow on the boundary $\partial X'$ such that $\Sigma$ and $\Sigma'$ have the same symplectic areas, $X$ symplectically embeds into $X'$ and the tuple $(X',\Sigma')$ satisfies all hypotheses of the first assertion of Theorem \ref{theorem:embedding_result}.
\end{prop}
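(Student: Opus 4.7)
The plan is to enlarge $X$ to a domain of the form $A(H)$ for a suitable positive Hamiltonian $H$ on $\BR/\BZ\times\BD$. The idea is to first model $X$, away from a codimension-$2$ set, as the region below a (possibly non-positive) graph $\Gamma(H_0)$ in the extended phase space $\widetilde{\BD}$, and then raise this graph to a strictly positive one while preserving the ordering of graphs.

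After rescaling I may assume $\int_\Sigma\omega_0=\pi$. Using the approximation results of \S\ref{section:from_reeb_flows_to_disk_like_surfaces_of_section_and_approximation_results}, I would first reduce to the case in which the local first return map of a small disk transverse to $\partial\Sigma$ is smoothly conjugated to a rotation, possibly at the cost of enlarging $X$ slightly. Fix a symplectomorphism $g\colon(\BD,\omega_0)\to(\Sigma,\omega_0|_\Sigma)$, let $\phi\in\Diff(\BD,\omega_0)$ denote the first return map pulled back by $g$, and let $\widetilde{\phi}_0\in\widetilde{\Diff}(\BD,\omega_0)$ be its lift with respect to a trivialization of degree $0$.

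The core of the proof is the construction, for some smooth $H_0\colon\BR/\BZ\times\BD\to\BR$ vanishing on $\partial\BD$, of a map $\Psi\colon\{(s,t,z)\in\widetilde{\BD}_+\colon 0\leq s\leq H_0(t,z)\}\to X$ which plays the role of the map $\Phi$ from \S\ref{subsection:main_construction} but with $X$ in place of $A(H_0)$. Concretely, $\Psi$ should restrict to a symplectomorphism on $\{s>0\}$, collapse $\{s=0\}$ onto the Liouville flowout of $\partial\Sigma$ inside $X$ by forgetting the $t$-coordinate, and send the slice $\{s=H_0(0,z),\,t=0\}$ diffeomorphically onto $\Sigma$. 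I would build $\Psi$ using the lift of the Reeb flow to the blow-up of $\partial X$ along $\partial\Sigma$ (which by $\partial$-strongness extends smoothly across $\partial\Sigma$) for the $t$-direction, together with the radial Liouville flow $Z_0$ of $\BR^4$ for the $s$-direction, with the local rotation normal form arranged above ensuring smoothness of $\Psi$ up to $\partial\BD$. The identity \eqref{eq:first_return_time_identity} combined with Lemmas \ref{lem:action_equals_first_return_time} and \ref{lem:action_in_terms_of_hamiltonian} then implies that the arc $(\phi_{H_0}^t)_{t\in[0,1]}$ represents $\widetilde{\phi}_0$. The function $H_0$ need not be positive in $\interior(\BD)$, but is automatically bounded from below by compactness.

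Choose a smooth $K\colon\BR/\BZ\times\BD\to\BR_{\geq 0}$ which vanishes on $\partial\BD$, is strictly positive in $\interior(\BD)$, is autonomous in $t$ near $\partial\BD$, and is large enough that $H\coloneqq H_0+K$ is strictly positive in $\interior(\BD)$; then $H$ satisfies conditions (a)--(c) of Theorem \ref{theorem:embedding_result}(1). The pointwise inequality $H_0\leq H$ yields a symplectic inclusion $\{0\leq s\leq H_0\}\hookrightarrow\{0\leq s\leq H\}$ of subregions of $\widetilde{\BD}_+$; composing $\Psi^{-1}$ with this inclusion and with the map $\Phi$ from \S\ref{subsection:main_construction} produces a symplectic embedding of $X$ minus the Liouville flowout of $\partial\Sigma$ into $A(H)\setminus(\partial\BD\times\{0\})$, which extends by continuity across this removed codimension-$2$ set to a symplectic embedding $X\overset{s}{\hookrightarrow}A(H)$. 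Setting $X'\coloneqq A(H)$ and letting $\Sigma'$ be the natural disk-like global surface of section in $\partial X'$ completes the construction: $\Sigma'$ has symplectic area $\pi$ by the calculation in \S\ref{subsection:main_construction} matching that of $\Sigma$, and its first return map's degree-$0$ lift is $\phi_H^1$, generated by the positive Hamiltonian $H$. The main obstacle is the construction of $\Psi$ with smooth extension up to $\partial\BD$ in the third paragraph; this relies on the $\partial$-strong hypothesis and the arranged local rotation normal form near $\partial\Sigma$, and on matching the collapsing behavior of $\Psi$ along $\{s=0\}$ with that of $\Phi$ in order to extend the final embedding across the removed set.
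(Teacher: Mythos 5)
There is a genuine gap, and it sits at the center of your argument. Your plan is to present $X$ (minus the Liouville flowout of $\partial\Sigma$) as the subgraph region $\{0\leq s\leq H_0\}$ in time--energy extended phase space via a symplectomorphism $\Psi$, with $H_0$ ``not necessarily positive''. This is internally inconsistent: over any $(t,z)$ with $H_0(t,z)<0$ the fiber $\{0\leq s\leq H_0(t,z)\}$ is empty, so $\Psi$ cannot cover the corresponding part of $X$, and the map $\Phi$ of Construction \ref{construction:spun_hypersurface} is only defined for $s\geq 0$ in the first place. If, on the other hand, you require $H_0\geq 0$ (positive in the interior), then producing such a presentation is essentially equivalent to generating $\widetilde{\phi}_0$ by a positive Hamiltonian --- which is exactly the statement the paper says it does not know how to prove and which Proposition \ref{prop:modify_hypersurface_such_that_return_map_is_generated_by_positive_hamiltonian} is designed to circumvent. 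The paper never globalizes $X$ as a subgraph: it keeps whatever Hamiltonian $H$ generates $\widetilde{\phi}_0$ (positive only \emph{near} $\partial\BD$, which is guaranteed by the positive rotation number coming from $\partial$-strongness via Lemmas \ref{lem:positive_paths_rotation_number} and \ref{lem:neighbourhood_theorem_boundary_orbit}), and instead glues the subgraph of a large non-negative correction $K$ onto $X$ along a flow box $F:[0,\epsilon]\times D\rightarrow\partial X$, using Lemma \ref{lem:embedding_time_engergy_extended_phase_space_into_symplectization} to transplant $(\Gamma_-(K),\widetilde{\omega})$ into the symplectization coordinates adapted to $\partial X$. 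This changes the return map to $\widetilde{\phi}_0\circ(\phi_K^t)_{t\in[0,1]}$, generated by $H\#K>0$, without ever needing a global normal form for $X$. Relatedly, your first reduction (local return map conjugate to a rotation) cannot be obtained from \S\ref{section:from_reeb_flows_to_disk_like_surfaces_of_section_and_approximation_results}, whose results require ellipticity and $C^3$-closeness to the round sphere and do not produce an enlargement of $X$; the paper gets it from Lemma \ref{lem:neighbourhood_theorem_boundary_orbit} together with an outward perturbation $H'\geq H$ of the graph near the orbit.

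A second missing step: you set $X'\coloneqq A(H)$ and declare the proof complete, but the proposition requires $X'$ to be a \emph{star-shaped} domain, and the paper explicitly warns that $\partial A(H)$ need not be star-shaped or even of contact type. The last third of the paper's proof is devoted to this: one must construct a primitive $\beta$ of $\omega_0|_{\partial X'}$ which is a contact form, which forces additional constraints on $K$ near $\partial D$ (namely $K_t$ constant outside a collar of $\partial D$ and $dK_t(W)\leq 0$ there, so that $(1+K_t)\omega+\lambda\wedge dK_t$ stays positive), and then invoke Eliashberg's classification and Corollary \ref{cor:corollary_of_gromov_mcduff} to replace $X'$ by a genuinely star-shaped domain. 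Without this, $(X',\Sigma')$ does not satisfy the hypotheses of Theorem \ref{theorem:embedding_result}, so the conclusion of the proposition is not reached.
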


\subsection{Main construction}
\label{subsection:main_construction}

Given a $1$-periodic Hamiltonian
\begin{equation*}
H:\BR/\BZ\times\BD\rightarrow\BR
\end{equation*}
which is positive in the interior $\interior(\BD)$ and vanishes on the boundary $\partial\BD$, we construct a domain
\begin{equation*}
A(H)\subset\BC^2.
\end{equation*}
We show that the characteristic foliation on the boundary $\partial A(H)$ possesses a disk-like surface of section with first return map given by $\phi_H^1$.

\begin{lem}
\label{lem:characteristic_foliation_on_graph}
Let $(M,\omega)$ be a symplectic manifold. Let $\widetilde{M}\coloneqq \BR_s\times (\BR/\BZ)_t \times M$ denote time-energy extended phase space equipped with the symplectic form $\widetilde{\omega}\coloneqq ds\wedge dt + \omega$. Consider a periodic Hamiltonian
\begin{equation*}
H:\BR/\BZ\times M\rightarrow\BR
\end{equation*}
and let
\begin{equation*}
\Gamma(H)\coloneqq \{(H(t,p),t,p)\mid (t,p)\in\BR/\BZ\times M\} \subset\widetilde{M}
\end{equation*}
denote its graph inside time-energy extended phase space $\widetilde{M}$. Then the characteristic foliation on $\Gamma(H)$ induced by the symplectic form $\widetilde{\omega}$ is spanned by the vector field
\begin{equation*}
X_{H_t}(p) + \partial_t + \partial_tH(t,p)\cdot \partial_s.
\end{equation*}
\end{lem}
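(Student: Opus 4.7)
The plan is to parametrize $\Gamma(H)$ by the embedding $\iota \colon \BR/\BZ \times M \to \widetilde M$, $(t,p) \mapsto (H(t,p),t,p)$, compute the pullback of $\widetilde\omega$, and then exhibit a vector field on $\BR/\BZ \times M$ whose $\iota$-pushforward is $R$ and which lies in the kernel of $\iota^*\widetilde\omega$. Since $\Gamma(H)$ is a smooth hypersurface in the symplectic manifold $(\widetilde M,\widetilde\omega)$, its characteristic foliation is one-dimensional, so producing a single nonvanishing vector field in that kernel will suffice.

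First I would compute $\iota^*\widetilde\omega$. Splitting the full differential as $dH = dH_t + (\partial_t H)\,dt$, where $dH_t$ denotes the fiber differential in the $M$-direction at fixed $t$, one finds
\[ \iota^*\widetilde\omega \;=\; dH \wedge dt + \omega \;=\; dH_t \wedge dt + \omega, \]
after discarding the $dt \wedge dt$ contribution.

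Second, I would introduce the candidate vector field $\hat R \coloneqq X_{H_t} + \partial_t$ on $\BR/\BZ \times M$ and verify via the chain rule that $d\iota(\hat R) = R$. The $\partial_s$-component of $d\iota(\hat R)$ is $dH(\hat R)\, \partial_s = \bigl(dH_t(X_{H_t}) + \partial_t H\bigr)\, \partial_s = (\partial_t H)\, \partial_s$, using the defining identity $dH_t(X_{H_t}) = \omega(X_{H_t},X_{H_t}) = 0$.

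Third, I would contract $\hat R$ into $\iota^*\widetilde\omega$. The $\omega$-term contributes $\iota_{X_{H_t}}\omega = dH_t$, while the $dH_t \wedge dt$-term contributes $dH_t(X_{H_t})\,dt - dt(\hat R)\,dH_t = -dH_t$, and the two cancel. Hence $\hat R$ spans the (pulled-back) characteristic line field on $\BR/\BZ \times M$, and $R = d\iota(\hat R)$ spans the characteristic foliation on $\Gamma(H)$. The computation is entirely routine and I expect no real obstacle; the only subtlety worth flagging is the bookkeeping between the spacetime differential $dH$ and the partial differential $dH_t$, since the sign of the final cancellation depends on treating these consistently and on the antisymmetry of $\omega$.
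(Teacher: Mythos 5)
Your proof is correct, but it takes a different route from the paper's. The paper realizes $\Gamma(H)$ as the regular level set $\widetilde{H}^{-1}(0)$ of the autonomous function $\widetilde{H}(s,t,p)=H(t,p)-s$ on $(\widetilde M,\widetilde\omega)$, invokes the standard fact that the characteristic foliation of a regular level set is spanned by the restriction of the Hamiltonian vector field of a defining function, and then reads off $X_{\widetilde H}$ from the single identity $d\widetilde H = \iota_{X_{H_t}+\partial_t+\partial_tH\,\partial_s}(ds\wedge dt+\omega)$. You instead work intrinsically on the hypersurface: you pull back $\widetilde\omega$ along the graph parametrization $\iota$, obtain $\iota^*\widetilde\omega = dH_t\wedge dt+\omega$, and check directly that $\hat R = X_{H_t}+\partial_t$ lies in its (one-dimensional) kernel and pushes forward to the claimed vector field. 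The level-set argument is shorter and avoids the $dH$ versus $dH_t$ bookkeeping you rightly flag as the one delicate point; your computation is more self-contained, since it does not appeal to the level-set characterization of the characteristic foliation, and as a by-product it produces the formula $\iota^*\widetilde\omega=\omega+dH_t\wedge dt$ for the pullback of $\widetilde\omega$ to the graph, which the paper needs again later (in the proof of Proposition \ref{prop:modify_hypersurface_such_that_return_map_is_generated_by_positive_hamiltonian}). Both arguments are complete; note only that your cancellation step uses the paper's sign convention $\iota_{X_{H_t}}\omega=dH_t$, which you apply consistently.
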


\begin{proof}
We define the autonomous Hamiltonian $\widetilde{H}$ on time-energy extended phase space by
\begin{equation*}
\widetilde{H}: \widetilde{M} \rightarrow\BR \qquad \widetilde{H}(s,t,p)\coloneqq H(t,p)-s.
\end{equation*}
The graph $\Gamma(H)$ is given by the regular level set $\widetilde{H}^{-1}(0)$. Thus the characteristic foliation on $\Gamma(H)$ is spanned by the restriction of the Hamiltonian vector field $X_{\widetilde{H}}$ to $\Gamma(H)$. We compute
\begin{equation*}
d\widetilde{H}(s,t,p) = d H_t(p) + \partial_t H(t,p)\cdot dt - ds = \iota_{X_{H_t} + \partial_t + \partial_tH\cdot \partial_s}(ds\wedge dt+\omega)
\end{equation*}
and conclude that
\begin{equation*}
X_{\widetilde{H}} = X_{H_t} + \partial_t + \partial_tH \cdot \partial_s.
\end{equation*}
\end{proof}

\begin{construction}
\label{construction:spun_hypersurface}
Consider $\BC$ equipped with the standard symplectic form $\omega_0 = dx\wedge dy$. Let
\begin{equation*}
\widetilde{\BC}\coloneqq \BR_s\times(\BR/\BZ)_t\times\BC \qquad \widetilde{\omega_0}\coloneqq ds\wedge dt+\omega_0
\end{equation*}
denote time-energy extended phase space and abbreviate
\begin{equation*}
\widetilde{\BC}_+ \coloneqq \BR_{\geq 0}\times\BR/\BZ\times\BC \qquad\text{and}\qquad \widetilde{\BC}_0 \coloneqq \{0\}\times\BR/\BZ\times\BC.
\end{equation*}
Consider the map
\begin{equation*}
\Phi : \widetilde{\BC}_+\rightarrow \BC^2 \qquad \Phi(s,t,z)\coloneqq \left(z\enspace,\enspace\sqrt{\frac{s}{\pi}}\cdot e^{2\pi i t}\right).
\end{equation*}
$\Phi$ restricts to a diffeomorphism between $\widetilde{\BC}_+\setminus\widetilde{\BC}_0$ and $\BC^2 \setminus (\BC\times 0)$. Moreover $\Phi^*\omega_0 = \widetilde{\omega_0}$. For $a>0$, let $B^2(a)\subset\BC$ denote the closed $2$-dimensional disk of area $a$. Let
\begin{equation*}
H:\BR/\BZ\times B^2(a) \rightarrow\BR
\end{equation*}
be a smooth function. Assume that:
\begin{enumerate}
\item $H$ is strictly positive in the interior $\operatorname{int}(B^2(a))$.
\item There exists a constant $C>0$ such that in some neighbourhood of $\partial B^2(a)$ the function $H$ is given by
\begin{equation}
\label{eq:special_form_of_H_near_boundary}
H(t,z) = C\cdot (a-\pi|z|^2).
\end{equation}
\end{enumerate}
Let
\begin{equation*}
\Gamma_-(H)\coloneqq \{(s,t,z)\in \widetilde{\BC}_+ \mid z \in B^2(a) \enspace \text{and}\enspace 0\leq s\leq H(t,z)\}
\end{equation*}
denote the subgraph of $H$. We define the subset $A(a,H)\subset \BC^2$ by
\begin{equation*}
A(a,H) \coloneqq \Phi(\Gamma_-(H)).
\end{equation*}
\end{construction}

\begin{lem}[Basic properties]
\label{lem:basic_properties_of_spun_hypersurfaces}
The set $A(a,H)\subset\BC^2$ defined in Construction \ref{construction:spun_hypersurface} satisfies the following basic properties:
\begin{enumerate}
\item $A(a,H)$ has smooth boundary and is diffeomorphic to the closed ball $D^4$.
\item $A(a,H)\subset Z(a)$
\item If $H(t,z)\geq a-\pi|z|^2$, then $B(a)\subset A(a,H)$.
\item The map
\begin{equation*}
f:B^2(a)\rightarrow \partial A(a,H) \quad z\mapsto \Phi(H(0,z),0,z)
\end{equation*}
is a parametrization of a disk-like surface of section of the characteristic foliation on $\partial A(a,H)$. We have $f^*\omega_0 = \omega_0$. Consider the lift $\widetilde{\phi}_0$ of the first return map with respect to a trivialization of degree $0$. We regard $\widetilde{\phi}_0$ as an element of $\widetilde{\Diff}(B^2(a),\omega_0)$ via the parametrization $f$. It is represented by $(\phi_H^t)_{t\in [0,1]}$.
\end{enumerate}
\end{lem}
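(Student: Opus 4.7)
The plan is to verify the four assertions by unpacking Construction~\ref{construction:spun_hypersurface} and combining it with the description of the characteristic foliation from Lemma~\ref{lem:characteristic_foliation_on_graph}. Properties (2) and (3) follow directly from the defining formula $\Phi(s,t,z)=(z,\sqrt{s/\pi}\,e^{2\pi it})$. Since $\Gamma_-(H)\subset\widetilde{\BC}_+\cap\{z\in B^2(a)\}$, its image under $\Phi$ has first $\BC$-factor in $B^2(a)$, giving (2). For (3), any $(z,w)\in B(a)$ with $w=re^{2\pi it}$ satisfies $\pi r^2\leq a-\pi|z|^2\leq H(t,z)$, so $(z,w)=\Phi(\pi r^2,t,z)\in A(a,H)$, while the case $w=0$ reduces to $B^2(a)\times\{0\}\subset A(a,H)$, which is immediate.

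For (1), the analysis splits according to whether one is near or far from the equator $\partial B^2(a)\times\{0\}$, which is the locus where $\Phi$ degenerates. Away from the equator, $\Phi$ is a diffeomorphism and $\Gamma_-(H)$ has smooth boundary (as $H>0$ in the interior of $B^2(a)$), so $A(a,H)$ inherits a smooth boundary. Near the equator, condition~(2) of Construction~\ref{construction:spun_hypersurface} (the special form $H(t,z)=C(a-\pi|z|^2)$) implies that $A(a,H)$ locally coincides with the solid ellipsoid
\begin{equation*}
E_C\coloneqq\{(z,w)\in\BC^2\mid C\pi|z|^2+\pi|w|^2\leq Ca\},
\end{equation*}
which is smooth and diffeomorphic to $D^4$. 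To identify $A(a,H)$ itself with $D^4$, I interpolate $H_\lambda\coloneqq (1-\lambda)H+\lambda C(a-\pi|z|^2)$ for $\lambda\in[0,1]$; each $H_\lambda$ still satisfies the hypotheses of Construction~\ref{construction:spun_hypersurface} with the same constant $C$ near $\partial B^2(a)$, so $\{A(a,H_\lambda)\}_{\lambda\in[0,1]}$ is a smooth one-parameter family of compact domains with smooth boundary connecting $A(a,H)$ to $E_C\cong D^4$. A standard isotopy extension argument (integrating a vector field tangent to the moving boundary across the total space) then produces the desired diffeomorphism.

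For (4), I factor $f=\Phi\circ\bar{f}$ with $\bar{f}(z)=(H(0,z),0,z)$. Since $\bar{f}^*dt=0$ (as $t\equiv 0$ along $\bar{f}$) and $\Phi^*\omega_0=ds\wedge dt+\omega_0$ where $\Phi$ is a local diffeomorphism, I obtain $f^*\omega_0=\omega_0$ on the interior, extending by continuity. Lemma~\ref{lem:characteristic_foliation_on_graph} describes the characteristic foliation on $\Gamma(H)$ as the flow of $R=X_{H_t}+\partial_t+\partial_tH\cdot\partial_s$, which is positively transverse to $\{t=0\}$ and projects to $\phi_H^t$ on $B^2(a)$; this identifies the image of $f$ as a disk-like surface of section of the characteristic foliation on $\partial A(a,H)$ with first return map $\phi_H^1$ (the $\partial$-strong property being inherited from the local ellipsoid description near the equator). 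To identify the arc $(\phi_H^t)_{t\in[0,1]}$ with the degree-$0$ lift $\widetilde{\phi}_0$, I apply Lemma~\ref{lem:action_in_terms_of_hamiltonian} to compute the action of the arc as $\int_0^1\lambda_0(X_{H_t}(\phi_H^t(z)))\,dt+\int_0^1 H(t,\phi_H^t(z))\,dt$, which is exactly the first return time obtained by integrating the contact form $H\,dt+\lambda_0$ along the orbit of $R$. Lemma~\ref{lem:action_equals_first_return_time} then forces this arc to represent $\widetilde{\phi}_0$, since any two lifts differ by a power of the full rotation $\widetilde{\rho}$ and thus have actions differing by a nonzero multiple of $a$. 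The main subtlety I anticipate is the smoothness analysis near the equator in (1), where condition~(2) of Construction~\ref{construction:spun_hypersurface} is indispensable in forcing agreement with a standard ellipsoid; the degree identification in (4) then follows cleanly from the action-first-return-time correspondence.
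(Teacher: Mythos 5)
Parts (1)--(3) and the first half of (4) are correct. Your treatment of (1) via the linear interpolation $H_\lambda=(1-\lambda)H+\lambda C(a-\pi|z|^2)$ and isotopy extension is a legitimate alternative to the paper's argument (which instead builds an explicit diffeomorphism $\psi:\Gamma_-(H)\to\Gamma_-(G)$ of the subgraphs, equal to the identity near $\{s=0\}$ and near $\partial B^2(a)$, and conjugates by $\Phi$); both hinge on the same key point, namely that condition (2) of the construction makes $A(a,H)$ literally coincide with an ellipsoid near the equator, and your route works.

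The gap is in the last step of (4), the identification of $(\phi_H^t)_{t\in[0,1]}$ with the degree-$0$ lift. You invoke Lemma \ref{lem:action_equals_first_return_time}, which is a statement about the Reeb flow of a contact form, and you speak of ``the contact form $H\,dt+\lambda_0$'' and its first return time. But $H\,dt+\lambda$ need not be a contact form on $\BR/\BZ\times B^2(a)$ for an arbitrary positive $H$ (one needs $H\omega+dH\wedge\lambda>0$, which can fail where $H$ is small and $dH$ is large), and the paper explicitly warns that $\partial A(H)$ need not be of contact type. Without a contact form there is no canonically parametrized flow and hence no ``first return time'' for Lemma \ref{lem:action_equals_first_return_time} to equate with the action; what you actually compute via Lemma \ref{lem:action_in_terms_of_hamiltonian} is just the action of the arc, and the argument becomes circular, since you would need an independent computation of the first return time to pin down which lift has that action. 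The correct argument is purely topological and much shorter: the map $\tau(t,z)=(H(t,z),t,z)$ composed with $\Phi$ is a trivialization of the blown-up complement of the boundary orbit whose associated normalized flow is exactly $(\phi_H^t)$, and for a boundary point $p\in\partial B^2(a)$ one has $\Phi(\tau(t,p))=\Phi(0,t,p)=(p,0)$, a constant loop in $\partial\Sigma$; hence this trivialization has degree $0$ by definition, and $(\phi_H^t)_{t\in[0,1]}$ represents $\widetilde{\phi}_0$.
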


\begin{remark}
The parametrization $f$ in item (4) of Lemma \ref{lem:basic_properties_of_spun_hypersurfaces} is only smooth in the interior $\interior(\BD)$. At the boundary $\partial\BD$, the radial derivative $\partial_rf$ blows up. Since $H$ has the special form \eqref{eq:special_form_of_H_near_boundary} and $\phi_H^t$ is a rotation near the boundary, this does not cause problems.
\end{remark}

\begin{proof}
Clearly, the boundary $\partial A(a,H)$ is smooth away from the circle $\partial B^2(a)\times\{0\}$. Near this circle, the Hamiltonian $H$ has the special form \eqref{eq:special_form_of_H_near_boundary}. Thus $\partial A(a,H)$ can be described by the equation
\begin{equation*}
C|z_1|^2 + |z_2|^2 = \frac{Ca}{\pi}
\end{equation*}
near $\partial B^2(a)\times\{0\}$. The solution set of this equation is the boundary of an ellipsoid and in particular smooth. Let $G$ denote the Hamiltonian which is given by formula \eqref{eq:special_form_of_H_near_boundary} on the entire disk $B^2(a)$. The set $A(a,G)$ is an ellipsoid and in particular diffeomorphic to the closed ball $D^4$. Clearly there exists a diffeomorphism
\begin{equation*}
\psi:\Gamma_-(H)\rightarrow\Gamma_-(G)
\end{equation*}
between the subgraphs of $H$ and $G$. In fact, we can choose $\psi$ to agree with the identity map on all points $(s,t,z)\in \Gamma_-(H)$ such that $z$ is close to the boundary $\partial B^2(a)$ or $s$ is close to $0$. If $\psi$ has these properties, then $\Phi\circ\psi\circ\Phi^{-1}$ defines a diffeomorphism between $A(a,H)$ and $A(a,G)$. Thus $A(a,H)$ is diffeomorphic to $D^4$. By construction, $A(a,H)$ is contained in the cylinder $Z(a)$. If $H(t,z) = a-\pi|z|^2$, then $A(a,H)$ is the $4$-dimensional ball $B(a)$ of area $a$. Since $H\leq G$ implies $A(a,H)\subset A(a,G)$, assertion (3) is an immediate consequence. In order to prove assertion (4), let us first observe that Lemma \ref{lem:characteristic_foliation_on_graph} implies that the map
\begin{equation*}
g:B^2(a)\rightarrow \Gamma(H)\subset \widetilde{\BC}_+ \qquad g(z)\coloneqq (H(0,z),0,z)
\end{equation*}
parametrizes a disk-like surface of section of the characteristic foliation on the graph $\Gamma(H)$. The first return map of this surface of section is given by $\phi_H^1$. The symplectomorphism $\Phi$ in Construction \ref{construction:spun_hypersurface} maps the characteristic foliation on the graph $\Gamma(H)$ to the characteristic foliation on $\partial A(a,H)$. Thus the first return map of the surface of section parametrized by $f$ is equal to $\phi_H^1$ as well. In order to show that $(\phi_H^t)_{t\in [0,1]}$ represents the correct lift, simply observe that the composition of the trivialization
\begin{equation*}
\tau:\BR/\BZ\times B^2(a)\rightarrow\Gamma(H)\qquad \tau(t,z)\coloneqq (H(t,z),t,z)
\end{equation*}
of $\Gamma(H)$ with $\Phi$ yields a trivialization of $\partial A(a,H)$ of degree $0$. This proves assertion (4).
\end{proof}

\subsection{Proof of Theorem \ref{theorem:embedding_result}}
\label{subsection:proof_of_embedding_result}

Throughout this section, we fix the setup of Theorem \ref{theorem:embedding_result}. We let $X\subset (\BR^4,\omega_0)$ denote a star-shaped domain and $\Sigma\subset\partial X$ a $\partial$-strong disk-like global surface of section of the natural Reeb flow on $\partial X$ induced by the restriction of the standard Liouville $1$-form $\lambda_0$ defined in \eqref{eq:liouville_vector_field_and_form}. We assume that the local first return map of a small disk transverse to the boundary orbit $\partial\Sigma$ is smoothly conjugated to a rotation. Let $a>0$ denote the symplectic area of $\Sigma$.\\
Our strategy, roughly speaking, is to show that if the degree $0$ lift $\widetilde{\phi}_0$ of the first return map can be generated by a Hamiltonian $H$ which is positive in the interior $\interior(\Sigma)$ and vanishes on the boundary $\partial \Sigma$, then $X$ is symplectomorphic to the domain $A(a,H)$ constructed in section \ref{subsection:main_construction}. We construct a symplectomorphism between $X$ and $A(a,H)$ in two steps. In Proposition \ref{prop:spun_hypersurface_isomorphic_starshaped_hypersurface} we show that there exists a diffeomorphism $\psi:\partial A(a,H)\rightarrow\partial X$ which pulls back $\omega_0|_{\partial X}$ to $\omega_0|_{\partial A(a,H)}$. Then we use a result of Gromov and McDuff (Theorem \ref{theorem:gromov_mcduff_theorem}) to extend $\psi$ to a symplectomorphism between $A(a,H)$ and $X$. This is done in Corollary \ref{cor:corollary_of_gromov_mcduff}.\\

We begin with the following auxiliary lemma on the existence of a convenient parametrization of a tubular neighbourhood of the boundary orbit.

\begin{lem}
\label{lem:neighbourhood_theorem_boundary_orbit_conj_to_rot}
Let $\epsilon>0$ be sufficiently small and let $\BD_\epsilon\subset\BC$ denote the disk of radius $\epsilon$. There exist a $\partial$-strong disk-like global surface of section $\Sigma'\subset \partial X$ with the same boundary orbit as $\Sigma$ and a parametrization $F:\BR/\BZ\times\BD_\epsilon\rightarrow \partial X$ of a tubular neighbourhood of $\partial\Sigma'$ such that the following is true.
\begin{equation}
\label{eq:neighbourhood_theorem_boundary_orbit_conj_to_rot_a}
F^{-1}(\Sigma') = \{(t,r e^{i\theta})\in \BR/\BZ \times \BD_\epsilon \mid 0\leq r\leq \epsilon\enspace\text{and}\enspace \theta=0 \}
\end{equation}
and
\begin{equation}
\label{eq:neighbourhood_theorem_boundary_orbit_conj_to_rot_b}
F^*\lambda_0 = \frac{1}{2}r^2\cdot d\theta + (a-\pi b r^2)\cdot dt
\end{equation}
where $b$ is a positive real number.
\end{lem}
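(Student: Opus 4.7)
The plan is to build the parametrization $F$ in three stages and then, if necessary, perturb $\Sigma$ near its boundary orbit $\gamma \coloneqq \partial \Sigma$, which has period $a = \int_\gamma \lambda_0$. Let $b$ denote the rotation number of the linearized Reeb flow along $\gamma$ in the global trivialization of the contact structure. By hypothesis there exist a small disk $D$ transverse to $\gamma$ at $p_0 \in \gamma$ and a diffeomorphism $\Psi \colon \BD_\delta \to D$ with $\Psi(0) = p_0$ such that $\phi_R^a \circ \Psi(w) = \Psi(e^{2\pi i b}w)$.

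First, I would set
\begin{equation*}
F_0(t, z) \coloneqq \phi_R^{at}\bigl(\Psi(e^{-2\pi i b t}z)\bigr).
\end{equation*}
The conjugacy relation above ensures that $F_0$ descends to an embedding $\BR/\BZ \times \BD_\delta \hookrightarrow \partial X$ onto a tubular neighborhood of $\gamma$. In the resulting polar coordinates $(t, r, \theta)$, a direct computation shows that the Reeb vector field pulls back to the linear twist $R_F \coloneqq \tfrac{1}{a}\partial_t + \tfrac{2\pi b}{a}\partial_\theta$.

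Second, $\alpha_0 \coloneqq F_0^*\lambda_0$ is $R_F$-invariant with $\iota_{R_F}\alpha_0 = 1$, and restricts to $a\,dt$ on $\{r=0\}$. Expanding $\alpha_0$ in Fourier modes in $(t,\theta)$ and exploiting $R_F$-invariance together with an averaging-plus-Moser argument relative to $\gamma$, I would produce a diffeomorphism of $\BR/\BZ \times \BD_\epsilon$ fixing $\gamma$ that brings $\alpha_0$ to the form $f(r)\,dt + g(r)\,d\theta$ with $f(0) = a$ and $g(0) = 0$. The Reeb conditions $\iota_{R_F}\alpha_0 = 1$ and $\iota_{R_F}d\alpha_0 = 0$ then force $f(r) + 2\pi b g(r) = a$, and the contact condition $\alpha_0 \wedge d\alpha_0 > 0$ yields $g'(0) > 0$, which in turn forces $b > 0$. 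A radial reparametrization defined by $\tilde r^2 \coloneqq 2g(r)$ delivers the target form $\tfrac{1}{2}\tilde r^2\,d\theta + (a - \pi b \tilde r^2)\,dt$, and I take $F$ to be the resulting parametrization.

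Third, since $\Sigma$ is $\partial$-strong, its lift $\overline{\Sigma}$ meets the boundary torus $\BT_\gamma$ of the blow-up transversely in a simple closed curve representing the class $[\gamma] \in H_1(\BT_\gamma)$. The target curve $\{\theta = 0\} \subset \BT_\gamma$ has the same homology class and is likewise transverse to the projective linearized Reeb flow, so the two curves are isotopic through transverse curves on $\BT_\gamma$. Extending this to an isotopy of surfaces transverse to $R$ supported in the tubular neighborhood produces the modified $\partial$-strong disk-like global surface of section $\Sigma'$ with $\partial \Sigma' = \gamma$ satisfying \eqref{eq:neighbourhood_theorem_boundary_orbit_conj_to_rot_a}, and \eqref{eq:neighbourhood_theorem_boundary_orbit_conj_to_rot_b} then holds by construction. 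I expect the main obstacle to be stage two: for rational or resonant $b$ the space of $R_F$-invariant $1$-forms is strictly larger than the target family, so isolating the exact normal form will require a careful Moser deformation through contact forms whose Reeb vector field agrees with $R_F$ and whose restriction to $\gamma$ agrees with $a\,dt$.
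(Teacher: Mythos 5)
Your skeleton (conjugate the local return map to a rotation, spin the transverse disk around by the Reeb flow, then isotope $\Sigma$ to be non-winding) matches the paper's, but there are genuine gaps at the two places where the real work happens. First, the normal form in your stage two is only established for irrational $b$; you acknowledge the resonant case but leave it unresolved, and it cannot be skipped, since the standing hypothesis is only that the local return map is conjugate to \emph{some} rotation. The paper avoids the resonance problem entirely by normalizing on the $2$-dimensional transverse disk \emph{before} spinning: an equivariant Moser argument makes the parametrization $f$ of $D$ satisfy $f^*\omega_0=\omega_0$, and then the primitive is corrected by flowing along the Reeb field, $f'(z)\coloneqq\phi^{\alpha(z)}(f(z))$ where $\lambda=f^*\lambda_0+d\alpha$, which forces $f'^*\lambda_0=\tfrac12 r^2d\theta$ exactly and makes the first return time constant equal to $a$; the form \eqref{eq:neighbourhood_theorem_boundary_orbit_conj_to_rot_b} then drops out of the quotient construction with no invariant-form analysis on the solid torus. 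That disk-level correction is the missing idea.

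Second, your derivation of $b>0$ is incorrect. Writing $\alpha_0=f(r)dt+g(r)d\theta$ with $f+2\pi bg=a$, one computes $\alpha_0\wedge d\alpha_0=(fg'-gf')\,dt\wedge dr\wedge d\theta=ag'\,dt\wedge dr\wedge d\theta$, so the contact condition yields only $g'>0$ (for $r>0$; note $g'(0)=0$ since $g\sim\tfrac12 r^2$) and places no constraint whatsoever on the sign of $b$: the form $\tfrac12 r^2d\theta+(a+\pi r^2)dt$ is a perfectly good contact form. Positivity of $b$ is not a local consequence of contactness but follows, as in the paper, from the fact that the boundary orbit of a disk-like surface of section is positive, combined with \eqref{eq:neighbourhood_theorem_boundary_orbit_conj_to_rot_a} and the formula $R=\tfrac1a(\partial_t+2\pi b\partial_\theta)$. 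Relatedly, your stage three asserts that $\partial\overline{\Sigma}\subset\BT_\gamma$ "represents $[\gamma]$" and is isotopic to $\{\theta=0\}$; that class is only defined relative to a trivialization of $\BT_\gamma$, and whether the two curves are homologous depends on choosing the correct integer lift of the rotation number $b$ in the definition of $F_0$ (replacing $b$ by $b+k$ changes the trivialization). This is exactly the "non-winding" normalization the paper imposes on $\tau$, and it must be built into the construction rather than asserted.
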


\begin{proof}
Consider a small disk $D$ transverse to $\partial\Sigma$ whose local first return map is smoothly conjugated to a rotation. We may choose a parametrization $f:\BD_\epsilon\rightarrow D$ such that the local first return map, regarded as a diffeomorphism of $\BD_\epsilon$ via $f$, is a rotation of $\BD_\epsilon$. By an equivariant version of Moser's argument, after modifying the parametrization $f$ we may in addition assume that $f^*\omega_0=\omega_0$ where $\omega_0$ denotes the standard symplectic form on both $\BR^4$ and $\BD_\epsilon$. The primitives $f^*\lambda_0$ and $\lambda\coloneqq\frac{1}{2}r^2d\theta$ of the area form $\omega_0$ on $\BD_\epsilon$ differ by an exact $1$-form, i.e. $\lambda = f^*\lambda_0 + d\alpha$ for a smooth function $\alpha$ on $\BD_\epsilon$. We may normalize $\alpha$ such that $\alpha(0)=0$. We define
\begin{equation*}
f':\BD_\epsilon\rightarrow \partial X\quad f'(z)\coloneqq \phi^{\alpha(z)}(f(z))
\end{equation*}
where $\phi^t$ denotes the Reeb flow on $\partial X$. This parametrizes a small disk $D'$ transverse to $\partial\Sigma$. A direct computation shows that $f'^*\lambda_0 = f^*\lambda_0+d\alpha = \lambda$ and the local first return map is still a rotation of $\BD_\epsilon$. Let $\rho$ denote this rotation. Since $\rho^*\lambda=\lambda$, it follows from \eqref{eq:first_return_time_identity} that the first return time of $D'$ is constant and equal to $a$, the action of the orbit $\partial\Sigma$. Let us define the immersion
\begin{equation*}
F:\BR\times\BD_\epsilon\rightarrow\partial X \quad F(t,z)\coloneqq \phi^t(f'(z)).
\end{equation*}
We have $F^*\lambda_0 = dt+\lambda$ and $F$ is invariant under the diffeomorphism $\psi$ of $\BR\times\BD_\epsilon$ defined by $\psi(t,z) \coloneqq (t-a,\rho(z))$. Thus $F$ descends to a strict contactomorphism between the quotient $(\BR\times\BD_\epsilon)/\sim$ of $(\BR\times\BD_\epsilon,dt+\lambda)$ by the action of $\psi$ and a tubular neighbourhood of $\partial\Sigma$ in $\partial X$. It is a direct computation to check that we may choose a diffeomorphism $\tau:\BR/\BZ\times\BD_\epsilon\cong (\BR\times\BD_\epsilon)/\sim$ such that the contact form $dt+\lambda$ on $(\BR\times\BD_\epsilon)/\sim$ pulls back to a contact form on $\BR/\BZ\times\BD_\epsilon$ of the form \eqref{eq:neighbourhood_theorem_boundary_orbit_conj_to_rot_b} for some real number $b$. By slight abuse of notation, let $F:\BR/\BZ\times\BD_\epsilon\rightarrow\partial X$ denote the resulting parametrization of a tubular neighbourhood of $\partial \Sigma$. For appropriate choice of diffeomorphism $\tau$, the preimage $F^{-1}(\Sigma)$ is non-winding, i.e. isotopic to the annulus \eqref{eq:neighbourhood_theorem_boundary_orbit_conj_to_rot_a} in $\BR\times\BD_\epsilon$. In fact, since $\Sigma$ is a $\partial$-strong surface of section, it is easy to see that we may replace $\Sigma$ by an isotopic disk-like global surface of section $\Sigma'$ with the same boundary orbit such that $F^{-1}(\Sigma')$ is equal to the annulus \eqref{eq:neighbourhood_theorem_boundary_orbit_conj_to_rot_a}. It remains to show that the constant $b$ must be positive. This is a consequence of the fact that the boundary orbit $\partial\Sigma'$ is positive, i.e. the boundary orientation of $\Sigma'$ agrees with the orientation induced by the Reeb vector field. Since the Reeb vector field of \eqref{eq:neighbourhood_theorem_boundary_orbit_conj_to_rot_b} is simply given by $\frac{1}{a}(\partial_t+2\pi b\partial_\theta)$, this means that $b$ must be positive.
\end{proof}

The surface of section $\Sigma'$ constructed in Lemma \ref{lem:neighbourhood_theorem_boundary_orbit_conj_to_rot} still satisfies the assumptions of Theorem \ref{theorem:embedding_result} because we can simply use a transfer map $\psi:\Sigma\rightarrow\Sigma'$ transport the positive Hamiltonians $H$ and $G$ generating the lifts $\widetilde{\phi}_0$ and $\widetilde{\phi}_1$ of the first return map of $\Sigma$ to obtain positive Hamiltonians on $\Sigma'$. Thus we may replace $\Sigma$ by $\Sigma'$ and assume in addition that there exists a parametrization $F$ of a tubular neighbourhood of $\partial\Sigma$ satisfying \eqref{eq:neighbourhood_theorem_boundary_orbit_conj_to_rot_a} and \eqref{eq:neighbourhood_theorem_boundary_orbit_conj_to_rot_b}.\\
Our next step is to construct a special parametrization $f:B^2(a)\rightarrow\Sigma$ such that $f^*\omega = \omega_0$ where $\omega_0$ denotes the standard symplectic form on $B^2(a)$. For $re^{i\theta}$ near the boundary $\partial B^2(a)$ we define
\begin{equation*}
f (r e^{i\theta}) = F\left(\frac{\theta}{2\pi} \enspace,\enspace \sqrt{\frac{1}{b\pi}(a-\pi r^2)}\right).
\end{equation*}
A direct computation involving \eqref{eq:neighbourhood_theorem_boundary_orbit_conj_to_rot_b} shows that this pulls back $\omega$ to $\omega_0$. Since both $(B^2(a),\omega_0)$ and $(\Sigma,a)$ have area $a$, we can use a Moser type argument to extend $f$ to an area preserving map $f:(B^2(a),\omega_0)\rightarrow (\Sigma,\omega)$.\\
Consider the degree $0$ lift $\widetilde{\phi}_0$ of the first return map of $\Sigma$. Via the parametrization $f$ we can regard $\widetilde{\phi}_0$ as an element of $\widetilde{\Diff}(B^2(a),\omega_0)$. It follows from \eqref{eq:neighbourhood_theorem_boundary_orbit_conj_to_rot_b} and a short computation that $\widetilde{\phi}_0$ is a rotation by angle $2\pi/b$ near the boundary $\partial B^2(a)$. By the assumptions in the first assertion of Theorem \ref{theorem:embedding_result}, there exists a Hamiltonian $H:\BR/\BZ\times B^2(a)\rightarrow\BR$ which vanishes on the boundary, is positive in the interior, is autonomous near the boundary and generates $\widetilde{\phi}_0$. We argue that we may in addition assume that
\begin{equation}
\label{eq:boundary_behaviour_of_ham}
H(t,z) = \frac{1}{b}(a-\pi |z|^2)\qquad \text{for}\enspace z\enspace\text{sufficiently close to}\enspace \partial B^2(a).
\end{equation}
Since $H$ is autonomous near the boundary, it is invariant under $\phi_H^1$, which is a rotation by $2\pi/b$. If $b$ is irrational, then this implies that $H$ is invariant under arbitrary rotations and it is not hard to see that $H$ must in fact be given by \eqref{eq:boundary_behaviour_of_ham}. If $b$ is rational, then $H$ need not be invariant under arbitrary rotations near the boundary. We show that we may replace $H$ by a Hamiltonian which is rotation invariant. There exists a symplectomorphism $g$ of $(B^2(a),\omega_0)$ supported in a small neighbourhood of $\partial B^2(a)$ which commutes with the rotation by angle $2\pi/b$ such that the level sets of $H\circ g$ near the boundary are circles centred at the centre of $B^2(a)$. The time-$1$-map of $H\circ g$ is given by $g^{-1}\circ \phi_H^1 \circ g$. Away from a neighbourhood of $\partial B^2(a)$ this agrees with $\phi_H^1$ because $g$ is equal to the identity. Near the boundary, $\phi_H$ is a rotation by angle $2\pi/b$ and thus commutes with $g$. Hence $g^{-1}\circ \phi_H^1\circ g$ agrees with $\phi_H^1$ on all of $B^2(a)$. Now simply replace $H$ by $H\circ g$. Then $H$ is rotation invariant near the boundary and again it follows that it must be given by \eqref{eq:boundary_behaviour_of_ham}. 

\begin{prop}
\label{prop:spun_hypersurface_isomorphic_starshaped_hypersurface}
There exists a diffeomorphism $\psi : \partial A(a,H)\rightarrow \partial X$ such that $\psi^*(\omega_0|_{\partial X}) = \omega_0|_{\partial A(a,H)}$.
\end{prop}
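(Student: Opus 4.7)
The plan is to identify both hypersurfaces with a common model $\BR/\BZ \times B^2(a)$ and read off $\psi$ from these identifications. On the $A(a,H)$ side, set
\[
g_A : \BR/\BZ \times B^2(a) \to \partial A(a,H), \qquad g_A(t,z) = \Phi(H(t,z),t,z).
\]
By Lemma \ref{lem:basic_properties_of_spun_hypersurfaces}, $g_A$ is a diffeomorphism on $\BR/\BZ \times \interior(B^2(a))$, and the explicit form $H(t,z) = \frac{1}{b}(a-\pi|z|^2)$ near $\partial B^2(a)$ implies that $g_A$ collapses $\BR/\BZ\times\{z\}$ to $(z,0)$ for each $z \in \partial B^2(a)$. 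A direct computation using $\Phi^*\omega_0 = ds\wedge dt + \omega_0$ gives
\[
g_A^*(\omega_0|_{\partial A(a,H)}) = \omega_0 + dH_t\wedge dt.
\]

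On the $\partial X$ side, I would choose a degree-$0$ trivialization $\tau : \BR/\BZ \times \Sigma \to \overline{\partial X}$ of the blow-up at $\partial\Sigma$ whose pullback Reeb vector field, after the unique rescaling sending its $dt$-component to $1$, equals $\partial_t + X_{H_t \circ f^{-1}}$. Such a trivialization exists because, after conjugating by the symplectic parametrization $f$, the hypothesis that $(\phi_H^t)$ represents $\widetilde{\phi}_0$ says exactly that the corresponding flow on $\Sigma$ is an isotopy from the identity to the first return map realizing the degree-$0$ lift. Define
\[
g_X : \BR/\BZ \times B^2(a) \to \partial X, \qquad g_X(t,z) = \pi(\tau(t,f(z))).
\]
Since $\tau$ has degree $0$, for $z \in \partial B^2(a)$ the loop $t \mapsto \pi(\tau(t,f(z)))$ is contractible in $\partial\Sigma \cong S^1$ and hence constant equal to $f(z)$. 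I then define $\psi$ by $\psi \circ g_A = g_X$ on the interior and $\psi(z,0) = f(z)$ on the boundary orbit.

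To verify $\psi^*\omega_0|_{\partial X} = \omega_0|_{\partial A(a,H)}$, I would show that $g_X^*(\omega_0|_{\partial X}) = \omega_0 + dH_t\wedge dt$ as well. Both pullback $2$-forms are closed and share the same $1$-dimensional kernel, namely $\partial_t + X_{H_t}$: for $g_A$ this is Lemma \ref{lem:characteristic_foliation_on_graph}, while for $g_X$ it follows from the choice of $\tau$ together with $f^*\omega = \omega_0$, which yields $(f^{-1})_* X_{H_t \circ f^{-1}} = X_{H_t}$. Both forms restrict to $\omega_0$ on $\{0\}\times B^2(a)$: for $g_A$ by direct evaluation, and for $g_X$ because $\tau(0,\cdot)$ is the inclusion and $f^*\omega = \omega_0$. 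A closed $2$-form on a $3$-manifold with prescribed $1$-dimensional kernel is determined by its restriction to a global transversal, so the two pullbacks coincide.

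The main obstacle is showing that $\psi$ is a smooth diffeomorphism across the boundary orbit, where neither $g_A$ nor $g_X$ is an immersion. Here I would exploit the explicit coordinates from Lemma \ref{lem:neighbourhood_theorem_boundary_orbit_conj_to_rot}. Near $\partial B^2(a)\times\{0\}$, the hypersurface $\partial A(a,H)$ is the ellipsoid $\pi|w_1|^2 + b\pi|w_2|^2 = a$, and a natural choice of $\tau$ in the blow-up coordinates $(t_0, r', \theta')$ near $\partial\Sigma$ is $\tau(t, F(s, r'_\Sigma)) = (s, r'_\Sigma, 2\pi t)$; this has degree $0$ by inspection and induces the desired rescaled Reeb field via the formula $F^*\lambda_0 = \tfrac{1}{2}r'^2 d\theta' + (a - \pi b r'^2)\,dt$. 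With this choice, $\psi$ takes the explicit form $\psi(w_1,w_2) = F(\arg(w_1)/(2\pi),\, w_2)$ on the ellipsoidal cap, and since $|w_1|$ is bounded away from zero there, $\arg(w_1)$ is smooth locally; thus $\psi$ extends smoothly across the boundary orbit, and the pullback identity extends by continuity from the interior.
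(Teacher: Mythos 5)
Your proposal is correct and is essentially the paper's own argument in different packaging: choosing a degree-$0$ trivialization of $\partial X\setminus\partial\Sigma$ whose rescaled Reeb field is $\partial_t+X_{H_t\circ f^{-1}}$ is exactly the paper's step of pulling back $R$, extending it tangentially to the characteristic foliation, rescaling so the return times match, and defining $\psi$ by intertwining the flows; and your verification of $\psi^*\omega_0=\omega_0$ (closed forms with the same one-dimensional kernel, invariant under the flow, agreeing on the section $\{0\}\times B^2(a)$) is the paper's Cartan-formula argument. The paper likewise handles smoothness across the boundary orbit by defining $\psi$ there explicitly as $F\circ F'^{-1}$ in the tubular-neighbourhood coordinates of Lemma \ref{lem:neighbourhood_theorem_boundary_orbit_conj_to_rot}, matching your formula; the only point you assert rather than construct is the existence of the trivialization realizing the isotopy $(\phi_H^t)$ exactly, which amounts to choosing the time-reparametrization the paper obtains by rescaling $R'$.
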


\begin{proof}
After possibly shrinking $\epsilon$, we may define
\begin{equation*}
F':\BR/\BZ\times \BD_\epsilon \rightarrow \partial A(a,H)\quad F'(t,z)\coloneqq 
\left( \sqrt{\frac{a}{\pi}- b |z|^2}\cdot e^{2\pi i t}\enspace , \enspace z \right).
\end{equation*}
It follows from \eqref{eq:boundary_behaviour_of_ham} and the definition of $A(a,H)$ that the image of $F'$ is contained in $\partial A(a,H)$ for $\epsilon$ sufficiently small. Moreover, we define
\begin{equation*}
f':B^2(a)\rightarrow \partial A(a,H)\quad f'(z)\coloneqq \left(z,\sqrt{\frac{H(0,z)}{\pi}}\right).
\end{equation*}
A direct computation shows that the following two diffeomorphisms between submanifolds of $\partial A(a,H)$ and $\partial X$ agree on their overlap and pull back $\omega_0|_{\partial X}$ to $\omega_0|_{\partial A(a,H)}$:
\begin{equation*}
F\circ F'^{-1} : \operatorname{im}(F')\rightarrow \operatorname{im}(F)
\end{equation*}
and
\begin{equation*}
f\circ f'^{-1} : \operatorname{im}(f')\rightarrow \operatorname{im}(f)
\end{equation*}
On $\operatorname{im}(F')\cup \operatorname{im}(f')$ we may therefore define $\psi$ to agree with $F\circ F'^{-1}$ and $f\circ f'^{-1}$, respectively. Next, we explain how to extend to a diffeomorphism between $\partial A(a,H)$ and $\partial X$. We pull back the Reeb vector field $R$ on $\partial X$ via $F\circ F'^{-1}$ to obtain a vector field $R'$ on $\operatorname{im}(F')$ which is tangent to the characteristic foliation. We smoothly extend to a vector field on $\partial A(a,H)$, still denoted by $R'$, which is everywhere tangent to the characteristic foliation. The embedding $f'$ parametrizes a surface of section of the flow generated by $R'$. By Lemma \ref{lem:basic_properties_of_spun_hypersurfaces}, the lift of the first return map with respect to a trivialization of degree $0$ is represented by $(\phi_H^t)_{t\in [0,1]}$, which also represents the degree $0$ lift of the first return map of the surface of section of $\partial X$ parametrized by $f$. After replacing $R'$ by a positive scaling $\chi\cdot R'$ for a suitable smooth function $\chi:\partial A(a,H)\rightarrow \BR_{>0}$, we may assume that the first return times of the two surfaces of section $f$ and $f'$ agree as well. This allows us to extend $\psi$ to a diffeomorphism $\psi:\partial A(a,H)\rightarrow \partial X$ by requiring that $\psi$ intertwines the flows on $\partial A(a,H)$ and $\partial X$ generated by $R'$ and $R$, respectively. Set $\omega\coloneqq \psi^*(\omega_0|_{\partial X})$. We need to show that $\omega = \omega_0|_{\partial A(a,H)}$. By construction of $\psi$, the pull-back of the characteristic foliation on $\partial X$ via $\psi$ is equal to the characteristic foliation on $A(a,H)$. Thus $\omega_0$ and $\omega$ induce the  same characteristic foliations on $\partial A(a,H)$. Moreover, the restrictions of $\omega_0$ and $\omega$ to $\operatorname{im}(F')$ and $\operatorname{im}(f')$ agree. Cartan's formula implies
\begin{equation*}
\ML_{R'}\omega_0 = \iota_{R'}d\omega_0 + d\iota_{R'}\omega_0 = 0
\end{equation*}
where we use that $\omega_0$ is closed and $R'$ is contained in its kernel. Similarly, $\ML_{R'}\omega=0$. Let $p\in\partial A(a,H)\setminus \operatorname{im}(F')$ and let $v,w\in T_p\partial A(a,H)$. Our goal is to show that $\omega_0(v,w)=\omega(v,w)$. The trajectory of $R'$ through $p$ intersects the surface of section $\operatorname{im}(f')$ after finite time. Let $\widetilde{p}$ be the first intersection point. We transport $v$ and $w$ via the flow of $R'$ to obtain vectors $\widetilde{v},\widetilde{w}\in T_{\widetilde{p}}\partial A(a,H)$. Since $\ML_{R'}\omega_0$ and $\ML_{R'}\omega$ vanish, we have $\omega_0(v,w) = \omega_0(\widetilde{v},\widetilde{w})$ and $\omega(v,w) = \omega(\widetilde{v},\widetilde{w})$. After replacing $(p,v,w)$ by $(\widetilde{p},\widetilde{v},\widetilde{w})$, we can therefore assume w.l.o.g. that $p$ is contained in the surface of section $\operatorname{im}(f')$. In addition we may assume that $v$ and $w$ are tangent to $\operatorname{im}(f')$. Indeed, replacing $v$ and $w$ by their projections onto $T_p\operatorname{im}(f')$ along the characteristic foliation does not change $\omega_0(v,w)$ and $\omega(v,w)$ because the kernels of $\omega_0$ and $\omega$ are tangent to the characteristic foliation. Now we simply use that the restrictions $\omega_0|_{\operatorname{im}(f')}$ and $\omega|_{\operatorname{im}(f')}$ agree by construction of $\psi$.
\end{proof}

We recall the following well-known theorem due to Gromov and McDuff (see Theorem 9.4.2 in \cite{MS12}).

\begin{theorem}[Gromov-McDuff]
\label{theorem:gromov_mcduff_theorem}
Let $(M,\omega)$ be a connected symplectic $4$-manifold and $K\subset M$ be a compact subset such that the following holds.
\begin{enumerate}
\item There is no symplectically embedded $2$-sphere $S\subset M$ with self-intersection number $S\cdot S=-1$.
\item There exists a symplectomorphism $\psi:\BR^4\setminus V\rightarrow M\setminus K$, where $V\subset\BR^4$ is a star-shaped compact set.
\end{enumerate}
Then $(M,\omega)$ is symplectomorphic to $(\BR^4,\omega_0)$. Moreover, for every open neighbourhood $U\subset M$ of $K$, the symplectomorphism can be chosen equal to $\psi^{-1}$ on $M\setminus U$.
\end{theorem}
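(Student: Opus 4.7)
The plan is to compactify $M$ by capping off the end with a symplectic sphere at infinity, identify the resulting closed symplectic $4$-manifold with $\mathbb{C}P^2$, and then delete the sphere at infinity to recover the symplectomorphism with $(\BR^4,\omega_0)$. Concretely, fix the symplectic embedding $\BR^4 \setminus V \hookrightarrow \bar{\BR}^4 \cong S^4$ obtained by a standard one-point compactification (or, better, the embedding of a neighborhood of infinity in $(\BR^4,\omega_0)$ into a neighborhood of the line at infinity $L \subset \mathbb{C}P^2$ obtained from the symplectic chart $\mathbb{C}P^2 \setminus L \cong (\BC^2,\omega_0)$). Using $\psi$, glue $M$ to a neighborhood of $L$ in $\mathbb{C}P^2$ along $\psi$ applied to a neighborhood of $\partial V$; the result is a closed symplectic $4$-manifold $\bar{M} \supset M$ containing an embedded symplectic sphere $L \subset \bar{M}$ of self-intersection $+1$, such that $M = \bar{M} \setminus L$.

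Next I would run Gromov's theory of $J$-holomorphic curves on $\bar{M}$. Choose a tame almost complex structure $J$ on $\bar{M}$ that is standard near $L$ and generic elsewhere. Consider the moduli space $\mathcal{M}$ of unparametrized $J$-holomorphic spheres representing the class $[L] \in H_2(\bar{M};\BZ)$. Standard transversality makes $\mathcal{M}$ a smooth manifold of the expected real dimension $4$, and evaluation at a marked point gives a map $\mathrm{ev}:\mathcal{M}\times S^2 \to \bar{M}$. Positivity of intersection forces any two distinct $J$-curves in class $[L]$ to intersect with positive multiplicity summing to $[L]\cdot[L]=1$; hence through each point of $\bar{M}$ there passes at most one smooth $J$-holomorphic representative of $[L]$, and any such sphere is embedded.

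The heart of the argument is the Gromov compactness / bubbling analysis: I need to show that for every point $p \in \bar{M}$ there is an (honest, smooth, embedded) $J$-holomorphic representative of $[L]$ through $p$. Compactness would produce a cusp curve whose components are $J$-holomorphic spheres in classes $A_1,\dots,A_k$ with $\sum A_i = [L]$ and $\omega(A_i) > 0$. Using $[L]\cdot[L]=1$ and adjunction, any such decomposition must involve at least one class with negative self-intersection, and a homological/minimality argument (combined with our hypothesis that $\bar{M}$ contains no symplectic $(-1)$-sphere — which follows from hypothesis (i) on $M$ together with the fact that $L$ itself has self-intersection $+1$, not $-1$) rules out all decompositions except the trivial one. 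This is the main obstacle: verifying that no $(-1)$-bubbles, multiple-cover components, or boundary strata appear. Once established, $\mathrm{ev}$ descends to a symplectomorphism $\bar{M} \to \mathbb{C}P^2$ (the foliation by $J$-holomorphic lines, together with a transverse such foliation, gives standard coordinates), and it can be arranged to send $L$ to the line at infinity and to equal the identity (via $\psi^{-1}$) outside any prescribed neighborhood $U$ of $K$. Restricting to $\bar{M}\setminus L = M$ yields the desired symplectomorphism with $(\BR^4,\omega_0)$, agreeing with $\psi^{-1}$ off $U$.
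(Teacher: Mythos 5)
The paper does not prove this statement at all: it is quoted verbatim as a known black box, with a pointer to Theorem 9.4.2 in McDuff--Salamon \cite{MS12}, so there is no in-paper proof to compare yours against. Your sketch is the standard Gromov--McDuff argument from that reference --- cap off the end with a neighbourhood of the line at infinity in $\BC P^2$, show the class $[L]$ with $[L]\cdot[L]=1$ is $J$-indecomposable, foliate by $J$-lines, and recover $\BC P^2$ --- so in approach it is exactly what the cited source does.

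That said, as a proof it is an outline with two acknowledged gaps at precisely the hard points. First, the claim that $\bar M$ is minimal does not follow merely from ``hypothesis (i) on $M$ plus $L\cdot L=+1$'': an exceptional sphere in $\bar M$ need not a priori lie in $M$, and one must argue (via $E\cdot[L]=0$ for an exceptional class $E$ with $\omega(E)>0$, positivity of intersections against the $J$-lines, and a $J$ standard near $L$) that any exceptional sphere can be displaced off $L$ and hence into $M$, contradicting (1). Second, the bubbling analysis you defer is the actual content of the theorem: the index/adjunction bookkeeping you gesture at (each somewhere-injective bubble component has $c_1\geq 1$, total $c_1([L])=3$, a component with $c_1=1$ and $A\cdot A=-1$ is an exceptional sphere) must be carried out, including multiply covered components. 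Finally, the ``Moreover'' clause --- that the symplectomorphism equals $\psi^{-1}$ off $U$ --- is not automatic from the existence of the two transverse foliations; it requires running the whole construction relative to the standard structure near infinity, which is the relative form of the theorem proved in \cite{MS12}. None of this makes your strategy wrong, but as written it restates the theorem's difficulty rather than resolving it.
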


\begin{cor}
\label{cor:corollary_of_gromov_mcduff}
For $j\in\{1,2\}$, let $A_j\subset\BR^4$ be a compact submanifold diffeomorphic to the closed disk $D^4$. Assume that there exists a diffeomorphism $\psi:\partial A_1\rightarrow\partial A_2$ such that $\psi^*(\omega_0|_{\partial A_2}) = \omega_0|_{\partial A_1}$. Then the boundary $\partial A_1$ is of contact type if and only if $\partial A_2$ is of contact type. In this case, $\psi$ extends to a symplectomorphism
\begin{equation}
\psi: (A_1,\omega_0|_{A_1})\rightarrow (A_2,\omega_0|_{A_2}).
\end{equation}
\end{cor}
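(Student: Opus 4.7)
\emph{Proof proposal.} The statement has two parts.

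For the first part (contact type equivalence), observe that $\psi$ preserves the 2-form $\omega_0|_{\partial A_j}$, and hence preserves the characteristic line field on $\partial A_j$. Pulling back a primitive $\alpha_2$ of $\omega_0|_{\partial A_2}$ by $\psi$ yields a primitive of $\omega_0|_{\partial A_1}$, and $\psi$ also preserves the condition of being nowhere vanishing on the characteristic foliation. Therefore $\psi^*\alpha_2$ is a contact form on $\partial A_1$ whenever $\alpha_2$ is one on $\partial A_2$, and symmetrically.

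For the second part, assume both boundaries are of contact type. By interchanging $A_1$ and $A_2$ and replacing $\psi$ by $\psi^{-1}$ if necessary, we may assume $A_2$ is star-shaped, which is the situation relevant to our applications of the corollary. Glue $A_1$ and $\BR^4\setminus\interior(A_2)$ along the identification $\psi^{-1}:\partial A_2\to\partial A_1$ to form a symplectic $4$-manifold $M$ with symplectic embeddings $\iota_1:A_1\hookrightarrow M$ and $\iota_2:\BR^4\setminus\interior(A_2)\hookrightarrow M$ satisfying $\iota_1(q)=\iota_2(\psi(q))$ for $q\in\partial A_1$. The contact-type hypothesis provides compatible Weinstein-style collars on each side, ensuring smoothness of the glued symplectic structure. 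Since $A_1,A_2\cong D^4$, the manifold $M$ is diffeomorphic to $\BR^4$; in particular $H_2(M)=0$ and $M$ contains no symplectically embedded $(-1)$-sphere. Setting $V\coloneqq A_2$ and $K\coloneqq\iota_1(A_1)$, the restriction $\iota_2|_{\BR^4\setminus V}:\BR^4\setminus V\to M\setminus K$ is a symplectomorphism, and Theorem~\ref{theorem:gromov_mcduff_theorem} produces a symplectomorphism $\Phi:M\to\BR^4$ equal to $\iota_2^{-1}$ on $M\setminus U$ for any open neighborhood $U\supset K$.

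Exploiting the freedom in the choice of $U$ and passing to a suitable limit, one obtains $\Phi$ with $\Phi\circ\iota_2=\operatorname{id}$ on all of $\BR^4\setminus\interior(A_2)$, including $\partial A_2$. The gluing relation then gives
\[
(\Phi\circ\iota_1)(q)=\Phi(\iota_2(\psi(q)))=\psi(q)\qquad\text{for all }q\in\partial A_1,
\]
so $\Phi\circ\iota_1$ restricts to $\psi$ on $\partial A_1$. The image $\Phi(\iota_1(A_1))$ is a compact submanifold of $\BR^4$ diffeomorphic to $D^4$ with boundary $\partial A_2$, contained in an arbitrarily small neighborhood of $A_2$; connectedness and orientation considerations force this image to coincide with $A_2$. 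Hence $\widetilde\psi\coloneqq\Phi\circ\iota_1:A_1\to A_2$ is the desired symplectomorphism extending $\psi$.

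\emph{The hard part} is upgrading the Gromov--McDuff conclusion (which gives identity only on $\BR^4\setminus U'$ for open $U'\supsetneq A_2$, strictly missing $\partial A_2$) to identity on $\partial A_2$ itself. This requires a careful continuity or compactness argument exploiting the freedom to take $U$ arbitrarily close to $K$. The smooth symplectic gluing producing $M$ is the other main technical ingredient and crucially relies on the contact-type hypothesis.
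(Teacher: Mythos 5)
Your argument for the first assertion is fine: ``contact type'' is equivalent to the existence of a primitive of $\omega_0|_{\partial A_j}$ that is nowhere vanishing on the characteristic line field, and this intrinsic condition is visibly transported by $\psi$. (One can even check that such a primitive necessarily induces the boundary orientation, since on a homology $3$-sphere $\int \lambda\wedge d\lambda$ is independent of the primitive $\lambda$ of a fixed exact $2$-form; so there is no coorientation ambiguity in the gluing below.)

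For the second assertion there are two genuine gaps. The first is the reduction ``we may assume $A_2$ is star-shaped'': the corollary assumes neither $A_1$ nor $A_2$ is star-shaped, and without a star-shaped compact set you cannot verify hypothesis (2) of Theorem~\ref{theorem:gromov_mcduff_theorem}. This is precisely the point at which the paper invokes Eliashberg's theorem \cite{Eli91}: the fillable contact manifold $(\partial A_1,\ker\lambda_1)$ is contactomorphic to the standard tight $S^3$, which produces a star-shaped domain $V\subset\BR^4$ with $(\partial V,\lambda_0)$ strictly contactomorphic to $(\partial A_1,\lambda_1)$; this $V$ plays the role your $A_2$ plays. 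Granting a star-shaped reference, your single gluing $M=A_1\cup_\psi(\BR^4\setminus\interior(A_2))$ with one application of Theorem~\ref{theorem:gromov_mcduff_theorem} is a legitimate variant of the paper's route (which instead completes $A_1$ and $A_2$ separately by Liouville cylindrical ends and applies the theorem twice, once to each completion, composing the results); note also that the smoothness of the glued symplectic structure needs only Gotay's uniqueness of hypersurface neighbourhoods \cite{Got82}, not the contact-type hypothesis, which enters only through the existence of the star-shaped model.

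The second gap is the step you yourself flag as ``the hard part,'' and it cannot be waved away: Theorem~\ref{theorem:gromov_mcduff_theorem} gives $\Phi=\iota_2^{-1}$ only on $M\setminus U$ for an open $U\supset K$, so what you actually obtain is that $A_1$ is symplectomorphic to the compact domain $B\coloneqq\Phi(\iota_1(A_1))$, which has the same volume as $A_2$ and is contained in an arbitrarily small neighbourhood of $A_2$, but need not equal $A_2$. No limiting argument is supplied; a naive Arzel\`a--Ascoli attempt fails for lack of uniform derivative bounds on the family of symplectomorphisms, and ``$B$ close to $A_2$ with equal volume'' does not imply $B$ symplectomorphic to $A_2$. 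The paper's proof is arranged so that the Gromov--McDuff maps agree with the reference embeddings on the \emph{entire} complement of $V$, whence $\phi_1(V)=A_1$ and $\phi_2(V)=A_2$ exactly and the boundary restriction is $\psi$; you need the analogous exact agreement up to $\partial A_2$, and this strengthening must be stated and proved (or extracted from the proof of Theorem~\ref{theorem:gromov_mcduff_theorem}, where the modification is supported in the prescribed neighbourhood), not left as a remark.
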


\begin{proof}
By the uniqueness part of the coisotropic neighbourhood theorem in \cite{Got82}, there exist open neighbourhoods $U_j$ of $\partial A_j$ such that $\psi$ extends to a symplectomorphism
\begin{equation*}
\psi: (U_1,\omega_0|_{U_1})\rightarrow (U_2,\omega_0|_{U_2}).
\end{equation*}
Being of contact type is a property that only depends on a small neighbourhood of a hypersurface. Thus $\partial A_1$ is if contact type if and only if $\partial A_2$ is. Suppose now that this is the case. After possibly shrinking $U_1$, we can find a Liouville vector field $Z_1$ defined on $U_1$ and transverse to $\partial A_1$. Let $\lambda_1$ denote the associated Liouville $1$-form defined by $\lambda_1 = \iota_{Z_1}\omega_0$. Let $Z_2$ and $\lambda_2$ denote the push-forwards via $\psi$. For $j\in\{1,2\}$ let $(\widehat{A_j},\omega_j)$ be the symplectic completion of $(A_j,\omega_0|_{A_j})$ obtained by attaching a cylindrical end using the Liouville vector field $Z_j$. Let $\widehat{U_j}$ denote the union of $U_j$ with the cylindrical end attached to $A_j$. Clearly, $\psi$ extends to a symplectomorphism
\begin{equation*}
\psi: (\widehat{U_1},\omega_1)\rightarrow (\widehat{U_2},\omega_2).
\end{equation*}
The contact manifold $(\partial A_1,\ker \lambda_1)$ is fillable. Hence it follows from Eliashberg's paper \cite{Eli91} that it is contactomorphic to $S^3$ equipped with the standard tight contact structure. Thus we can find a star-shaped domain $V\subset\BR^4$ and a strict contactomorphism from $(\partial V,\lambda_0)$ to $(\partial A_1,\lambda_1)$, where $\lambda_0$ denotes the standard Liouville $1$-form on $\BR^4$. There exists an open neighbourhood $U_0$ of $\BR^4\setminus V$ such that this strict contactomorphism extends to a symplectomorphism
\begin{equation*}
\phi: (U_0,\omega_0)\rightarrow (\widehat{U_1},\omega_1).
\end{equation*}
By Theorem \ref{theorem:gromov_mcduff_theorem}, there exists a symplectomorphism
\begin{equation*}
\phi_1:(\BR^4,\omega_0)\rightarrow (\widehat{A_1},\omega_1)
\end{equation*}
which agrees with $\phi$ on the complement of $V$. Similarly, applying Theorem \ref{theorem:gromov_mcduff_theorem} to the composition
\begin{equation*}
\psi\circ\phi:(U_0,\omega_0)\rightarrow (\widehat{U_2},\omega_2)
\end{equation*}
we obtain a symplectomorphism
\begin{equation*}
\phi_2:(\BR^4,\omega_0)\rightarrow (\widehat{A_2},\omega_2)
\end{equation*}
agreeing with $\psi\circ\phi$ on the complement of $V$. The composition $\phi_2\circ\phi_1^{-1}$ restricts to a symplectomorphism $(A_1,\omega_0)\rightarrow (A_2,\omega_0)$ extending the given diffeomorphism $\psi:\partial A_1\rightarrow \partial A_2$.
\end{proof}

\begin{proof}[Proof of Theorem \ref{theorem:embedding_result}]
We prove the first assertion. By Proposition \ref{prop:spun_hypersurface_isomorphic_starshaped_hypersurface} and Corollary \ref{cor:corollary_of_gromov_mcduff}, $X$ is symplectomorphic to $A(a,H)$. By the second item of Lemma \ref{lem:basic_properties_of_spun_hypersurfaces} we have $A(a,H)\subset Z(a)$. Thus $X\overset{s}{\hookrightarrow} Z(a)$.\\
We prove the second assertion. We define the Hamiltonian
\begin{equation*}
K:\BR/\BZ\times B^2(a) \rightarrow \BR\quad K(t,z)\coloneqq a-\pi|z|^2.
\end{equation*}
This Hamiltonian generates $\widetilde{\rho}$, the full positive rotation of $B^2(a)$ by angle $2\pi$. Our goal is to show that we may assume that the Hamiltonian $H:\BR/\BZ\times B^2(a)\rightarrow\BR$ generating $\widetilde{\phi}_0$ satisfies in addition $K\leq H$. Then it follows from Proposition \ref{prop:spun_hypersurface_isomorphic_starshaped_hypersurface}, Corollary \ref{cor:corollary_of_gromov_mcduff} and the third item in Lemma \ref{lem:basic_properties_of_spun_hypersurfaces} that $B(a){\hookrightarrow} X{\hookrightarrow} Z(a)$. We regard the lift $\widetilde{\phi}_1$ as an element of $\widetilde{\Diff}(B^2(a),\omega_0)$ via the parametrization $f$. It follows from \eqref{eq:relationship_lift_trivialization} that $\widetilde{\phi}_1 = \widetilde{\rho}^{-1}\circ \widetilde{\phi}_0$. In particular, $\widetilde{\phi}_1$ is a rotation by angle $2\pi(1/b-1)$ near the boundary. By assumption, there exists a Hamiltonian $G:\BR/\BZ\times B^2(a)\rightarrow\BR$ generating $\widetilde{\phi}_1$ which is positive in the interior, vanishes on the boundary and is autonomous near the boundary. It follows as in the case of the Hamiltonian $H$ generating $\widetilde{\phi}_0$ that we can assume that $G$ is given by
\begin{equation*}
G(z,t) = (\frac{1}{b}-1)\cdot (a-\pi |z|^2)
\end{equation*}
near the boundary. Define
\begin{equation*}
H_t\coloneqq (K\#G)_t = K_t + G_t\circ (\phi_K^t)^{-1}.
\end{equation*}
This defines a $1$-periodic Hamiltonian generating $\widetilde{\phi}_0$ which has the special form \eqref{eq:boundary_behaviour_of_ham} near $\partial B^2(a)$ and is bounded below by $K_t(z)=a-\pi|z|^2$.
\end{proof}

\subsection{Proof of Proposition \ref{prop:modify_hypersurface_such_that_return_map_is_generated_by_positive_hamiltonian}}
\label{subsection:proof_of_modification_result}

We begin with some auxiliary lemmas. The first lemma concerns {\it positive paths} in the linear symplectic group (see Lalonde-McDuff \cite{LaMD97}). Let $\on{Sp}(2n)$ denote the group of all linear symplectomorphisms of $\BR^{2n}$. Moreover, let $J_0$ denote the matrix representing the standard complex structure on $\BR^{2n}\cong\BC^n$. We recall that, for every path $S:[0,1]\rightarrow\BR^{2n\times 2n}$ of symmetric matrices, the solution to the initial value problem
\begin{equation*}
\dot{\Phi}(t) = J_0 S(t)\Phi(t)\qquad \text{and}\qquad \Phi(0)=\on{id}
\end{equation*}
is an arc $\Phi$ in $\on{Sp}(2n)$ starting at the identity. Conversely, every such arc $\Phi$ arises this way for a unique path $S$ of symmetric matrices. A path $\Phi$ in $\on{Sp}(2n)$ is called {\it positive} if $S(t)$ is positive definite for all $t$. Lemma \ref{lem:positive_paths_rotation_number} below characterizes the elements $\widetilde{\Phi}$ of the universal cover $\widetilde{\on{Sp}}(2)$ which can be represented by positive arcs. This characterization is given in terms of the rotation number
\begin{equation*}
\rho : \widetilde{\on{Sp}}(2)\rightarrow\BR.
\end{equation*}
We give two equivalent definitions of $\rho$. The first one involves eigenvalues of symplectic matrices. We begin by defining a function $\overline{\rho}:\on{Sp}(2) \rightarrow\BR/\BZ$. The complex eigenvalues of a matrix $A\in\on{Sp}(2)$ are either given by $\lambda,\lambda^{-1}$ for $\lambda\in\BR\setminus\{0\}$ or by $e^{\pm 2\pi i\theta}$ for $\theta\in (0,1/2)$. In the former case, we define
\begin{equation*}
\overline{\rho}(A)\coloneqq \begin{cases} 0 & \text{if}\enspace \lambda>0 \\
1/2 & \text{if}\enspace\lambda<0 \end{cases}
\end{equation*}
In the latter case, we fix an arbitrary vector $v\in\BR^2\setminus\{0\}$ and define
\begin{equation*}
\overline{\rho}(A)\coloneqq \begin{cases} \theta & \text{if}\enspace \langle J_0v,Av\rangle>0 \\
-\theta & \text{if}\enspace \langle J_0v,Av \rangle <0\end{cases}
\end{equation*}
The rotation number $\rho$ is defined to be the unique lift of $\overline{\rho}$ to the universal cover $\widetilde{\on{Sp}}(2)$ satisfying $\rho(\on{id})=0$.\\
For our second definition of $\rho$, we fix $v\in\BR^2\setminus\{0\}$ and define $\overline{\rho}_v:\on{Sp}(2)\rightarrow\BR/\BZ$ to be the auxiliary function characterized by
\begin{equation*}
Av \in \BR_{>0}\cdot e^{2\pi i \overline{\rho}_v(A)}\cdot v
\end{equation*}
We let $\rho_v:\widetilde{\on{Sp}}(2)\rightarrow\BR$ denote the unique lift of $\overline{\rho}_v$ to the universal cover satisfying $\rho_v(\on{id})=0$ and define
\begin{equation*}
\rho(\widetilde{\Phi})\coloneqq \lim_{k\rightarrow\infty} \frac{\rho_v(\widetilde{\Phi}^n)}{n}.
\end{equation*}
We refer to \cite[appendix A]{CH21} for a proof that these two definitions of $\rho$ coincide.

\begin{lem}
\label{lem:positive_paths_rotation_number}
Let $\widetilde{\Phi}\in\widetilde{\on{Sp}}(2)$. Then $\widetilde{\Phi}$ can be represented by a positive arc in $\on{Sp}(2)$ if and only if the rotation number $\rho(\widetilde{\Phi})$ is strictly positive.
\end{lem}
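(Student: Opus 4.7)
The plan is to prove the two implications separately.

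For the ``if'' direction, I reduce to an angular-velocity computation. Suppose $\widetilde{\Phi}$ is represented by an arc $\Phi(t)$ with $\dot{\Phi}(t) = J_0 S(t) \Phi(t)$, $S(t) \succ 0$. Fix any $v \in \BR^2 \setminus \{0\}$, set $w(t) := \Phi(t) v$, and let $\theta_v(t)$ denote the continuous lift of $\arg(w(t))$ starting at $\arg(v)$. Using $\dot w = J_0 S w$ and the orthogonality of $J_0$, the standard formula for the angular velocity gives
\begin{equation*}
\dot{\theta}_v(t) \;=\; \frac{\langle J_0 w,\dot w\rangle}{|w|^2} \;=\; \frac{\langle w(t), S(t)\, w(t)\rangle}{|w(t)|^2},
\end{equation*}
which is bounded below uniformly by some $c > 0$ by compactness of $[0,1]$ and continuity of $S$. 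Hence $\theta_v(1) - \theta_v(0) \geq c$, so by the second definition of $\rho$ we have $\rho_v(\widetilde\Phi) \geq c/(2\pi) > 0$ for every $v$. To pass to the asymptotic invariant $\rho$, concatenate $n$ copies of the positive arc (smoothing at the joins) to represent $\widetilde\Phi^n$; the total angular change of $v$ along the concatenation equals $\sum_{k=0}^{n-1} 2\pi\,\rho_{\Phi^k v}(\widetilde\Phi) \geq nc$, so $\rho_v(\widetilde\Phi^n) \geq nc/(2\pi)$, and dividing by $n$ in the limit yields $\rho(\widetilde\Phi) \geq c/(2\pi) > 0$.

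For the ``only if'' direction, I first handle the elliptic case explicitly. The linear bijection $X \mapsto S := -J_0 X$ between $\mathfrak{sp}(2) = \mathfrak{sl}(2,\BR)$ and symmetric $2 \times 2$ matrices satisfies $\det X = \det S$, and the angular-velocity computation at the identity ($\dot\theta_v|_{t=0} = \langle v, Sv\rangle/|v|^2$) shows that $e^{tX}$ rotates in the positive direction precisely when $S \succ 0$, while ellipticity of $e^X$ is equivalent to $\det X > 0$ since $X \in \mathfrak{sl}(2)$ has eigenvalues $\pm\sqrt{-\det X}$. Hence for every $S \succ 0$, the canonical path $t \mapsto e^{tJ_0 S}$ is a positive arc, and its endpoint has rotation number $\sqrt{\det S}/(2\pi)$, which ranges over all of $(0, \infty)$ as $S$ varies over the positive definite symmetric $2\times 2$ matrices. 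Consequently, every elliptic $\widetilde\Phi$ with $\rho(\widetilde\Phi) > 0$ lies in the set $\mathcal{P} \subset \widetilde{\on{Sp}}(2)$ of positively representable elements.

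To extend to general $\widetilde{\Phi}$ with $\rho(\widetilde\Phi) > 0$, I exploit that $\mathcal P$ is a semigroup---concatenating positive arcs and smoothing the join yields a positive arc---and factor $\widetilde\Phi = \widetilde R_\alpha \cdot \widetilde E$ as a product of two positive elliptic elements, where $\widetilde R_\alpha$ is a lift of a rotation $R_\alpha$. Setting $\Phi := \pi(\widetilde\Phi)$, one computes $\on{tr}(R_{-\alpha}\Phi) = \on{tr}(\Phi)\cos\alpha + B\sin\alpha$ for some $B \in \BR$, a sinusoid in $\alpha$ of amplitude at least $|\on{tr}(\Phi)| \geq 2$ when $\Phi$ is non-elliptic; hence there is an open interval of $\alpha \in (0, 2\pi)$ making $E := R_{-\alpha}\Phi$ elliptic. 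The main obstacle is then arranging that lifts $\widetilde R_\alpha$, $\widetilde E$ with $\widetilde R_\alpha \widetilde E = \widetilde\Phi$ can be chosen so that both have positive rotation number. Since $\rho$ is a quasi-morphism on $\widetilde{\on{Sp}}(2)$ with bounded defect, the two factor rotation numbers must sum to $\rho(\widetilde\Phi)$ up to a bounded error; by shifting the lifts by powers of the deck transformation of $\widetilde{\on{Sp}}(2) \to \on{Sp}(2)$ and adjusting $\alpha$ within its admissible range, one verifies that a choice making both lifts positive exists as soon as $\rho(\widetilde\Phi) > 0$. Once the factorization is obtained, Paragraph 2 places both factors in $\mathcal P$, whence $\widetilde\Phi \in \mathcal P$ by the semigroup property.
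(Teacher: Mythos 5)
Your ``if'' direction is correct and is essentially the paper's own argument: the paper computes the same angular-velocity bound $\tfrac{d}{dt}\overline{\rho}_v(\Phi(t))=\langle\Phi(t)v,S(t)\Phi(t)v\rangle/|\Phi(t)v|^2\ge\epsilon$ and then declares the positivity of $\rho$ ``immediate''; your concatenation argument for $\widetilde\Phi^n$ fills in that passage to the asymptotic invariant properly.

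The ``only if'' direction is where you depart from the paper, and it contains a genuine gap. The paper first composes with the inverse of a lifted rotation to reduce to $\rho(\widetilde\Phi)\in(0,1)$ and then invokes Lalonde--McDuff (Theorem 1.2 of their paper) to produce a positive arc from $\on{id}$ to $\Phi$ avoiding positive real spectrum, whose rotation number is then pinned into $(0,1)$ and hence identifies the class with $\widetilde\Phi$. You instead try a self-contained factorization $\widetilde\Phi=\widetilde R_\alpha\widetilde E$ into two positive elliptic elements. Your elliptic base case is essentially right, though ``the rotation numbers range over $(0,\infty)$'' is not by itself sufficient: you also need that $\{e^{J_0S}: S\succ0,\ \det S=\theta^2\}$ is the \emph{entire} conjugacy class of $R_\theta$ and that a lift is determined by its projection together with its rotation number; both are true but must be said. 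The real problem is the step you yourself call ``the main obstacle'': arranging $\rho(\widetilde R_\alpha)>0$ and $\rho(\widetilde E)>0$ simultaneously. The quasimorphism property cannot deliver this. The various lifts of $E$ have rotation numbers differing by integers, while the defect of $\rho$ on $\widetilde{\on{Sp}}(2)$ is of size $1$, so the estimate $|\rho(\widetilde E)-\rho(\widetilde\Phi)+\rho(\widetilde R_\alpha)|\le D$ leaves at least two integer translates of $\overline\rho(E)$ in the admissible window, one of which may be negative. Concretely, take $\Phi=\on{diag}(-2,-1/2)$ with $\rho(\widetilde\Phi)=1/2$ (the smallest positive value a non-elliptic element can have) and $\alpha=\pi/2$: then $\overline\rho(E)=1/4$ and the window $[1/2-1/4-D,\ 1/2-1/4+D]$ contains both $1/4$ and $-3/4$, so the bounded-defect argument cannot tell you which one $\rho(\widetilde E)$ is. (It is in fact $1/4$, but one only sees this by tracking the winding of a test vector along the concatenated arc, or by a continuity argument in $\alpha$ --- exactly the computation your proof omits.) As written, the converse direction is therefore not established.
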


\begin{proof}
Suppose that $\widetilde{\Phi}$ is represented by a positive arc $(\Phi(t))_{t\in [0,1]}$ in $\on{Sp}(2)$ starting at the identity. Let $S(t)$ denote the path of positive definite matrices generating $\Phi(t)$. We may choose $\epsilon>0$ such that $\langle z,S(t)z \rangle\geq\epsilon$ for all $t\in [0,1]$ and all unit vectors $z\in \BR^2$. Fix $v\in\BR^2\setminus\{0\}$. A direct computation shows that
\begin{equation*}
\frac{d}{dt}\overline{\rho}_v(\Phi(t)) = \frac{\langle \Phi(t)v,S(t)\Phi(t)v \rangle}{|\Phi(t)v|^2} \geq \epsilon.
\end{equation*}
It is immediate from our second definition of $\rho$ that this implies that $\rho(\widetilde{\Phi})\geq \epsilon>0$.\\
Conversely, suppose that $\rho(\widetilde{\Phi})>0$. Our goal is to construct a positive arc in $\on{Sp}(2)$ representing $\widetilde{\Phi}$. We claim that we may reduce ourselves to the case $\rho(\widetilde{\Phi})\in (0,1)$. Indeed, for $\tau\in\BR$, let $\widetilde{R}_\tau\in\widetilde{\on{Sp}}(2)$ denote the element represented by the arc $(e^{it\tau})_{t\in [0,1]}$ in $\on{U}(1)\subset\on{Sp}(2)$. Consider the function $\tau\mapsto\rho((\widetilde{R}_\tau)^{-1}\circ \widetilde{\Phi})$. This function is continuous and it is clear from our second definition of $\rho$ that it is decreasing and that it diverges to $-\infty$ as $\tau\rightarrow +\infty$. Thus we may pick $\tau>0$ such that $\rho((\widetilde{R}_\tau)^{-1}\circ \widetilde{\Phi})\in (0,1)$. Since $\widetilde{R}_\tau$ is represented by a positive arc by definition, it suffices to show that the same is true for $(\widetilde{R}_\tau)^{-1}\circ \widetilde{\Phi}$. Hence, after replacing $\widetilde{\Phi}$ by $(\widetilde{R}_\tau)^{-1}\circ \widetilde{\Phi}$, we may assume w.l.o.g. that $\rho(\widetilde{\Phi})\in (0,1)$.\\
Let $\Phi\in\on{Sp}(2)$ denote the projection of $\widetilde{\Phi}$ to $\on{Sp}(2)$. Since $\rho(\widetilde{\Phi})\notin\BZ$, the spectrum of $\Phi$ does not contain positive real numbers. Thus Theorem 1.2 in \cite{LaMD97} implies that there exists a positive arc $(\Phi(t))_{t\in [0,1]}$ in $\on{Sp}(2)$ starting at the identity and ending at $\Phi(1)=\Phi$ such that $\Phi(t)$ does not have any positive real eigenvalues for any $t>0$. Let $[(\Phi(t))_t]$ denote the element of the universal cover represented by the arc $(\Phi(t))_t$. Our goal is to show that $\widetilde{\Phi}=[(\Phi(t))_t]$. Since the projections of these to elements to $\on{Sp}(2)$ agree, it is enough to show that the rotation numbers $\rho(\widetilde{\Phi})$ and $\rho([(\Phi(t))_t])$ coincide. These rotation numbers must agree in $\BR/\BZ$, so it is actually enough to show that $\rho([(\Phi(t))_t])\in (0,1)$. Positivity of the arc $(\Phi(t))_t$ implies that $\rho([(\Phi(t))_t])>0$. This follows from the implication of Lemma \ref{lem:positive_paths_rotation_number} already proved above. Since the spectrum of $\Phi(t)$ does not contain positive real numbers for any $t>0$, we have $\overline{\rho}(\Phi(t))\neq 0$ for all $t>0$. We deduce that $\rho([(\Phi(t))_t])<1$. This concludes our proof that $\widetilde{\Phi}$ can be represented by a positive arc.
\end{proof}

\begin{lem}
\label{lem:neighbourhood_theorem_boundary_orbit}
Let $X\subset\BR^4$ be a star-shaped domain and let $\Sigma\subset\partial X$ be a $\partial$-strong disk-like global surface of section. Let $\epsilon>0$ be sufficiently small and let $\BD_\epsilon\subset\BC$ denote the disk of radius $\epsilon$. Then there exist a $\partial$-strong disk-like global surface of section $\Sigma'$ with the same boundary orbit as $\Sigma$ and a parametrization $F:\BR/\BZ\times\BD_\epsilon\rightarrow \partial X$ of a tubular neighbourhood of $\Sigma$ such that the following is true:
\begin{equation}
\label{eq:neighbourhood_theorem_boundary_orbit_a}
F^{-1}(\Sigma') = \{(t,re^{i\theta})\in \BR/\BZ\times\BD_\epsilon \mid 0\leq r\leq\epsilon\enspace\text{and}\enspace \theta=0\}
\end{equation}
and
\begin{equation}
\label{eq:neighbourhood_theorem_boundary_orbit_b}
F^*\lambda_0 = \frac{1}{2}r^2d\theta + H dt
\end{equation}
where $H:\BR/\BZ\times\BD_\epsilon\rightarrow\BR$ is a Hamiltonian such that $H_t(0)=\int_{\partial\Sigma} \lambda_0$ and the differential $dH_t(0)$ vanishes. Moreover, the Hessian $\nabla^2H_t(0)$ is negative definite.
\end{lem}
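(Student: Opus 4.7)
The proof closely parallels the argument for Lemma \ref{lem:neighbourhood_theorem_boundary_orbit_conj_to_rot}, but the absence of the rotation hypothesis requires two new inputs: the $\partial$-strong property of $\Sigma$, and the positive-paths criterion of Lemma \ref{lem:positive_paths_rotation_number}. A standard normal-form argument near a closed Reeb orbit---Darboux on a small transverse disk, extension by the Reeb flow, a parametrized Moser trick to kill the exact part of the transverse primitive, and a mapping-torus identification to make the result periodic---produces a parametrization $F_0:\BR/\BZ\times\BD_\epsilon\to\partial X$ of a tubular neighbourhood of $\partial\Sigma$ with $F_0^*\lambda_0 = H_0\,dt + \tfrac{1}{2}r^2\,d\theta$. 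The construction can be carried out so as to realize any prescribed isotopy class of symplectic trivialization of $\xi|_{\partial\Sigma}$; we choose the one induced by $\Sigma$, i.e.\ so that $F_0^{-1}(\Sigma)$ is isotopic rel boundary to the flat annulus \eqref{eq:neighbourhood_theorem_boundary_orbit_a}. Evaluating along the orbit forces $H_0(t,0)=\int_{\partial\Sigma}\lambda_0 = a$ and $dH_0|_t(0)=0$.

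The remaining step is to arrange $\nabla^2 H_0|_t(0)$ to be negative definite. Writing $S_0(t):=-\nabla^2 H_0|_t(0)$, the unique solution of $\dot\Phi_0 = J_0 S_0(t)\Phi_0$, $\Phi_0(0)=\identity$ describes the linearized Reeb flow along $\partial\Sigma$ in the $\Sigma$-trivialization, so that its class $\widetilde\Phi_0\in\widetilde{\on{Sp}}(2)$ is precisely the linearized Reeb flow seen from $\Sigma$, and controls the projective linearized flow on the torus $\BT_{\partial\Sigma}\subset\overline{\partial X}$ in this trivialization. The $\partial$-strong hypothesis asserts exactly that $\partial\overline\Sigma$ is a section circle for this projective flow; since in the $\Sigma$-trivialization $\partial\overline\Sigma$ sits as $\{\theta=0\}$ and the boundary orbit of $\Sigma$ is positive, transversality of the flow to this section forces the rotation number $\rho(\widetilde\Phi_0)$ to be strictly positive. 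Lemma \ref{lem:positive_paths_rotation_number} then supplies a positive-arc representative $\Phi_1$ of $\widetilde\Phi_0$ generated by positive definite $S_1(t)$. Since $\Phi_0$ and $\Phi_1$ are homotopic in $\on{Sp}(2)$ rel endpoints, one can construct a contractible loop $A:\BR/\BZ\to\on{Sp}(2)$ so that the fibrewise-linear reparametrization $(t,z)\mapsto(t,A_t z)$, suitably corrected by a higher-order term in $z$, pulls $F_0^*\lambda_0$ back to a 1-form of the same shape $H\,dt+\tfrac{1}{2}r^2\,d\theta$ with $-\nabla^2 H|_t(0)=S_1(t)$ positive definite.

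Finally, as in the closing argument of Lemma \ref{lem:neighbourhood_theorem_boundary_orbit_conj_to_rot}, the $\partial$-strong property of $\Sigma$ allows one to isotope $\Sigma$ to a $\partial$-strong disk-like global surface of section $\Sigma'$ with the same boundary orbit, chosen so that $F^{-1}(\Sigma')$ coincides exactly with the flat annulus \eqref{eq:neighbourhood_theorem_boundary_orbit_a}. The main technical obstacle is the rotation-number step: identifying the trivialization induced by $\Sigma$ as the one in which the $\partial$-strong hypothesis forces $\rho(\widetilde\Phi_0)>0$, and then realizing the resulting positive arc as $-\nabla^2 H|_t(0)$ via a gauge transformation that preserves the normal form of the contact form.
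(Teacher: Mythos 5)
Your proof is correct and follows essentially the same route as the paper's: both deduce strict positivity of the rotation number from the $\partial$-strong hypothesis together with positivity of the boundary orbit, invoke Lemma \ref{lem:positive_paths_rotation_number} to obtain a positive-definite generator of the linearized flow in the non-winding trivialization, realize it as $-\nabla^2H_t(0)$ in a normal form of the type of Lemma 5.2 in \cite{HM15}, and finally isotope $\Sigma$ to flatten it in the chart. The only (cosmetic) difference is the order of operations --- the paper replaces the trivialization by an isotopic one realizing the positive arc \emph{before} building the normal form, whereas you build the normal form first and then correct the Hessian by the contractible fibrewise-linear gauge loop $A_t=\Phi_0(t)\Phi_1(t)^{-1}$.
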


\begin{proof}
Let $a\coloneqq\int_{\partial\Sigma}\lambda_0$ be the action of the orbit $\partial\Sigma$. Let $\xi$ denote the contact structure on $\partial X$. Let $\tau:\xi|_{\partial\Sigma}\cong\BR^2$ be a symplectic trivialization of $\xi|_{\partial\Sigma}$ with the property that $\Sigma$ does not wind with respect to $\tau$. Via the trivialization $\tau$, the linearized Reeb flow $d\phi^t$ along $\partial\Sigma$ induces an arc $\Phi:[0,a]\rightarrow\on{Sp}(2)$ representing an element $\widetilde{\Phi}$ of the universal cover $\widetilde{\on{Sp}}(2)$. Since $\partial \Sigma$ is a positive boundary orbit of $\Sigma$, the linearized Reeb flow along $\partial\Sigma$ winds non-negatively with respect to the surface of section $\Sigma$, i.e. $\rho(\widetilde{\Phi})\geq 0$. The fact that $\Sigma$ is $\partial$-strong actually implies that $\rho(\widetilde{\Phi})$ is strictly positive. Hence it follows from Lemma \ref{lem:positive_paths_rotation_number} that $\widetilde{\Phi}$ is represented by a positive arc. We may therefore choose a loop $(S(t))_{t\in\BR/a\BZ}$ of symmetric positive definite matrices generating an arc $\Psi:[0,a]\rightarrow\on{Sp}(2)$ which represents $\widetilde{\Phi}$. After replacing $\tau$ by an isotopic trivialization, we can assume that the arc $\Psi$ is induced by the linearized Reeb flow. Using the trivialization $\tau$, we can choose a parametrization $F:\BR/\BZ\times\BD_\epsilon\rightarrow\partial X$ of a tubular neighbourhood of $\partial\Sigma$ such that
\begin{enumerate}
\item The pullback $F^*\lambda_0$ is given by $a\cdot dt$ on the circle $\BR/\BZ\times 0$.
\item The pullback $F^*\omega_0$ agrees with $\omega_0$ on the the circle $\BR/\BZ\times 0$. Here $\omega_0$ denotes the standard symplectic form on $\BR^4$ and also the $2$-form on $\BR/\BZ\times\BD_\epsilon$ whose restriction to fibres $t\times\BD_\epsilon$ agrees with the standard symplectic form on $\BD_\epsilon$ and which vanishes on the vector field $\partial_t$.
\item The linearized Reeb flow of $F^*\lambda_0$ along the orbit $\BR/\BZ\times 0$ is given by the arc $\Psi$.
\end{enumerate}
The remaining argument proceeds exactly as the proof of Lemma 5.2 in \cite{HM15}. The result is a modification of the parametrization $F$ such that properties (1)-(3) above still hold and such that $F^*\lambda_0$ is of the form \eqref{eq:neighbourhood_theorem_boundary_orbit_b} for some Hamiltonian $H$ which satisfies $H_t(0)=a$ and $dH_t(0)=0$. It follows from the fact that the linearized flow $\Psi$ is generated by symmetric positive definite matrices and our sign conventions that $\nabla^2H_t(0)$ is negative definite. Finally, since $\Sigma$ does not wind with respect to the parametrization $F$, we can achieve \eqref{eq:neighbourhood_theorem_boundary_orbit_a} by isotoping $\Sigma$ and possibly shrinking the tubular neighbourhood.
\end{proof}

\begin{lem}
\label{lem:embedding_time_engergy_extended_phase_space_into_symplectization}
Let $D$ be a closed $2$-dimensional disk. Let $\lambda$ be a Liouville $1$-form on $D$, i.e. $\omega\coloneqq d\lambda$ is a symplectic form and the Liouville vector field $W$ characterized by $\iota_W\omega = \lambda$ is transverse to $\partial D$. Let $I\subset\BR$ be a closed interval and endow $I\times D$ with the contact form $dt+\lambda$. Here $t$ denotes the coordinate on $I$. Set $M\coloneqq \BR_s\times I\times D$. We can regard $M$ as the symplectization of $I\times D$ and equip it with the symplectic form $\omega_M\coloneqq d(e^s(dt+\lambda))$. We can also regard $M$ as time-energy extended phase space of $D$ and endow it with the symplectic form $\widetilde{\omega}\coloneqq ds\wedge dt+\omega$. We abbreviate $M_+\coloneqq \BR_{\geq 0}\times I\times D$ and $M_0\coloneqq 0\times I\times D$. There exists a symplectic embedding
\begin{equation*}
G: (M_+,\widetilde{\omega})\rightarrow (M_+,\omega_M)
\end{equation*}
which restricts to the identity on $M_0$.
\end{lem}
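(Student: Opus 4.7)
My plan is to apply Moser's trick to the affine family
\begin{equation*}
\omega_u := (1-u)\,\widetilde{\omega} + u\,\omega_M, \qquad u \in [0,1],
\end{equation*}
on $M_+$ and set $G := \Phi_1$, where $\Phi_u$ is the isotopy generated by the Moser vector field. Since $\widetilde{\omega} = d(\lambda + s\,dt)$ and $\omega_M = d(e^s(dt+\lambda))$, the explicit 1-form
\begin{equation*}
\beta := (e^s - 1 - s)\,dt + (e^s - 1)\,\lambda
\end{equation*}
is a primitive of $\omega_M - \widetilde{\omega}$ that vanishes as a 1-form on $M$ at every point of $M_0$. Writing $\omega_u = a\,ds\wedge dt + b\,ds\wedge\lambda + a\,\omega$ with $a := 1 + u(e^s - 1) \geq 1$ and $b := u e^s$, and using that $\lambda\wedge d\lambda$ and $\omega\wedge\omega$ vanish on the $2$-dimensional disk $D$, one computes $\omega_u \wedge \omega_u = 2a^2\,ds\wedge dt\wedge\omega$, so every $\omega_u$ is symplectic.

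Next I solve the Moser equation $\iota_{X_u}\omega_u = -\beta$. In local coordinates $(s,t,x,y)$ on $M$ with $\lambda = \lambda_x\,dx + \lambda_y\,dy$, I decompose $X_u = X_u^s\partial_s + X_u^t\partial_t + X_u^D$ with $X_u^D$ tangent to $D$. The $dt$-component of the Moser equation gives
\begin{equation*}
X_u^s = -(e^s - 1 - s)/a,
\end{equation*}
and the $dx,dy$-components force $X_u^D = -(c/a)\,W$, where $W$ is the Liouville vector field of $(D,\lambda)$ and $c := b X_u^s + e^s - 1$. The $ds$-component then reads $a X_u^t + b\,\lambda(X_u^D) = 0$, but $\lambda(W) = \iota_W\iota_W\omega = 0$, so in fact $X_u^t \equiv 0$. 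Since $\beta$ vanishes on $M_0$ and $\omega_u$ is non-degenerate, $X_u$ vanishes on $M_0$, so the isotopy $\Phi_u$ fixes $M_0$ pointwise.

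The main technical issue is completeness of the Moser flow on the non-compact manifold $M_+$, and the boundary-preservation of the flow. First, the elementary inequality $e^s - 1 - s \geq 0$ for $s \geq 0$ gives $X_u^s \leq 0$, so the $s$-coordinate is non-increasing along trajectories and stays in $[0,s_0]$ for any initial value $s_0$; on this compact range $X_u$ is bounded, ruling out blow-up. Second, $X_u^t \equiv 0$ keeps trajectories in $I$. Third, writing $c\,a = (e^s-1) + u[1 + e^s(s-1)]$ and noting that $s\mapsto 1 + e^s(s-1)$ has value $0$ and derivative $s e^s \geq 0$ at $s = 0$, one sees $c \geq 0$ on $M_+$; hence $X_u^D = -(c/a)\,W$ is inward-pointing (or zero) along $\partial D$ because $W$ is outward-transverse, so the flow preserves the disk fibre. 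Combining these, $\Phi_u$ is a well-defined diffeomorphism of $M_+$ for all $u \in [0,1]$, and Moser's identity yields $G^*\omega_M = \widetilde{\omega}$ with $G|_{M_0} = \on{id}$.

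The main obstacle I anticipate is exactly this completeness and boundary-preservation step. It succeeds only because of the signs $X_u^s \leq 0$ and $c \geq 0$, which in turn depend critically on our having chosen the specific primitive $\beta$ of $\omega_M - \widetilde{\omega}$ that vanishes on the boundary $M_0$.
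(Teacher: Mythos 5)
Your proof is correct, and it takes a genuinely different route from the paper. The paper constructs $G$ directly: assuming w.l.o.g. that $0\in\interior(I)$, it introduces the single vector field $Y=\partial_s-W-t\partial_t$, defines $G$ by $G|_{M_0}=\identity$ and $G^*Y=\partial_s$, and then verifies $G^*\omega_M=\widetilde{\omega}$ by a short Cartan-calculus argument: both $\ML_Y\omega_M$ and $\ML_{\partial_s}\widetilde{\omega}$ vanish, so it suffices to compare the two forms along $M_0$, which is done by checking their pullbacks to $M_0$ and their contractions with $\partial_s$ there. You instead run Moser's trick on the affine family $\omega_u=(1-u)\widetilde{\omega}+u\,\omega_M$ with the primitive $\beta=(e^s-1-s)\,dt+(e^s-1)\lambda$ of $\omega_M-\widetilde{\omega}$; I checked your computations ($d\beta=\omega_M-\widetilde{\omega}$, nondegeneracy of $\omega_u$ via $\omega_u\wedge\omega_u=2a^2\,ds\wedge dt\wedge\omega$, the formulas $X_u^s=-(e^s-1-s)/a$, $X_u^t=0$, $X_u^D=-(c/a)W$ with $ca=(1-u)(e^s-1)+use^s\geq 0$) and they are all right, as is the completeness argument: $s$ is non-increasing with $\dot s\approx -s^2/2$ near $s=0$ so trajectories never cross $M_0$, $t$ is constant, and $X_u^D$ points inward along $\partial D$ since $W$ is outward (which follows from transversality together with $\int_{\partial D}\lambda=\int_D\omega>0$, and is also what the paper assumes). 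The trade-off is roughly this: the paper's construction is shorter and makes $G$ essentially explicit as the flow of one autonomous vector field, with no interpolation needed; your Moser argument is more systematic (the primitive $\beta$ is forced by the requirement $G|_{M_0}=\identity$, and the conclusion $G^*\omega_M=\widetilde{\omega}$ is automatic once the flow exists), but it shifts all the work into the sign and completeness analysis, which you correctly identify and carry out. One cosmetic point: $\Phi_1$ need not be surjective onto $M_+$ (the flow contracts in $s$ and in $D$), so it is a diffeomorphism onto its image rather than of $M_+$; since the lemma only asks for a symplectic embedding, this does not affect the conclusion.
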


\begin{proof}
We assume w.l.o.g. that $0$ is contained in the interior of $I$. We define the vector field $Y$ on $M$ by
\begin{equation*}
Y\coloneqq \partial_s - W - t\cdot \partial t.
\end{equation*}
Since $W$ is outward pointing at $\partial D$ and $t\cdot\partial_t$ is outward pointing at $\partial I$, the flow of $Y$ is defined for all positive times. We define $G$ to be the embedding which is uniquely determined by requiring $G(0,t,z)= (0,t,z)$ for all $(t,z)\in I\times D$ and $G^*Y = \partial_s$. Let us check that
\begin{equation*}
G^* \omega_M = \widetilde{\omega}.
\end{equation*}
We compute
\begin{IEEEeqnarray}{rCl}
\ML_Y \omega_M & = & \ML_Y d(e^s(\lambda + dt)) \nonumber\\
& = & d ((\ML_Y e^s)(\lambda + dt) + e^s\ML_Y(\lambda + dt)) \nonumber\\
& = & d (e^s(\lambda + dt - d\iota_W\lambda - \iota_W d\lambda - d \iota_{t\partial_t}dt)) \nonumber\\
& = & 0.\nonumber
\end{IEEEeqnarray}
Clearly $\ML_{\partial_s}\widetilde{\omega}=0$ as well. Thus is suffices to check that $G^*\omega_M$ and $\widetilde{\omega}$ agree on the set $M_0$. This is equivalent to showing that the pullbacks to $M_0$ of $G^*\omega_M$ and $\widetilde{\omega}$ and of $\iota_{\partial_s}G^*\omega_M$ and $\iota_{\partial_s}\widetilde{\omega}$ agree. Since the restriction of $G$ to $M_0$ is the identity, the pullback of $G^*\omega_M$ to $M_0$ is equal to $\omega$. This agrees with the pullback of $\widetilde{\omega}$. We compute
\begin{IEEEeqnarray}{rCl}
\iota_{\partial_s}G^*\omega_M & = & \iota_{\partial_s}G^* d(e^s(\lambda + dt))\nonumber\\
& = & G^* \iota_Y e^s(\omega + ds\wedge\lambda + ds\wedge dt) \nonumber\\
& = & G^* e^s (-\lambda + \lambda + dt + tds) \nonumber\\
& = & G^* e^s (dt + tds).\nonumber
\end{IEEEeqnarray}
The pullback of this form to $M_0$ is simply $dt$. This agrees with the pullback of $\iota_s\widetilde{\omega}$. This establishes $G^*\omega_M=\widetilde{\omega}$
\end{proof}

\begin{proof}[Proof of Proposition \ref{prop:modify_hypersurface_such_that_return_map_is_generated_by_positive_hamiltonian}]
Our first step is to construct a star-shaped domain $X'\subset\BR^4$ which contains $X$, agrees with $X$ outside a small neighbourhood of $\partial\Sigma$ and has a $\partial$-strong disk-like surface of section $\Sigma'\subset\partial X'$ of the same symplectic area as $\Sigma$ such that the local first return map of a small disk transverse to the boundary orbit $\partial \Sigma'$ is smoothly conjugated to a rotation. We apply Lemma \ref{lem:neighbourhood_theorem_boundary_orbit}. After replacing $\Sigma$ by a disk-like global surface of section with the same boundary orbit, we may choose a parametrization $F:\BR/\BZ\times \BD_\epsilon \rightarrow \partial X$ of a tubular neighbourhood of $\partial\Sigma$ satisfying \eqref{eq:neighbourhood_theorem_boundary_orbit_a} and \eqref{eq:neighbourhood_theorem_boundary_orbit_b}. Consider time-energy extended phase space $\BR_s\times (\BR/\BZ)_t\times \BD_\epsilon$ equipped with the symplectic form $\widetilde{\omega_0}\coloneqq \omega_0 + ds\wedge dt$ where $\omega_0$ denotes the standard symplectic form on $\BD_\epsilon$. The pullback of $\widetilde{\omega_0}$ via the parametrization
\begin{equation*}
G:\BR/\BZ\times \BD_\epsilon \rightarrow \BR\times\BR/\BZ\times\BD_\epsilon \quad G(t,z)\coloneqq (H_t(z),t,z)
\end{equation*}
of the graph $\Gamma(H)$ is given by $\omega_0 + dH\wedge dt$. It follows from  \eqref{eq:neighbourhood_theorem_boundary_orbit_b} that this agrees with $F^*\omega_0$ where, by slight abuse of notation, $\omega_0$ also denotes the standard symplectic form on $\BR^4$. We set $\psi\coloneqq F\circ G^{-1}$. This defines a diffeomorphism from $\Gamma(H)$ to $\operatorname{im}(F)$ satisfying $\psi^*\omega_0 = \widetilde{\omega_0}|_{\Gamma(H)}$. By the coisotropic neighbourhood theorem in \cite{Got82}, we may extend $\psi$ to a symplectomorphism defined on some open neighbourhood $U$ of the graph $\Gamma(H)$. Note that the push-forward of the vector field $\partial_s$ via $\psi$ is transverse to $\partial X$ and outward pointing. Let $H'$ be a $C^1$-small perturbation of $H$ supported in a small neighbourhood of $\BR/\BZ\times 0$ with the following properties:
\begin{enumerate}
\item $H'\geq H$
\item Near $\BR/\BZ\times 0$ the Hamiltonian $H'$ is given by
\begin{equation}
\label{eq:modified_H_in_prop_modify_hypersurface_such_that_return_map_is_generated_by_positive_hamiltonian}
H'_t(z) = H_t(0) - b |z|^2
\end{equation}
for some positive constant $b$.
\end{enumerate}
This is possible because the Hessian $\nabla^2H_t(0)$ is negative definite. We define $X'$ to be the star-shaped domain which agrees with $X$ outside $\operatorname{im}(\psi)$ and which satisfies
\begin{equation*}
\partial X'\cap \operatorname{im}(\psi) = \psi(\Gamma(H')).
\end{equation*}
The inequality $H'\geq H$ implies that $X$ is contained on $X'$. By \eqref{eq:modified_H_in_prop_modify_hypersurface_such_that_return_map_is_generated_by_positive_hamiltonian}, the orbit $\partial\Sigma$ on $\partial X$ also is an orbit on $\partial X'$. If follows from our construction of $X'$ that there exists an embedding $F':\BR/\BZ\times \BD_\epsilon \rightarrow\partial X'$ which agrees with $F$ near $\BR/\BZ\times \partial\BD_\epsilon$ such that
\begin{equation*}
F'^*\omega_0 = \omega_0 + dH'\wedge dt.
\end{equation*}
We define $\Sigma'\subset\partial X'$ to agree with $\Sigma$ outside $\operatorname{im}(F')$ and to be given by
\begin{equation*}
\{F'(t,r)\mid 0\leq r\leq \epsilon\enspace \text{and}\enspace t\in\BR/\BZ\}
\end{equation*}
inside $\operatorname{im}(F')$. This clearly defines a disk-like surface of section of the Reeb flow on $\partial X'$. Its symplectic area agrees with the symplectic area of $\Sigma$ because $\partial\Sigma'=\partial\Sigma$. It follows from \eqref{eq:modified_H_in_prop_modify_hypersurface_such_that_return_map_is_generated_by_positive_hamiltonian} that the local return map of the orbit $\partial\Sigma'$ is smoothly conjugated to a rotation.\\
Let us replace $(X,\Sigma)$ by $(X',\Sigma')$. We may choose a smooth parametrization $f:\BD\rightarrow\Sigma$ such that near the boundary $\partial\BD$ the pullback $\omega\coloneqq f^*\omega_0$ is rotation invariant and the first return map $\phi$ is a rotation. Let $\widetilde{\phi}_0$ denote the lift of $\phi$ with respect to a degree $0$ trivialization. Note that by positivity of the constant $b$ in \eqref{eq:modified_H_in_prop_modify_hypersurface_such_that_return_map_is_generated_by_positive_hamiltonian}, the rotation angle of $\widetilde{\phi}_0$ near $\partial\BD$ must be strictly positive. Thus $\widetilde{\phi}_0$ can be generated by a Hamiltonian $H$ which vanishes on the boundary and is autonomous and strictly positive in some neighbourhood of $\partial\BD$. We do not know whether we can choose $H$ to be strictly positive everywhere in the interior. Our strategy is to construct a domain $X'$ which contains $X$ and agrees with $X$ outside an open neighbourhood of $\Sigma$ such that the degree $0$ lift of the first return map of the Reeb flow on $\partial X'$ can be generated by a Hamiltonian $H'$ which agrees with $H$ near $\partial\BD$ and is strictly positive in the interior. Let $K:\BR/\BZ\times\BD\rightarrow\BR$ be a non-negative Hamiltonian which is compactly supported in the interior $\interior(\BR/\BZ\times\BD)$. The composition $\widetilde{\phi}_0\circ (\phi_K^t)_{t\in [0,1]}\in \widetilde{\Diff}(\BD,\omega)$ is generated by the Hamiltonian
\begin{equation}
\label{eq:composite_hamiltonian_in_prop_modify_hypersurface_such_that_return_map_is_generated_by_positive_hamiltonian}
(H\#K)_t\coloneqq H_t + K\circ (\phi_H^t)^{-1}.
\end{equation}
If $K$ is sufficiently large, then this Hamiltonian is strictly positive. The Hamiltonian $H\#K$ need not be $1$-periodic. This can be remedied as follows: First note that we can assume that $H_t$ is strictly positive in the interior for $t$ in some open neighbourhood of $0\in\BR/\BZ$. Then it suffices to consider $K$ with the property that $K_t$ vanishes for $t$ near $0$. In this situation $H\#K$ smoothly extends to a $1$-periodic Hamiltonian which is strictly positive in the interior and autonomous and rotation invariant near the boundary. Our goal is to construct $X'$ such that the degree $0$ lift of the first return map is given by $\widetilde{\phi}_0\circ (\phi_K^t)_{t\in [0,1]}$.\\
Let $D\subset\operatorname{int}(\BD)$ be a closed disk centred at $0$ and containing the support of $K$. After possibly increasing the radius of $D$, we can assume that $\lambda\coloneqq f^*\lambda_0$ is a Liouville $1$-form on $D$ whose associated Liouville vector field is transverse to $\partial D$. Let $\epsilon>0$ be sufficiently small and let
\begin{equation*}
F:[0,\epsilon]\times D\rightarrow \partial X
\end{equation*}
be the unique embedding such that $F(0,z) = f(z)$ and such that the pullback of the Reeb vector field $R$ on $\partial X$ via $F$ is given by $\partial_t$ where $t$ is the coordinate on $[0,\epsilon]$. This implies that $F^*\lambda_0 = \lambda + dt$. Let $M\coloneqq \BR\times [0,\epsilon]\times D$ be the symplectization of $([0,\epsilon]\times D,\lambda + dt)$ equipped with the symplectic form $\omega_M\coloneqq d(e^s(\lambda + dt))$. Using the radial Liouville vector field $Z_0$ on $\BR^4$, we can extend $F$ to a symplectic embedding $F:(M,\omega_M)\rightarrow (\BR^4,\omega_0)$ mapping $\partial_s$ to $Z_0$. Our modification of $X$ will be supported inside the image of $F$.\\
We apply Lemma \ref{lem:embedding_time_engergy_extended_phase_space_into_symplectization} and obtain a symplectic embedding $G:(M_+,\widetilde{\omega})\rightarrow (M_+,\omega_M)$. Let us reparametrize $K_t$ such that it is compactly supported in the time interval $(0,\epsilon)$ and still generates the same time-$1$-flow. Let $\Gamma_-(K)$ denote the subgraph of $K$, i.e. the set
\begin{equation*}
\Gamma_-(K) = \{(s,t,z)\in M_+\mid s\leq K_t(z)\}.
\end{equation*}
We define $X'$ to be the union
\begin{equation*}
X'\coloneqq X \cup F(G(\Gamma_-(K))).
\end{equation*}
$\Sigma$ is a disk-like global surface of section of the characteristic foliation on $\partial X'$ and it follows from Lemma \ref{lem:characteristic_foliation_on_graph} that the degree $0$ lift of the first return map is given by $\widetilde{\phi'}_0=\widetilde{\phi}_0\circ (\phi_K^t)_{t\in [0,1]}$. By our construction of $K$, the first return map $\widetilde{\phi'}_0$ can be generated by a Hamiltonian satisfying the hypotheses in Theorem \ref{theorem:embedding_result}.\\
The domain $X'$ might not be star-shaped. We argue that $X'$ must be symplectomorphic to a star-shaped domain for appropriate choice of Hamiltonian $K$. Our strategy is to define a contact form $\beta$ on $\partial X'$ such that $d\beta = \omega_0|_{X'}$. This contact form must be tight and there exists a star-shaped domain $X''\subset\BR^4$ such that $(\partial X',\beta)$ is strictly contactomorphic to $(\partial X'',\lambda_0|_{\partial X''})$. Corollary \ref{cor:corollary_of_gromov_mcduff} then implies that $X''$ is symplectomorphic to $X'$. On the complement of the image of $F:M\rightarrow\BR^4$ we simply define $\beta\coloneqq \lambda_0|_{\partial X'}$. We parametrize the intersection $\on{im}(F)\cap\partial X'$ via
\begin{equation*}
F':[0,\epsilon]\times D \rightarrow \on{im}(F)\cap\partial X'\quad (t,z)\mapsto F(G(K_t(z),t,z)).
\end{equation*}
A direct computation shows that $F'^*\omega_0 = \omega + dK_t\wedge dt$. Moreover, we have $F'^*\lambda_0 = dt+\lambda$ near the boundary of $[0,\epsilon]\times D$. Thus we may extend $\beta$ to a smooth $1$-form on all of $\partial X'$ be requiring that $F'^*\beta = dt+\lambda + K_t dt$. The resulting $1$-form clearly satisfies $d\beta = \omega_0|_{\partial X'}$. It remains to check $\beta$ is indeed a contact form on $\on{im}(F)\cap\partial X'$ for appropriate choice of $K$. This amounts to showing that $F'^* (\beta\wedge d\beta)$ is a positive volume form on $[0,\epsilon]\times D$. We compute
\begin{equation*}
F'^* (\beta\wedge d\beta) = dt\wedge ((1+K_t)\omega + \lambda\wedge dK_t).
\end{equation*}
Thus it suffices to construct $K$ such that $(1+K_t)\omega + \lambda\wedge dK_t$ is a positive area form on $D$. The first term clearly is a positive area form since $K_t\geq 0$. Let $W$ denote the Liouville vector field on $D$ induced by $\lambda$. We can guarantee that the second term is non-negative by choosing $K_t$ to be constant outside a small neighbourhood of $\partial D$ and requiring that $dK_t(W)\leq 0$ inside that small neighbourhood. Clearly we still have the freedom to make \eqref{eq:composite_hamiltonian_in_prop_modify_hypersurface_such_that_return_map_is_generated_by_positive_hamiltonian} positive.
\end{proof}

\section{A positivity criterion for Hamiltonian diffeomorphisms}
\label{section:a_positivity_criterion_for_hamiltonian_diffeomorphisms}

The results of this section are inspired by the fixed point theorem stated in \cite[Theorem 3]{ABHS18}. In fact, our proofs rely on the generalized generating functions introduced in \cite[sections 2.3 to 2.6]{ABHS18}.

\subsection{Statement of the positivity criterion}
\label{subsection:statement_of_the_positivity_criterion}

The generalized generating function framework we use applies to area-preserving diffeomorphisms of the disk $\BD$ which are {\it radially monotone} in the sense of the following definition.

\begin{definition}
A diffeomorphism $\phi\in\Diff^+(\BD)$ is called {\it radially monotone} if it fixes the center $0$ and if the image of the radial foliation of $\BD$ under $\phi$ is transverse to the foliation of $\BD$ by circles centred at $0$.
\end{definition}

We state the main result of this section.

\begin{theorem}
\label{theorem:positivity_criterion_for_radially_monotone_diffeomorphisms}
Let $\omega$ be a smooth $2$-form on $\BD$ which is positive in the interior $\operatorname{int}(\BD)$. Moreover, assume that $\omega$ is rotation invariant near the origin and the boundary $\partial\BD$. Let $\widetilde{\phi}\in\widetilde{\Diff}(\BD,\omega)$ and set $\phi\coloneqq \pi(\widetilde{\phi})$. Assume that:
\begin{enumerate}
\item $\phi$ fixes the origin and is radially monotone.
\item The restriction of $\phi$ to a small disk centred at the origin is a rotation.
\item The restriction of $\phi$ to a small annular neighbourhood of $\partial\BD$ is a rotation.
\item The action $\sigma_{\widetilde{\phi}}(p)$ is positive for all fixed points $p$ of $\phi$.
\end{enumerate}
Then there exists a Hamiltonian $H:\BR/\BZ \times \BD \rightarrow \BR$ with the following properties:
\begin{enumerate}
\item $H$ is strictly positive in the interior $\operatorname{int}(\BD)$ and vanishes on the boundary $\partial\BD$.
\item $H$ is autonomous and rotation invariant in some neighbourhood of the origin and $\partial\BD$.
\item The Hamiltonian vector field $X_{H_t}$ defined by $\iota_{X_{H_t}}\omega = dH_t$ in the interior $\operatorname{int}(\BD)$ smoothly extends to the closed disk $\BD$ and is tangent to $\partial \BD$.
\item The arc $(\phi_H^t)_{t\in [0,1]}$ represents $\widetilde{\phi}$.
\end{enumerate}
\end{theorem}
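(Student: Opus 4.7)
The plan is to generalize the strategy sketched for Proposition 1.17 in the introduction, replacing standard generating functions with the generalized ones of \cite{ABHS18}. The key idea is to construct a generating function $W$ for $\phi$ whose critical points are the fixed points of $\phi$ and whose critical values equal the corresponding actions $\sigma_{\widetilde{\phi}}(p)$; then the linear interpolation $W_s := sW$ yields an isotopy from $\mathrm{id}$ to $\phi$ whose generating Hamiltonian $H_s$ has the property that $\mathrm{Crit}(H_s) = \mathrm{Crit}(W)$ and $H_s$ agrees with $W$ on this set. Positivity of actions then forces $\min H_s \geq 0$ since the minimum must be attained either on the boundary (where $H_s = 0$) or at a critical point (where $H_s = W > 0$).

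First I would handle the rotational regions. Near $0$ and near $\partial\BD$, the map $\phi$ is a rotation and $\omega$ is rotation invariant, so there is, up to an additive constant, a unique autonomous rotation-invariant Hamiltonian of the form $h(|z|^2)$ whose time-$1$-flow is this rotation. The additive constant is pinned down by requiring that the restriction of the arc $(\phi_H^t)$ to these regions represents the restriction of $\widetilde{\phi}$. In particular, the Hamiltonian near $\partial\BD$ vanishes on $\partial\BD$; since the boundary rotation angle, combined with the chosen lift, corresponds to a positive action $\sigma_{\widetilde{\phi}}$ at the boundary (by hypothesis (4) applied via a limiting argument from interior fixed points, or directly from the rotation number of $\widetilde{\phi}$ on $\partial\BD$), the autonomous Hamiltonian is strictly positive in the interior of the boundary annulus. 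A similar argument applies near the origin.

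On the intermediate annulus, where $\phi$ is radially monotone but not rotational, I apply the generalized generating function construction that will be reviewed in \S\ref{subsection:lifts_to_the_strip}--\S\ref{subsection:generalized_generating_functions}. Lifting $\phi$ to a symplectomorphism of a strip via polar coordinates and extracting a generating function $W$, the critical points of $W$ are in bijection with the fixed points of $\phi$ in this region, with $W(p) = \sigma_{\widetilde{\phi}}(p) > 0$. Setting $W_s := sW$ and letting $\phi_s$ denote the symplectomorphism generated by $W_s$ produces an isotopy from the identity to $\phi$. A direct computation, analogous to the $W \mapsto H$ translation in the standard case, shows that the non-autonomous Hamiltonian $H_s$ generating $(\phi_s)$ satisfies $\mathrm{Crit}(H_s) = \mathrm{Crit}(W)$ and $H_s|_{\mathrm{Crit}(W)} = W|_{\mathrm{Crit}(W)}$ for every $s \in [0,1]$. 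Hence, on this annulus, $H_s$ is non-negative and strictly positive away from $\partial\BD$.

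Finally, I would glue the three Hamiltonians into a single smooth, $1$-periodic Hamiltonian $H$ on $\BR/\BZ \times \BD$. Smoothness of the glued family requires arranging the generalized generating function construction near the two interfaces so that it reproduces the autonomous rotation-invariant Hamiltonians there, which is possible because $\phi$ is rotational in those regions and the generating function is then essentially determined by radial data. The resulting $H$ vanishes on $\partial\BD$, is autonomous and rotation invariant near $0$ and $\partial\BD$, is strictly positive in $\operatorname{int}(\BD)$, and by construction the concatenated arc represents $\widetilde{\phi}$. The principal obstacle is twofold: first, verifying that the generalized generating function formalism of \cite{ABHS18} produces a $W$ with the required critical-point/action correspondence under our degenerate boundary conditions (where $\omega$ may vanish on $\partial\BD$); and second, tracking the isotopy class of the concatenated arc to ensure it represents $\widetilde{\phi}$ rather than differing from it by a power of the full rotation $\widetilde{\rho}$, which is controlled by the chosen additive constants in the rotational regions.
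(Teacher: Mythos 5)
Your proposal follows the same core mechanism as the paper's proof: lift $\widetilde{\phi}$ to the strip, take the generalized generating function $W$ of \cite{ABHS18}, interpolate linearly via $W_s\coloneqq sW$, show that the generating Hamiltonian $H_s$ has the same interior critical points as $W$ and agrees with $W$ (hence with the action $\sigma_{\widetilde{\phi}}$) on them, and conclude positivity from a minimum principle. However, your three-region decomposition introduces a genuine gap in the positivity argument for the two rotational collars. Hypothesis (4) concerns only fixed points of $\phi$; if the collar rotation near $\partial\BD$ is by an irrational angle there are no fixed points in that collar at all, so neither a ``limiting argument from interior fixed points'' (interior fixed points need not accumulate at $\partial\BD$) nor ``the rotation number of $\widetilde{\phi}$ on $\partial\BD$'' (about which hypothesis (4) says nothing) determines the sign of the collar rotation, and hence of the autonomous collar Hamiltonian. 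The same problem occurs near the origin: $\sigma_{\widetilde{\phi}}(0)>0$ pins down the value of $H$ at the origin but not the sign of its radial derivative, so the autonomous Hamiltonian could a priori become negative inside the inner disk. In the paper, positivity of $H_s$ on the collars is not a local statement: it is deduced from the \emph{global} minimum principle --- $H_s$ vanishes on $\partial\BD$, is positive at the origin and at every interior critical point, and any interior minimum is a critical point --- applied to the single Hamiltonian produced by the correspondence of Proposition \ref{prop:correspondence_symplectomorphisms_generating_functions} on the whole strip (that correspondence automatically makes $H_s$ autonomous and translation-invariant in the collars, so no gluing is needed). Your region-by-region verification is in fact circular: positivity on the intermediate annulus requires the interface values, which come from the collar Hamiltonians, whose positivity you have not established.

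A second, smaller omission: the interpolation $W_s=sW$ produces a Hamiltonian parametrized by $s\in[0,1]$ with $H_0\neq H_1$ in general, so it does not directly define a map $\BR/\BZ\times\BD\rightarrow\BR$ as the statement requires. The paper needs an additional cut-off/reparametrization step, together with a correction by a $C^1$-small compactly supported Hamiltonian, to produce a genuinely $1$-periodic $H$ that is still positive and whose time-one flow still represents $\widetilde{\phi}$; your proposal does not address this.
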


In order to prove Theorem \ref{theorem:strong_viterbo_near_round_ball}, we need to apply Theorem \ref{theorem:positivity_criterion_for_radially_monotone_diffeomorphisms} to area-preserving diffeomorphisms of the disk which are $C^1$-close to the identity. Such diffeomorphisms need not be radially monotone. However, they are smoothly conjugated to radially monotone diffeomorphisms (see \cite[Proposition 2.24]{ABHS18}). We use this observation to deduce the following corollary of Theorem \ref{theorem:positivity_criterion_for_radially_monotone_diffeomorphisms}.

\begin{cor}
\label{cor:positivity_criterion_for_diffeomorphisms_close_to_the_identity}
Let $\omega$ be a smooth $2$-form on $\BD$ which is positive in the interior $\operatorname{int}(\BD)$. Let $\widetilde{\phi}\in\widetilde{\Diff}(\BD,\omega)$ and set $\phi\coloneqq \pi(\widetilde{\phi})$. Assume that:
\begin{enumerate}
\item $\widetilde{\phi}$ is $C^1$-close to the identity $\identity_\BD$.
\item $\phi$ is smoothly conjugated to a rotation in some neighbourhood of the boundary $\partial\BD$.
\item The action $\sigma_{\widetilde{\phi}}(p)$ is positive for all fixed points $p$ of $\phi$.
\end{enumerate}
Then there exists a Hamiltonian $H:\BR/\BZ \times \BD\rightarrow\BR$ with the following properties:
\begin{enumerate}
\item $H$ is strictly positive in the interior $\operatorname{int}(\BD)$ and vanishes on the boundary $\partial\BD$.
\item $H$ is autonomous in some neighbourhood of $\partial\BD$.
\item The Hamiltonian vector field $X_{H_t}$ defined by $\iota_{X_{H_t}}\omega = dH_t$ in the interior $\operatorname{int}(\BD)$ smoothly extends to the closed disk $\BD$ and is tangent to $\partial \BD$.
\item The arc $(\phi_H^t)_{t\in [0,1]}$ represents $\widetilde{\phi}$.
\end{enumerate}
\end{cor}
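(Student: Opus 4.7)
The plan is to reduce Corollary \ref{cor:positivity_criterion_for_diffeomorphisms_close_to_the_identity} to Theorem \ref{theorem:positivity_criterion_for_radially_monotone_diffeomorphisms} by a sequence of area-preserving conjugations of $\widetilde{\phi}$, followed by a small local modification near the fixed point $0$.

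First, I would locate a fixed point $p_0$ of $\phi$ in the interior of $\BD$; such a fixed point exists by Brouwer's theorem, and can be taken to lie in the interior because by hypothesis $\phi$ is smoothly conjugated to a small-angle rotation in an annular neighborhood of $\partial\BD$. Using Moser's trick I would then construct a diffeomorphism $\psi_1$ of $\BD$ supported away from $\partial\BD$, mapping $0$ to $p_0$, such that $\psi_1^*\omega$ is rotation invariant in a small disk around $0$. Replacing the data $(\omega,\widetilde{\phi})$ by $(\psi_1^*\omega,\widetilde{\psi}_1^{-1}\widetilde{\phi}\widetilde{\psi}_1)$ — the action at fixed points is preserved by Lemma \ref{lem:basic_properties_of_action} — we may assume that $0$ is fixed by $\phi$ and that $\omega$ has the desired rotation invariance near $0$, while the other hypotheses of the Corollary still hold.

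Second, by Proposition 2.24 of \cite{ABHS18} a further area-preserving conjugation renders $\phi$ radially monotone, establishing hypothesis (1) of Theorem \ref{theorem:positivity_criterion_for_radially_monotone_diffeomorphisms}; a separate conjugation supported in an annular neighborhood of $\partial\BD$ converts the given smooth conjugation-to-a-rotation there into an exact rotation, yielding hypothesis (3).

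Third, I must arrange hypothesis (2): that $\phi$ equal an exact rotation on some small disk about $0$. This is the principal technical obstacle, since a general area-preserving diffeomorphism $C^1$-close to the identity is not smoothly linearizable about an elliptic fixed point, and so the desired local rotation structure cannot be attained by conjugation alone. My approach is instead to deform $\widetilde{\phi}$ inside a tiny disk $\BD_\delta$ around $0$ to a lift $\widetilde{\phi}_\delta$ that equals an exact rotation on $\BD_{\delta/2}$, agrees with $\widetilde{\phi}$ outside $\BD_\delta$, is $C^1$-close to $\widetilde{\phi}$, and has $0$ as its unique fixed point inside $\BD_\delta$ with the same (positive) action. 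Theorem \ref{theorem:positivity_criterion_for_radially_monotone_diffeomorphisms} then produces a positive Hamiltonian $H_\delta$ generating $\widetilde{\phi}_\delta$.

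Finally, writing $\widetilde{\phi}=\widetilde{\phi}_\delta\cdot\widetilde{g}$ with $\widetilde{g}=\widetilde{\phi}_\delta^{-1}\widetilde{\phi}$ supported in $\BD_\delta$ and $C^1$-small, I would generate $\widetilde{g}$ by a Hamiltonian $G$ compactly supported in $\BD_\delta$ and form $H\coloneqq H_\delta\#G$ via the composition formula $(H_\delta\#G)_t = (H_\delta)_t + G_t\circ(\phi_{H_\delta}^t)^{-1}$ used in the proof of Theorem \ref{theorem:embedding_result}. Because $H_\delta$ is strictly positive in the interior of $\BD$ and hence bounded below by a positive constant on the compact set $\BD_\delta$, while $G$ can be made uniformly small by choosing $\delta$ small, the resulting $H$ remains strictly positive in the interior of $\BD$, vanishes on $\partial\BD$, and generates $\widetilde{\phi}_\delta\widetilde{g}=\widetilde{\phi}$. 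Pulling back by the conjugations from the first two steps yields the Hamiltonian claimed by the Corollary.
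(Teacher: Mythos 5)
Your overall strategy --- conjugate a fixed point to the origin, arrange radial monotonicity and exact rotations near $0$ and $\partial\BD$, apply Theorem \ref{theorem:positivity_criterion_for_radially_monotone_diffeomorphisms}, and absorb a small correction via $H_\delta\#G$ --- is the same as the paper's. But your Step 3 hides the genuine difficulty and, as stated, fails. You take an \emph{arbitrary} fixed point $p_0$ from Brouwer and claim you can deform $\phi$ inside $\BD_\delta$ to an area-preserving map that is an exact rotation on $\BD_{\delta/2}$, agrees with $\phi$ outside $\BD_\delta$, and has $0$ as its \emph{unique} fixed point in $\BD_\delta$. If $p_0$ is a non-degenerate positive hyperbolic fixed point this is impossible: the fixed-point index of $\phi$ over $\BD_\delta$ is $-1$, it is determined by the (unchanged) behaviour near $\partial\BD_\delta$, and a rotation contributes index $+1$ at $0$, so extra fixed points must appear. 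This is exactly why the paper first proves, by a Lefschetz count combined with the $C^1$-closeness hypothesis (which excludes negative hyperbolic points), that an \emph{elliptic or degenerate} interior fixed point exists, and only then modifies the map there (Lemma \ref{lem:perturbing_symplectomorphism_near_elliptic_fixed_point}, which needs ellipticity to invoke the Moser--Siegel normal form and produce a genuine local rotation). Even at an elliptic point, the existence of your deformation --- area-preserving, compatible with the global radial monotonicity you arranged in Step 2, and with controlled fixed-point set --- is the technical core of Lemmas \ref{lem:perturbing_symplectomorphism_near_elliptic_fixed_point} and \ref{lem:moving_fixed_point_to_origin}; asserting it is not a proof. (A related smaller point: Proposition 2.24 of \cite{ABHS18} gives radial monotonicity for the M\"obius conjugation moving $p_0$ to $0$ of a map $C^1$-close to the identity; after your preliminary conjugation $\psi_1$ the $C^1$-smallness constant degrades, so the order of operations matters, and your boundary and interior modifications must each be checked to preserve radial monotonicity, as the paper does explicitly in Steps 3--4 of Lemma \ref{lem:moving_fixed_point_to_origin}.)

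There is also a circularity in your final positivity estimate. You bound the correction term $G_t\circ(\phi_{H_\delta}^t)^{-1}$ by ``a positive constant'' bounding $H_\delta$ from below on $\BD_\delta$, and propose to make $G$ small ``by choosing $\delta$ small'' --- but $H_\delta$, and hence the required smallness threshold, itself depends on $\delta$, and $G$ is determined by $\widetilde{\phi}_\delta^{-1}\widetilde{\phi}$ rather than being a free parameter. The paper breaks this circle by observing that the Hamiltonian produced by Theorem \ref{theorem:positivity_criterion_for_radially_monotone_diffeomorphisms} can be taken autonomous near the origin with $H'_t(p)=\sigma_{\widetilde{\phi}'}(p)$, which is uniformly bounded below by a constant close to $\sigma_{\widetilde{\phi}}(p)>0$ independently of the perturbation; you need this (or an equivalent uniform lower bound) to close the argument.
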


\subsection{Lifts to the strip}
\label{subsection:lifts_to_the_strip}

In order to represent area preserving diffeomorphisms of the disk $\BD$ by generalized generating functions, it will be convenient to lift them to the strip $S\coloneqq [0,1]\times\BR$. This is carefully explained in \cite[section 2.3]{ABHS18}. Here we summarize the relevant material. Consider the map
\begin{equation*}
p:S\rightarrow\BD\quad (r,\theta)\mapsto r\cdot e^{i\theta}.
\end{equation*}
The restriction of $p$ to $(0,1]\times\BR$ is a covering map to $\BD\setminus\{0\}$. The translation
\begin{equation*}
T:S\rightarrow S\quad (r,\theta)\mapsto (r,\theta+2\pi)
\end{equation*}
generates the group of deck transformations. Any orientation preserving diffeomorphism $\phi\in\Diff^+(\BD)$ fixing the origin lifts to a diffeomorphism $\Phi\in\Diff^+(S)$ (see \cite[Lemma 2.10]{ABHS18}). The lift $\Phi$ commutes with $T$, i.e.
\begin{equation*}
T\circ\Phi=\Phi\circ T.
\end{equation*}
Any two lifts are related by composition with a deck transformation. A lift $\widetilde{\phi}\in\widetilde{\Diff}(\BD)$ of $\phi$ to the universal cover uniquely specifies a lift $\Phi\in\Diff^+(S)$ as follows: Represent $\widetilde{\phi}$ by a smooth arc $(\phi_t)_{t\in [0,1]}$ in $\Diff^+(\BD)$ starting at the identity. This arc uniquely lifts to an arc $(\Phi_t)_{t\in [0,1]}$ in $\Diff^+(S)$ starting at the identity. Now simply set $\Phi\coloneqq \Phi_1$.\\

If the diffeomorphism $\phi\in\Diff^+(\BD)$ is radially monotone, then any lift $\Phi$ is monotone in the sense of the following definition.

\begin{definition}
\label{definition:monotone_diffeomorphism_of_strip}
Let $\Phi\in\Diff^+(S)$ be a diffeomorphism and denote the components of $\Phi$ by $(R,\Theta)$. We call $\Phi$ {\it monotone} if $\partial_1R(r,\theta)>0$ for all $(r,\theta)\in S$.
\end{definition}

The following characterization of monotonicity will be useful in later sections.

\begin{lem}
\label{lem:monotonicity_equivalent_to_diagonal_projection_being_diffeomorphism}
Let $\Phi\in\Diff^+(S)$ be a diffeomorphism preserving the boundary components of $S$. Let $\Gamma(\Phi)\subset S\times S$ denote the graph of $\Phi$. Then $\Phi$ is monotone if and only if
\begin{equation*}
\pi: \Gamma(\Phi)\subset S\times S \rightarrow S\quad (r,\theta,R,\Theta)\mapsto (R,\theta)
\end{equation*}
is a diffeomorphism.
\end{lem}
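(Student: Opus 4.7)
The plan is to reduce the geometric statement about the graph to an analytic statement about the map $\psi: S \to S$ defined by $\psi(r,\theta) \coloneqq (R(r,\theta),\theta)$.

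First I would parametrize the graph by the identification
\[
i: S \xrightarrow{\cong} \Gamma(\Phi), \qquad (r,\theta) \mapsto (r,\theta,R(r,\theta),\Theta(r,\theta)),
\]
which is a diffeomorphism since $\Phi$ is. Under this identification $\pi \circ i = \psi$, so $\pi$ is a diffeomorphism of $\Gamma(\Phi)$ onto $S$ if and only if $\psi$ is a self-diffeomorphism of $S$. A direct computation gives the Jacobian matrix
\[
D\psi(r,\theta) = \begin{pmatrix} \partial_1 R & \partial_2 R \\ 0 & 1 \end{pmatrix},
\]
so $\det D\psi = \partial_1 R$.

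For the forward direction, if $\psi$ is a diffeomorphism then $\partial_1 R$ is nowhere zero. Since $\Phi$ preserves the boundary components of $S = [0,1] \times \BR$, we have $R(0,\theta) = 0$ and $R(1,\theta) = 1$ for every $\theta$. For fixed $\theta$, the continuous function $r \mapsto \partial_1 R(r,\theta)$ has no sign change, and by the mean value theorem applied to $R(\cdot,\theta)$ on $[0,1]$ it must take a positive value; hence $\partial_1 R > 0$ everywhere, i.e.\ $\Phi$ is monotone.

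For the reverse direction, suppose $\Phi$ is monotone, so $\partial_1 R > 0$. Then for each fixed $\theta$, the map $r \mapsto R(r,\theta)$ is a strictly increasing smooth function of $r \in [0,1]$, sending $0$ to $0$ and $1$ to $1$ by boundary preservation, and is therefore a diffeomorphism of $[0,1]$ onto itself. This shows $\psi$ is a bijection $S \to S$, and since its Jacobian determinant $\partial_1 R$ is everywhere positive, $\psi$ is a diffeomorphism by the inverse function theorem. Consequently $\pi = \psi \circ i^{-1}$ is a diffeomorphism of $\Gamma(\Phi)$ onto $S$. No step is expected to present a real obstacle; the only point requiring care is the appeal to boundary preservation, which is needed in both directions (to pin down the sign of $\partial_1 R$ in the forward direction, and to conclude surjectivity of $\psi$ in the reverse direction).
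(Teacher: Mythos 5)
Your proof is correct and follows essentially the same route as the paper: parametrize $\Gamma(\Phi)$ by $(\identity_S,\Phi)$, reduce to the map $(r,\theta)\mapsto (R(r,\theta),\theta)$, and use boundary preservation to see that this is a diffeomorphism exactly when $\partial_1 R>0$. The paper states the fiberwise reduction more tersely; your Jacobian computation and mean-value argument just make the same steps explicit.
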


\begin{proof}
Let $\Phi(r,\theta) = (R(r,\theta),\Theta(r,\theta))$ denote the components of $\Phi$. The map
\begin{equation*}
(\identity_S,\Phi):S\rightarrow \Gamma(\Phi)\quad (r,\theta)\mapsto (r,\theta,R(r,\theta),\Theta(r,\theta))
\end{equation*}
is a parametrization of $\Gamma(\Phi)$. Clearly, $\pi$ is a diffeomorphism if and only if the composition
\begin{equation*}
\pi\circ (\identity_S,\Phi):S\rightarrow S\quad (r,\theta)\mapsto (R(r,\theta),\theta)
\end{equation*}
is a diffeomorphism. This is the case if and only if
\begin{equation}
\label{eq:R_in_proof_of_monotonicity_criterion}
R(\cdot,\theta):[0,1]\rightarrow [0,1]
\end{equation}
is a diffeomorphism for every fixed $\theta\in \BR$. By assumption $\Phi$ preserves the boundary components of $S$. Thus $R(0,\theta) = 0$ and $R(1,\theta)=1$ for every $\theta$. Hence \eqref{eq:R_in_proof_of_monotonicity_criterion} is a diffeomorphism if and only if $\partial_1R>0$, i.e. $\Phi$ is monotone.
\end{proof}

Now suppose that $\BD$ is equipped with a $2$-form $\omega$ which is positive in the interior $\operatorname{int}(\BD)$. Then
\begin{equation*}
\Omega\coloneqq p^*\omega
\end{equation*}
is a $2$-form on $S$ which is positive in the interior and invariant under the translation $T$, i.e.
\begin{equation*}
T^*\Omega = \Omega.
\end{equation*}
If $\phi\in\Diff(\BD,\omega)$ fixes the origin and preserves $\omega$, then any lift $\Phi$ preserves $\Omega$, i.e. $\Phi\in\Diff(S,\Omega)$.

\subsection{Generalized generating functions on the strip}
\label{subsection:generalized_generating_functions}

Throughout this section, let $\Omega$ be a $2$-form on the strip $S$ which is positive in the interior $\interior(S)$ and preserved by $T$. Moreover, let $\Phi\in\Diff(S,\Omega)$ be a diffeomorphism which preserves $\Omega$ and the boundary components of $S$ and which commutes with $T$. We equip the product $S\times S$ with the closed $2$-form $(-\Omega)\oplus\Omega$. The restriction of this $2$-form to the interior of $S\times S$ is a symplectic form. Consider a primitive $1$-form $\alpha$ of $(-\Omega)\oplus\Omega$ whose restriction to the diagonal $\Delta\subset S\times S$ vanishes. The graph $\Gamma(\Phi)$ is a Lagrangian submanifold of $(S\times S,(-\Omega)\oplus\Omega)$, i.e. the restriction of $(-\Omega)\oplus\Omega$ to $\Gamma(\Phi)$ vanishes. Therefore, the restriction of the primitive $\alpha$ to $\Gamma(\Phi)$ is closed. Since $S$ is simply connected, it is also exact, i.e. it can be written as
\begin{equation}
\label{eq:defining_equation_generalized_generating_function}
\alpha|_{\Gamma(\Phi)} = d W_{\Phi,\alpha}
\end{equation}
for a function $W_{\Phi,\alpha}:\Gamma(\Phi)\rightarrow\BR$, which is unique up to addition of a constant. We call $W_{\Phi,\alpha}$ a {\it generalized generating function} for $\Phi$ with respect to $\alpha$ (c.f. \cite[Definition 9.3.8]{MS17}). In Lemma \ref{lem:normalization_generalized_generating_function} below, we specify a preferred normalization of $W_{\Phi,\alpha}$ which we will use throughout this paper. For $z\in \{1\}\times \BR\subset \partial S$, let $\gamma_z$ denote the path
\begin{equation*}
\gamma_z : [0,1]\rightarrow \partial S\times \partial S\qquad \gamma_z(t)\coloneqq (z,(1-t)z+t\Phi(z)).
\end{equation*}

\begin{lem}
\label{lem:normalization_generalized_generating_function}
There exists a unique smooth function $W_{\Phi,\alpha}:\Gamma(\Phi)\rightarrow\BR$ satisfying
\begin{equation}
\label{eq:characterizing_equation_generalized_generating_function_general_primitive}
d W_{\Phi,\alpha} = \alpha|_{\Gamma(\Phi)}
\end{equation}
and
\begin{equation}
\label{eq:normalization_equation_generalized_generating_function_general_primitive}
W_{\Phi,\alpha}(z,\Phi(z)) = \int_{\gamma_z}\alpha\qquad\text{for all}\enspace z\in\{1\}\times \BR.
\end{equation}
\end{lem}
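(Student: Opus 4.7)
The plan is to establish existence and uniqueness of $W_{\Phi,\alpha}$ in two stages: first produce some primitive of $\alpha|_{\Gamma(\Phi)}$ (unique only up to an additive constant), then show that the normalization \eqref{eq:normalization_equation_generalized_generating_function_general_primitive} pins this constant down. For the first stage, the graph $\Gamma(\Phi)$ is Lagrangian for $(-\Omega)\oplus\Omega$, so $d(\alpha|_{\Gamma(\Phi)}) = (d\alpha)|_{\Gamma(\Phi)} = ((-\Omega)\oplus\Omega)|_{\Gamma(\Phi)} = 0$. Moreover, the parametrization $(\identity_S,\Phi):S\to\Gamma(\Phi)$ shows $\Gamma(\Phi)$ is diffeomorphic to the strip, hence simply connected, so the closed $1$-form $\alpha|_{\Gamma(\Phi)}$ admits a smooth primitive $W$, unique up to an additive constant.

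For the second stage, define $h:\{1\}\times\BR\to\BR$ for a fixed primitive $W$ by $h(z):=W(z,\Phi(z))-\int_{\gamma_z}\alpha$. If $h$ is constant on the connected set $\{1\}\times\BR$, then $W_{\Phi,\alpha}:=W-h$ satisfies both \eqref{eq:characterizing_equation_generalized_generating_function_general_primitive} and \eqref{eq:normalization_equation_generalized_generating_function_general_primitive}; uniqueness is immediate, since any two primitives satisfying both conditions differ by a constant forced to vanish by \eqref{eq:normalization_equation_generalized_generating_function_general_primitive}. Thus the entire statement reduces to showing that $h$ is constant.

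To verify this, fix $z_0,z_1\in\{1\}\times\BR$, pick a smooth path $z(s)$ in $\{1\}\times\BR$ from $z_0$ to $z_1$, and set $\beta(s):=(z(s),\Phi(z(s)))$. Because $\Phi$ preserves the boundary component $\{1\}\times\BR$, the path $\beta$ lies both in $\Gamma(\Phi)$ and entirely inside the $2$-dimensional corner $P:=(\{1\}\times\BR)\times(\{1\}\times\BR)\subset S\times S$. The same is true of $\gamma_{z_0}$, $\gamma_{z_1}$, and the diagonal path $\delta(s):=(z(1-s),z(1-s))$, so the concatenation $\ell:=\gamma_{z_0}\cdot\beta\cdot(-\gamma_{z_1})\cdot\delta$ is a closed loop in $P$. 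Using $dW=\alpha|_{\Gamma(\Phi)}$ to evaluate $W(z_1,\Phi(z_1))-W(z_0,\Phi(z_0))=\int_\beta\alpha$, and the hypothesis $\alpha|_\Delta=0$ to get $\int_\delta\alpha=0$, one obtains
\begin{equation*}
h(z_1)-h(z_0) = \int_\beta\alpha - \int_{\gamma_{z_1}}\alpha + \int_{\gamma_{z_0}}\alpha = \int_\ell\alpha.
\end{equation*}
Finally, since $\{1\}\times\BR$ is $1$-dimensional, $\Omega$ vanishes on it identically, so $d\alpha=(-\Omega)\oplus\Omega$ pulls back to zero on $P$. Hence $\alpha|_P$ is a closed $1$-form on the simply connected plane $P\cong\BR^2$, and therefore exact, giving $\int_\ell\alpha=0$.

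The main obstacle, modest as it is, is recognizing why the normalization \eqref{eq:normalization_equation_generalized_generating_function_general_primitive} is consistent; the argument above isolates the two ingredients that make it work, namely that $\Phi$ preserves the chosen boundary component (so that all four arcs of $\ell$ live in the single flat corner $P$) and that $\Omega$ restricts trivially to $\partial S$ for dimensional reasons (so that $\alpha|_P$ is closed). Both are immediate from the standing setup, so no further work is needed.
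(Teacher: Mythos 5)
Your proposal is correct and follows essentially the same route as the paper: the key step in both is that $(-\Omega)\oplus\Omega$ restricts to zero on the corner $(\{1\}\times\BR)\times(\{1\}\times\BR)$ for dimensional reasons, so $\alpha$ is closed (hence exact) there, and together with $\alpha|_\Delta=0$ this makes the normalization consistent along the whole boundary component. The paper phrases this as equality of integrals over two homotopic concatenated paths rather than vanishing of the integral over your closed loop $\ell$, but these are the same computation.
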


\begin{proof}
Set $z_0\coloneqq (1,0)$. Clearly, there exists a unique function $W_{\Phi,\alpha}$ satisfying \eqref{eq:characterizing_equation_generalized_generating_function_general_primitive} such that \eqref{eq:normalization_equation_generalized_generating_function_general_primitive} holds for $z_0$. We need to check that this function $W_{\Phi,\alpha}$ satisfies \eqref{eq:normalization_equation_generalized_generating_function_general_primitive} for all $z\in \{1\}\times \BR$. Fix $z\in\{1\}\times\BR$. We define two paths $\delta$ and $\epsilon$ by
\begin{equation*}
\delta : [0,1] \rightarrow \partial S \times \partial S \qquad \delta(t) = ((1-t)z_0+tz, \Phi((1-t)z_0+tz))
\end{equation*}
and
\begin{equation*}
\epsilon : [0,1] \rightarrow \partial S \times \partial S \qquad \epsilon(t) = ((1-t)z_0+tz,(1-t)z_0+tz).
\end{equation*}
The path $\delta$ is contained in $\Gamma(\Phi)$ and goes from $(z_0,\Phi(z_0))$ to $(z,\Phi(z))$. The path $\epsilon$ connects $(z_0,z_0)$ to $(z,z)$ and is contained in $\Delta$. The concatenations $\gamma_{z_0} \# \delta$ and $\epsilon \# \gamma_z$ are homotopic inside $\partial S\times\partial S$ with fixed end points. Since the restriction of $(-\Omega)\oplus\Omega$ to $\partial S\times \partial S$ vanishes, the restriction of $\alpha$ to this subspace is closed. Thus
\begin{equation*}
\int_{\gamma_{z_0} \# \delta} \alpha = \int_{\epsilon \# \gamma_z} \alpha.
\end{equation*}
Using \eqref{eq:characterizing_equation_generalized_generating_function_general_primitive} and the fact that $W_{\Phi,\alpha}$ satisfies \eqref{eq:normalization_equation_generalized_generating_function_general_primitive} for $z_0$, the left hand side evaluates to
\begin{equation*}
\int_{\gamma_{z_0} \# \delta} \alpha = \int_{\gamma_{z_0}}\alpha + \int_{\delta}\alpha = W_{\Phi,\alpha}(z_0,\Phi(z_0)) + W_{\Phi,\alpha}(z,\Phi(z)) - W_{\Phi,\alpha}(z_0,\Phi(z_0)) = W_{\Phi,\alpha}(z,\Phi(z)).
\end{equation*}
Since the restriction of $\alpha$ to the diagonal $\Delta$ vanishes, the right hand side is given by
\begin{equation*}
\int_{\epsilon \# \gamma_z} \alpha = \int_{\epsilon}\alpha + \int_{\gamma_z}\alpha =\int_{\gamma_z}\alpha.
\end{equation*}
This concludes our proof that \eqref{eq:normalization_equation_generalized_generating_function_general_primitive} holds for all $z\in\{1\}\times\BR$.
\end{proof}

Let $\beta$ be a second primitive $1$-form of $(-\Omega)\oplus\Omega$ whose restriction to $\Delta$ vanishes. Then the difference $\alpha-\beta$ is exact, i.e. there exists a smooth function $u$ on $S\times S$ such that $\alpha-\beta = du$. Since the restrictions of $\alpha$ and $\beta$ to the diagonal $\Delta$ vanish, the function $u$ must be constant on $\Delta$. Let us normalize $u$ such that $u|_\Delta=0$. The following lemma relates the generalized generating functions of $\Phi$ with respect to $\alpha$ and $\beta$.

\begin{lem}
\label{lem:generalized_generating_functions_change_of_primitive}
Let $W_{\Phi,\alpha}$ and $W_{\Phi,\beta}$ be the generalized generating functions of $\Phi$ with respect to $\alpha$ and $\beta$. Then
\begin{equation*}
W_{\Phi,\alpha} = W_{\Phi,\beta} + u|_{\Gamma(\Phi)}.
\end{equation*}
In particular, since $u$ vanishes on the diagonal $\Delta$, the value of a generalized generating function at a fixed point of $\Phi$ is independent of the choice of primitive $1$-form.
\end{lem}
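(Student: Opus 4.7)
The plan is to invoke the uniqueness portion of Lemma \ref{lem:normalization_generalized_generating_function}: it suffices to show that $W_{\Phi,\beta} + u|_{\Gamma(\Phi)}$ satisfies both the differential equation \eqref{eq:characterizing_equation_generalized_generating_function_general_primitive} and the normalization \eqref{eq:normalization_equation_generalized_generating_function_general_primitive} with respect to $\alpha$, and then conclude that it must agree with $W_{\Phi,\alpha}$.

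First I would check the differential equation. Since $\alpha - \beta = du$, restricting to $\Gamma(\Phi)$ gives
\begin{equation*}
d(W_{\Phi,\beta} + u|_{\Gamma(\Phi)}) = \beta|_{\Gamma(\Phi)} + (\alpha - \beta)|_{\Gamma(\Phi)} = \alpha|_{\Gamma(\Phi)},
\end{equation*}
as required.

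Next I would verify the normalization at points $z \in \{1\}\times \BR$. By the fundamental theorem of calculus applied along the path $\gamma_z$, which runs from $(z,z) \in \Delta$ to $(z,\Phi(z)) \in \Gamma(\Phi)$, and using $u|_\Delta = 0$, we obtain
\begin{equation*}
\int_{\gamma_z} (\alpha - \beta) = \int_{\gamma_z} du = u(z,\Phi(z)) - u(z,z) = u(z,\Phi(z)).
\end{equation*}
Combining this with the normalization of $W_{\Phi,\beta}$ gives
\begin{equation*}
W_{\Phi,\beta}(z,\Phi(z)) + u(z,\Phi(z)) = \int_{\gamma_z}\beta + \int_{\gamma_z}(\alpha-\beta) = \int_{\gamma_z}\alpha,
\end{equation*}
which is precisely the normalization condition \eqref{eq:normalization_equation_generalized_generating_function_general_primitive} for $\alpha$.

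Since $W_{\Phi,\beta} + u|_{\Gamma(\Phi)}$ satisfies the two conditions that uniquely characterize $W_{\Phi,\alpha}$, the two functions must coincide. There is no real obstacle here; the only subtlety is checking that $u$ is well-defined with the stated normalization $u|_\Delta = 0$, which is automatic because $(\alpha-\beta)|_\Delta = 0$ implies $u$ is locally constant on the connected set $\Delta$, and the freedom in choosing $u$ (up to an additive constant) is fixed by this normalization. The final assertion about fixed points is then immediate: if $\Phi(z) = z$, the point $(z,z)$ lies in $\Gamma(\Phi) \cap \Delta$, so $u|_{\Gamma(\Phi)}(z,z) = 0$ and therefore $W_{\Phi,\alpha}(z,z) = W_{\Phi,\beta}(z,z)$.
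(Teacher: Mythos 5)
Your proof is correct and follows essentially the same route as the paper: verify that $W_{\Phi,\beta}+u|_{\Gamma(\Phi)}$ satisfies both the defining equation \eqref{eq:characterizing_equation_generalized_generating_function_general_primitive} and the normalization \eqref{eq:normalization_equation_generalized_generating_function_general_primitive}, then invoke the uniqueness from Lemma \ref{lem:normalization_generalized_generating_function}. Your explicit fundamental-theorem-of-calculus computation along $\gamma_z$ is exactly the step the paper compresses into the remark that $u$ vanishes on the diagonal.
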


\begin{proof}
We set $W\coloneqq W_{\Phi,\beta} + u|_{\Gamma(\Phi)}$. We need to check that this function satisfies \eqref{eq:characterizing_equation_generalized_generating_function_general_primitive} and \eqref{eq:normalization_equation_generalized_generating_function_general_primitive}. In order to show \eqref{eq:characterizing_equation_generalized_generating_function_general_primitive}, we compute
\begin{equation*}
dW = dW_{\Phi,\beta} + d u|_{\Gamma(\Phi)} = \beta|_{\Gamma(\Phi)} + (\alpha-\beta)|_{\Gamma(\Phi)} = \alpha|_{\Gamma(\Phi)}.
\end{equation*}
Let $z\in\{1\}\times\BR$. We have
\begin{equation*}
W(z,\Phi(z)) = W_{\Phi,\beta}(z,\Phi(z)) + u(z,\Phi(z)) = \int_{\gamma_z}\beta + \int_{\gamma_z} du = \int_{\gamma_z}\alpha.
\end{equation*}
Here the second equality uses that $u$ vanishes on the diagonal $\Delta$. This shows \eqref{eq:normalization_equation_generalized_generating_function_general_primitive}.
\end{proof}

There are two primitives of $(-\Omega)\oplus\Omega$ whose associated generalized generating functions are of particular importance to our discussion. The first such primitive is given by $(-\Lambda)\oplus\Lambda$ where $\Lambda$ is a primitive of the area form $\Omega$ on $S$. It will be useful to regard the associated generalized generating function $W_{\Phi,(-\Lambda)\oplus\Lambda}$ as a function on $S$ via the parametrization $(\identity_S,\Phi):S\rightarrow\Gamma(\Phi)$ of the graph $\Gamma(\Phi)$. We define
\begin{equation*}
\Sigma_{\Phi,\Lambda} \coloneqq W_{\Phi,(-\Lambda)\oplus\Lambda} \circ (\identity_S,\Phi)
\end{equation*}
and call it the {\it action} of $\Phi$ with respect to $\Lambda$. The characterizing equations \eqref{eq:characterizing_equation_generalized_generating_function_general_primitive} and \eqref{eq:normalization_equation_generalized_generating_function_general_primitive} for the generalized generating function $W_{\Phi,(-\Lambda)\oplus\Lambda}$ can be expressed in terms of the action $\Sigma_{\Phi,\Lambda}$ as
\begin{equation}
\label{eq:characterizing_equation_action_on_strip}
\Phi^*\Lambda - \Lambda = d\Sigma_{\Phi,\Lambda}
\end{equation}
and
\begin{equation}
\label{eq:normalization_equation_action_on_strip}
\Sigma_{\Phi,\Lambda}(1,\theta) = \int_{\delta_\theta}\Lambda
\qquad \text{for all}\enspace \theta\in\BR
\end{equation}
where $\delta_\theta$ denotes the path
\begin{equation*}
\delta_\theta : [0,1]\rightarrow \partial S\quad \delta_\theta(t)\coloneqq (1-t)\cdot(1,\theta)+t\cdot \Phi(1,\theta).
\end{equation*}

The following basic properties of the action $\Sigma_{\Phi,\Lambda}$ will be useful later on.

\begin{lem}
\label{lem:action_on_strip_basic_properties}
\begin{enumerate}
\item Let $(\Phi_t)_{t\in [0,1]}$ be an arc in $\Diff(S,\Omega)$ starting at the identity. Let
\begin{equation*}
H:[0,1] \times S \rightarrow \BR
\end{equation*}
be a Hamiltonian generating this arc. If we normalize $H$ by $H_t(1,\theta)=0$, then the action $\Sigma_{\Phi,\Lambda}$ may be computed via
\begin{equation*}
\Sigma_{\Phi,\Lambda}(z) = \int_{\{t\mapsto\Phi_t(z)\}}\Lambda + \int_0^1 H_t(\Phi_t(z))dt.
\end{equation*}
\item Suppose that $\Lambda=p^*\lambda$ is the pull-back of a primitive $\lambda$ of $\omega$ and that $\Phi$ is the lift of a diffeomorphism $\widetilde{\phi}\in\widetilde{\Diff}(\BD,\omega)$ fixing the origin. Then $\Sigma_{\Phi,\Lambda} = \sigma_{\widetilde{\phi},\lambda}\circ p$.
\end{enumerate}
\end{lem}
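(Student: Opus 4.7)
The plan is to verify, in each case, that the candidate formula satisfies the two defining conditions \eqref{eq:characterizing_equation_action_on_strip} and \eqref{eq:normalization_equation_action_on_strip}; by the uniqueness underlying Lemma \ref{lem:normalization_generalized_generating_function} this pins down $\Sigma_{\Phi,\Lambda}$ completely.

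For part (1), I would set $F(z) \coloneqq \int_{\{t\mapsto \Phi_t(z)\}}\Lambda + \int_0^1 H_t(\Phi_t(z))\,dt$ and check the two conditions in turn. The normalization is the easier: each $\Phi_t$ preserves the boundary component $\{1\}\times\BR$ setwise (as part of a continuous arc starting at the identity in $\Diff^+(S)$), so the assumption $H_t|_{\{1\}\times\BR}=0$ forces the Hamiltonian term to vanish identically along $\{1\}\times\BR$. The remaining integral of $\Lambda$ runs along the path $t\mapsto\Phi_t(1,\theta)$, which lies in the one-dimensional manifold $\{1\}\times\BR$ and has the same endpoints as the straight-line path $\delta_\theta$; since every $1$-form on a $1$-manifold is closed, the two integrals agree. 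To verify \eqref{eq:characterizing_equation_action_on_strip}, I would differentiate $F$ under the integral sign. Using $d(\Phi_t^*\iota_{X_{H_t}}\Lambda)=\Phi_t^*d\iota_{X_{H_t}}\Lambda$ together with $d(H_t\circ\Phi_t)=\Phi_t^*dH_t=\Phi_t^*\iota_{X_{H_t}}\Omega$, the two contributions reassemble via Cartan's formula into $\int_0^1 \Phi_t^*\ML_{X_{H_t}}\Lambda\,dt = \int_0^1 \tfrac{d}{dt}(\Phi_t^*\Lambda)\,dt$, which by the fundamental theorem of calculus equals $\Phi^*\Lambda - \Lambda$.

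For part (2), I would set $G\coloneqq\sigma_{\widetilde{\phi},\lambda}\circ p$ and exploit the lift identity $p\circ\Phi=\phi\circ p$. Equation \eqref{eq:characterizing_equation_action_on_strip} then follows by pulling back \eqref{eq:action_on_disk_first_defining_identity} via $p$:
\begin{equation*}
\Phi^*\Lambda - \Lambda = \Phi^*p^*\lambda - p^*\lambda = p^*(\phi^*\lambda - \lambda) = p^*d\sigma_{\widetilde{\phi},\lambda} = dG.
\end{equation*}
For the normalization, I would lift an arc $(\phi_t)_{t\in [0,1]}$ representing $\widetilde{\phi}$ to an arc $(\Phi_t)_{t\in [0,1]}$ in $\Diff^+(S)$ starting at the identity, so that $p\circ\Phi_t=\phi_t\circ p$. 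The path $t\mapsto\Phi_t(1,\theta)$ then projects under $p$ to $t\mapsto\phi_t(e^{i\theta})$, and \eqref{eq:action_on_disk_second_defining_identity} gives
\begin{equation*}
\int_{\{t\mapsto\Phi_t(1,\theta)\}} p^*\lambda = \int_{\{t\mapsto\phi_t(e^{i\theta})\}}\lambda = \sigma_{\widetilde{\phi},\lambda}(e^{i\theta}) = G(1,\theta),
\end{equation*}
and replacing this evolution path by the straight line $\delta_\theta$ does not change the integral by the same one-manifold argument as in part (1).

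Neither half of the lemma presents a substantive obstacle; both are strip-level reformulations of the corresponding disk identities in Section \ref{subsection:area_preserving_maps_of_the_disk}. The only mildly delicate point is the passage from the evolution path $t\mapsto\Phi_t(1,\theta)$ to the straight line $\delta_\theta$ when checking \eqref{eq:normalization_equation_action_on_strip}, which is immediate once one notes that $\{1\}\times\BR$ is one-dimensional. The normalization convention $H_t|_{\{1\}\times\BR}=0$ in part (1) is precisely what is needed so that the Hamiltonian contribution to $F$ aligns with this boundary condition.
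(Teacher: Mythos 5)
Your proposal is correct and follows essentially the same route as the paper: both verify that the candidate function satisfies the two characterizing identities \eqref{eq:characterizing_equation_action_on_strip} and \eqref{eq:normalization_equation_action_on_strip} and invoke uniqueness, with your part (2) matching the paper's computation almost verbatim. The only difference is that for part (1) the paper simply cites the disk version \cite[Proposition 2.6]{ABHS18}, whereas you write out the Cartan-formula argument explicitly (and correctly), including the observation that $\Phi_t$ preserves $\{1\}\times\BR$ so that the normalization $H_t(1,\theta)=0$ kills the Hamiltonian term on the boundary.
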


\begin{proof}
Statement (1) is the analog of \cite[Proposition 2.6]{ABHS18}, which deals with area-preserving diffeomorphisms of the disk $\BD$. The proof given in \cite{ABHS18} carries over to the case of the strip almost verbatim and we will not repeat it here.\\

We prove (2). We compute
\begin{equation*}
d(\sigma_{\widetilde{\phi},\lambda} \circ p) = p^* d\sigma_{\widetilde{\phi},\lambda} = p^* (\phi^*\lambda - \lambda) = \Phi^*p^*\lambda - p^*\lambda = \Phi^*\Lambda - \Lambda.
\end{equation*}
Here the third equality uses the identity $p\circ\Phi = \phi\circ p$. Let $(\phi_t)_{t\in [0,1]}$ be an arc in $\Diff^+(\BD)$ representing $\widetilde{\phi}$. Then
\begin{equation*}
\sigma_{\widetilde{\phi},\lambda}\circ p(1,\theta) = \int_{t\mapsto\phi_t(p(1,\theta))} \lambda = \int_{p\circ\delta_\theta} \lambda = \int_{\delta_\theta} p^*\lambda = \int_{\delta_\theta} \Lambda = \Sigma_{\Phi,\Lambda}(1,\theta).
\end{equation*}
Here the second equality uses that the restriction of $\lambda$ to $\partial\BD$ is closed and that $t\mapsto\phi_t(p(1,\theta))$ and $p\circ\delta_\theta$ are homotopic in $\partial\BD$ with fixed end points. This shows that $\sigma_{\widetilde{\phi},\lambda} \circ p$ satisfies \eqref{eq:characterizing_equation_action_on_strip} and \eqref{eq:normalization_equation_action_on_strip}. Thus $\sigma_{\widetilde{\phi},\lambda} \circ p = \Sigma_{\Phi,\Lambda}$.
\end{proof}

Let us define a second special primitive of $(-\Omega)\oplus\Omega$. We write
\begin{equation*}
\Omega = F(r,\theta)\cdot dr\wedge d\theta
\end{equation*}
where $F$ is a smooth function on $S$ which is positive in the interior and invariant under $T$. Next, we define functions $A$ and $B$ on $S$ by
\begin{equation*}
A(r,\theta)\coloneqq \int_0^rF(s,\theta)ds \quad\quad \text{and}\quad\quad B(r,\theta) \coloneqq \int_0^\theta F(r,\vartheta) d\vartheta.
\end{equation*}
We let $(r,\theta,R,\Theta)$ denote coordinates on $S\times S$ and define
\begin{equation*}
\Xi \coloneqq (A(R,\theta)-A(r,\theta))\cdot d\theta + (B(R,\theta)-B(R,\Theta))\cdot dR.
\end{equation*}
A direct computation shows that $d\Xi=(-\Omega)\oplus\Omega$ and that the restriction of $\Xi$ to the diagonal $\Delta\subset S\times S$ vanishes. The resulting generalized generating function $W_{\Phi,\Xi}$ is particularly useful if the diffeomorphism $\Phi\in\Diff(S,\Omega)$ is monotone in the sense of Definition \ref{definition:monotone_diffeomorphism_of_strip}. From now on, let us assume that this is the case. Consider the projection
\begin{equation}
\label{eq:diagonal_projection_for_generating_function}
\pi_\Delta: \Gamma(\Phi)\rightarrow S\quad (r,\theta,R,\Theta)\mapsto (R,\theta).
\end{equation}
By Lemma \ref{lem:monotonicity_equivalent_to_diagonal_projection_being_diffeomorphism}, $\pi_\Delta$ is a diffeomorphism. It will be convenient to view $W_{\Phi,\Xi}$ as a function on $S$ via the diffeomorphism $\pi_\Delta$. We abbreviate
\begin{equation*}
W\coloneqq W_{\Phi,\Xi}\circ\pi_\Delta^{-1}.
\end{equation*}
Equation \eqref{eq:characterizing_equation_generalized_generating_function_general_primitive} can be rewritten in terms of $W$ as
\begin{equation}
\label{eq:defining_equation_generating_function}
\begin{cases}
\partial_1W(R,\theta) = B(R,\theta)-B(R,\Theta)\\
\partial_2W(R,\theta) = A(R,\theta) - A(r,\theta)
\end{cases}
\qquad \text{for all}\enspace (r,\theta,R,\Theta)\in\Gamma(\Phi).
\end{equation}
The normalization \eqref{eq:normalization_equation_generalized_generating_function_general_primitive} simply becomes
\begin{equation}
\label{eq:normalization_generating_function}
W|_{\{1\}\times \BR} = 0
\end{equation}
because the restriction of the primitive $\Xi$ to $(\{1\}\times\BR)\times (\{1\}\times\BR)$ vanishes. We summarize the relevant properties of $W$ in the following Proposition (see Proposition 2.15, Lemma 2.16 and Proposition 2.17 in \cite{ABHS18}).

\begin{prop}
\label{prop:generalized_generating_functions}
Suppose that $\Phi\in\Diff(S,\Omega)$ is monotone and commutes with $T$. Then there exists a unique generating function $W:S\rightarrow\BR$ satisfying equations \eqref{eq:defining_equation_generating_function} and the normalization \eqref{eq:normalization_generating_function}. The function $W$ is invariant under $T$ and is constant on the boundary components of $S$. The interior critical points of $W$ are precisely the interior fixed points of $\Phi$. We have $W(p)=\Sigma_{\Phi,\Lambda}(p)$ for all fixed points $p$ of $\Phi$ and any primitive $\Lambda$ of $\Omega$. If the restriction of $\Lambda$ to $\{0\}\times\BR$ vanishes, then $W$ agrees with $\Sigma_{\Phi,\Lambda}$ on $\{0\}\times\BR$.
\end{prop}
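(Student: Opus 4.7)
The plan is to define $W := W_{\Phi,\Xi} \circ \pi_\Delta^{-1}$, where $\pi_\Delta: \Gamma(\Phi) \to S$ is the projection $(r,\theta,R,\Theta) \mapsto (R,\theta)$ from \eqref{eq:diagonal_projection_for_generating_function}. By Lemma \ref{lem:monotonicity_equivalent_to_diagonal_projection_being_diffeomorphism}, monotonicity of $\Phi$ makes $\pi_\Delta$ a diffeomorphism, so $W$ is a well-defined smooth function on $S$. The equations \eqref{eq:defining_equation_generating_function} are then simply the coordinate rewriting of $dW_{\Phi,\Xi} = \Xi|_{\Gamma(\Phi)}$ transferred along $\pi_\Delta^{-1}$. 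The normalization \eqref{eq:normalization_generating_function} follows from \eqref{eq:normalization_equation_generalized_generating_function_general_primitive}: for $z = (1,\theta)$ the path $\gamma_z$ lies entirely in $(\{1\}\times\BR)\times(\{1\}\times\BR)$ (since $\Phi$ preserves boundary components forces $R \equiv 1$ along $\gamma_z$), so $dr = d\theta = dR = 0$ along $\gamma_z$ and thus $\Xi|_{\gamma_z} = 0$. For uniqueness, any two solutions of \eqref{eq:defining_equation_generating_function} have the same differential on the connected space $S$, hence differ by a constant which is pinned down to $0$ by \eqref{eq:normalization_generating_function}.

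Next I would establish $T$-invariance and the boundary constancy. Periodicity of $F$ (from $T$-invariance of $\Omega$) gives $A(r,\theta+2\pi) = A(r,\theta)$ and $B(r,\theta+2\pi) - B(r,\theta'+2\pi) = B(r,\theta) - B(r,\theta')$, and a direct check then yields $(T\times T)^*\Xi = \Xi$. Since $\Phi$ commutes with $T$ the graph $\Gamma(\Phi)$ is $(T\times T)$-invariant, and $\gamma_{Tz} = (T\times T)\circ\gamma_z$ makes the normalization $T$-compatible; uniqueness then forces $W\circ T = W$. Constancy of $W$ on $\{1\}\times\BR$ is the normalization itself. For $\{0\}\times\BR$, the crucial observation is that the polar-coordinate map $p$ collapses $\{0\}\times\BR$ to the origin of $\BD$, so $F(0,\theta) = 0$ and hence $A(0,\theta) = B(0,\theta) = 0$; since $\Phi$ preserves boundary components, $\pi_\Delta^{-1}(0,\theta)$ has first coordinate $r = 0$, so the right-hand sides of \eqref{eq:defining_equation_generating_function} vanish, giving $dW|_{\{0\}\times\BR} = 0$. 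The identification of critical points is immediate: because $F > 0$ on the interior, $A(\cdot,\theta)$ is strictly increasing in the first variable and $B(R,\cdot)$ is strictly increasing in the second, so $\partial_1 W(R,\theta) = 0 \iff \theta = \Theta$ and $\partial_2 W(R,\theta) = 0 \iff R = r$, which together express that $(R,\theta)$ is a fixed point of $\Phi$.

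For the two comparison statements with $\Sigma_{\Phi,\Lambda}$ I would invoke Lemma \ref{lem:generalized_generating_functions_change_of_primitive}. At a fixed point $p$ one has $\pi_\Delta^{-1}(p) = (p,p) \in \Delta$, and the lemma asserts that generating functions with respect to different primitives agree on $\Delta$; hence $W(p) = W_{\Phi,\Xi}(p,p) = W_{\Phi,(-\Lambda)\oplus\Lambda}(p,p) = \Sigma_{\Phi,\Lambda}(p)$. For the identification on $\{0\}\times\BR$ under the hypothesis $\Lambda|_{\{0\}\times\BR} = 0$, one checks that both $\Xi$ and $(-\Lambda)\oplus\Lambda$ pull back to zero on the subset $(\{0\}\times\BR)\times(\{0\}\times\BR)$: for $\Xi$, the $d\theta$-coefficient vanishes since $A(0,\cdot) = 0$ and the $dR$-component vanishes since $R \equiv 0$ there, while for $(-\Lambda)\oplus\Lambda$ this is the hypothesis. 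Consequently the primitive-difference $u$ from Lemma \ref{lem:generalized_generating_functions_change_of_primitive} satisfies $du = 0$ on this connected set; since $u$ vanishes on $\Delta$, which meets this set, $u$ vanishes throughout, yielding $W = \Sigma_{\Phi,\Lambda}$ on $\{0\}\times\BR$. The main technical subtlety I anticipate is handling the degeneracy of the polar map $p$ at $r = 0$ consistently: one must track how the vanishing of $F$ on $\{0\}\times\BR$ cascades through $A$, $B$, $\Xi$, and $W$ in exactly the way needed to make both the boundary-constancy claim and the comparison on $\{0\}\times\BR$ work out.
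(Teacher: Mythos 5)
Your proof is correct and follows the paper's own framework: the same definition $W=W_{\Phi,\Xi}\circ\pi_\Delta^{-1}$, and for the two comparison statements with $\Sigma_{\Phi,\Lambda}$ the identical argument via Lemma \ref{lem:generalized_generating_functions_change_of_primitive} (restriction of $du=\Xi-((-\Lambda)\oplus\Lambda)$ to $(\{0\}\times\BR)^2$, pinned down by $u|_\Delta=0$). The only real difference is organizational: the paper outsources existence, uniqueness, $T$-invariance, boundary constancy and the critical-point correspondence to Proposition 2.15 of \cite{ABHS18}, whereas you verify them directly, and your verifications are sound. One small inaccuracy: you justify $F(0,\theta)=0$ by saying $p$ collapses $\{0\}\times\BR$ to the origin, but in this section $\Omega$ is an arbitrary $T$-invariant form positive in $\interior(S)$, not necessarily of the form $p^*\omega$, so $F$ need not vanish on $\{0\}\times\BR$ and $B(0,\cdot)$ need not vanish. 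This costs you nothing: constancy of $W$ on $\{0\}\times\BR$ already follows from $\partial_2W(0,\theta)=A(0,\theta)-A(r(0,\theta),\theta)=0$, which holds because $A(0,\cdot)\equiv 0$ and $r(0,\theta)=0$ (as $\Phi$ preserves boundary components), and the vanishing of $\Xi$ on $(\{0\}\times\BR)^2$ uses only $A(0,\cdot)\equiv 0$ and $dR|_{\{R\equiv 0\}}=0$, exactly as you observe in your final paragraph. For the very last assertion it is worth making explicit (as the paper does) that $W$ and $\Sigma_{\Phi,\Lambda}$ are read off from $W_{\Phi,\Xi}$ and $W_{\Phi,(-\Lambda)\oplus\Lambda}$ via the two different parametrizations $\pi_\Delta^{-1}$ and $(\identity_S,\Phi)$ of $\Gamma(\Phi)$; since both generating functions are constant on $\Gamma(\Phi)\cap(\{0\}\times\BR)^2$ and agree there, this discrepancy is immaterial.
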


\begin{proof}
Existence and uniqueness of $W$ follow from Proposition 2.15 in \cite{ABHS18}. Moreover, this proposition asserts that $W$ is invariant under $T$ and constant on the boundary components of $S$ and that the interior critical points of $W$ are precisely the interior fixed points of $W$. The remaining assertions are proved in \cite[Lemma 2.16 and Proposition 2.17]{ABHS18} in the special case that $\Phi$ is the lift of a diffeomorphism $\widetilde{\phi}\in\widetilde{\Diff}(\BD,\omega)$ and $\Lambda=p^*\lambda$ for a primitive $\lambda$ of $\omega$. Since the statements in Proposition \ref{prop:generalized_generating_functions} are slightly more general, we provide independent proofs. It is a direct consequence of Lemma \ref{lem:generalized_generating_functions_change_of_primitive} that $W(p) = \Sigma_{\Phi,\Lambda}(p)$ for all fixed points $p$ of $\Phi$. Suppose that the restriction of $\Lambda$ to $\{0\}\times\BR$ vanishes. This implies that the restriction of $(-\Lambda)\oplus\Lambda$ to $(\{0\}\times\BR)^2$ vanishes. Similarly, the restriction of the primitive $\Xi$ to this subspace vanishes. It is a direct consequence of \eqref{eq:characterizing_equation_generalized_generating_function_general_primitive} that the generating functions $W_{\Phi,\Xi}$ and $W_{\Phi,(-\Lambda)\oplus\Lambda}$ are both constant on $\Gamma(\Phi)\cap (\{0\}\times\BR)^2$. Let $u$ be the unique smooth function on $S\times S$ whose restriction to the diagonal $\Delta$ vanishes and which satisfies $\Xi = (-\Lambda)\oplus\Lambda + du$. Since both $\Xi$ and $(-\Lambda)\oplus\Lambda$ restrict to zero on $(\{0\}\times\BR)^2$, the function $u$ vanishes on this set. By Lemma \ref{lem:generalized_generating_functions_change_of_primitive}, this implies that $W_{\Phi,\Xi}$ and $W_{\Phi,(-\Lambda)\oplus\Lambda}$ agree on $\Gamma(\Phi)\cap (\{0\}\times\BR)^2$. We conclude that $W$ and $\Sigma_{\Phi,\Lambda}$ agree and are constant on $\{0\}\times\BR$.
\end{proof}

In order to avoid technicalities involving the behaviour of $\Phi$ and $W$ near $\partial S$, let us now assume in addition that $\Omega$ is translation invariant in some small neighbourhood of $\partial S$. In other words, $F(r,\theta)$ does not depend on $\theta$ for $r$ sufficiently close to $0$ or $1$. Moreover, we will restrict our attention to diffeomorphisms $\Phi$ whose restrictions to neighbourhoods of the two boundary components of $S$ are translations. More precisely, we will assume that there exist constants $\theta_0$ and $\theta_1$ such that for $j\in\{0,1\}$
\begin{equation}
\label{eq:translation_condition_on_diffeomorphisms}
\Phi(r,\theta)=(r,\theta+\theta_j) \quad\quad \text{if }r\text{ is sufficiently close to }j.
\end{equation}
Since the following result is not explicitly stated in \cite{ABHS18}, we provide a proof.

\begin{prop}
\label{prop:correspondence_symplectomorphisms_generating_functions}
There exists a bijective correspondence between the set of all diffeomorphisms $\Phi\in\Diff(S,\Omega)$ which are monotone, commute with $T$ and satisfy \eqref{eq:translation_condition_on_diffeomorphisms} and the set of all smooth functions $W:S\rightarrow\BR$ satisfying
\begin{enumerate}
\item\label{item:G_solvable} $0<A(r,\theta)-\partial_2W(r,\theta)<A(1,\theta)$ for all $(r,\theta)\in\operatorname{int}(S)$
\item\label{item:G_derivative_invertible} $\partial_{12}W(r,\theta)<F(r,\theta)$ for all $(r,\theta)\in\operatorname{int}(S)$
\item\label{item:G_periodic} $W\circ T=W$
\item\label{item:G_boundary_behaviour} There exist constants $c_1$, $c_2$ and $c_3$ such that
\begin{equation*}
\begin{cases}
W(r,\theta) = c_1 + c_2\cdot \int_0^rF(s,\theta)ds & \quad\text{if } r \text{ is sufficiently close to 0}\\
W(r,\theta) = c_3\cdot \int_r^1F(s,\theta)ds & \quad\text{if } r \text{ is sufficiently close to 1}.
\end{cases}
\end{equation*}
\end{enumerate}
$\Phi$ and $W$ correspond to each other under this bijection if and only if equations \eqref{eq:defining_equation_generating_function} hold.
\end{prop}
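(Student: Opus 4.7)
The plan is to verify the two directions of the correspondence separately and check they are mutual inverses, linked by the defining equations \eqref{eq:defining_equation_generating_function} together with the normalization \eqref{eq:normalization_generating_function}.

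For the direction $\Phi\mapsto W$, I would take $W$ to be the generating function supplied by Proposition \ref{prop:generalized_generating_functions}, so that \eqref{eq:defining_equation_generating_function} and the $T$-invariance in condition (3) are automatic. Condition (1) is a direct rewriting of the second identity in \eqref{eq:defining_equation_generating_function} as $A(R,\theta)-\partial_2 W(R,\theta)=A(r,\theta)$, combined with the fact that $r\in(0,1)$ and $s\mapsto A(s,\theta)$ is strictly increasing from $0$ to $A(1,\theta)$. Condition (2) follows by differentiating the same identity in $R$ to obtain $F(r,\theta)\,\partial_R r = F(R,\theta) - \partial_{12}W(R,\theta)$ and invoking monotonicity of $\Phi$ to conclude $\partial_R r>0$. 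For condition (4), I would plug the translation hypothesis \eqref{eq:translation_condition_on_diffeomorphisms} into \eqref{eq:defining_equation_generating_function}; together with the local $\theta$-invariance of $F$ near $\partial S$, this reduces \eqref{eq:defining_equation_generating_function} to elementary ODEs whose integration, with constants fixed by the normalization \eqref{eq:normalization_generating_function}, yields the explicit form in condition (4).

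For the direction $W\mapsto\Phi$, I would construct $\Phi(r,\theta)=(R(r,\theta),\Theta(r,\theta))$ by two successive applications of the implicit function theorem. Condition (1) guarantees that $A(r,\theta)\in(0,A(1,\theta))$ lies in the image of $R\mapsto A(R,\theta)-\partial_2W(R,\theta)$, whose strict monotonicity is supplied by condition (2); this defines a smooth $R(r,\theta)$. The second equation of \eqref{eq:defining_equation_generating_function} then determines $\Theta(r,\theta)$ uniquely, since $\partial_\theta B(R,\cdot)=F(R,\cdot)>0$ makes $B(R,\cdot)$ a bijection of $\BR$ (using $T$-periodicity of $F$ in $\theta$). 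Near $\partial S$, condition (4) yields explicit translation formulas which match the implicit interior definitions by uniqueness, so $\Phi$ extends smoothly across $\partial S$, preserves the boundary components, and satisfies \eqref{eq:translation_condition_on_diffeomorphisms}. Monotonicity of $\Phi$ and commutation with $T$ then follow from condition (2) and from condition (3) combined with $T$-invariance of $A,B$, respectively. Preservation of $\Omega$ is the usual Lagrangian-graph argument: since $\Xi|_{\Gamma(\Phi)}=dW$ read through $\pi_\Delta$, this one-form is closed, hence $(-\Omega\oplus\Omega)|_{\Gamma(\Phi)}=d\Xi|_{\Gamma(\Phi)}=0$, i.e.\ $\Phi^*\Omega=\Omega$. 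Bijectivity of the correspondence is then immediate: $W$ is uniquely determined by $\Phi$ (Proposition \ref{prop:generalized_generating_functions}), and the implicit construction of $\Phi$ from $W$ is also unique.

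The main technical obstacle I expect is reconciling the interior implicit-function construction with the boundary behaviour. In the direction $\Phi\mapsto W$, one must translate the translation hypothesis \eqref{eq:translation_condition_on_diffeomorphisms} into the particular ODE boundary conditions of condition (4), which depends on a careful use of the local $\theta$-invariance of $F$ near $\partial S$. In the reverse direction, the implicit solutions $R,\Theta$ are defined a priori only on $\interior(S)$, and one has to check that they glue smoothly to the explicit translation formulas obtained from condition (4) on a neighbourhood of $\partial S$; this match is forced by uniqueness of the implicit solution where both descriptions apply. Everything else, including the verification that $R(r,\theta)\in(0,1)$ and that $\Theta(r,\theta)\in\BR$ is well-defined for all $(r,\theta)\in\interior(S)$, is routine book-keeping once this boundary matching is established.
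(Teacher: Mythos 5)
Your proposal is correct and follows essentially the same route as the paper: extract $W$ from the normalized generating function and verify (1)--(4) from \eqref{eq:defining_equation_generating_function}, then reconstruct $\Phi$ via the implicit function theorem, with the boundary form (4) supplying the translation behaviour. The only substantive point you gloss over is surjectivity of the reconstructed $\Phi$ onto $\interior(S)$ (in particular $\Theta\to\pm\infty$ as $\theta\to\pm\infty$, which the paper gets from boundedness of $\partial_1W$ and $B(R,\theta+2\pi)=B(R,\theta)+B(R,2\pi)$ --- note $B$ is only quasi-periodic, not $T$-invariant as you state), and there is a harmless mislabeling of which equation of \eqref{eq:defining_equation_generating_function} determines $\Theta$.
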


\begin{proof}
Let $\Phi\in\Diff(S,\Omega)$ be a diffeomorphism which is monotone, commutes with $T$ and satisfies \eqref{eq:translation_condition_on_diffeomorphisms}. Let $W$ be the associated generating function satisfying \eqref{eq:defining_equation_generating_function} and the normalization \eqref{eq:normalization_generating_function}. We verify that $W$ satisfies properties (1)-(4). Property (3) actually is a consequence of Proposition \ref{prop:generalized_generating_functions}. It follows from \eqref{eq:translation_condition_on_diffeomorphisms} that any point $(r,\theta,R,\Theta)\in\Gamma(\Phi)$ sufficiently close to the boundary satisfies $r=R$. Thus equation \eqref{eq:defining_equation_generating_function} implies that
\begin{equation*}
\partial_2 W(R,\theta) = A(R,\theta)-A(r,\theta) = 0
\end{equation*}
near $\partial S$. Hence $W$ is independent of $\theta$ in a neighbourhood of the boundary. Near $\partial S$ we also have
\begin{equation*}
B(r,\theta) = \theta\cdot F(r).
\end{equation*}
Here we use that $F(r,\theta)=F(r)$ does not depend on $\theta$ near $\partial S$. Thus \eqref{eq:defining_equation_generating_function} yields
\begin{equation*}
\partial_1W(R,\theta) = B(R,\theta)-B(R,\Theta) = (\theta-\Theta)\cdot F(R).
\end{equation*}
Using \eqref{eq:translation_condition_on_diffeomorphisms} we obtain
\begin{equation*}
\partial_1W(R,\theta) = -\theta_j\cdot F(R)
\end{equation*}
for $R$ close to $j\in\{0,1\}$. Property (4) is an immediate consequence. Next we check property (1). Rearranging the second equation in \eqref{eq:defining_equation_generating_function} gives
\begin{equation*}
A(R,\theta)-\partial_2W(R,\theta) = A(r,\theta).
\end{equation*}
Now we simply use that $A(0,\theta)=0$ and that $A(r,\theta)$ is strictly monotonic in $r$ for fixed $\theta$. It remains to verify (2). By Lemma \ref{lem:monotonicity_equivalent_to_diagonal_projection_being_diffeomorphism}, monotonicity of $\Phi$ implies that the projection $\pi_\Delta$ defined in equation \eqref{eq:diagonal_projection_for_generating_function} is a diffeomorphism. Let us now parametrize $\Gamma(\Phi)$ by the inverse of this diffeomorphism
\begin{equation*}
\pi_\Delta^{-1}: S\rightarrow \Gamma(\Phi)\subset S\times S \quad (R,\theta)\mapsto
\left(\begin{matrix}
r(R,\theta)\\
\theta\\
R\\
\Theta(R,\theta)
\end{matrix}\right).
\end{equation*}
For $j\in\{1,2\}$ let $\pi_j:S\times S\rightarrow S$ be the projection onto the $j$-th factor. The restriction of $\pi_j$ to $\Gamma(\Phi)$ is a diffeomorphism onto $S$. This shows that
\begin{equation*}
(R,\theta)\mapsto (r(R,\theta),\theta)\quad\quad\text{and}\quad\quad (R,\theta)\mapsto (R,\Theta(R,\theta))
\end{equation*}
are both diffeomorphisms of $S$. In fact, these diffeomorphisms are orientation preserving. Thus the linearizations
\begin{equation*}
\left(\begin{matrix}
\partial_1r(R,\theta) & \partial_2r(R,\theta)\\
0 & 1
\end{matrix}\right)
\quad\quad\text{and}\quad\quad
\left(\begin{matrix}
1 & 0\\
\partial_1\Theta(R,\theta) & \partial_2\Theta(R,\theta)
\end{matrix}\right)
\end{equation*}
both have positive determinant. This implies $\partial_1r(R,\theta)>0$ and $\partial_2\Theta(R,\theta)>0$. Let us view both sides of the first equation in \eqref{eq:defining_equation_generating_function} as functions of $R$ and $\theta$. Then differentiating with respect to $\theta$ yields
\begin{equation*}
\partial_{12}W(R,\theta) = F(R,\theta) - F(R,\Theta(R,\theta))\cdot\partial_2\Theta(R,\theta).
\end{equation*}
In the interior of $S$, both $F(R,\Theta)$ and $\partial_2\Theta$ are strictly positive. Thus
\begin{equation*}
\partial_{12}W(R,\theta) < F(R,\theta)
\end{equation*}
proving (2).\\

Let us now prove the converse direction of Proposition \ref{prop:correspondence_symplectomorphisms_generating_functions}. We start with a generating function $W$ satisfying properties (1)-(4). Let us first show that we may solve equations \eqref{eq:defining_equation_generating_function} for $r$ and $\Theta$ and obtain a smooth map
\begin{equation*}
\operatorname{int}(S)\ni (R,\theta)\mapsto (r(R,\theta),\Theta(R,\theta)) \in \operatorname{int}(S).
\end{equation*}
The function $B(R,\cdot)$ is a diffeomorphism of $\BR$ for fixed $R\in (0,1)$. Thus we may apply the implicit function theorem and solve the first equation in \eqref{eq:defining_equation_generating_function} for $\Theta(R,\theta)$ in the interior $\operatorname{int}(S)$. For fixed $\theta$, the function $A(\cdot,\theta)$ is a diffeomorphism from $(0,1)$ onto $(0,A(1,\theta))$. By property (1), $\partial_2W(R,\theta)$ is contained in the image of $A(R,\theta)-A(\cdot,\theta)$. Again we invoke the implicit funciton theorem to solve the second equation in \eqref{eq:defining_equation_generating_function} for $r(R,\theta)$. The map
\begin{equation}
\label{eq:parametrization_iota_of_graph_of_phi}
\iota:\operatorname{int}(S)\rightarrow S\times S\quad (R,\theta)\mapsto \left(\begin{matrix}
r(R,\theta)\\
\theta\\
R\\
\Theta(R,\theta)
\end{matrix}\right)
\end{equation}
parametrizes a smooth submanifold of $S\times S$. We show that this submanifold is in fact the graph of a diffeomorphism of $\operatorname{int}(S)$. Differentiating the first equation in \eqref{eq:defining_equation_generating_function} with respect to $\theta$ and the second equation with respect to $R$ yields:
\begin{equation*}
\begin{cases}
\partial_{12}W(R,\theta) = F(R,\theta) - F(R,\Theta(R,\theta))\cdot \partial_2\Theta(R,\theta)\\
\partial_{12}W(R,\theta) = F(R,\theta) - F(r(R,\theta),\theta)\cdot \partial_1r(R,\theta)
\end{cases}
\end{equation*}
Using property (2), we conclude that $\partial_1r>0$ and $\partial_2\Theta>0$. Hence the composition $\pi_j\circ\iota$ is a diffeomorphism onto its image for $j\in\{1,2\}$. We show that this image actually is all of $\operatorname{int}(S)$. By property (4), we have $\partial_2W=0$ near $\partial S$. Thus the second equation in \eqref{eq:defining_equation_generating_function} implies that $r(R,\theta)=R$ for $R$ near $0$ or $1$. This shows that $r(\cdot,\theta)$ is a diffeomorphism of $(0,1)$ for fixed $\theta$. Therefore the image of $\pi_1\circ\iota$ is $\operatorname{int}(S)$. The function $\partial_1W$ is bounded. For fixed $R\in (0,1)$, the function $B(R,\cdot)$ is an orientation preserving diffeomorphism of $\BR$. Thus the first equation in \eqref{eq:defining_equation_generating_function} implies that $\lim_{\theta\rightarrow\pm\infty}\Theta(R,\theta)=\pm\infty$. Hence the image of $\pi_2\circ\iota$ is $\operatorname{int}(S)$. This shows that the image of $\iota$ is the graph of a diffeomorphism $\Phi\in\Diff(\operatorname{int}(S))$. Equations \eqref{eq:defining_equation_generating_function} imply that the pull-back of $\Xi$ via $\iota$ is closed. Therefore $\Phi$ preserves $\Omega$. Property (4) implies that in a neighbourhood of $\partial S$ equations \eqref{eq:defining_equation_generating_function} become:
\begin{equation*}
\begin{cases}
c\cdot F(R) = (\theta-\Theta)\cdot F(R)\\
0 = A(R,\theta)-A(r,\theta)
\end{cases}
\end{equation*}
We conclude that $\Phi$ satisfies \eqref{eq:translation_condition_on_diffeomorphisms} near $\partial S$. Therefore $\Phi$ smoothly extends to the closed strip and we have $\Phi\in\Diff(S,\Omega)$. In order to check that $\Phi$ commutes with $T$, first note that $A(R,\theta)$ is invariant under $T$ and that $B(R,\theta+2\pi)=B(R,\theta)+B(R,2\pi)$. In combination with invariance of $W$ under $T$, this implies that $r(R,\theta)$ is invariant under $T$ and that $\Theta(R,\theta+2\pi)=\Theta(R,\theta)+2\pi$. Hence the image of $\iota$ is invariant under the diagonal action of $T$ on $S\times S$, which implies that $\Phi$ commutes with $T$. The composition of $\pi_\Delta\circ\iota$ is equal to $\identity_S$. Thus $\pi_\Delta$ is a diffeomorphism, which is equivalent to monotonicity of $\Phi$ by Lemma \ref{lem:monotonicity_equivalent_to_diagonal_projection_being_diffeomorphism}.
\end{proof}

\subsection{Proof of the positivity criterion for radially monotone diffeomorphisms}
\label{subsection:proof_of_the_positivity_criterion_for_radially_monotone_diffeomorphisms}

\begin{proof}[Proof of Theorem \ref{theorem:positivity_criterion_for_radially_monotone_diffeomorphisms}]
Let $\Phi\in\Diff(S,\Omega)$ be the lift of $\widetilde{\phi}$ to the strip. $\Phi$ is monotone and commutes with $T$. Since $\phi$ is a rotation near the origin and the boundary, $\Phi$ is a translation near the two boundary components of $S$, i.e. $\Phi$ satisfies \eqref{eq:translation_condition_on_diffeomorphisms}. It follows from rotation invariance of $\omega$ near $0$ and $\partial\BD$ that $\Omega$ is translation invariant near $\partial S$. Let $W$ be the unique generating function satisfying properties (1)-(4) in Proposition \ref{prop:correspondence_symplectomorphisms_generating_functions}. For $t\in [0,1]$, we define $W_t\coloneqq t\cdot W$. The functions $W_t$ satisfy conditions (1)-(4) for all $t$. Indeed, the set of functions satisfying conditions (1)-(4) is convex and both $W$ and the zero function satisfy these conditions. Proposition \ref{prop:correspondence_symplectomorphisms_generating_functions} therefore yields an isotopy $\Phi_t\in\Diff(S,\Omega)$ starting at the identity and ending at $\Phi$. Let $H:[0,1]\times S\rightarrow\BR$ be the unique Hamiltonian generating $\Phi_t$ which is normalized by $H_t(1,\theta)=0$. Since $\Phi_t$ commutes with $T$ for all $t$, the Hamiltonian $H_t$ is invariant under $T$. Near the boundary components of $S$, the isotopy $\Phi_t$ is a translation at constant speed. Hence $H$ is autonomous and translation invariant near the boundary.\\

Our goal is to show that the restriction of $H$ to the complement of the boundary component $\{1\}\times\BR$ is strictly positive. Our strategy is the following: If we can show that $H_t$ is strictly positive on all its interior critical points and on the boundary component $\{0\}\times\BR$, then it follows that $H_t$ must be strictly positive on the complement of $\{1\}\times\BR$. We begin by showing that for every $t$, the set of interior critical points of $H_t$ is equal to the set of interior critical points of $W$. In other words, if the velocity $\partial_t\Phi_t(z)$ vanishes for some $t\in [0,1]$, then $z$ must be a critical point of $W$ and is fixed by the entire isotopy $\Phi_t$. Let $(R_t,\Theta_t)$ denote the components of $\Phi_t$. The defining equations for the generating function $W_t$ read:
\begin{equation*}
\begin{cases}
\partial_1W_t(R_t,\theta) = B(R_t,\theta)-B(R_t,\Theta_t)\\
\partial_2W_t(R_t,\theta) = A(R_t,\theta) - A(r,\theta)
\end{cases}
\end{equation*}
Differentiating with respect to $t$ yields:
\begin{equation*}
\begin{cases}
\partial_1 W(R_t,\theta) + \partial_{11}W_t(R_t,\theta)\cdot\partial_tR_t = \partial_1B(R_t,\theta)\cdot \partial_tR_t-\partial_1B(R_t,\Theta_t)\cdot \partial_tR_t-\partial_2B(R_t,\Theta_t)\cdot \partial_t\Theta_t \\
\partial_2W(R_t,\theta) + \partial_{12}W_t(R_t,\theta)\cdot\partial_tR_t = \partial_1A(R_t,\theta)\cdot\partial_tR_t
\end{cases}
\end{equation*}
If $z=(r,\theta)$ is a point satisfying $\partial_t\Phi_t(z)=0$ for some $t$, then these equations yield:
\begin{equation}
\label{eq:time_derivative_of_generating_equations_at_stationary_point}
\begin{cases}
\partial_1 W(R_t(r,\theta),\theta)=0\\
\partial_2 W(R_t(r,\theta),\theta)=0
\end{cases}
\end{equation}
This implies that $z$ is a critical point of $W$. Indeed, if $t=0$, then $R_t(r,\theta)=r$ and \eqref{eq:time_derivative_of_generating_equations_at_stationary_point} says that $(r,\theta)$ is a critical point of $W$. If $t>0$, then \eqref{eq:time_derivative_of_generating_equations_at_stationary_point} implies that $(R_t(r,\theta),\theta)$ is a critical point of $W_t$. Hence $z$ is fixed by $\Phi_t$ and therefore a critical point of $W_t$. Since $t>0$, this implies that $z$ is a critical point of $W$. Hence we have verified that every interior critical point of $H_t$ is a critical point of $W$. Conversely, suppose that $z$ is a critical points of $W$. Then $z$ is a critical point of $W_t$ for all $t$. Thus $z$ is fixed by the isotopy $\Phi_t$ and hence a critical point of $H_t$.\\

Let $z$ be an interior critical point of $H_t$. By the above discussion, $z$ is a fixed point of $\Phi$. We show that $H_t(z)=\Sigma_{\Phi}(z)$ for all $t$. This implies that $H_t(z)>0$. Indeed, all fixed points of $\phi$ are assumed to have strictly positive action and the same is true for $\Phi$ by item (3) in Lemma \ref{lem:action_on_strip_basic_properties}. Let $\lambda$ be a primitive of $\omega$ and let $\Lambda$ denote the pull-back to $S$. For every $\tau\in [0,1]$, we can compute the action $\Sigma_{\Phi_\tau}(z)$ via item (2) in Lemma \ref{lem:action_on_strip_basic_properties}
\begin{equation*}
\Sigma_{\Phi_\tau}(z) = \Sigma_{\Phi_\tau,\Lambda}(z) = \int_{\{[0,\tau]\ni t\mapsto\Phi_t(z)\}}\Lambda + \int_0^\tau H_t(\Phi_t(z))dt = \int_0^\tau H_t(z)dt.
\end{equation*}
Here the last equality uses that $z$ is fixed by the isotopy $\Phi_t$. By Proposition \ref{prop:generalized_generating_functions}, the action $\Sigma_{\Phi_\tau}(z)$ agrees with $W_\tau(z)$. We obtain
\begin{equation*}
\tau\cdot W(z) = \int_0^\tau H_t(z)dt.
\end{equation*}
Differentiating with respect to $\tau$ yields $H_\tau(z) = W(z) = \Sigma_{\Phi}(z)>0$.\\

Next we show that $H_t$ is positive on the boundary component $\{0\}\times\BR$. Since $\Lambda$ is given by the pull-back $p^*\lambda$ and $p$ maps the entire boundary component $\{0\}\times \BR$ to the origin $0$ of $\BD$, the restriction of $\Lambda$ to $\{0\}\times \BR$ vanishes. Thus item (2) in Lemma \ref{lem:action_on_strip_basic_properties} yields
\begin{equation*}
\Sigma_{\Phi_\tau,\Lambda}(z) = \int_0^\tau H_t(\Phi_t(z))dt = \int_0^\tau H_t(z) dt
\end{equation*}
for all $z\in \{0\}\times\BR$. Here the second equality uses that $H_t$ is translation invariant near the boundary and in particular constant on $\{0\}\times\BR$. By Proposition \ref{prop:generalized_generating_functions}, the action $\Sigma_{\Phi_\tau,\Lambda}$ agrees with $W_\tau$ on $\{0\}\times \BR$. Thus
\begin{equation*}
\tau\cdot W(z) = \int_0^\tau H_t(z)dt
\end{equation*}
for all $z\in\{0\}\times\BR$. Differentiating with respect to $\tau$ yields $H_\tau(z) = W(z) = \Sigma_{\Phi,\Lambda}(z)$. By item (3) in Lemma \ref{lem:action_on_strip_basic_properties}, the action $\Sigma_{\Phi,\Lambda}(z)$ is equal to the action $\sigma_{\widetilde{\phi}}(0)>0$. Hence $H_\tau$ is strictly positive on $\{0\}\times\BR$ for all $\tau$. This completes the proof that $H_t$ is strictly positive on the complement of $\{1\}\times\BR$.\\

The Hamiltonian $H$ is invariant under $T$ and constant on $\{0\}\times\BR$. Thus it descends to a continuous function $H:[0,1]\times \BD\rightarrow\BR$ which is smooth away from the origin. The Hamiltonian flow of $H$, which is defined on the complement of the origin, is a rotation in some small neighbourhood of the origin. Thus the flow extends to a smooth flow on the entire disk $\BD$. This implies that $H$ is actually smooth everywhere. Clearly $H$ satisfies properties (1)-(4) in Theorem \ref{theorem:positivity_criterion_for_radially_monotone_diffeomorphisms}.\\

There is one detail remaining: We actually want the Hamiltonian $H$ to be $1$-periodic in time. Here is how to fix this. Let $\eta:\BR\rightarrow [0,1]$ be a smooth cut-off function which vanishes in an open neighbourhood of $(-\infty,0]$ and is equal to $1$ in an open neighbourhood of $[1,\infty)$. For $\epsilon>0$ we define
\begin{equation*}
\eta^\epsilon(t)\coloneqq \eta\left(\frac{t-1+\epsilon}{\epsilon}\right).
\end{equation*}
The function $\eta^\epsilon$ vanishes in a neighbourhood of $(-\infty,1-\epsilon]$ and is equal to $1$ in a neighbourhood of $[1,\infty)$. Now define
\begin{equation*}
G^\epsilon:[0,1]\times \BD\rightarrow\BR\quad G^\epsilon(t,z) \coloneqq (1-\eta^\epsilon(t))\cdot H(t,z) + \eta^\epsilon(t)\cdot H(0,z).
\end{equation*}
This actually extends to a smooth $1$-periodic Hamiltonian $G^\epsilon:\BR/\BZ\times \BD\rightarrow\BR$. In an open neighbourhood of $0$ and $\partial\BD$, the Hamiltonian $G^\epsilon_t$ agrees with $H_t$ for all $t\in [0,1]$ because $H$ is autonomous in this region. If $t\in [0,1-\epsilon]$, then $G^\epsilon_t$ agrees with $H_t$ on the entire disk $\BD$. Moreover, $G^\epsilon$ is strictly positive in the interior of $\BD$. The time-$1$-map $\phi_{G^\epsilon}^1$ agrees with $\phi$ in a neighbourhood of $0$ and $\partial\BD$, but it need not agree with $\phi$ on the entire disk. As $\epsilon$ approaches $0$, the diffeomorphism $(\phi_{G^\epsilon}^1)^{-1}\circ\phi$, which is compactly supported in the complement of $0$ and $\partial\BD$, converges to the identity in the $C^1$-topology. Using Lemma \ref{lem:small_symplectomorphism_generated_by_small_hamiltonian}, we can therefore find a Hamiltonian $K^\epsilon:[0,1]\times \BD\rightarrow\BR$, compactly supported in the complement of $0$ and $\partial\BD$ and vanishing for $t$ close to $0$ or $1$, such that $\phi_{K^\epsilon}^1 = (\phi_{G^\epsilon}^1)^{-1}\circ\widetilde{\phi}$ and such that $\|X_{K^\epsilon}\|_{C^0}$ converges to $0$ as $\epsilon$ approaches $0$. Now define
\begin{equation*}
H^\epsilon_t\coloneqq (G^\epsilon \# K^\epsilon)_t = G^\epsilon_t + K^\epsilon_t\circ (\phi_{G^\epsilon}^t)^{-1}
\end{equation*}
for $t\in [0,1]$. This extends to a smooth $1$-periodic Hamiltonian. It agrees with $H$ in a neighbourhood of $0$ and $\partial\BD$ and its time-$1$-map is $\phi_{H^\epsilon}^1$ is equal to $\widetilde{\phi}$. For $\epsilon>0$ sufficiently small, $H^\epsilon$ is strictly positive in the interior $\operatorname{int}(\BD)$. Thus we have constructed a $1$-periodic Hamiltonian satisfying properties (1)-(4).
\end{proof}

\subsection{Proof of the positivity criterion for diffeomorphisms close to the identity}
\label{subsection:proof_of_the_positivity_criterion_for_diffeomorphisms_close_to_the_identity}

The goal of this section is to deduce Corollary \ref{cor:positivity_criterion_for_diffeomorphisms_close_to_the_identity} from Theorem \ref{theorem:positivity_criterion_for_radially_monotone_diffeomorphisms}. Key ingredient is the following lemma.

\begin{lem}
\label{lem:moving_fixed_point_to_origin}
Let $\phi:\BD\rightarrow\BD$ be a diffeomorphism. Assume that:
\begin{enumerate}
\item $\phi$ is sufficiently $C^1$-close to the identity $\operatorname{id}_\BD$.
\item $\phi$ is smoothly conjugated to a rotation near $\partial\BD$.
\item There exists a fixed point $p\in\operatorname{int}(\BD)$ such that $\phi$ is smoothly conjugated to a rotation in a neighbourhood of $p$.
\end{enumerate}
Then there exists a diffeomorphism $\psi:\BD\rightarrow\BD$ such that:
\begin{enumerate}
\item $\psi(0)=p$
\item $\psi^{-1}\circ\phi\circ\psi$ is radially monotone.
\item $\psi^{-1}\circ\phi\circ\psi$ is a rotation near $0$ and $\partial\BD$.
\end{enumerate}
\end{lem}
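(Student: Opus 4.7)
The plan is to construct $\psi$ as a composition $\psi = \psi_0\circ\psi_1$, where $\psi_0$ moves the origin to $p$ and trivializes the local rotation structure at $p$, and $\psi_1$, supported in an annular neighborhood of $\partial\BD$, trivializes the rotation structure there. Radial monotonicity of the resulting conjugate $\phi_2 \coloneqq \psi^{-1}\circ\phi\circ\psi$ will then follow automatically from the $C^1$-closeness of $\phi$ to $\identity_\BD$.

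For $\psi_0$: hypothesis (3) provides a local diffeomorphism $\eta\colon U_p\to V_0$ with $\eta(p) = 0$ such that $\eta\circ\phi\circ\eta^{-1}$ is a rotation on $V_0$. I would construct $\psi_0\in\Diff^+(\BD)$ so that $\psi_0 = \eta^{-1}$ on some smaller neighborhood $V_0' \subset V_0$ of $0$ (in particular $\psi_0(0) = p$) and $\psi_0 = \identity_\BD$ in an annular neighborhood of $\partial\BD$; such an extension exists by a standard isotopy-extension argument. Then $\phi_1 \coloneqq \psi_0^{-1}\circ\phi\circ\psi_0$ fixes $0$, is an exact rotation on $V_0'$ (taking $V_0'$ small enough that $\phi(\eta^{-1}(V_0'))\subset U_p$), and agrees with $\phi$ near $\partial\BD$. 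For $\psi_1$: hypothesis (2) applied to $\phi_1 = \phi$ near $\partial\BD$ gives a local diffeomorphism $\tau$ on an annulus $A$ around $\partial\BD$ with $\tau^{-1}\circ\phi_1\circ\tau$ a rotation; I would extend $\tau$ to $\psi_1\in\Diff^+(\BD)$ which equals $\tau$ on a smaller annulus $A'\subset A$ and equals $\identity_\BD$ outside $A$ (in particular near $0$). Setting $\psi \coloneqq \psi_0\circ\psi_1$ gives $\psi(0) = \psi_0(0) = p$, and $\phi_2 = \psi_1^{-1}\circ\phi_1\circ\psi_1$ is an exact rotation near both $0$ and $\partial\BD$, securing properties (1) and (3).

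It remains to verify radial monotonicity of $\phi_2$. Passing to the lift $\Phi_2 = (R_2,\Theta_2)$ on the strip $S$, near the two components of $\partial S$ the map $\phi_2$ is an exact rotation, so $R_2(r,\theta) = r$ and $\partial_1 R_2 \equiv 1 > 0$. On any compact subset $K\subset\interior(S)$ the covering $p\colon S\to\BD$ is a local diffeomorphism with bounded, nondegenerate derivative, so $C^1$-closeness of $\phi_2$ to $\identity_\BD$ transfers to $C^1$-closeness of $\Phi_2$ to $\identity_S$ on $K$, forcing $\partial_1 R_2 > 0$ there. Since $\psi$ is fixed once $\phi$ is, the conjugation $\phi\mapsto\psi^{-1}\circ\phi\circ\psi$ is $C^1$-continuous, so the ``sufficiently'' in hypothesis (1), chosen small enough relative to the $C^1$-norms of $\psi_0$ and $\psi_1$, ensures $\phi_2$ is as close to $\identity_\BD$ as needed. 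The main obstacle is precisely this uniformity: $\psi_0$ and $\psi_1$ are built from the linearizing data $\eta$ and $\tau$, which themselves depend on $\phi$, so one must verify that the overall threshold for hypothesis (1) absorbs the resulting constants -- routinely so, since the linearizations can be selected with $C^1$-bounds that are uniform on small enough $C^1$-neighborhoods of $\identity_\BD$ within the class of diffeomorphisms satisfying (2) and (3).
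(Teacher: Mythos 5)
There is a genuine gap, and it sits exactly where you flag it as ``routine'': the claim that the linearizing maps $\eta$ (near $p$) and $\tau$ (near $\partial\BD$) can be chosen with $C^1$-bounds that are uniform over a $C^1$-neighbourhood of $\identity_\BD$. Hypotheses (2) and (3) give no control whatsoever on the conjugating diffeomorphisms, and no such uniform choice exists. Concretely, take $d\phi(p)=A\rho_\alpha A^{-1}$ with $A=\operatorname{diag}(\lambda,\lambda^{-1})$, $\lambda$ large and $\alpha\ll\lambda^{-2}$: this is realized by maps $\phi$ arbitrarily $C^1$-close to the identity that are smoothly conjugate to a rotation near $p$, yet \emph{every} conjugating map $\eta$ must have $d\eta(p)$ linearly conjugating $A\rho_\alpha A^{-1}$ to a rotation, forcing $d\eta(p)$ into the coset $\{\text{conformal}\}\cdot A^{-1}$ and hence condition number at least $\lambda^{2}$. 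Consequently $\psi_0$ (and likewise $\psi_1$) has unbounded distortion in its transition region, $\phi_2=\psi^{-1}\circ\phi\circ\psi$ need not be $C^1$-close to $\identity_\BD$ there, and your argument that $\partial_1R_2>0$ collapses: conjugation by a map with large distortion can easily tilt the image of a radial line past tangency with the concentric circles. Since the lemma feeds into Corollary \ref{cor:positivity_criterion_for_diffeomorphisms_close_to_the_identity}, whose threshold must be absolute, you cannot rescue this by letting the smallness in (1) depend on $\phi$'s conjugacy data.

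The paper's proof is organized precisely to avoid this. It first conjugates by a M\"obius transformation sending $0$ to $p$ and invokes \cite[Proposition 2.24]{ABHS18}, a special geometric fact about M\"obius maps that yields radial monotonicity directly (and after which, as the paper notes, $C^1$-closeness to the identity is \emph{lost} and never used again). It then normalizes $\phi$ to an actual rotation near $0$ and near $\partial\BD$ in Steps 2--4 by building the conjugating maps by hand so that each one preserves the radial foliation where the circle foliation is at risk and vice versa, verifying $\partial_1 R>0$ in the transition annuli by the explicit derivative formulas \eqref{eq:formula_radial_derivative}--\eqref{eq:formula_radial_derivative_tilde} and a convexity argument. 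Your two correct observations --- that a diffeomorphism fixing $0$ and $C^1$-close to $\identity_\BD$ is radially monotone, and that the problem splits into a normalization at $p$ and one at $\partial\BD$ --- are consistent with the paper's strategy, but the heart of the lemma is preserving radial monotonicity through conjugations that are \emph{not} small, and that part is missing from your proposal.
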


\begin{proof}
We proceed in four steps.\\
{\bf Step 1:} Let $\psi:\BD\rightarrow\BD$ be a M\"{o}bius transformation such that $\psi(0)=p$. By Proposition 2.24 in \cite{ABHS18}, the diffeomorphism $\psi^{-1}\circ\phi\circ \psi$ fixes the origin and is radially monotone. After replacing $\phi$ by $\psi^{-1}\circ\phi\circ \psi$, we can therefore assume that $\phi$ is radially monotone and smoothly conjugated to rotations near $0$ and $\partial\BD$. Note, however, that we can no longer guarantee that $\phi$ is $C^1$-close to the identity.\\
{\bf Step 2:} We show that we may further reduce to the case that $\phi$ agrees with the linearization $d\phi(0)$ in an entire open neighbourhood of $0$. We choose a $\phi$-invariant neighbourhood $U$ of $0$ and an orientation preserving diffeomorphism $f:(\BD,0)\rightarrow (U,0)$ such that $\rho\coloneqq f^{-1}\circ\phi\circ f$ is a rotation around the center of $\BD$. We may approximate $f$ with respect to the $C^1$-topology by a diffeomorphism $\widetilde{f}:(\BD,0)\rightarrow (U,0)$ which agrees with its linearization $d\widetilde{f}(0)$ near $0$ and with $f$ outside some arbitrarily small neighbourhood of $0$. We set $\psi\coloneqq f\circ\widetilde{f}^{-1}$. This defines a compactly supported diffeomorphism of $U$. We may smoothly extend it to a diffeomorphism of $\BD$ by setting $\psi$ to be equal to the identity outside $U$. The resulting diffeomorphism $\psi:(\BD,0)\rightarrow (\BD,0)$ is $C^1$-close to the identity and supported in a small neighbourhood of $0$. Near $0$ we have
\begin{equation*}
\psi^{-1}\circ\phi\circ\psi = (f\circ\widetilde{f}^{-1})^{-1}\circ\phi\circ (f\circ\widetilde{f}^{-1}) =
\widetilde{f}\circ (f^{-1}\circ\phi\circ f)\circ \widetilde{f}^{-1} =
 d\widetilde{f}(0)\circ \rho \circ d\widetilde{f}(0)^{-1}.
\end{equation*}
This shows that $\psi^{-1}\circ\phi\circ\psi$ is linear in some neighbourhood of $0$. Since $\psi$ is $C^1$-close to the identity and fixes the origin $0$, radial monotonicity is preserved by conjugation by $\psi$. We can therefore replace $\phi$ by $\psi^{-1}\circ\phi\circ\psi$ and assume in addition that $\phi$ agrees with $d\phi(0)$ near $0$.\\
{\bf Step 3:} Since $\phi$ agrees with its linearization near $0$ after performing Step 2, we may choose a linear orientation preserving diffeomorphism $g:(\BD,0)\rightarrow (U,0)$ onto a $\phi$-invariant neighbourhood of $0$ such that $\rho\coloneqq g^{-1}\circ \phi\circ g$ is a rotation of $\BD$. We construct a (non-linear) diffeomorphism $\widetilde{g}:(\BD,0)\rightarrow (U,0)$ satisfying properties (1)-(4) below. Let $(R,\Theta)$ and $(\widetilde{R},\widetilde{\Theta})$ denote the components of $g$ and $\widetilde{g}$ in polar coordinates, respectively.
\begin{enumerate}
\item $\widetilde{g}(r,\theta)$ agrees with $g(r,\theta)$ for $r> \frac{3}{4}$.
\item $\widetilde{\Theta}(r,\theta)$ agrees with $\Theta(r,\theta)$ for $r>\frac{1}{2}$.
\item There exists a constant $C>0$ such that $\widetilde{R}(r,\theta) = C\cdot r$ for $r<\frac{1}{2}$.
\item $\widetilde{\Theta}(r,\theta) = \theta$ for $r< \frac{1}{4}$.
\end{enumerate}
Let us first choose $C>0$ such that the ball $B_C(0)$ is contained in the image $g(B_{\frac{3}{4}}(0))$. We may choose a smooth function $\widetilde{R}(r,\theta)$ which agrees with $C\cdot r$ for $r<\frac{1}{2}$ and with $R(r,\theta)$ for $r>\frac{3}{4}$ and which satisfies $\partial_1\widetilde{R}(r,\theta)>0$. Since $g$ is linear, the function $\Theta(r,\theta)$ is actually independent of $r$ and we denote it by $\Theta(\theta)$. The function $\Theta(\theta)$ is an orientation preserving diffeomorphism of the circle $\BR/2\pi\BZ$. Hence there exists a smooth isotopy from $\Theta$ to the identity. Using such an isotopy, we may define a function $\widetilde{\Theta}(r,\theta)$ such that $\widetilde{\Theta}(r,\theta)=\Theta(\theta)$ for $r>\frac{1}{2}$ and $\widetilde{\Theta}(r,\theta) = \theta$ for $r<\frac{1}{4}$. It is immediate from the construction that $\widetilde{g} = (\widetilde{R},\widetilde{\Theta})$ is a diffeomorphism satisfying properties (1)-(4) above. As before, we define a diffeomorphism $\psi:(\BD,0)\rightarrow (\BD,0)$ which is compactly supported inside $U$ and agrees with $g\circ\widetilde{g}^{-1}$ inside $U$. We have
\begin{equation*}
\psi^{-1}\circ\phi\circ\psi = (g\circ\widetilde{g}^{-1})^{-1}\circ\phi\circ(g\circ\widetilde{g}^{-1}) = \widetilde{g}\circ (g^{-1}\circ\phi\circ g)\circ \widetilde{g}^{-1} = \widetilde{g}\circ \rho\circ \widetilde{g}^{-1}
\end{equation*}
inside $U$. Since $\widetilde{g}$ is simply multiplication by $C$ near $0$, this is an actual rotation near $0$. We need to check that $\psi^{-1}\circ\phi\circ\psi$ is radially monotone. Since this diffeomorphism agrees with $\phi$ outside $U$, we only need to check radial monotonicity of the restriction to $U$, which is given by $\widetilde{g}\circ \rho\circ \widetilde{g}^{-1}$ by the above computation. For $r>\frac{1}{2}$, the function $\widetilde{\Theta}(r,\theta)$ agrees with $\Theta(r,\theta)$ and is therefore independent of $r$. This implies that the restriction of $\widetilde{g}$ to $\BD\setminus B_{\frac{1}{2}}(0)$ preserves the foliations by radial rays. Since $\rho$ is a rotation, the same is true for the restriction of $\widetilde{g}\circ \rho\circ \widetilde{g}^{-1}$ to $\widetilde{g}(\BD\setminus B_{\frac{1}{2}}(0))$. This implies radial monotonicity of $\psi^{-1}\circ\phi\circ\psi$ on the set $\widetilde{g}(\BD\setminus B_{\frac{1}{2}}(0))$. For $r<\frac{1}{2}$, the function $\widetilde{R}$ has the special form $\widetilde{R}(r,\theta)=C\cdot r$. Thus the restriction of $\widetilde{g}$ to $B_{\frac{1}{2}}(0)$ preserves the foliation by circles centered at the origin. Since $\rho$ is a rotation, the same continues to hold for the restriction of $\widetilde{g}\circ \rho\circ \widetilde{g}^{-1}$ to $\widetilde{g}(B_{\frac{1}{2}}(0))$. Radial monotonicity of $\psi^{-1}\circ\phi\circ\psi$ on the set $\widetilde{g}(B_{\frac{1}{2}}(0))$ is a direct consequence. After replacing $\phi$ by $\psi^{-1}\circ\phi\circ\psi$, we can hence assume in addition that $\phi$ is an actual rotation near the origin.\\
{\bf Step 4:} It remains to perform a similar construction to turn $\phi$ into a rotation near $\partial\BD$ while preserving radial monotonicity. Let $\dot{\BD}\coloneqq \BD\setminus\{0\}$ denote the punctured disk. Let $V$ be a $\phi$-invariant neighbourhood of $\partial\BD$ and $f=(R,\Theta):\dot{\BD}\rightarrow V$ an orientation preserving diffeomorphism such that $\rho\coloneqq f^{-1}\circ\phi\circ f$ is a rotation of $\dot{\BD}$ around $0$. Our goal is to define a diffeomorphism $\widetilde{f}=(\widetilde{R},\widetilde{\Theta}):\dot{\BD}\rightarrow V$ agreeing with $f$ in a small neighbourhood of $0$ such that $\widetilde{f}\circ\rho\circ\widetilde{f}^{-1}$ is radially monotone on $V$ and an actual rotation near $\partial\BD$. Once we have such $\widetilde{f}$, we may set $\psi\coloneqq f\circ\widetilde{f}^{-1}$ and extend to a diffeomorphism of $\BD$ by setting $\psi$ to be equal to the identity outside $V$. We have
\begin{equation*}
\psi^{-1}\circ\phi\circ\psi = (f\circ\widetilde{f}^{-1})^{-1}\circ\phi\circ(f\circ\widetilde{f}^{-1}) = \widetilde{f}\circ (f^{-1}\circ\phi\circ f)\circ\widetilde{f}^{-1} = \widetilde{f}\circ \rho\circ\widetilde{f}^{-1}
\end{equation*}
which implies that $\psi^{-1}\circ\phi\circ\psi$ is radially monotone and a rotation near $\partial \BD$. Here is how we construct $\widetilde{f}$. After shrinking $V$ if necessary, we may assume that the image of $f(r,\cdot)$ is $C^1$-close to $\partial\BD$ for all $r$. In particular, $\Theta(r,\cdot)$ is a diffeomorphism of $\BR/2\pi\BZ$ for fixed $r$. Moreover, we can assume that $\partial_1 R(r,\theta)>0$. Let $\eta(r)$ be a smoothing of the function $\min(r,\frac{1}{5})$. Assume that both $\eta(r)$ and $\eta'(r)$ are monotonic and that $\eta(r)$ agrees with $\min(r,\frac{1}{5})$ outside $(\frac{1}{5}-\epsilon,\frac{1}{5}+\epsilon)$ for some small $\epsilon>0$. For $r<\frac{3}{5}$ we set $\widetilde{\Theta}(r,\theta)\coloneqq \Theta(\eta(r),\theta)$. For $r> \frac{4}{5}$ we set $\widetilde{\Theta}(r,\theta)\coloneqq \theta$. 
Choose a smooth isotopy from $\Theta(\frac{1}{5},\cdot)$ to $\identity_{\BR/2\pi\BZ}$ and use it to define $\widetilde{\Theta}$ in the interval $\frac{3}{5}<r<\frac{4}{5}$. We set $\widetilde{R}(r,\theta)$ to be equal to $R(r,\theta)$ for $r<\frac{2}{5}$. We extend $\widetilde{R}$ is such a way that $\partial_1\widetilde{R}(r,\theta)>0$. Moreover, we require that there exists $C>0$ such that $\widetilde{R}(r,\theta) = 1+C\cdot(r-1)$ for $r>\frac{3}{5}$. This finishes the construction of $\widetilde{f}$. Clearly, $\widetilde{f}\circ\rho\circ\widetilde{f}^{-1}$ is an actual rotation near $\partial\BD$. We need to check radial monotonicity. On the set $\widetilde{f}(\{r>\frac{3}{5}\})$, radial monotonicity follows from the special form $\widetilde{R}(r,\theta) = 1+C\cdot(r-1)$. Inside $\widetilde{f}(\{\frac{1}{5}+\epsilon<r<\frac{3}{5}\})$, radial monotonicity follows from the fact that $\widetilde{\Theta}(r,\theta)$ is independent of $r$. For $r<\frac{1}{5}-\epsilon$ the diffeomorphisms $\widetilde{f}$ and $f$ agree, which implies radial monotonicity in $\widetilde{f}(\{r<\frac{1}{5}-\epsilon\})$. It remains to verify radial monotonicity inside $\widetilde{f}(\{\frac{1}{5}-\epsilon<r<\frac{1}{5}+\epsilon\})$. A direct computation shows that for $r<\frac{2}{5}$
\begin{equation}
\label{eq:formula_radial_derivative}
dr\left(\partial_1(f\circ\rho\circ f^{-1})(f(r,\theta))\right) =
\frac{1}{\det df(r,\theta)} \cdot dR(\rho(r,\theta))
\left(\begin{matrix}
\partial_2\Theta(r,\theta) \\
-\partial_1\Theta(r,\theta)
\end{matrix}\right)
\end{equation}
and
\begin{equation}
\label{eq:formula_radial_derivative_tilde}
dr\left(\partial_1(\widetilde{f}\circ\rho\circ\widetilde{f}^{-1})(\widetilde{f}(r,\theta))\right) =
\frac{1}{\det d\widetilde{f}(r,\theta)} \cdot dR(\rho(r,\theta))
\left(\begin{matrix}
\partial_2\Theta(\eta(r),\theta) \\
-\partial_1\Theta(\eta(r),\theta)\cdot\eta'(r)
\end{matrix}\right).
\end{equation}
Note that radial monotonicity of $f\circ\rho\circ f^{-1}$ is precisely saying that the left hand side of equation \eqref{eq:formula_radial_derivative} is positive. Thus \eqref{eq:formula_radial_derivative} implies that
\begin{equation*}
dR(\rho(r,\theta))
\left(\begin{matrix}
\partial_2\Theta(r,\theta) \\
-\partial_1\Theta(r,\theta)
\end{matrix}\right) >0.
\end{equation*}
By choosing $\epsilon>0$ sufficiently small, we can guarantee that
\begin{equation*}
dR(\rho(r,\theta))
\left(\begin{matrix}
\partial_2\Theta(\eta(r),\theta) \\
-\partial_1\Theta(\eta(r),\theta)
\end{matrix}\right) >0
\end{equation*}
whenever $r\in (\frac{1}{5}-\epsilon,\frac{1}{5}+\epsilon)$. Using the fact that $\partial_1R>0$ and $\partial_2\Theta>0$, we see that
\begin{equation*}
dR(\rho(r,\theta))
\left(\begin{matrix}
\partial_2\Theta(\eta(r),\theta) \\
0
\end{matrix}\right) >0.
\end{equation*}
Therefore, the linear functional $dR(\rho(r,\theta))$ is positive on any convex linear combination of the vectors $(\partial_2\Theta(\eta(r),\theta) ,
-\partial_1\Theta(\eta(r),\theta))$ and $(\partial_2\Theta(\eta(r),\theta),0)$. Using $\eta'(r)\in [0,1]$, we can hence deduce that
\begin{equation*}
dR(\rho(r,\theta))
\left(\begin{matrix}
\partial_2\Theta(\eta(r),\theta) \\
-\partial_1\Theta(\eta(r),\theta)\cdot\eta'(r)
\end{matrix}\right)>0.
\end{equation*}
Together with \eqref{eq:formula_radial_derivative_tilde} this implies radial monotonicity of $\widetilde{f}\circ\rho\circ\widetilde{f}^{-1}$ in the annulus $\widetilde{f}(\{\frac{1}{5}-\epsilon<r<\frac{1}{5}+\epsilon\})$.
\end{proof}

\begin{proof}[Proof of Corollary \ref{cor:positivity_criterion_for_diffeomorphisms_close_to_the_identity}]
Let us first prove the corollary under the additional assumption that $\phi$ possesses a fixed point $p$ such that $\phi$ is smoothly conjugated to a rotation in some neighbourhood of $p$. In this situation we may apply Lemma \ref{lem:moving_fixed_point_to_origin}. Let $\psi$ be the resulting diffeomorphism of $\BD$. The $2$-form $\psi^*\omega$ and the diffeomorphism $\psi^{-1}\circ\widetilde{\phi}\circ\psi\in\widetilde{\Diff}(\BD,\psi^*\omega)$ satisfy all assumptions in Theorem \ref{theorem:positivity_criterion_for_radially_monotone_diffeomorphisms}, except for possibly rotation invariance of $\psi^*\omega$ near $0$ and $\partial\BD$. Using an equivariant version of Moser's argument, we may construct a diffeomorphism of $\BD$ which is supported near $0$ and $\partial \BD$, commutes with $\psi^{-1}\circ \phi\circ\psi$ and pulls back $\psi^*\omega$ to a $2$-form which is rotation invariant near $0$ and $\partial\BD$. After replacing $\psi$ by its composition with this diffeomorphism, we may assume w.l.o.g. that $\psi^*\omega$ is rotation invariant near the origin and the boundary and apply Theorem \ref{theorem:positivity_criterion_for_radially_monotone_diffeomorphisms} to the tuple $(\psi^*\omega,\psi^{-1}\circ\widetilde{\phi}\circ\psi)$. Let $H$ denote the resulting Hamiltonian. Then the Hamiltonian $H\circ\psi^{-1}$ satisfies all assertions of Corollary \ref{cor:positivity_criterion_for_diffeomorphisms_close_to_the_identity}.\\

Let us now consider the general case. We claim that $\phi$ must possess an interior fixed point which is either degenerate or elliptic. This clearly is the case if $\phi$ is equal to the identity near the boundary $\partial\BD$. So assume that $\phi$ is not equal to the identity near the boundary. Since $\phi$ is conjugated to a rotation near the boundary, this implies that there are no fixed points at all near the boundary. Let us assume that all fixed points of $\phi$ are non-degenerate. Then there are only finitely many fixed points and their signed count equals the Euler characteristic of $\BD$, which is equal to $1$. The Lefschetz sign of positive hyperbolic fixed points is $-1$ and the Lefschetz sign of negative hyperbolic and elliptic fixed points $1$. Thus there must exist a fixed point which is negative hyperbolic or elliptic. Since $\widetilde{\phi}$ is assumed to be $C^1$-close to the identity, there are no negative hyperbolic fixed points, which implies that there must exist an elliptic one. This concludes the proof that there must be a degenerate or elliptic interior fixed point. We choose such a fixed point $p$. By Lemma \ref{lem:perturbing_symplectomorphism_near_elliptic_fixed_point} there exists a Hamiltonian $G$, supported in an arbitrarily small neighbourhood of $p$ and with arbitrarily small $C^2$-norm $\|X_G\|_{C^2}$, such that $\phi'\coloneqq \phi\circ \phi_G^1$ is smoothly conjugated to an rotation in a neighbourhood of $p$. We define the lift $\widetilde{\phi'}$ of $\phi'$ by $\widetilde{\phi'}\coloneqq \widetilde{\phi}\circ\phi_G$ where $\phi_G\in\widetilde{\Diff}(\BD,\omega)$ is represented by the arc $(\phi_G^t)_{t\in [0,1]}$. After shrinking $\|X_G\|_{C^2}$ if necessary, we can assume that the action $\sigma_{\widetilde{\phi'}}$ is positive on all fixed points of $\phi'$. Thus Corollary \ref{cor:positivity_criterion_for_diffeomorphisms_close_to_the_identity} holds for $\widetilde{\phi'}$ by the above discussion. Let $H'$ be a Hamiltonian generating $\widetilde{\phi'}$ and satisfying all assertions in Corollary \ref{cor:positivity_criterion_for_diffeomorphisms_close_to_the_identity}. In fact, it follows from assertion (2) in Theorem \ref{theorem:positivity_criterion_for_radially_monotone_diffeomorphisms} that $H'$ can be chosen to satisfy $dH'_t(p)=0$ and $H'_t(p) = \sigma_{\widetilde{\phi'}}(p)$ for all $t$. The action $\sigma_{\widetilde{\phi'}}(p)$ is close to $\sigma_{\widetilde{\phi}}(p)>0$. Thus we can bound $H'_t(p)$ from below by a positive constant which can be chosen uniform among all sufficiently small $G$. After reparametrizing the Hamiltonian flow of $G$, we can assume w.l.o.g. that $G$ vanishes for $t$ near $0$ and $1$. We define $H$ by
\begin{equation}
\label{eq:proof_of_positivity_criterion_close_to_identity_def_of_ham}
H_t\coloneqq (H'\#\overline{G})_t = H'_t - G_t\circ\phi_G^t\circ (\phi_{H'}^t)^{-1}
\end{equation}
for $t\in [0,1]$. This smoothly extends to a $1$-periodic Hamiltonian and generates $\widetilde{\phi}$. Since $H$ agrees with $H'$ outside a small neighbourhood of $p$, it satisfies assertions (2) and (3) of Corollary \ref{cor:positivity_criterion_for_diffeomorphisms_close_to_the_identity}. If we choose $G$ with sufficiently small norm $\|X_G\|_{C^2}$, we can also guarantee that $H$ is strictly positive in the interior of $\BD$. This follows from \eqref{eq:proof_of_positivity_criterion_close_to_identity_def_of_ham} and the fact that we have a strictly positive lower bound on $H'_t(p)$ independent of $G$. Thus $H$ has all desired properties.
\end{proof}

\section{From Reeb flows to disk-like surfaces of section and approximation results}
\label{section:from_reeb_flows_to_disk_like_surfaces_of_section_and_approximation_results}

Let $\alpha_0$ denote the restriction of the standard Liouville $1$-form $\lambda_0$ on $\BR^4$ defined in \eqref{eq:liouville_vector_field_and_form} to the unit sphere $S^3$. We will refer to $\alpha_0$ as the standard contact form. The goal of this section is to prove the following result.

\begin{prop}
\label{prop:contact_forms_in_C_3_neighbourhood_may_be_approx_by_forms_satisfying_criterion}
Every tight contact form $\alpha$ on $S^3$ which is sufficiently $C^3$-close to the standard contact form $\alpha_0$ can be $C^2$-approximated by contact forms $\alpha'$ with the following properties: There exists a unique Reeb orbit $\gamma$ of minimal action. The local first return map of a small disk transversely intersecting $\gamma$ is smoothly conjugated to an irrational rotation. There exists a smooth embedding $f:\BD\rightarrow S^3$ parametrizing a $\partial$-strong disk-like global surface of section with boundary orbit $\gamma$ such that the $2$-form $\omega\coloneqq f^*d\alpha'$, the first return map $\phi\in \Diff(\BD,\omega)$ and the lift $\widetilde{\phi}_1\in\widetilde{\Diff}(\BD,\omega)$ of $\phi$ with respect to a trivialization of degree $1$ satisfy assertions (1)-(3) below.
\begin{enumerate}
\item $\widetilde{\phi}_1$ is $C^1$-close to the identity $\identity_\BD$.
\item $\phi$ is smoothly conjugated to an irrational rotation in some neighbourhood of the boundary $\partial\BD$.
\item All fixed points $p$ of $\phi$ have positive action $\sigma_{\widetilde{\phi}_1}(p)$.
\end{enumerate}
\end{prop}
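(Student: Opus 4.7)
The plan is to apply the correspondence between Reeb flows and disk-like surfaces of section developed in \cite{ABHS18} and to refine the Reeb dynamics near the shortest closed orbit via a small $C^\infty$-perturbation. Starting from $\alpha$ in a small enough $C^3$-neighbourhood of $\alpha_0$, dynamical convexity together with Theorem~\ref{theorem:hopf_iff_boundary_of_disk_gss} and Remark~\ref{remark:hopf_iff_boundary_of_disk_gss} produce a minimum-action simple Reeb orbit $\gamma_0$ bounding a $\partial$-strong disk-like global surface of section $\Sigma_0$. The analysis in \cite[Section~4]{ABHS18} further provides an area-preserving parametrization $\BD\to\Sigma_0$ for which the degree $1$ lift $\widetilde{\phi}_1$ of the first return map lies in any prescribed $C^1$-neighbourhood of $\identity_\BD$; this is where the $C^3$-hypothesis enters, as it gives $C^2$-control on the Reeb vector field and hence $C^1$-control on the return map.

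Next I perform a $C^\infty$-perturbation of $\alpha$, arbitrarily small in $C^2$ and supported in a thin tubular neighbourhood of $\gamma_0$. Working in the coordinates of Lemma~\ref{lem:neighbourhood_theorem_boundary_orbit}, where $\alpha$ pulls back to $\tfrac{1}{2}r^2\,d\theta+H\,dt$, a small bump in $H$ near the core circle leaves $\gamma_0$ intact as an orbit and adjusts the linearized Poincar\'e map to be elliptic with a prescribed Diophantine rotation number. Moser's smooth linearization theorem then conjugates the local return map of a small transverse disk to an irrational rotation. A further generic perturbation supported away from $\gamma_0$ breaks any remaining ties in the action spectrum, ensuring the minimum is attained uniquely at $\gamma_0$. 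The resulting form $\alpha'$ is $C^2$-close to $\alpha$ and still in the $C^3$-neighbourhood of $\alpha_0$, so reapplying the ABHS construction to $\alpha'$ produces the required parametrization $f:\BD\to\Sigma\subset\partial X'$ with $\widetilde{\phi}_1$ still $C^1$-close to $\identity_\BD$.

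For positivity of $\sigma_{\widetilde{\phi}_1}(p)$ at a fixed point $p$ of $\phi$, combining \eqref{eq:relationship_lift_trivialization} with Lemma~\ref{lem:basic_properties_of_action}(2) gives
\[
\sigma_{\widetilde{\phi}_0}(p)=\sigma_{\widetilde{\rho}}(p)+\sigma_{\widetilde{\phi}_1}(p).
\]
Lemma~\ref{lem:action_equals_first_return_time} identifies $\sigma_{\widetilde{\phi}_0}(p)$ with the action $\MA(\gamma_p)$ of the Reeb orbit through $p$, while Lemma~\ref{lem:action_in_terms_of_hamiltonian} applied to any Hamiltonian generating $\widetilde{\rho}$ and vanishing on $\partial\BD$ yields $\sigma_{\widetilde{\rho}}(p)=\int_\Sigma d\alpha'=\MA(\gamma)$. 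Hence $\sigma_{\widetilde{\phi}_1}(p)=\MA(\gamma_p)-\MA(\gamma)$; since $p\in\interior(\Sigma)$ the orbit $\gamma_p$ is disjoint from $\gamma$, so it is neither equal to $\gamma$ nor a multiple cover of it, and uniqueness of the minimum forces $\MA(\gamma_p)>\MA(\gamma)$.

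The main obstacle is to coordinate the three requirements in the second paragraph---smooth local linearization of the Poincar\'e map (requiring Moser's theorem and hence a Diophantine rotation number), persistence of the $\partial$-strong disk-like global surface of section with $C^1$-small return map, and uniqueness of the minimum-action orbit---while keeping the cumulative perturbation $C^2$-small. Individually each ingredient is standard; the delicate point is the ordering and localization of the perturbations and the verification that the ABHS construction of the surface of section is robust enough to absorb them without degrading the $C^1$-estimate on $\widetilde{\phi}_1$.
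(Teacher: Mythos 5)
Your overall architecture matches the paper's (ABHS-style surface of section near the great circle, a localized perturbation to normalize the return map near the shortest orbit, and the action identity $\sigma_{\widetilde{\phi}_1}(p)=\MA(\gamma_p)-\MA(\gamma)$ for positivity; that last computation is essentially identical to the one in Lemma \ref{lem:viterbo_near_standard_contact_form_if_local_return_map_is_rotation}). But the central technical step is wrong as stated. You make the linearized Poincar\'e map elliptic with Diophantine rotation number and then invoke ``Moser's smooth linearization theorem'' to conjugate the nonlinear local return map to a rotation. No such theorem exists for area-preserving germs at an elliptic fixed point: even with a Diophantine rotation number, the Birkhoff normal form generically carries a nontrivial twist, and one only obtains invariant circles on a Cantor set, not a smooth conjugacy to a rigid rotation on a full neighbourhood. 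Adjusting the linearization by a bump in $H$ does nothing to remove the higher-order terms of the return map. The paper's Proposition \ref{prop:making_local_return_map_of_elliptic_orbit_a_rotation} circumvents exactly this: after putting the (real-analytic) return map in Birkhoff normal form $(x,y)\mapsto R_{\theta(x,y)}(x,y)+O_4$ via Moser--Siegel, it kills the remainder $O_4$ by composing with a $C^3$-small symplectomorphism and then kills the twist term $\theta_1(x^2+y^2)$ by composing with the time-one flow of the explicit Hamiltonian $\tfrac{\theta_1}{4}(x^2+y^2)^2$; these compositions are then realized as a $C^2$-small modification of the contact form by Lemma \ref{lem:lift_modified_return_map_to_contact_form}. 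Your proof needs this (or an equivalent explicit normalization); it cannot be replaced by a linearization theorem.

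A secondary gap: you obtain the disk-like surface of section bounded by the minimum-action orbit from Theorem \ref{theorem:hopf_iff_boundary_of_disk_gss}, but that theorem applies only to Hopf orbits, and whether the shortest orbit of a (dynamically) convex domain is a Hopf orbit is precisely Question \ref{question:cz_3_implies_hopf}, open in general. Near the round ball this can be repaired (the orbit is $C^1$-close to a great circle, hence unknotted with self-linking number $-1$), but you do not argue it; the paper instead sidesteps the issue entirely by moving the orbit to the great circle (Lemma \ref{lem:move_orbit_to_great_circle}) and writing down the surface of section explicitly. You should also justify that the minimal orbit is elliptic or degenerate before perturbing it to an elliptic one with prescribed rotation number --- a small perturbation cannot convert a hyperbolic orbit into an elliptic one; the paper gets this from Ekeland's index computation plus $C^1$-closeness of the linearized return map to the identity.
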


\subsection{Approximation results}
\label{subsection:approximation_results}

The next result says that we may perturb the contact form near an elliptic orbit such that the local first return map of the perturbed Reeb flow is smoothly conjugated to a rotation. We can choose the perturbation such that both the contact form and Reeb vector field are close to the original contact form and Reeb vector field with respect to the $C^2$-topology.

\begin{prop}
\label{prop:making_local_return_map_of_elliptic_orbit_a_rotation}
Let $\alpha$ be a contact form on a $3$-manifold $Y$. Let $\gamma$ be an elliptic, simple, closed Reeb orbit of $\alpha$. For every open neighbourhood $V$ of $\gamma$ and every $\epsilon>0$ there exists a contact form $\alpha'$ on $Y$ such that:
\begin{enumerate}
\item $\alpha'$ agrees with $\alpha$ outside $V$.
\item $\|\alpha'-\alpha\|_{C^2} < \epsilon$
\item $\|R_{\alpha'}-R_\alpha\|_{C^2} < \epsilon$
\item Up to reparametrization, $\gamma$ is a simple closed Reeb orbit of $\alpha'$. The local first return map of a small disk transversely intersecting $\gamma$ is smoothly conjugated to an irrational rotation.
\end{enumerate}
\end{prop}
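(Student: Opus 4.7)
The strategy is to replace $\alpha$, on a small tube around $\gamma$, with an explicit rotationally-symmetric model contact form whose Reeb first-return map is visibly an honest rotation, and then to check that this replacement can be made $C^2$-small in both the form and its Reeb vector field.

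First I would arrange the linearized rotation number to be irrational. The rotation number $\theta$ of $\gamma$ depends continuously on $\alpha$, and a $C^\infty$-small perturbation of the form $\alpha \leadsto \alpha + s\chi\eta$, where $\chi$ is a bump supported in $V$ and $\eta$ is a $1$-form vanishing on $\gamma$ to sufficient order, keeps $\gamma$ a Reeb orbit while shifting $\theta$ by a prescribed small amount. So assume henceforth $\theta \in \BR\setminus\BQ$.

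Next I would put $\alpha$ into a finite-order normal form near $\gamma$. Choose a tubular neighbourhood $U\subset V$ of $\gamma$ with coordinates $(t,z) \in (\BR/T\BZ) \times D^2_\epsilon$, $\gamma = \{z=0\}$, and iteratively cancel the non-rotationally-symmetric Taylor coefficients of $\alpha$ along $\gamma$ by $t$-dependent symplectic changes of coordinates on the $z$-section combined with reparametrizations of $t$. Since $\theta$ is irrational, the relevant cohomological equations at each step have no resonances up to order $4$. The outcome is coordinates in which
\begin{equation*}
\alpha \;=\; \alpha_{\mathrm{mod}} + O(|z|^4), \qquad
\alpha_{\mathrm{mod}} \;:=\; (T - \pi\theta\,|z|^2)\,dt + \tfrac12(x\,dy - y\,dx).
\end{equation*}
A direct computation shows that $R_{\alpha_{\mathrm{mod}}}$ commutes with the $S^1$-action of rotations on $z$, and that its first return map on $\{t=0\}$ is the honest rotation $z\mapsto e^{-2\pi i\theta}z$.

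Then I interpolate. Let $\chi_\delta(z)$ be a smooth radial cutoff equal to $1$ on $\{|z|\leq \delta/2\}$ and $0$ for $|z|\geq\delta$, and set
\begin{equation*}
\alpha' \;:=\; \chi_\delta\,\alpha_{\mathrm{mod}} + (1-\chi_\delta)\,\alpha
\end{equation*}
on $U$, extended by $\alpha$ outside $U$. The fact that $\alpha-\alpha_{\mathrm{mod}}$ vanishes to order $4$ at $\gamma$, combined with the standard Leibniz estimate $\|\chi_\delta\|_{C^k} \lesssim \delta^{-k}$, gives $\|\alpha'-\alpha\|_{C^k(U)} \lesssim \delta^{4-k}$. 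In particular $\|\alpha'-\alpha\|_{C^2} = O(\delta^2)$ and $\|\alpha'-\alpha\|_{C^3} = O(\delta)$; since $R_\alpha$ depends smoothly on $\alpha$ through one derivative, this yields $\|R_{\alpha'}-R_\alpha\|_{C^2} = O(\delta)$. The contact condition $\alpha'\wedge d\alpha' \neq 0$ is preserved for $\delta$ small by openness. On the inner tube $\{|z|\leq \delta/2\}$ we have $\alpha'=\alpha_{\mathrm{mod}}$, so the local first return map of $\alpha'$ on the transverse disk $\{t=0\}\cap\{|z|\leq \delta/4\}$ is an honest rotation by $2\pi\theta$, hence smoothly conjugated (by the identity, in these coordinates) to an irrational rotation.

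The main technical difficulty is Step~2: the contact-geometric analog of the finite-order Birkhoff normal form at an elliptic periodic orbit. Concretely, one has to verify that $t$-dependent symplectic coordinate changes on the Poincaré section, together with reparametrizations of $t$, are enough to eliminate the non-rotationally-symmetric terms in the Taylor expansion of $\alpha$ along $\gamma$ up to order $3$. This is a bookkeeping computation that proceeds order by order and exhausts the degrees of freedom exactly when $\theta$ is irrational; everything else in the argument is a routine cutoff-interpolation estimate.
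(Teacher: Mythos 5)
Your proposal is correct in outline but takes a genuinely different route from the paper. The paper never normalizes the contact form itself: it works entirely with the two-dimensional local return map $\phi$ of a transverse disk. Lemma \ref{lem:perturbing_symplectomorphism_near_elliptic_fixed_point} produces a Hamiltonian $H$ supported near the fixed point with $\|X_H\|_{C^2}$ arbitrarily small such that $\phi\circ\phi_H^1$ is smoothly conjugate to an irrational rotation --- first perturbing to make the map real-analytic with irrational rotation number, then citing the classical Birkhoff normal form of Moser--Siegel to write it as a twist map plus an $O_4$ remainder, and finally killing the remainder and the twist term by further small Hamiltonian compositions (the twist $\theta_1(x^2+y^2)$ is removed by the time-one flow of $\frac{\theta_1}{4}(x^2+y^2)^2$). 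Lemma \ref{lem:lift_modified_return_map_to_contact_form} then realizes the modification $\phi\leadsto\phi\circ\phi_H^1$ at the level of contact forms via the explicit flow-box formula $\alpha_H=(1+H')dt+\lambda$, with continuity statements converting smallness of $\|X_H\|_{C^2}$ into assertions (2) and (3). What that route buys is that the only hard normal-form input is a citable classical theorem about area-preserving maps, and all the $C^2$ bookkeeping is soft. What your route buys is an explicit rotationally symmetric model on an inner tube; your cutoff estimates are right (including the point that $R_\alpha$ costs one derivative, so you need the $C^3$ bound $O(\delta)$), and stopping the normal form at order exactly $4$ is the correct choice --- the Birkhoff twist invariant sits in the cubic term of the return map and hence in the quartic remainder of the form, so it is not an obstruction. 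The cost is that your Step 2, the order-$3$ normal form of $\alpha$ along an elliptic orbit with irrational rotation number (Floquet trivialization of the linearized flow to fix the quadratic part, then the non-resonance conditions $e^{ik\theta}\neq 1$ for $|k|\leq 3$ to kill the cubic part), is exactly where the content of the proposition lives and is only asserted; it needs to be written out or replaced by a reference before the proof is complete. A minor further difference: the paper's Lemma \ref{lem:perturbing_symplectomorphism_near_elliptic_fixed_point} also covers degenerate fixed points, which is reused elsewhere (in the proof of Corollary \ref{cor:positivity_criterion_for_diffeomorphisms_close_to_the_identity}), whereas your Step 1 uses ellipticity to tune the rotation number; for the proposition as stated this is harmless.
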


Our proof of Proposition \ref{prop:making_local_return_map_of_elliptic_orbit_a_rotation} requires some preparation. Consider $\BR^2$ equipped with the standard symplectic form $\omega_0$. Any compactly supported symplectomorphism $\phi$ which is sufficiently $C^1$-close to the identity can be represented by a unique compactly supported generating function $W$ (see chapter 9 in \cite{MS17}). If $(X,Y)$ denote the components of $\phi$, the defining equations for the generating function are:
\begin{equation*}
\begin{cases}
X-x = \enspace\partial_2 W(X,y)\\
Y-y = -\partial_1 W(X,y)
\end{cases}
\end{equation*}
Conversely, any compactly supported function $W$ which is sufficiently $C^2$-close to the identity uniquely determines a compactly supported symplectomorphism $\phi$.

\begin{lem}
\label{lem:continuity_of_correspondence_between_symplectomorphisms_and_generating_functions}
Let $k\geq 0$. We equip the space of compactly supported diffeomorphisms with the $C^k$-topology and the space of compactly supported generating functions with the topology induced by the norm $\|\nabla\cdot\|_{C^k}$. The correspondence between symplectomorphisms and generating functions is continuous in both directions with respect to these topologies.
\end{lem}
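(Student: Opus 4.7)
The plan is to make the implicit function theorem quantitative in $C^k$ and reduce both directions of the correspondence to continuity of composition, inversion, and parameter-dependent implicit solutions.

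For the direction from generating functions to symplectomorphisms, fix $W$ compactly supported with $\|\nabla W\|_{C^0}$ small. The defining equation $X = x + \partial_2 W(X,y)$ is solved for $X$ as a function of $(x,y)$ by applying the implicit function theorem to $F(X; x, y) = X - x - \partial_2 W(X, y)$; the hypothesis $\|\partial_{11}^2 W\|_{C^0} < 1$ (which follows from $\|\nabla W\|_{C^1}$ being small) ensures that $\partial_X F$ is uniformly invertible, so the solution $X = X(x,y)$ exists, is unique, compactly supported modulo the identity, and $C^{k+1}$ whenever $W$ is $C^{k+1}$. One then sets $Y(x,y) = y - \partial_1 W(X(x,y), y)$, and a standard computation using $\partial_{12}^2 W = \partial_{21}^2 W$ shows $(X,Y)$ is symplectic. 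To obtain continuity: if $W_n \to W$ in the norm $\|\nabla \cdot\|_{C^k}$, then $\partial_2 W_n \to \partial_2 W$ in $C^k$, and the quantitative implicit function theorem (or a direct fixed-point argument, writing $X$ as the limit of iterates $X_{j+1} = x + \partial_2 W(X_j, y)$) gives $X_n \to X$ in $C^k$. Composing with $\partial_1 W_n$ in $C^k$ then yields $Y_n \to Y$ in $C^k$.

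For the inverse direction, let $\phi = (X,Y)$ be compactly supported and sufficiently $C^1$-close to the identity. Then the map $\Psi(x,y) = (X(x,y), y)$ has Jacobian close to the identity, hence is a diffeomorphism; write its inverse as $\Psi^{-1}(X,y) = (\xi(X,y), y)$. Define the $1$-form on $\BR^2$
\begin{equation*}
\eta := (X - \xi(X,y))\, dX + (\xi(X,y) \cdot 0 + (y - Y(\xi(X,y), y)))\, dy,
\end{equation*}
more cleanly: set $\alpha := -Y\, dX + y\, dx$ pulled back appropriately; the symplectic condition $dX \wedge dY = dx \wedge dy$ together with the rearrangement $d(X\,dy - x\,dy) + d(\text{stuff})$ shows that in the coordinates $(X, y)$ the form whose components are $y - Y(\xi(X,y),y)$ and $X - \xi(X,y)$ is closed. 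Since it is also compactly supported on $\BR^2$, it admits a unique compactly supported primitive $W(X,y)$, and this is the desired generating function. Continuity of this construction follows from three standard facts, each of which is continuous in $C^k$: (i) inversion of $C^{k+1}$-diffeomorphisms close to the identity is continuous in the $C^k$ topology; (ii) composition with a fixed or varying $C^k$ map is $C^k$-continuous on bounded sets; (iii) the integration operator sending a compactly supported closed $1$-form on $\BR^2$ to its compactly supported primitive is bounded in every $C^k$ norm. Composing these, $\phi_n \to \phi$ in $C^k$ yields $\nabla W_n \to \nabla W$ in $C^k$.

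The only real content beyond bookkeeping is the quantitative version of the implicit function theorem and the continuity of inversion, both of which are standard (see e.g.\ \cite[Chapter 9]{MS17}); the main care needed is to keep track of the fact that $\|\nabla W\|_{C^k}$ controls the derivatives of $W$ of orders $1$ through $k+1$, which is precisely what is needed to get $\phi$ in $C^k$ and vice versa. The remaining step of verifying that the two constructions are mutual inverses is a direct calculation from the defining equations and has already been invoked implicitly above.
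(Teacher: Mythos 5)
Your proof is correct and follows essentially the same route as the paper's: in one direction you invert the map $(X,y)\mapsto(X-\partial_2W(X,y),y)$ (the paper phrases this as inverting the projection $\pi_1\circ\iota$ of the graph parametrization, which is the same implicit-function-theorem step), and in the other you observe that $\nabla W$ is the explicit composition $V\circ\psi^{-1}$ with $\psi(x,y)=(X(x,y),y)$, so continuity reduces in both cases to continuity of composition and inversion in the $C^k$-topology. The only blemish is the garbled display for the $1$-form $\eta$ (swapped components, the stray ``$\xi(X,y)\cdot 0$''), but since the lemma only concerns the norm $\|\nabla\cdot\|_{C^k}$ the detour through a compactly supported primitive is unnecessary anyway and the continuity argument you give is the right one.
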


\begin{proof}
If $\phi$ is a symplectomorphism with corresponding generating function $W$, then the graph $\Gamma(\phi)$ of $\phi$ inside $\BR^2\times\BR^2$ can be parametrized via
\begin{equation*}
\iota : \BR^2\ni (X,y)\mapsto
\left(\begin{matrix}
X-\partial_2W(X,y)\\
y\\
X\\
y-\partial_1W(X,y)
\end{matrix}\right)\in\Gamma(\phi)\subset\BR^2\times\BR^2.
\end{equation*}
For $j\in\{1,2\}$ let $\pi_j:\BR^2\times\BR^2\rightarrow\BR^2$ denote the projection onto the $j$-th factor. Then $\phi$ can be written as
\begin{equation*}
\phi = (\pi_2\circ\iota)\circ (\pi_1\circ\iota)^{-1}.
\end{equation*}
Clearly, the assignment $W\mapsto \pi_j\circ\iota$ is continuous with respect to the topologies on the spaces of functions and diffeomorphisms specified above. Since composition and taking inverses of diffeomorphisms are continuous operations with respect to the $C^k$-topology, this shows that the symplectomorphism $\phi$ depends continuously on $W$. Conversely, given $\phi$ we define the diffeomorphism $\psi(x,y)\coloneqq (X(x,y),y)$. The assignment $\phi\mapsto \psi$ is continuous with respect to the $C^k$-topology. Define the function
\begin{equation*}
V(x,y) \coloneqq (y - Y(x,y), X(x,y)-x).
\end{equation*}
Then $\nabla W$ is given by the composition $\nabla W = V\circ\psi^{-1}$. Both $V$ and $\psi^{-1}$ depend continuously on $\phi$ with respect to the $C^k$-topology. Hence the same is true for $\nabla W$.
\end{proof}

\begin{lem}
\label{lem:small_symplectomorphism_generated_by_small_hamiltonian}
There exist a $C^1$-open neighbourhood $\MU \subset \Symp_c(\BR^2,\omega_0)$ of the identity and a map
\begin{equation*}
\MU\rightarrow C^\infty_c([0,1]\times \BR^2)\qquad \phi\mapsto H_\phi
\end{equation*}
such that:
\begin{enumerate}
\item $H_\phi$ generates $\phi$.
\item $H_{\identity} = 0$
\item For every integer $k\geq 1$ the following is true: Equip $\MU$ with the $C^k$-topology and $C^\infty_c([0,1]\times \BR^2)$
with the topology induced by the norm $H\mapsto \|X_H\|_{C^{k-1}}$ where we view $X_H$ as an element of $C^\infty_c([0,1] \times \BR^2,\BR^2)$. The map $\phi\mapsto H_\phi$ is continuous with respect to these topologies.
\end{enumerate}
\end{lem}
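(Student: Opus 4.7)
The plan is to build $H_\phi$ from the standard compactly supported generating function of $\phi$ via the linear interpolation trick. Given a $C^1$-open neighbourhood $\MU$ of the identity in $\Symp_c(\BR^2,\omega_0)$ small enough that each $\phi\in\MU$ admits a unique compactly supported generating function $W=W_\phi$ satisfying
\begin{equation*}
\begin{cases}
X-x = \enspace\partial_2 W(X,y)\\
Y-y = -\partial_1 W(X,y)
\end{cases}
\end{equation*}
(where $(X,Y)$ are the components of $\phi$), I will define $W_{\phi,t}\coloneqq t\cdot W_\phi$ for $t\in[0,1]$. For $\MU$ sufficiently small, each $W_{\phi,t}$ still lies in the range of the generating-function correspondence; let $\phi_t\in\Symp_c(\BR^2,\omega_0)$ be the symplectomorphism it represents. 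Then $(\phi_t)_{t\in[0,1]}$ is a smooth compactly supported isotopy from $\identity$ to $\phi$, and $H_\phi$ is defined to be the unique compactly supported time-dependent Hamiltonian generating it, i.e.\ $X_{H_\phi,t}(w)=\partial_t\phi_t(\phi_t^{-1}(w))$.

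Property (2) is immediate: when $\phi=\identity$ we have $W_\phi\equiv 0$, so $\phi_t\equiv\identity$ and $H_\phi\equiv 0$. For property (1), the fact that $H_\phi$ generates $\phi_1=\phi$ follows by construction. The only substantive point is the continuity statement (3). I will establish it by combining Lemma \ref{lem:continuity_of_correspondence_between_symplectomorphisms_and_generating_functions} with the explicit formula for $X_{H_\phi}$ obtained by differentiating the generating-function equations in time. Writing $\phi_t=(X_t,Y_t)$ and differentiating the defining relations with respect to $t$ gives
\begin{equation*}
\partial_tX_t = \frac{\partial_2W_\phi(X_t,y)}{1-t\,\partial_{12}W_\phi(X_t,y)},\qquad
\partial_tY_t = -\frac{\partial_1W_\phi(X_t,y)}{1-t\,\partial_{12}W_\phi(X_t,y)} + \text{(lower order)},
\end{equation*}
which expresses $\partial_t\phi_t$ as a smooth algebraic function of $W_\phi$, $\nabla W_\phi$, $\nabla^2W_\phi$ and $\phi_t$. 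By Lemma \ref{lem:continuity_of_correspondence_between_symplectomorphisms_and_generating_functions}, the assignment $\phi\mapsto\nabla W_\phi$ is continuous $C^k\to C^k$, and simultaneously $W_\phi\mapsto\phi_t$ is continuous $C^k\to C^k$ uniformly in $t\in[0,1]$. Composing and using that $\nabla^2 W_\phi\in C^{k-1}$ whenever $\nabla W_\phi\in C^k$, we see that $\partial_t\phi_t$ depends $C^{k-1}$-continuously on $\phi$, and so does $X_{H_\phi,t}=\partial_t\phi_t\circ\phi_t^{-1}$ since inversion and composition are continuous in the $C^{k-1}$ topology on compactly supported diffeomorphisms.

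The main obstacle I foresee is purely bookkeeping: tracking the one-derivative loss from passing between $\phi$ and the Hamiltonian vector field $X_H$ (which involves second derivatives of $W$) and making sure the neighbourhood $\MU$ is chosen once and for all in $C^1$ so that the denominator $1-t\,\partial_{12}W_\phi$ stays uniformly bounded away from zero. Once $\MU$ is fixed to guarantee $\|\partial_{12}W_\phi\|_{C^0}<1/2$, say, the explicit formula above shows that all the estimates are uniform in $t$, and the continuity assertion (3) in every $C^k$ topology follows at once.
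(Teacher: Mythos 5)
Your proposal is correct and follows essentially the same route as the paper: define $\phi_t$ via the linearly interpolated generating functions $t\cdot W_\phi$, take $H_\phi$ to be the generating Hamiltonian of this isotopy, and reduce the continuity statement to Lemma \ref{lem:continuity_of_correspondence_between_symplectomorphisms_and_generating_functions} together with the one-derivative loss inherent in passing from $\phi_t$ to $X_{H_\phi}=(\partial_t\phi_t)\circ\phi_t^{-1}$. The only cosmetic difference is that you verify continuity by explicitly differentiating the implicit generating-function equations in $t$ (which requires the uniform lower bound on $1-t\,\partial_{12}W_\phi$ that you correctly arrange), whereas the paper parametrizes the graphs $\Gamma(\phi_t)$ jointly in $(t,z)$ and invokes continuity of composition and inversion in the $C^k$-topology; both yield the same conclusion.
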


\begin{proof}
Let $\phi$ be a compactly supported symplectomorphism sufficiently $C^1$-close to the identity and let $W$ be the associated generating function. For $t\in [0,1]$, let $\phi_t$ be the symplectomorphism associated to the generating function $t\cdot W$. Let $H_\phi:[0,1]\times \BR^2\rightarrow\BR$ be the unique compactly supported Hamiltonian generating the flow $(\phi_t)_{t\in [0,1]}$. This yields a map $\phi\mapsto H_\phi$ defined on a $C^1$-open neighbourhood of the identity. Clearly, assertions (1) and (2) hold. It remains to check continuity. Let $k\geq 1$. Consider the parametrization
\begin{equation*}
\iota_t : \BR^2\ni (X,y)\mapsto
\left(\begin{matrix}
X-t\cdot \partial_2W(X,y)\\
y\\
X\\
y- t\cdot \partial_1W(X,y)
\end{matrix}\right)\in\Gamma(\phi_t)\subset\BR^2\times\BR^2
\end{equation*}
of $\Gamma(\phi_t)$. We have
\begin{equation*}
\phi_t = (\pi_2\circ\iota_t)\circ (\pi_1\circ\iota_t)^{-1}
\end{equation*}
where $\pi_j:\BR^2\times\BR^2\rightarrow\BR^2$ denotes the projection onto the $j$-th factor. Clearly, the map
\begin{equation*}
(C^\infty_c(\BR^2),\|\nabla \cdot\|_{C^k})\enspace\ni\enspace W \mapsto \pi_j\circ\iota_t \enspace\in\enspace (C^\infty([0,1]\times \BR^2,\BR^2),\|\cdot\|_{C^k})
\end{equation*}
is continuous. Since composition and inversion of diffeomorphisms are continuous operations with respect to the $C^k$-topology, this implies that
\begin{equation}
\label{eq:continuity_of_generating_function_mapsto_flow}
(C^\infty_c(\BR^2),\|\nabla \cdot\|_{C^k})\enspace\ni\enspace W \mapsto \phi_t \enspace\in\enspace (C^\infty([0,1] \times \BR^2,\BR^2),\|\cdot\|_{C^k})
\end{equation}
is continuous. We have $X_{H_\phi} = (\partial_t\phi_t)\circ\phi_t^{-1}$. Combining Lemma \ref{lem:continuity_of_correspondence_between_symplectomorphisms_and_generating_functions} with continuity of \eqref{eq:continuity_of_generating_function_mapsto_flow}, we obtain that $\phi\mapsto X_{H_\phi}$ is continuous with respect to the $C^k$-topology on symplectomorphisms and the $C^{k-1}$-topology on vector fields.
\end{proof}

\begin{lem}
\label{lem:perturbing_symplectomorphism_near_elliptic_fixed_point}
Let $\omega$ be an area form and $\phi$ a symplectomorphism defined near the origin of $\BR^2$. Assume that $0$ is a fixed point of $\phi$ and that it is either elliptic or degenerate. Then there exists a Hamiltonian $H\in C^\infty_c([0,1]\times \BR^2,\BR)$ supported inside an arbitrarily small open neighbourhood of $0$ and with arbitrarily small norm $\|X_H\|_{C^2}$ such that $0$ is a fixed point of $\phi\circ \phi_H^1$ and such that $\phi\circ \phi_H^1$ is smoothly conjugated to an irrational rotation in some neighbourhood of $0$.
\end{lem}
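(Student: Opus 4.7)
My plan is to first reduce to the case $L \coloneqq d\phi(0) = R_\beta$ for some $\beta$, and then construct $H$ via formal Birkhoff normalization combined with a flat cut-off. By Darboux's theorem I may assume $\omega = \omega_0$ locally. If $L$ is a parabolic non-diagonalizable matrix (a Jordan block with eigenvalue $\pm 1$), I first precompose $\phi$ with the time-$1$ flow of the cut-off quadratic Hamiltonian $K(x) = \tfrac{\epsilon}{2}|x|^2 \rho(|x|/r)$, which on the inner ball coincides with the linear rotation $R_\epsilon$ and satisfies $\|X_K\|_{C^2} = O(\epsilon)$. A direct trace computation shows that $L R_\epsilon$ is elliptic for an appropriately chosen sign and magnitude of $\epsilon$. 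Since the property of being smoothly conjugated to a rotation is invariant under symplectic linear change of coordinates, I may then conjugate by some $A \in \Sp(2,\BR)$ to reduce to the case $L = R_\beta$ with $\beta \in (0,1/2)$.

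Next I pick an irrational Diophantine number $\alpha$ close to $\beta$. The heart of the argument is to produce a smooth symplectomorphism $\Psi$ defined near $0$ together with a smooth Hamiltonian $H^{\infty}$ such that the identity $\phi \circ \phi_{H^\infty}^1 = \Psi \circ R_\alpha \circ \Psi^{-1}$ holds to infinite order at $0$. This would be carried out by formal Birkhoff normalization: at each order the relevant homological equation is solvable because the Diophantine condition on $\alpha$ renders the small divisors $e^{2\pi i n\alpha}-1$ invertible, and the Taylor coefficients of $H^\infty$ supply precisely the degree of freedom needed to annihilate the Birkhoff twist coefficients of $\phi$ that would otherwise obstruct conjugation to a pure rotation. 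Borel's lemma then realizes the formal series for $\Psi$ and $H^\infty$ as genuine smooth objects, so that $\phi \circ \phi_{H^\infty}^1$ and $\Psi \circ R_\alpha \circ \Psi^{-1}$ agree up to a smooth correction that is infinitely flat at $0$.

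Finally, to obtain an $H$ that is compactly supported in an arbitrarily small neighbourhood of $0$ and has arbitrarily small $\|X_H\|_{C^2}$, I let $H$ generate a cut-off version of the symplectomorphism $\phi^{-1} \circ \Psi R_\alpha \Psi^{-1}$. Since the deviation of this symplectomorphism from the identity is infinitely flat at $0$, its $C^3$-norm on the ball $B_r$ is $O(r^N)$ for every $N$. Applying the continuity statement of Lemma \ref{lem:small_symplectomorphism_generated_by_small_hamiltonian} with $k=3$ gives $\|X_H\|_{C^2} = O(r^N)$, arbitrarily small. By construction $\phi \circ \phi_H^1 = \Psi R_\alpha \Psi^{-1}$ on an inner sub-ball, so it is smoothly conjugate to $R_\alpha$ there. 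The principal technical difficulty lies in the formal Birkhoff normalization step: generically $\phi$ is only formally conjugate to a twist map in its Birkhoff normal form rather than to a pure rotation, and the freedom offered by the unknown $H^\infty$ must be used carefully to cancel the twist coefficients order by order, a procedure that relies essentially on the Diophantine condition on $\alpha$.
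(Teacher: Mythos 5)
Your overall strategy (Birkhoff normal form, with the resonant twist coefficients absorbed into the auxiliary Hamiltonian) is the same idea as the paper's, which works with a real-analytic approximation and the Moser--Siegel normal form only up to order $4$ rather than an all-orders formal normalization plus Borel summation. The formal step itself is fine (note that for order-by-order solvability of the homological equations you only need $\alpha$ irrational, not Diophantine; the Diophantine condition buys convergence, which you never use). The genuine gap is in the final estimate. The symplectomorphism $\phi^{-1}\circ\Psi R_\alpha\Psi^{-1}$ is \emph{not} infinitely flat to the identity at $0$: it equals $\phi^1_{H^\infty}$ composed with a flat correction, and $H^\infty$ has nonvanishing Taylor coefficients — at order $4$ the twist-cancelling terms $\theta_1|z|^4$, and at order $2$ whenever $d\phi(0)$ is not already conjugate to $R_\alpha$ (which is forced in the degenerate/parabolic and rational-elliptic cases, where you must move the rotation number to an irrational value). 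Consequently the $C^3$-norm of this map on $B_r$ is not $O(r^N)$; for the quartic part it is only $O(r)$, and for the quadratic part it is $O(|\alpha-\beta|)$ independently of $r$, degrading further to $O(|\alpha-\beta|/r^{2})$ once you cut off at scale $r$. So the appeal to Lemma \ref{lem:small_symplectomorphism_generated_by_small_hamiltonian} with $k=3$ does not deliver $\|X_H\|_{C^2}\to 0$ as written.

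The repair is to estimate the three pieces separately, which is essentially what the paper does: the part of $H^\infty$ vanishing to order $4$ has $\|X_{H^\infty}\|_{C^2}=O(r)$ on $B_r$ even after multiplying by a cut-off $\chi(\cdot/r)$ (each derivative of the cut-off costs $r^{-1}$ but is paid for by the order of vanishing); the genuinely flat discrepancy contributes $O(r^{N-3})$ and may be handled by Lemma \ref{lem:small_symplectomorphism_generated_by_small_hamiltonian}; and the linear adjustment of the rotation number must be made small by choosing $\alpha$ \emph{after} $r$, with $|\alpha-\beta|\ll r^{2}$ (or by first performing your trace perturbation so that $d\phi(0)$ is already an irrational rotation, in which case one can take $\alpha=\beta$ and this term disappears). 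A further small point: Borel's lemma produces $\Psi$ and $H^\infty$ with the prescribed jets but not automatically a symplectic $\Psi$; you need to realize the formal series through a generating function, or correct $\Psi$ by a flat diffeomorphism, so that $\Psi R_\alpha\Psi^{-1}$ is honestly symplectic and $\phi^{-1}\circ\Psi R_\alpha\Psi^{-1}$ is Hamiltonian-generated. With these amendments the argument goes through, but as stated the quantitative conclusion — which is the entire content of the lemma, since it is used to make $C^2$-small perturbations of contact forms — is not justified.
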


\begin{proof}
After a change of coordinates, we can assume w.l.o.g. that $\omega=\omega_0$. Since $0$ is elliptic or degenerate as a fixed point of $\phi$, we may choose a $C^\infty$-small Hamiltonian $H$ supported in a small neighbourhood of $0$ such that $0$ is an elliptic fixed point of $\phi\circ\phi_H^1$ with rotation number an irrational multiple of $2\pi$ and such that $\phi\circ\phi_H^1$ is real analytic in some open neighbourhood of $0$. It follows from \cite[Chapter 23, p. 172-173]{MoSi95} that there exists a symplectomorphism $\psi$ defined in an open neighbourhood of $0$ and fixing $0$ such that
\begin{equation*}
\psi^{-1}\circ\phi\circ\phi_H^1\circ\psi (x,y) =
\left(\begin{matrix}
\cos(\theta(x,y)) & -\sin(\theta(x,y)) \\
\sin(\theta(x,y)) & \cos(\theta(x,y))
\end{matrix}\right)\cdot 
\left(\begin{matrix}
x\\y
\end{matrix}\right) + O_4(x,y)
\end{equation*}
where $\theta(x,y)=\theta_0 + \theta_1(x^2+y^2)$ for real constants $\theta_0$ and $\theta_1$ and $O_4(x,y)$ is a real analytic map vanishing up to order $3$ at the origin. Since $d(\phi\circ\phi_H^1)(0)$ is conjugated to an irrational rotation, the constant $\theta_0$ is an irrational multiple of $2\pi$. There exists a symplectomorphism $\xi$ arbitrarily $C^3$-close to the identity and supported in an arbitrarily small neighbourhood of $0$ such that
\begin{equation*}
\psi^{-1}\circ\phi\circ\phi_H^1\circ\psi \circ \xi (x,y) =
\left(\begin{matrix}
\cos(\theta(x,y)) & -\sin(\theta(x,y)) \\
\sin(\theta(x,y)) & \cos(\theta(x,y))
\end{matrix}\right)\cdot 
\left(\begin{matrix}
x\\y
\end{matrix}\right)
\end{equation*}
near $0$. Lemma \ref{lem:small_symplectomorphism_generated_by_small_hamiltonian} yields a Hamiltonian $G$ supported in a small neighbourhood of $0$ such that $\phi_G^1 = \xi$ and such that $\|X_G\|_{C^2}$ is controlled by $\|\xi-\identity\|_{C^3}$. We may choose an autonomous Hamiltonian $K$ such that $\|K\|_{C^3}$ is arbitrarily small, $K$ is supported in an arbitrarily small neighbourhood of $0$ and $K(x,y) = \frac{\theta_1}{4}(x^2+y^2)^2$ near $0$. The time-$1$-map $\phi_K^1$ is given by
\begin{equation*}
\phi_K^1(x,y) = \left(\begin{matrix}
\cos(-\theta_1(x^2+y^2)) & -\sin(-\theta_1(x^2+y^2)) \\
\sin(-\theta_1(x^2+y^2)) & \cos(-\theta_1(x^2+y^2))
\end{matrix}\right)\cdot 
\left(\begin{matrix}
x\\y
\end{matrix}\right)
\end{equation*}
in a neighbourhood of $0$. Thus we have
\begin{equation*}
\psi^{-1}\circ\phi\circ\phi_H^1\circ\psi \circ \phi_G^1\circ \phi_K^1 (x,y)=
\left(\begin{matrix}
\cos\theta_0 & -\sin\theta_0 \\
\sin\theta_0 & \cos\theta_0
\end{matrix}\right)\cdot 
\left(\begin{matrix}
x\\y
\end{matrix}\right).
\end{equation*}
Hence
\begin{equation*}
\phi\circ\phi_H^1\circ\psi \circ \phi_G^1\circ \phi_K^1\circ\psi^{-1} = \phi\circ \phi_H^1\circ \phi_{G\circ\psi^{-1}}^1\circ \phi_{K\circ\psi^{-1}}^1
\end{equation*}
is smoothly conjugated to an irrational rotation. Let $\eta:[0,1]\rightarrow [0,1]$ be a smooth function such that $\eta(t)=0$ for $t$ near $0$ and $\eta(t)=1$ for $t$ near $1$ and $\eta'\geq 0$. Define the Hamiltonian $F$ by
\begin{equation*}
F:[0,1] \times \BR^2\rightarrow\BR\qquad F(t,z)\coloneqq
\begin{cases}
3\cdot\eta'(3t)\cdot K(\eta(3t),\psi^{-1}(z)) & \text{for}\enspace 0\leq t\leq \frac{1}{3} \\
3\cdot\eta'(3t-1)\cdot G(\eta(3t-1),\psi^{-1}(z)) & \text{for}\enspace \frac{1}{3}\leq t\leq \frac{2}{3} \\
3\cdot\eta'(3t-2)\cdot H(\eta(3t-2),z) & \text{for}\enspace \frac{2}{3}\leq t\leq 1.
\end{cases}
\end{equation*}
$F$ is compactly supported, vanishes for $t$ near $0$ and $1$ and generates the symplectomorphism $\phi_H^1\circ \phi_{G\circ\psi^{-1}}^1\circ \phi_{K\circ\psi^{-1}}^1$. Thus $\phi\circ\phi_F^1$ is smoothly conjugated to an irrational rotation. By shrinking the supports of $H$, $\xi$ and $K$ and the norms $\|H\|_{C^\infty}$, $\|\xi-\identity\|_{C^3}$ and $\|K\|_{C^3}$, we can make the support of $F$ and the norm $\|X_F\|_{C^2}$ arbitrarily small.
\end{proof}

\begin{lem}
\label{lem:lift_modified_return_map_to_contact_form}
Let $\alpha$ be a contact form on a $3$-manifold $Y$ and let $\gamma$ be a simple closed Reeb orbit. Let $p$ be a point on $\gamma$ and let $D$ be a small disk intersecting $\gamma$ transversely in $p$. We denote $\omega\coloneqq d\alpha|_D$. Let $\phi:(U,\omega)\rightarrow (D,\omega)$ be the local first-return-map of the Reeb flow, defined in some open neighbourhood $U\subset D$. There exist a $C^1$-open neighbourhood $\MU$ of zero inside $C^\infty_c([0,1]\times U,\BR)$ and a map
\begin{equation*}
\MU\rightarrow \Omega^1(Y)\qquad H \mapsto \alpha_H
\end{equation*}
such that:
\begin{enumerate}
\item $\alpha_H$ is a contact form and agrees with $\alpha$ outside a small neighbourhood of $\gamma$.
\item The local first return map of the Reeb flow of $\alpha_H$ is given by $\phi\circ \phi_H^1$.
\item $\alpha_0 = \alpha$
\item For every interger $k\geq 0$ the following is true: Equip $\MU$ with the topology induced by the norm $\|X_H\|_{C^k}$. Equip $\Omega^1(Y)$ and $\operatorname{Vect}(Y)$ with the $C^k$-topologies. Then the maps $H\mapsto \alpha_H$ and $H\mapsto R_{\alpha_H}$ are continuous with respect to these topologies.
\end{enumerate}
\end{lem}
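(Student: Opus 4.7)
The plan is to use a flow-box chart around $\gamma$ to put $\alpha$ into a standard form, and then insert a time-dependent Hamiltonian into the model so that one circuit along $\gamma$ acquires the extra rotation $\phi_H^1$ on the transverse disk. Let $T$ be the period of $\gamma$. After possibly shrinking $D$, the Reeb flow defines a smooth map $F \colon [0,T] \times D \to V$, $F(t,z) \coloneqq \phi^t_\alpha(z)$, onto a tubular neighborhood $V$ of $\gamma$; it is a diffeomorphism from $(0,T)\times D$ onto $V\setminus D$ and satisfies $F(0,z)=z$, $F(T,z)=\phi(z)$, realizing $V$ as the mapping torus of $\phi$. Because $\mathcal{L}_{R_\alpha}\alpha=0$ and $\alpha(R_\alpha)=1$, one checks $F^*\alpha = dt+\lambda$ with $\lambda \coloneqq \alpha|_D$.

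Given $H$, I would fix a smooth monotone cutoff $\chi \colon [0,T]\to[0,1]$ with $\chi \equiv 0$ near $0$ and $\chi \equiv 1$ near $T$, and reparametrize by $\widetilde{H}(t,z)\coloneqq \chi'(t)\,H(\chi(t),z)$. Then $\widetilde{H}$ is supported in $(0,T)\times U$, depends linearly and continuously on $H$ in every $C^k$, and a change of time variable shows that its Hamiltonian flow (for $\omega = d\lambda$) from $t=0$ to $t=T$ equals $\phi_H^1$. I then set
\begin{equation*}
\alpha' \coloneqq (1+\widetilde{H})\,dt + \lambda \qquad \text{on } [0,T]\times D.
\end{equation*}
Since $\widetilde{H}$ vanishes near $\{0,T\}\times D$ and near $[0,T]\times\partial D$, $\alpha'$ agrees there with $F^*\alpha$, so it descends through $F$ to a smooth 1-form $\alpha_H$ on $V$ coinciding with $\alpha$ outside a compact subset of $V$. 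Extending by $\alpha$ outside $V$ produces a 1-form on $Y$ with $\alpha_0 = \alpha$, establishing parts (1) and (3).

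For (2), a direct wedge computation yields $\alpha'\wedge d\alpha' = \rho_H\,dt\wedge\omega$ with $\rho_H = 1 + O(\widetilde{H}, d_z\widetilde{H})$, so for $H$ sufficiently $C^1$-small the form $\alpha_H$ is contact; this determines the open neighborhood $\mathcal{U}$. Solving $\alpha'(R)=1$ and $\iota_R\,d\alpha'=0$ with the ansatz $R = a\,\partial_t + X$ gives $X = a\,X_{\widetilde{H}_t}$ and $a = 1/(1+\widetilde{H}+\lambda(X_{\widetilde{H}_t}))$, so Reeb trajectories project onto $D$ as integral curves of the time-dependent Hamiltonian vector field $X_{\widetilde{H}_t}$. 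Running from $t=0$ to $t=T$ therefore executes $\phi_H^1$ on the disk, and the identification $(T,z)\sim(0,\phi(z))$ then composes with $\phi$, giving the new local first return map $\phi\circ\phi_H^1$.

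For (4), continuity is essentially for free: $H\mapsto\widetilde{H}$ is linear and continuous in every $C^k$-norm; pulled back by $F$, the difference $\alpha_H - \alpha$ is the 1-form $\widetilde{H}\,dt$; and $R_{\alpha_H}$ is a smooth algebraic expression in $\widetilde{H}$ and $d_z\widetilde{H}$. Since $H$ has compact support in $U$, Poincaré's inequality gives $\|H\|_{C^{k+1}} \lesssim \|X_H\|_{C^k}$, so $\|\alpha_H - \alpha\|_{C^k}$ and $\|R_{\alpha_H} - R_\alpha\|_{C^k}$ are both controlled by $\|X_H\|_{C^k}$. The only mildly delicate step in this plan is ensuring that $\alpha'$ descends smoothly across the mapping-torus identification, which is precisely why the cutoff $\chi$ is chosen to force $\widetilde{H}$ to vanish near both endpoints $t=0$ and $t=T$.
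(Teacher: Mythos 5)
Your construction is the same as the paper's -- reparametrize $H$ by a cutoff so that it vanishes near the ends of a flow box, insert it as $(1+\widetilde{H})\,dt+\lambda$, read off the Reeb field as a positive multiple of $\partial_t+X_{\widetilde{H}_t}$, and conclude assertions (1)--(4) from the explicit formulas. However, there is a genuine error in your setup of the flow box. You take $T$ to be the period of $\gamma$ and assert simultaneously that $F(t,z)=\phi_\alpha^t(z)$ satisfies $F^*\alpha=dt+\lambda$, that $F$ is a diffeomorphism from $(0,T)\times D$ onto $V\setminus D$, and that $F(T,z)=\phi(z)$. These are mutually inconsistent: if $F^*\alpha=dt+\lambda$ then $F$ is the genuine Reeb flow, and the first return time $\sigma(z)$ is a nonconstant function equal to $T$ only at $p$. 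For points with $\sigma(z)<T$ the map $F$ on $(0,T)\times D$ fails to be injective (the trajectory re-enters $D$ before time $T$), and $F(T,z)=\phi_\alpha^T(z)\neq\phi(z)$ in general; the neighbourhood $V$ is the region $\{\phi_\alpha^t(z): 0\le t\le\sigma(z)\}$, not the image of a product $[0,T]\times D$. Consequently the "descent across the mapping-torus identification $(T,z)\sim(0,\phi(z))$" that you rely on at the end is not available in the form you state it, and the smoothness of $\alpha_H$ on $Y$ is not established.

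The fix is exactly what the paper does: take $T>0$ \emph{small} (much smaller than the return time on $D$), so that $F:[0,T]\times D\to Y$ with $F^*\alpha=dt+\lambda$ is an honest embedding, and insert the cutoff Hamiltonian there. No quotient identification is needed; since $\widetilde{H}$ vanishes near $\{0,T\}\times D$ and near $[0,T]\times\partial D$, one simply extends by $\alpha$ outside $\operatorname{im}(F)$. The trajectory of $R_{\alpha_H}$ starting at $z\in D$ then executes $\phi_H^1$ inside the short box and afterwards follows the unperturbed flow back to $D$, producing the return map $\phi\circ\phi_H^1$. With this correction the remainder of your argument -- the contact condition for $C^1$-small $H$, the Reeb vector field formula, and the continuity estimates in (4), including the observation that compact support lets one recover $\|H\|_{C^{k+1}}$ from $\|X_H\|_{C^k}$ -- is sound and agrees with the paper.
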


\begin{proof}
Denote $\lambda\coloneqq \alpha|_{D}$. For $T>0$ sufficiently small, there exists an embedding
\begin{equation*}
F:[0,T]\times D\rightarrow Y
\end{equation*}
such that the restriction of $F$ to $\{0\}\times D$ is the inclusion of $D$ and such that $F^*\alpha= dt + \lambda$. Let $\eta:[0,T]\rightarrow [0,1]$ be a smooth function which is equal to $0$ near $0$, equal to $1$ near $T$ and satisfies $\eta'\geq 0$. Given a Hamiltonian $H\in\MU$, we define $H'$ by
\begin{equation*}
H'(z,t)\coloneqq \eta'(t)\cdot H(z,\eta(t)).
\end{equation*}
$H'$ vanishes for $t$ near $0$ and $T$ and its time-$T$-map agrees with the time-$1$-map of $H$. The map $H\mapsto H'$ is continuous with respect to the topology induced by the norm $\|X_H\|_{C^k}$. We define $\alpha_H$ by
\begin{equation}
\label{eq:lift_modified_return_map_to_contact_form}
\alpha_H\coloneqq (1+H')dt + \lambda
\end{equation}
in the coordinate chart $F$. We extend $\alpha_H$ to all of $Y$ by setting it equal to $\alpha$ outside $\operatorname{im}(F)$. If $H$ is sufficiently $C^1$-small, then $\alpha_H$ is a contact form. The Reeb vector field inside the coordinate chart $F$ is given by
\begin{equation}
\label{eq:lift_modified_return_map_to_contact_form_reeb_vector_field}
R_{\alpha_H} = \frac{1}{1 + H' + \lambda(X_{H'})}\cdot (\partial_t + X_{H'}).
\end{equation}
This is positively proportional to $\partial_t + X_{H'}$. Thus the local first return map of the disk $D$ induced by the Reeb flow of $\alpha_H$ is given by $\phi\circ\phi_H^1$. It is immediate from formulas \eqref{eq:lift_modified_return_map_to_contact_form} and \eqref{eq:lift_modified_return_map_to_contact_form_reeb_vector_field} that $\alpha_H$ and $R_{\alpha_H}$ depend continuously on $H$ with respect to the topologies specified in assertion (4).
\end{proof}

We are finally ready to prove Proposition \ref{prop:making_local_return_map_of_elliptic_orbit_a_rotation}.

\begin{proof}[Proof of Proposition \ref{prop:making_local_return_map_of_elliptic_orbit_a_rotation}]
Let $D$ be a small disk transversely intersecting $\gamma$ in a point $p$. Denote $\omega\coloneqq d\alpha|_D$. For a sufficiently small open neighbourhood $p\in U\subset D$ we have a well-defined local first return map $\phi:(U,\omega)\rightarrow (D,\omega)$. The map $\phi$ has an elliptic fixed point at $p$. By Lemma \ref{lem:perturbing_symplectomorphism_near_elliptic_fixed_point}, there exists a Hamiltonian $H:[0,1]\times U\rightarrow\BR$, supported in an arbitrarily small neighbourhood of $p$ and with arbitrarily small norm $\|X_H\|_{C^2}$, such that $\phi\circ\phi_H^1$ is smoothly conjugated to an irrational rotation in some neighbourhood of $p$. Lemma \ref{lem:lift_modified_return_map_to_contact_form} yields a contact form $\alpha_H$ on $Y$ which agrees with $\alpha$ outside a small neighbourhood of $\gamma$ such that the local first return map of the Reeb flow of $\alpha_H$ is given by $\phi\circ\phi_H^1$. By assertion (4) in Lemma \ref{lem:lift_modified_return_map_to_contact_form}, we can make $\|\alpha_H-\alpha\|_{C^2}$ and $\|R_{\alpha_H}-R_\alpha\|_{C^2}$ arbitrarily small by shrinking $\|X_H\|_{C^2}$.
\end{proof}

\subsection{Proof of Proposition \ref{prop:contact_forms_in_C_3_neighbourhood_may_be_approx_by_forms_satisfying_criterion}}
\label{subsection:proof_of_proposition_on_contact_forms_in_c_3_neighbourhood}

This section is devoted to the proof of Proposition \ref{prop:contact_forms_in_C_3_neighbourhood_may_be_approx_by_forms_satisfying_criterion}.

\begin{lem}
\label{lem:viterbo_near_standard_contact_form_if_local_return_map_is_rotation}
There exists $\epsilon>0$ with the following property: Let $\alpha$ be a tight contact form on $S^3$ satisfying the following conditions:
\begin{enumerate}
\item $\|R_\alpha-R_{\alpha_0}\|_{C^2}<\epsilon$
\item $R_\alpha = c\cdot R_{\alpha_0}$ on the great circle $\Gamma\coloneqq \{(z_1,0)\mid |z_1|=1\}$ for some constant $c>0$.
\item $\Gamma$ is the unique shortest Reeb orbit of $\alpha$.
\item The local return map of a small disk transversely intersecting $\Gamma$ is smoothly conjugated to an irrational rotation.
\end{enumerate}
Then there exists a smooth embedding $f:\BD\rightarrow S^3$ parametrizing a $\partial$-strong disk-like surface of section with boundary orbit $\Gamma$ such that the $2$-form $\omega\coloneqq f^*d\alpha$ and the lift of the first return map $\widetilde{\phi}_1\in\widetilde{\Diff}(\BD,\omega)$ with respect to a trivialization of degree $1$ satisfy assertions (1)-(3) in Proposition \ref{prop:contact_forms_in_C_3_neighbourhood_may_be_approx_by_forms_satisfying_criterion}.
\end{lem}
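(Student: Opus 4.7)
The plan is to construct a $\partial$-strong disk-like global surface of section of $\alpha$ whose boundary orbit is $\Gamma$, choose a well-adapted parametrization $f$, and then verify the three required properties; property (3) will reduce to a clean action identity, while properties (1) and (2) follow from $C^2$-proximity of $R_\alpha$ to $R_{\alpha_0}$ combined with assumption (4). First I would use that for $\alpha$ sufficiently $C^2$-close to $\alpha_0$ the form $\alpha$ is dynamically convex and that $\Gamma$, being a small perturbation of a Hopf orbit of $\alpha_0$, remains an unknot with self-linking number $-1$ and is thus a Hopf orbit of $\alpha$; Theorem \ref{theorem:hopf_iff_boundary_of_disk_gss} together with Remark \ref{remark:hopf_iff_boundary_of_disk_gss} then yields a $\partial$-strong disk-like surface of section $\Sigma \subset S^3$ with $\partial\Sigma = \Gamma$. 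Using assumption (4) and Lemma \ref{lem:neighbourhood_theorem_boundary_orbit_conj_to_rot}, I may replace $\Sigma$ by an isotopic $\partial$-strong surface of section such that, in a suitable tubular neighborhood of $\Gamma$, the contact form takes the normal form \eqref{eq:neighbourhood_theorem_boundary_orbit_conj_to_rot_b} with $F^{-1}(\Sigma)$ given by \eqref{eq:neighbourhood_theorem_boundary_orbit_conj_to_rot_a}. A Moser-type argument then produces a parametrization $f:\BD\to\Sigma$ with $f^*d\alpha = \omega$ and with $\phi$ an irrational rotation in some neighborhood of $\partial\BD$, establishing property (2).

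For property (1): when $\alpha = \alpha_0$ the standard Hopf hemisphere is a disk-like surface of section on which the first return map is $\identity_\BD$, and its lift with respect to a degree $1$ trivialization of the blow-up $\overline{S^3}$ along $\Gamma$ equals the identity in $\widetilde{\Diff}(\BD,\omega)$. Since $R_\alpha$ is $C^2$-close to $R_{\alpha_0}$ by assumption (1), the constructed $\Sigma$, the parametrization $f$, and the isotopy used to define the degree $1$ lift (namely the flow of a rescaled pullback of $\overline{R_\alpha}$ under a degree $1$ trivialization of $\overline{S^3}$) all depend continuously in $C^1$ on $R_\alpha$, and therefore $\widetilde{\phi}_1$ is $C^1$-close to $\identity_\BD$ in $\widetilde{\Diff}(\BD,\omega)$.

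For property (3), let $p$ be a fixed point of $\phi$ and let $\delta\neq\Gamma$ be the corresponding closed Reeb orbit of $\alpha$. By assumption (3), $\MA(\delta) > \MA(\Gamma)$. Lemma \ref{lem:action_equals_first_return_time} gives $\sigma_{\widetilde{\phi}_0}(p) = \MA(\delta)$ for the degree $0$ lift. Equation \eqref{eq:relationship_lift_trivialization} reads $\widetilde{\phi}_0 = \widetilde{\rho}\circ\widetilde{\phi}_1$, where $\widetilde{\rho}\in\widetilde{\Diff}(\BD,\omega)$ is one full positive rotation. Applying Lemma \ref{lem:basic_properties_of_action}(2) at the fixed point $p$,
\begin{equation*}
\MA(\delta) \;=\; \sigma_{\widetilde{\phi}_0}(p) \;=\; \sigma_{\widetilde{\rho}}(p) + \sigma_{\widetilde{\phi}_1}(p).
\end{equation*}
Since $\pi(\widetilde{\rho}) = \identity_\BD$, equation \eqref{eq:action_on_disk_first_defining_identity} forces $\sigma_{\widetilde{\rho},\lambda}$ to be constant on $\BD$, and evaluation on $\partial\BD$ through \eqref{eq:action_on_disk_second_defining_identity} identifies this constant with $\int_{\partial\BD}\lambda = \int_\BD\omega = \MA(\Gamma)$. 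Hence $\sigma_{\widetilde{\phi}_1}(p) = \MA(\delta) - \MA(\Gamma) > 0$.

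The main obstacle is property (1): one must verify that $C^2$-closeness of $R_\alpha$ to $R_{\alpha_0}$ genuinely yields $C^1$-closeness of $\widetilde{\phi}_1$ in the universal cover $\widetilde{\Diff}(\BD,\omega)$, i.e.\ at the level of the isotopy class of the connecting arc, not merely at the level of the underlying diffeomorphism $\phi$. The use of the degree $1$ trivialization---which cancels precisely the rotation that the linearized Reeb flow of $\alpha_0$ along $\Gamma$ performs with respect to the Hopf hemisphere---is what makes the reference isotopy constant, and hence what makes this $C^1$-closeness a meaningful small-perturbation statement.
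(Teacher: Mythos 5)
Your handling of assertions (2) and (3) is correct and essentially matches the paper: in particular the action computation $\sigma_{\widetilde{\phi}_0}(p)=\sigma_{\widetilde{\phi}_1}(p)+\MA(\Gamma)$ via Lemma \ref{lem:action_equals_first_return_time}, equation \eqref{eq:relationship_lift_trivialization} and the identification $\sigma_{\widetilde{\rho}}\equiv\int_\BD\omega=\MA(\Gamma)$ is exactly the right argument. The problem is assertion (1), which you yourself flag as ``the main obstacle'' but then dispatch with an unsubstantiated continuity claim. The surface of section you produce comes from Theorem \ref{theorem:hopf_iff_boundary_of_disk_gss} and Remark \ref{remark:hopf_iff_boundary_of_disk_gss}, i.e.\ from an abstract existence theorem proved by pseudoholomorphic-curve methods. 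That theorem gives no quantitative control whatsoever on the resulting disk, on its first return map, or on how either depends on the contact form; there is no sense in which ``the constructed $\Sigma$, the parametrization $f$, and the isotopy used to define the degree $1$ lift all depend continuously in $C^1$ on $R_\alpha$.'' Even for $\alpha$ very close to $\alpha_0$, the disk produced by that theorem could a priori be wildly different from the Hopf hemisphere, and its return map need not be $C^1$-close to the identity. So as written the proof of (1) does not go through.

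The paper closes this gap by \emph{not} appealing to the abstract existence theorem at all. It writes down the explicit candidate map
\begin{equation*}
f(t,re^{i\theta})=\Bigl(\sin\bigl(\tfrac{\pi}{2}r\bigr)e^{i(\theta+2\pi t)},\ \cos\bigl(\tfrac{\pi}{2}r\bigr)e^{2\pi i t}\Bigr),
\end{equation*}
which trivializes the complement of $\Gamma$ by explicit Hopf hemispheres, and invokes the quantitative estimate of \cite[Proposition 3.6]{ABHS18}: under hypotheses (1) and (2) of the lemma, the pullback of $R_\alpha$ extends smoothly to the closed solid torus $\BR/\BZ\times\BD$ and satisfies $\|R-\partial_t\|_{C^1}\lesssim\|R_\alpha-R_{\alpha_0}\|_{C^2}$. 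Transversality of $R$ to the fibres $\{t\}\times\BD$ (hence the $\partial$-strong surface-of-section property) and the $C^1$-closeness of the degree-$1$ lift $\widetilde{\phi}_1$ to $\identity_\BD$ are then immediate consequences of this single estimate, with no continuity-of-an-abstract-construction argument needed. This explicit construction, or some substitute for it carrying the same quantitative content, is the missing ingredient in your proof.
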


\begin{proof}
Our proof is based on Proposition 3.6 in \cite{ABHS18}. We define
\begin{equation*}
f:\BR/\BZ\times\BD\rightarrow S^3 \quad f(t,e^{i\theta})\coloneqq \left(\sin\left(\frac{\pi}{2}r\right)e^{i(\theta+2\pi t)},\cos\left(\frac{\pi}{2}r\right)e^{2\pi it}\right).
\end{equation*}
Up to replacing $\BR/\pi\BZ$ by $\BR/\BZ$, this agrees with the map $f$ defined in \cite{ABHS18}. By assertion (iii) in \cite[Proposition 3.6]{ABHS18}, the pull-back of the Reeb vector field $R_\alpha$ via $f|_{\BR/\BZ\times\interior(\BD)}$ extends to a smooth vector field $R$ on the closed solid torus $\BR/\BZ\times\BD$. Moreover, the $C^1$-norm $\|R-\partial_t\|_{C^1}$ is controlled by the $C^2$-norm $\|R_\alpha - R_{\alpha_0}\|_{C^2}$. This shows that if $R_\alpha$ is sufficiently $C^2$-close to $R_{\alpha_0}$, then $R$ is positively transverse to the fibres $t\times\BD$ of the solid torus. In particular, the restriction of $f$ to $0\times\BD$ parametrizes a $\partial$-strong disk-like global surface of section of the Reeb flow of $\alpha$. Let $z$ be a point in the boundary $\partial\BD$. The map
\begin{equation*}
\BR/\BZ\rightarrow \Gamma \quad t \mapsto f(t,z)
\end{equation*}
has degree $1$. Thus the flow of $R$ on $\BR/\BZ \times \BD$ induces the lift $\widetilde{\phi}_1$ of the first return map of $f|_{0\times \BD}$ with respect to a trivialization of degree $1$. We see that the $C^1$-distance between $\widetilde{\phi}_1$ and the identity $\identity_\BD$ is controlled by the $C^2$-distance between $R_\alpha$ and $R_{\alpha_0}$, which yields assertion (1) of Proposition \ref{prop:contact_forms_in_C_3_neighbourhood_may_be_approx_by_forms_satisfying_criterion}. The hypothesis that the local first return map of the orbit $\Gamma$ is smoothly conjugated to an irrational rotation implies that the global first return map $\phi$ is smoothly conjugated to an irrational rotation near the boundary. Let $p$ be a fixed point of $\phi$ corresponding to a closed Reeb orbit $\gamma$ of $\alpha$. By assumption, $\Gamma$ is the unique shortest Reeb orbit of $\alpha$. Thus $\int_\gamma\alpha > \int_\Gamma\alpha$. Let $\widetilde{\phi}_0$ denote the lift of $\phi$ with respect to a trivialization of degree $0$. It follows from Lemma \ref{lem:action_equals_first_return_time} that $\int_\gamma\alpha = \sigma_{\widetilde{\phi}_0}(p)$. The actions of $\widetilde{\phi}_0$ and $\widetilde{\phi}_1$ are related via $\sigma_{\widetilde{\phi}_0}(p)=\sigma_{\widetilde{\phi}_1}(p)+\int_\Gamma\alpha$. Thus we can conclude that $\sigma_{\widetilde{\phi}_1}(p)$ is positive.
\end{proof}

\begin{lem}
\label{lem:move_orbit_to_great_circle}
For every $\epsilon >0$ there exists $\delta >0$ such that for all tight contact forms $\alpha$ on $S^3$ which satisfy $\| R_\alpha - R_{\alpha_0}\|_{C^2}< \delta$ and for all simple closed Reeb orbits $\gamma$ of action less than $\frac{3}{2}\cdot\pi$ there exists a diffeomorphism $\psi$ of $S^3$ such that:
\begin{enumerate}
\item $R_{\psi^*\alpha} = c\cdot R_{\alpha_0}$ on the great circle $\Gamma \coloneqq \{(z_1,0)\mid |z_1|=1\}$ for some constant $c>0$.
\item $\psi(\Gamma) = \operatorname{im} (\gamma)$
\item $\|R_{\psi^*\alpha} - R_{\alpha_0}\|_{C^2} < \epsilon$
\end{enumerate}
\end{lem}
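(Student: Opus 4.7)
The plan is to decompose the desired diffeomorphism as $\psi = g \circ h$, where $g \in U(2)$ is a linear isometry of $(S^3,\alpha_0)$ and $h$ is a small smooth diffeomorphism supported in a tubular neighborhood of $\Gamma$. The unitary group $U(2)$ acts on $S^3$ preserving $\alpha_0$ exactly (hence $R_{\alpha_0}$) and acts transitively on the $2$-sphere of Hopf circles. First I will establish that for $\delta$ small, every simple closed Reeb orbit $\gamma$ of $\alpha$ with action less than $\frac{3}{2}\pi$ is $C^3$-close, as a parametrized loop, to a unique nearby Hopf circle $\Gamma_0$. This is standard ODE dependence: differentiating the defining equation $\dot{\gamma} = R_\alpha(\gamma)$ inductively shows that $\|\gamma - \Gamma_0\|_{C^{k+1}}$ is controlled by $\|R_\alpha - R_{\alpha_0}\|_{C^k}$, so the hypothesis of $C^2$-closeness of Reeb vector fields bootstraps to $C^3$-closeness of integral curves. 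The action cutoff $\frac{3}{2}\pi$ prevents $\gamma$ from shadowing a doubly-covered Hopf fiber, and uniqueness of the nearest Hopf circle follows from a transversality argument over the $2$-sphere of Hopf fibers once $\delta$ is small enough.

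Choose $g \in U(2)$ with $g(\Gamma) = \Gamma_0$. Because $g^*R_{\alpha_0} = R_{\alpha_0}$ and $g$ is a fixed smooth isometry with $C^k$-norm uniformly bounded over the compact group $U(2)$, one has
\[
R_{g^*\alpha}(x) - R_{\alpha_0}(x) = (dg_x)^{-1}\bigl(R_\alpha(g(x)) - R_{\alpha_0}(g(x))\bigr),
\]
so $\|R_{g^*\alpha} - R_{\alpha_0}\|_{C^2} = O(\delta)$. The pulled-back orbit $\gamma' := g^{-1}(\gamma)$ is then a Reeb orbit of $g^*\alpha$ that is $C^3$-close to $\Gamma$ itself.

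Next I construct $h$ inside a tubular neighborhood $\MN \cong \Gamma \times \BD^2_\rho$ of $\Gamma$. Writing $\gamma'$ as a graph in these coordinates, $\gamma'(s) = (\sigma(s), V(\sigma(s)))$, yields a $C^3$-small reparametrization $\sigma$ and a $C^3$-small section $V$. Define $h$ first on $\Gamma$ by $h(s,0) := \gamma'(s)$, then linearly extend across transverse fibers so that $d_{(s,0)}h$ sends the standard Reeb direction at $(s,0)$ to a positive multiple of $R_{g^*\alpha}(\gamma'(s))$, and finally cut off with a bump function in $|z|$ to extend $h$ by the identity outside $\MN$. The resulting $h$ is $C^3$-close to $\identity_{S^3}$ with $h(\Gamma) = \gamma'$. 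Setting $\psi := g \circ h$, one has $\psi(\Gamma) = \on{im}(\gamma)$ and $R_{\psi^*\alpha}|_\Gamma = h^* R_{g^*\alpha}|_\Gamma$, which is a positive multiple of $R_{\alpha_0}|_\Gamma$ by construction. The $C^2$-estimate
\[
\|R_{\psi^*\alpha} - R_{\alpha_0}\|_{C^2} \leq \|h^*(R_{g^*\alpha} - R_{\alpha_0})\|_{C^2} + \|h^*R_{\alpha_0} - R_{\alpha_0}\|_{C^2} = O(\delta)
\]
combines $C^2$-smallness of $R_{g^*\alpha} - R_{\alpha_0}$ with $C^3$-smallness of $h - \identity$; taking $\delta$ small enough gives $<\epsilon$.

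The hardest part is arranging that $h$ is small in $C^3$, not merely in $C^2$. The pullback formula $R_{h^*\beta}(x) = (d_x h)^{-1} R_\beta(h(x))$ involves two derivatives of $h$, so controlling $\|R_{\psi^*\alpha} - R_{\alpha_0}\|_{C^2}$ forces control of $\|h - \identity\|_{C^3}$. This is precisely what the $C^2$-to-$C^3$ bootstrap for integral curves provides: $C^2$-closeness of Reeb vector fields yields $C^3$-closeness of the orbits as parametrized loops, which is exactly what is needed to make the transverse graph data $(\sigma, V)$ defining $h$ small in $C^3$.
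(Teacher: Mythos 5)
Your argument is correct in outline, but it is worth pointing out that the paper does not prove this lemma at all: its proof is the single line ``This is immediate from Proposition 3.10 in \cite{ABHS18}.'' What you have written is essentially a from-scratch reconstruction of that cited proposition, and your route is in fact the natural (and essentially the same) one: use the transitive $\alpha_0$-preserving action of $U(2)$ on Hopf circles to move $\Gamma$ onto the Hopf fiber shadowed by $\gamma$, then correct by a diffeomorphism supported in a tubular neighbourhood of $\Gamma$. The two genuinely substantive ingredients are both present and correctly identified in your sketch: first, the Gronwall-plus-bootstrap argument showing that $C^2$-closeness of $R_\alpha$ to $R_{\alpha_0}$, together with the action bound $\frac{3}{2}\pi$ (which pins the period near $\pi$ rather than a higher multiple), forces $\gamma$ to be $C^3$-close to a Hopf circle as a parametrized loop; and second, the observation that the pullback formula $R_{h^*\beta}=(dh)^{-1}(R_\beta\circ h)$ costs one derivative, so that the $C^2$-estimate on $R_{\psi^*\alpha}-R_{\alpha_0}$ requires $C^3$-control of $h-\identity$, which is exactly what the bootstrap supplies. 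The cutoff in the tubular neighbourhood only loses a constant depending on the (fixed) neighbourhood, so the quantifiers come out right.

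One point needs to be tightened: condition (1) demands $R_{\psi^*\alpha}=c\cdot R_{\alpha_0}$ on $\Gamma$ for a \emph{constant} $c>0$, and as written your construction only guarantees a pointwise positive multiple. Along $\Gamma$ one has $d_{(s,0)}\psi\bigl(R_{\alpha_0}\bigr)=\tfrac{d}{ds}\bigl(\psi(s,0)\bigr)$, so the multiple is constant if and only if $h|_\Gamma$ parametrizes $\gamma'$ as an affine rescaling of its Reeb parametrization; an arbitrary ``$C^3$-small reparametrization $\sigma$'' will produce $R_{\psi^*\alpha}|_\Gamma=c(s)R_{\alpha_0}|_\Gamma$ with non-constant $c(s)$, which is not what the lemma (or its use in Lemma \ref{lem:viterbo_near_standard_contact_form_if_local_return_map_is_rotation}) asks for. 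The fix is immediate --- define $h(s,0)\coloneqq\gamma'_{\mathrm{Reeb}}(Ts/\pi)$ where $T$ is the period, giving $c=\pi/T$ --- but it should be said explicitly; the transverse part of $d_{(s,0)}h$ is irrelevant to condition (1) since $R_{\alpha_0}$ is tangent to $\Gamma$ there.
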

\begin{proof}
This is immediate from Proposition 3.10 in \cite{ABHS18}.
\end{proof}

\begin{proof}[Proof of Proposition \ref{prop:contact_forms_in_C_3_neighbourhood_may_be_approx_by_forms_satisfying_criterion}]
Choose $\epsilon >0$ as in Lemma \ref{lem:viterbo_near_standard_contact_form_if_local_return_map_is_rotation}. Choose corresponding $\delta>0$ as in Lemma \ref{lem:move_orbit_to_great_circle}. Let $\MU$ be a small $C^3$-open neighbourhood of $\alpha_0$ such that all $\alpha\in\MU$ satisfy $\|R_\alpha-R_{\alpha_0}\|_{C^2}<\delta$. We also demand that $(S^3,\alpha)$ is strictly contactomorphic to the boundary of a strictly positively curved domain. We prove that the Proposition holds for all $\alpha\in\MU$. It suffices to consider $C^\infty$-generic $\alpha$. In particular we can assume that all periodic orbits are non-degenerate and that there exists a unique orbit $\gamma$ of minimal action. Since $(S^3,\alpha)$ is strictly contactomorphic to the boundary of a strictly positively curved domain, it follows from \cite{Ek90} (see in particular Theorem 3 and Proposition 9 in chapter V) that $\gamma$ must have Conley-Zehnder index $3$ with respect to a global trivialization of the contact structure. This implies that $\gamma$ must be elliptic or negative hyperbolic. By shrinking $\MU$ we can guarantee the linearized return map of $\gamma$ to be arbitrarily close to the identity. Hence we can guarantee that $\gamma$ is elliptic. We apply Proposition \ref{prop:making_local_return_map_of_elliptic_orbit_a_rotation}. This yields a contact form $\alpha'$ approximating $\alpha$ in the $C^2$-topology and agreeing with $\alpha$ outside a small neighbourhood of $\gamma$ such that the local return map of $\gamma$ generated by the Reeb flow of $\alpha'$ is smoothly conjugated to an irrational rotation. We can also demand $\|R_{\alpha'} - R_{\alpha}\|_{C^2}$ to be arbitrarily small. In particular we can guarantee that $\|R_{\alpha'} - R_{\alpha_0}\|_{C^2}<\delta$. We apply Lemma \ref{lem:move_orbit_to_great_circle} to the contact form $\alpha'$ and the Reeb orbit $\gamma$ of minimal action. Let $\psi$ be a diffeomorphism of $S^3$ satisfying properties (1)-(3) in Lemma \ref{lem:move_orbit_to_great_circle}. Then the contact form $\psi^*\alpha'$ satisfies all assumptions in Lemma \ref{lem:viterbo_near_standard_contact_form_if_local_return_map_is_rotation}. Hence there exists a smooth embedding $f:\BD\rightarrow S^3$ parametrizing a $\partial$-strong disk-like surface of section of the Reeb flow of $\psi^*\alpha'$ with boundary orbit $\Gamma$ such that the $2$-form $\omega\coloneqq f^*d(\psi^*\alpha')$ and the first return map $\widetilde{\phi}_1\in\widetilde{\Diff}(\BD,\omega)$ satisfy assertions (1)-(3) in Proposition \ref{prop:contact_forms_in_C_3_neighbourhood_may_be_approx_by_forms_satisfying_criterion}. Since $\psi^*\alpha'$ and $\alpha'$ are strictly contactomorphic, the same is true for $\alpha'$.
\end{proof}

\section{Proofs of the main results}
\label{section:proofs_of_the_main_results}

\begin{proof}[Proof of Theorem \ref{theorem:strong_viterbo_near_round_ball}]
Let $X\subset\BR^4$ be a convex domain such that $\partial X$ is $C^3$-close to the unit sphere $S^3$. Let $g:S^3\rightarrow\BR_{>0}$ be the unique function such that
\begin{equation*}
\partial X = \{\sqrt{g(x)}\cdot x\mid x\in S^3\}.
\end{equation*}
The function $g$ is $C^3$-close to the constant function $1$. The pull-back of the contact form $\lambda_0|_{\partial X}$ via the radial map
\begin{equation*}
S^3\rightarrow\partial X \qquad x\mapsto \sqrt{g(x)}\cdot x
\end{equation*}
is given by $\alpha \coloneqq g\cdot \alpha_0$ and is $C^3$-close to $\alpha_0$. Let $\alpha'$ be a contact form which is $C^2$-close to $\alpha$ and satisfies all assertions of Proposition \ref{prop:contact_forms_in_C_3_neighbourhood_may_be_approx_by_forms_satisfying_criterion}. We claim that there exists a star-shaped domain $X'$ whose boundary $\partial X'$ is $C^1$-close to $\partial X$ and strictly contactomorphic to $(S^3,\alpha')$. Indeed, arguing as in the proof of Proposition 3.11 in \cite{ABHS18}, we conclude that there exists a $C^1$-open neighbourhood $\MU\subset \Omega^1(S^3)$ of $\alpha$ and a map
\begin{equation*}
\MU \rightarrow C^\infty(S^3) \quad \beta \mapsto g_\beta
\end{equation*}
which is continuous with respect to the $C^{k+1}$ topology on $\MU$ and the $C^k$-topology on $C^\infty(S^3)$, maps $\alpha$ to the constant function $1$ and has the property that every $\beta\in\MU$ is a contact form strictly contactomorphic to $g_\beta\cdot \alpha$. Since $\alpha'$ is $C^2$-close to $\alpha$, the function $g_{\alpha'}$ is $C^1$-close to the constant function $1$. We define $X'$ to be the star-shaped domain with boundary
\begin{equation*}
\partial X' = \{ \sqrt{g_{\alpha'}(x)\cdot g(x)}\cdot x\mid x\in S^3\}.
\end{equation*}
$\partial X'$ is $C^1$-close to $\partial X$. The pull-back to $S^3$ of the contact form $\lambda_0|_{\partial X'}$ via the radial projection is given by $g_{\alpha'}\cdot g\cdot \alpha_0 = g_{\alpha'}\cdot \alpha$. This is strictly contactomorphic to $\alpha'$. We claim that $\cG(X') = \cZ(X')$. Let $f:\BD\rightarrow\partial X'$ be a surface of section satisfying the assertions of Proposition \ref{prop:contact_forms_in_C_3_neighbourhood_may_be_approx_by_forms_satisfying_criterion}. This means that we may apply Corollary \ref{cor:positivity_criterion_for_diffeomorphisms_close_to_the_identity} to $\widetilde{\phi}_1$, the lift of the first return map with respect to a trivialization of degree $1$. Let $H:\BR/\BZ\times\BD\rightarrow\BR$ denote a Hamiltonian satisfying all assertions of Corollary \ref{cor:positivity_criterion_for_diffeomorphisms_close_to_the_identity}. This Hamiltonian satisfies all hypotheses for the second part of Theorem \ref{theorem:embedding_result}. We conclude that $B(a)\overset{s}{\hookrightarrow}X'\overset{s}{\hookrightarrow} Z(a)$ where $a>0$ is the symplectic area of the surface of section. In particular, this implies that $\cG(X')=\cZ(X')$. We can make the $C^1$-distance between $\partial X$ and $\partial X'$ arbitrarily small by letting the $C^2$-distance between $\alpha$ and $\alpha'$ go to zero. This shows that $X$ may be approximated in the $C^1$-topology by star-shaped domains $X'$ whose Gromov width and cylindrical embedding capacity agree. It is an easy consequence of the monotonicity and conformality of symplectic capacities that any symplectic capacity is continuous on the space of all star-shaped domains with respect to the $C^0$-topology. Therefore $\cG(X)=\cZ(X)$.
\end{proof}

\begin{proof}[Proof of Theorem \ref{theorem:area_of_surface_of_section_bounds_cylindrical_capacity}]
We apply Proposition \ref{prop:modify_hypersurface_such_that_return_map_is_generated_by_positive_hamiltonian}. This yields a star-shaped domain $X'$ such that $X\overset{s}{\hookrightarrow} X'$ and a $\partial$-strong disk-like global surface of section $\Sigma'\subset\partial X'$ such that $\Sigma'$ has the same symplectic area as $\Sigma$ and such that $(X',\Sigma')$ satisfies all hypotheses in the first part of Theorem \ref{theorem:embedding_result}. Let $a>0$ denote the symplectic area of $\Sigma$. By Theorem \ref{theorem:embedding_result}, there exist symplectic embeddings $X\overset{s}{\hookrightarrow} X'\overset{s}{\hookrightarrow} Z(a)$. In particular, we have $\cZ(X)\leq a$.
\end{proof}

\begin{proof}[Proof of Theorem \ref{theorem:a_hopf_equals_cylindrical_capacity_for_convex_domains}]
It was proved by Hofer-Wysocki-Zehnder in \cite{HWZ96} that every star-shaped domain $X$ possesses a Hopf orbit, i.e. that $\operatorname{A_{Hopf}}(X)<\infty$. Although not explicitly stated in this form, the proof of the existence result of Hopf orbits in \cite{HWZ96} shows more, namely that if $X$ symplectically embeds into the cylinder $Z(a)$, then $\operatorname{A_{Hopf}}(X)\leq a$. The reason is the following. Since $X$ embeds into $Z(a)$, it also embeds into the product $M\coloneqq S^2(a+\varepsilon)\times T^2(b)$. Here $S^2(a+\varepsilon)$ is the $2$-sphere equipped with an area form of total area $a+\varepsilon$ for an arbitrarily small $\varepsilon>0$ and $T^2(b)$ is the $2$-torus of total area $b$ for some sufficiently large $b$. For an arbitrary compatible almost complex structure $J$, the symplectic manifold $M$ is foliated by $J$-holomorphic spheres in the homology class $[S^2\times *]$. Hofer-Wysocki-Zehnder produce a Hopf orbit by neck-stretching $J$-holomorphic spheres along the boundary of $X$. Carrying out this procedure with $J$-holomorphic spheres of symplectic area $a+\varepsilon$ yields a Hopf orbit with action at most $a+\varepsilon$. Therefore $\operatorname{A_{Hopf}}(X)\leq a+\varepsilon$. Since $\varepsilon>0$ is arbitrary, this shows that $\operatorname{A_{Hopf}}(X)\leq a$.

Let us now assume that $X$ is dynamically convex. It is proved by Hryniewicz-Hutchings-Ramos \cite{HHR21} that the infimum in the definition of $\operatorname{A_{Hopf}}(X)$ is attained in this case. This means that there exists a Hopf orbit $\gamma$ with action $\MA(\gamma)=\operatorname{A_{Hopf}}(X)$. As mentioned in the introduction, it follows from work of Hryniewicy-Salom\~{a}o \cite{HS11} and Hryniewicz \cite{Hry14} that $\gamma$ bounds a disk-like global surface of section. In fact, Florio-Hryniewicz proved in \cite[Proposition 2.8]{FH21} that $\gamma$ bounds a $\partial$-strong disk-like surface of section. Thus it is a direct consequence of Theorem \ref{theorem:area_of_surface_of_section_bounds_cylindrical_capacity} that $X$ symplectically embeds into $Z(\operatorname{A_{Hopf}}(X))$.
\end{proof}

\bibliographystyle{plain}
\bibliography{surf_sect_sympl_cap_v2}

\end{document}